\titleformat{\subsection}[runin]{\normalfont\bfseries}{\thesubsection.}{.5em}{}[.]\titlespacing{\subsection}{0pt}{2ex plus .1ex minus .2ex}{.8em}
\titleformat{\subsubsection}[runin]{\normalfont\itshape}{\thesubsubsection.}{.3em}{}[.]\titlespacing{\subsubsection}{0pt}{1ex plus .1ex minus .2ex}{.5em}
\numberwithin{equation}{section}
\numberwithin{figure}{section}
\newcommand{\f}[1]{\boldsymbol{\mathrm{#1}}} 
\newcommand{\bb}{\mathbb} 
\renewcommand{\cal}{\mathcal} 
\newcommand{\fra}{\mathfrak} 
\newcommand{\ol}[1]{\overline{#1} \!\,} 
\newcommand{\wh}{\widehat}
\newcommand{\wt}{\widetilde}
\newcommand{\ii}{\mathrm{i}}
\newcommand{\dd}{\mathrm{d}}
\newcommand{\col}{\mathrel{\mathop:}}
\newcommand{\deq}{\mathrel{\mathop:}=}
\renewcommand{\leq}{\leqslant}
\renewcommand{\geq}{\geqslant}
\renewcommand{\ge}{\geqslant}
\newcommand{\ind}[1]{\f 1 (#1)}
\newcommand{\indb}[1]{\f 1 \pb{#1}}
\renewcommand{\epsilon}{\varepsilon}
\newcommand{\al}{\alpha}
\newcommand{\de}{\delta}
\renewcommand{\P}{\mathbb{P}}
\newcommand{\E}{\mathbb{E}}
\newcommand{\R}{\mathbb{R}}
\newcommand{\C}{\mathbb{C}}
\newcommand{\N}{\mathbb{N}}
\newcommand{\p}[1]{({#1})}
\newcommand{\pb}[1]{\bigl({#1}\bigr)}
\newcommand{\pB}[1]{\Bigl({#1}\Bigr)}
\newcommand{\pbb}[1]{\biggl({#1}\biggr)}
\newcommand{\pBB}[1]{\Biggl({#1}\Biggr)}
\newcommand{\q}[1]{[{#1}]}
\newcommand{\qB}[1]{\Bigl[{#1}\Bigr]}
\newcommand{\qBB}[1]{\Biggl[{#1}\Biggr]}
\newcommand{\h}[1]{\{{#1}\}}
\newcommand{\hb}[1]{\bigl\{{#1}\bigr\}}
\newcommand{\hBB}[1]{\Biggl\{{#1}\Biggr\}}
\newcommand{\abs}[1]{\lvert #1 \rvert}
\newcommand{\absb}[1]{\bigl\lvert #1 \bigr\rvert}
\newcommand{\absbb}[1]{\biggl\lvert #1 \biggr\rvert}
\newcommand{\norm}[1]{\lVert #1 \rVert}
\newcommand{\scalar}[2]{\langle{#1} \mspace{2mu}, {#2}\rangle}
\DeclareMathOperator{\tr}{Tr}
\DeclareMathOperator{\re}{Re}
\DeclareMathOperator{\im}{Im}
\newcommand{\beqa}{\begin{eqnarray}}
\newcommand{\eeqa}{\end{eqnarray}}
\newcommand{\e}{\varepsilon}
\newcommand{\bv}{{\bf{v}}}
\newcommand{\bw}{{\bf{w}}}
\newcommand{\be}{\begin{equation}}
\newcommand{\ee}{\end{equation}}
\theoremstyle{plain} 
\newtheorem{theorem}{Theorem}[section]
\newtheorem*{theorem*}{Theorem}
\newtheorem{lemma}[theorem]{Lemma}
\newtheorem*{lemma*}{Lemma}
\newtheorem*{corollary*}{Corollary}
\newtheorem{proposition}[theorem]{Proposition}
\newtheorem*{proposition*}{Proposition}
\newtheorem{definition}[theorem]{Definition}
\newtheorem*{definition*}{Definition}
\theoremstyle{definition} 
\newtheorem*{example*}{Example}
\newtheorem{remark}[theorem]{Remark}
\newtheorem*{remark*}{Remark}
\newtheorem*{remarks*}{Remarks}
\begin{document}
\title{Isotropic Local Laws for Sample Covariance and Generalized Wigner Matrices}

\author{Alex Bloemendal\footnote{Harvard University, alexb@math.harvard.edu.}
\and L\'aszl\'o Erd\H{o}s\footnote{IST Austria, Am Campus 1, Klosterneuburg A-3400, lerdos@ist.ac.at. On leave from Institute of Mathematics,
University of Munich. Partially supported by SFB-TR 12 Grant of the German Research Council by ERC Advanced Grant RANMAT 338804.} \and Antti Knowles\footnote{ETH Z\"urich, knowles@math.ethz.ch. Partially supported by Swiss National Science Foundation grant 144662.}
 \and Horng-Tzer Yau\footnote{Harvard University, htyau@math.harvard.edu. Partially supported by  NSF  Grant DMS-1307444 and Simons investigator fellowship.}  \and Jun Yin\footnote{University of Wisconsin, jyin@math.wisc.edu. Partially supported by NSF Grant DMS-1207961.}}
 \date{April 4, 2014}
 
\maketitle

\begin{abstract} 
We consider sample covariance matrices of the form $X^*X$, where $X$ is an $M \times N$ matrix with independent random entries.  We prove the \emph{isotropic local Marchenko-Pastur law}, i.e.\ we prove that the resolvent $(X^* X - z)^{-1}$ converges to a multiple of the identity in the sense of quadratic forms. More precisely, we establish sharp high-probability bounds on the quantity $\scalar{\f v}{(X^* X - z)^{-1} \f w} - \scalar{\f v}{\f w} m(z)$, where $m$ is the Stieltjes transform of the Marchenko-Pastur law and $\f v, \f w \in \C^N$. We require the logarithms of the dimensions $M$ and $N$ to be comparable. Our result holds down to scales $\im z \geq N^{-1+\e}$ and throughout the entire spectrum away from 0. We also prove analogous results for generalized Wigner matrices.
\end{abstract}

%
%
%
%
%
%
%

\section{Introduction}

The empirical density of eigenvalues of large $N\times N$ random matrices typically converges to
a deterministic limiting law.  For Wigner matrices this law is the celebrated Wigner semicircle law \cite{Wig} and for sample covariance matrices it is the Marchenko-Pastur law \cite{MP}.  Under some additional
moment conditions this  convergence also holds in very small spectral windows,  all the way down 
 to the scale of the eigenvalue spacing.
In this paper  we normalize the matrix so that the support of
 its spectrum remains bounded 
 as $N$ tends to infinity. In
particular, the typical eigenvalue spacing is of order $1/N$ away from the spectral edges.
The  empirical eigenvalue density is  conveniently, and commonly, 
 studied via its Stieltjes transform -- the normalized trace of the resolvent, $\frac{1}{N} \tr (H-z)^{-1}$,
where $z=E+\ii\eta$ is the spectral parameter with positive imaginary part $\eta$. Understanding the eigenvalue density on 
small  scales of order $\eta$ around a fixed value $E \in \R$  is roughly equivalent to 
 understanding its Stieltjes transform with spectral parameter $z=E+\ii \eta$. 
The  smallest  scale on which a deterministic limit is  expected to emerge  is $\eta \gg N^{-1}$; 
below this scale the empirical eigenvalue density  remains a fluctuating object even in the limit of large $N$, driven by the fluctuations of individual eigenvalues. We remark that a local law on the optimal scale $1/N$ (up to logarithmic corrections) was first obtained in \cite{ESY2}.

In recent years there has been substantial progress in understanding the local 
versions of the semicircle and the Marchenko-Pastur laws (see \cite{EKYY4, ECDM} for 
an overview and detailed references). This research was originally motivated by the Wigner-Dyson-Mehta universality
 conjecture for the local spectral statistics  of random matrices. 
The celebrated sine kernel universality and related results for other symmetry classes
concern higher-order correlation functions, and not just the eigenvalue density. Moreover, they  pertain to scales of order $1/N$, smaller than the scales on which local laws hold.  Nevertheless, local laws (with precise error bounds) are essential  ingredients for proving universality. In particular, one of their consequences, the precise localization of the eigenvalues (called \emph{rigidity bounds}), has
played a fundamental role in the relaxation flow analysis of the Dyson Brownian Motion,
which has led to the proof of the Wigner-Dyson-Mehta universality  conjecture for
all symmetry classes \cite{ESY4, ESYY}.

The basic approach  behind the proofs of local laws  is the analysis of a self-consistent equation for the
Stieltjes transform, a scalar equation which controls the trace of the resolvent (and hence the empirical eigenvalue density).
A vector self-consistent equation
for the diagonal resolvent matrix entries, $[(H-z)^{-1}]_{ii}$, was introduced in \cite{EYY1}. Later, a matrix self-consistent equation was derived in \cite{EKYY3}.
Such self-consistent equations provide \emph{entrywise} control of the resolvent and not only its trace. 
 This latter fact has proved a key ingredient in the \emph{Green function comparison method} (introduced in \cite{EYY1} and extended to the spectral edge in \cite{EYY3}), which allows the comparison of local statistics via moment matching even below the scale of eigenvalue spacing.

In this paper we are concerned with \emph{isotropic local laws}, in which the control of the matrix entries $[(H - z)^{-1}]_{ij}$ is generalized to a control of quantities of the form $\scalar{\f v}{(H - z)^{-1} \f w}$, where $\f v , \f w \in \C^N$ are deterministic vectors. This may be interpreted as basis-independent control on the resolvent.  The fact that the matrix entries are independent distinguishes the standard basis of $\C^N$ in the analysis of the resolvent.  Unless the entries of $H$ are Gaussian, this independence of the matrix entries is destroyed after a change of basis, and the isotropic law is a nontrivial  generalization of the entrywise law. 
 The first isotropic local law was proved in \cite{KY2}, where it was established for Wigner matrices.

The main motivation for isotropic local laws is the study of \emph{deformed matrix ensembles}. A simple example is the sum $H + A$ of a Wigner matrix $H$ and a deterministic finite-rank matrix $A$. As it turns out, a powerful means to study the eigenvalues and eigenvectors of such deformed matrices is to derive large deviation bounds and central limit theorems for quantities of the form $\scalar{\f v}{(H - z)^{-1} \f w}$, where $\f v$ and $\f w$ are eigenvectors of $A$. Deformed matrix ensembles are known to exhibit rather intricate spectra, depending on the spectrum of $A$. In particular, the spectrum of $H + A$ may contain \emph{outliers} -- lone eigenvalues separated from the bulk spectrum. The creation or annihilation of an outlier occurs at a sharp transition when an eigenvalue of $A$ crosses a critical value. This transition is often referred to as the \emph{BBP transition} and was first established in \cite{BBP} for unitary matrices and extended in \cite{BV1, BV2} to other symmetry classes.
Similarly to the above deformed Wigner matrices, one may introduce a class of deformed sample covariance matrices, commonly referred to as \emph{spiked population models} \cite{Johnstone}, which describe populations with nontrivial correlations (or ``spikes'').

The isotropic local laws established in this paper serve as a key input in establishing detailed results about the eigenvalues and eigenvectors of deformed matrix models. These include:
\begin{enumerate}
\item[(a)]
A complete picture of the distribution of outlier eigenvalues/eigenvectors, as well as the non-outlier eigenvalues/eigenvectors near the spectral edge. 
\item[(b)]
An investigation of the BBP transition using that, thanks to the optimality of the high-probability bounds in the local laws, the results of (a) extend even to the case when some eigenvalues of $A$
are very close to the critical value.
\end{enumerate}
This programme for the eigenvalues of deformed Wigner matrices was carried out in \cite{KY2, KY3}. In \cite{KYY}, we carry it out for the eigenvectors of covariance matrices.

In this paper we prove the isotropic Marchenko-Pastur law for sample covariance matrices as well as the isotropic semicircle law for generalized Wigner matrices. Our proofs are based on a novel method, which is considerably more robust than that of \cite{KY2}. Both proofs (the one from \cite{KY2} and the one presented here) crucially rely on the entrywise local law as input,
but follow completely different approaches to obtain the isotropic law from the entrywise one.
The basic idea of the proof in \cite{KY2} is to use the Green function comparison method to compare the resolvent of a given Wigner matrix to the resolvent of a Gaussian  random  matrix, for which the isotropic law is a trivial corollary of the entrywise one (by basis transformation). Owing to various moment matching conditions imposed by the Green function comparison, the result of \cite{KY2} required the variances of all matrix entries to coincide and, for results in the bulk spectrum, the third moments to vanish.
In contrast, our current approach does not rely on Green function comparison. Instead, it consists of a precise analysis of the cancellation of fluctuations in Green functions. We use a graphical expansion method inspired by techniques recently developed in \cite{EKY2} to control fluctuations in  Green functions of random band matrices.

Our first main result is the isotropic local Marchenko-Pastur law for sample covariance matrices $H = X^* X$, where $X$ is an $M \times N$ matrix. We allow the dimensions of $X$ to differ wildly: we only assume that $\log N \asymp \log M$. In particular, the aspect ratio $\phi = M/N$ -- a key parameter in the
Marchenko-Pastur law -- may scale as a power of $N$.
Our entrywise law  (required as input for the proof of the isotropic law) is a generalization of the one given in \cite{PY}. In addition to generalizing the proof of \cite{PY}, we simplify and streamline it, so as to obtain a short and self-contained proof.

Our second main result is the isotropic local semicircle law for generalized Wigner matrices. This extends the isotropic law of \cite{KY2} from Wigner matrices to generalized Wigner matrices, in which the variances of the matrix entries need not coincide. It also dispenses with the third moment assumption of \cite{KY2} mentioned previously. In fact, our proof applies to even more general matrix models, provided that an entrywise law has been established.  As an application of the isotropic laws, we also prove a basis-independent version of eigenvector delocalization for both sample covariance and generalized Wigner matrices. 

We conclude with an outline of the paper. In Section \ref{sec:results} we define our models and state our results, first for sample covariance matrices (Section \ref{sec: sample covariance}) and then for generalized Wigner matrices (Section \ref{sec: gen_wigner}). The rest of the paper is devoted to the proofs. Since they are very similar for sample covariance matrices and generalized Wigner matrices, we only give the details for sample covariance matrices. Thus, Sections \ref{sec:tools}--\ref{sec:5} are devoted to the proof of the isotropic Marchenko-Pastur law for sample covariance matrices; in Section \ref{sec:appendix}, we describe how to modify the arguments to prove the isotropic semicircle law for generalized Wigner matrices. Section \ref{sec:tools} collects some basic identities and estimates that we shall use throughout the proofs. In Section \ref{sec: MP law} we prove the entrywise local Marchenko-Pastur law, generalizing the results of \cite{PY}. The main argument and the bulk of the proof, i.e.\ the proof of the isotropic law, is given in Section \ref{sec:4}. For a sketch of the argument we refer to Section \ref{sec:sketch}. Finally, in Section \ref{sec:5} we draw some simple consequences from the isotropic law: optimal control outside of the spectrum and isotropic delocalization bounds.

\subsubsection*{Conventions}
We use $C$ to denote a generic large positive constant, which may depend on some fixed parameters and whose value may change from one expression to the next. Similarly, we use $c$ to denote a generic small positive constant. For two positive quantities $A_N$ and $B_N$ depending on $N$ we use the notation $A_N \asymp B_N$ to mean $C^{-1} A_N \leq B_N \leq C A_N$ for some positive constant $C$.

\section{Results} \label{sec:results}

\subsection{Sample covariance matrix} \label{sec: sample covariance}

Let $X$ be an $M\times N$ matrix whose entries $X_{i \mu}$  are independent complex-valued random variables satisfying
\begin{equation} \label{cond on entries of X}
\E X_{i \mu}\;=\;0\,,\qquad \E\abs{X_{i \mu}}^2\;=\;\frac{1}{\sqrt {N M}}\,.
\end{equation}
We shall study the $N\times N$ matrix $X^*X$; hence we regard $N$ as the fundamental large 
parameter, and write $M \equiv M_N$.
Our results also apply to the matrix $X X^*$ provided one replaces $N \leftrightarrow M$. See Remark \ref{rem:XXstar} below for more details.

We always assume that $M$ and $N$ satisfy the bounds
\begin{equation} \label{NM gen}
N^{1/C} \;\leq\; M \;\leq\; N^C
\end{equation}
for some positive constant $C$.
We define the ratio 
\[
 \phi \;=\; \phi_N  \;\deq\; \frac MN\,,
\]
which may depend on $N$. Here, and throughout the following, 
in order to unclutter notation we omit the argument $N$ in quantities, 
such as $X$ and $\phi$, that depend on it.

We make the following technical assumption on the tails of the entries of $X$. We assume that,  for all $p \in \N$, the random variables $(N M)^{1/4} X_{i\mu}$ have a uniformly bounded $p$-th moment:
there is a constant $C_p$ such that
\begin{equation} \label{moments of X-1}
\E \absb{(N M)^{1/4} X_{i\mu}}^p \;\leq\; C_p\,.
\end{equation}

It is well known that the empirical distribution of the eigenvalues of the $N\times N$ matrix $X^*X$ has the same asymptotics as the \emph{Marchenko-Pastur} law\cite{MP}
\begin{equation}\label{def: rhog}
\varrho_\phi(\dd x) \;\deq\;\frac{\sqrt\phi}{2\pi}\sqrt{\frac{\q{(x-\gamma_-)(\gamma_+-x)}_+}{x^2}}\, \dd x + (1-\phi)_+ \, \delta(\dd x)\,,
\end{equation}
where we defined
\begin{equation} \label{def:gamma_pm}
 \gamma_\pm  \;\deq\;\sqrt\phi+\frac 1{\sqrt\phi}\pm 2\,
\end{equation}
to be the edges of the limiting spectrum.
Note that \eqref{def: rhog} is normalized so that its integral is equal to one. 
The Stieltjes transform of the Marchenko-Pastur law \eqref{def: rhog} is
\begin{equation} \label{S_MP}
m_\phi(z)\;\deq\; \int \frac{\varrho_\phi (\dd x)}{x-z} \;=\; \frac{\phi^{ 1/2}-\phi^{-1/2}-z+\ii \sqrt{(z-\gamma_-)(\gamma_+-z)}}{2\,\phi^{-1/2}\, z}\,,
\end{equation}
where the square root is chosen so that $m_\phi$ is holomorphic in the upper half-plane and satisfies $m_\phi(z) \to 0$ as $z \to \infty$. 
The function $m_\phi = m_\phi(z)$ is also characterized as the unique solution of the equation
\begin{equation} \label{identity for m MP}
m+\frac{1}{z+z\phi^{-1/2}m-(\phi^{ 1/2}-\phi^{-1/2})} \;=\; 0
\end{equation}
satisfying $\im m (z) > 0$ for $\im z >0$.
The formulas \eqref{def: rhog}--\eqref{identity for m MP} were originally derived for the case when $\phi=M/N$ is independent of $N$ 
 (or, more precisely, when $\phi$ has a limit in $(0,\infty)$ as $N \to \infty$).  Our results
allow $\phi$ to depend on $N$ under the constraint \eqref{NM gen}, so that $m_\phi$ and $\varrho_\phi$ may also depend on $N$ through $\phi$.

Throughout the following we use a spectral parameter
\begin{equation*}
z \;=\; E + \ii \eta\,,
\end{equation*}
with $\eta > 0$, as the argument of Stieltjes transforms and resolvents.
Define the resolvent
\begin{equation} \label{def_R}
R(z) \;\deq\; (X^*X-z)^{-1}\,.
\end{equation}
For $z\in \C$, define $\kappa\deq\kappa(z)$ to be the distance of $E=\re z$ to the spectral edges $\gamma_\pm$, i.e.
\begin{equation} \label{def kappa sc}
\kappa \;\deq\; \min \hb{\abs{\gamma_+-E}\,,\,\abs{\gamma_- -E}}\,.
\end{equation}

The following notion of a high-probability bound was introduced in \cite{EKY2}, and has been subsequently used in a number of works on random matrix theory. It provides a simple way of systematizing and making precise statements of the form ``$\xi$ is bounded with high probability by $\zeta$  up to small powers of $N$''.

\begin{definition}[Stochastic domination]\label{def:stocdom}
Let
\begin{equation*}
\xi = \pb{\xi^{(N)}(u) \col N \in \N, u \in U^{(N)}} \,, \qquad
\zeta = \pb{\zeta^{(N)}(u) \col N \in \N, u \in U^{(N)}}
\end{equation*}
be two families of nonnegative random variables, where $U^{(N)}$ is a possibly $N$-dependent parameter set. 
We say that $\xi$ is \emph{stochastically dominated by $\zeta$, uniformly in $u$,} if for all (small) $\epsilon > 0$ and (large) $D > 0$ we have
\begin{equation*}
\sup_{u \in U^{(N)}} \P \qB{\xi^{(N)}(u) > N^\epsilon \zeta^{(N)}(u)} \;\leq\; N^{-D}
\end{equation*}
for large enough $N\geq N_0(\e, D)$.
Throughout this paper the stochastic 
domination will always be uniform in all parameters (such as matrix indices and the spectral parameter $z$) that are not explicitly fixed. Note that $N_0(\e, D)$ may depend on the constants from \eqref{NM gen} and \eqref{moments of X-1} as well as any constants fixed in the assumptions of our main results.
If $\xi$ is stochastically dominated by $\zeta$, uniformly in $u$, we use the notation $\xi \prec \zeta$. Moreover, if for some complex family $\xi$ we have $\abs{\xi} \prec \zeta$ we also write $\xi = O_\prec(\zeta)$.
\end{definition}

\begin{remark} \label{rem:stochdomMN} 
Because of \eqref{NM gen}, all (or some) factors of $N$ in Definition \eqref{def:stocdom} could be replaced with $M$ without changing the definition of stochastic domination. 
\end{remark}

Sometimes we shall need the following notion of high probability.
\begin{definition} \label{def: high probability}
An $N$-dependent event $\Xi \equiv \Xi_N$ holds with \emph{high probability} if $1 - \ind{\Xi} \prec 0$ (or, equivalently, if $1 \prec \ind{\Xi}$).
\end{definition}

We introduce the quantity
\begin{equation} \label{def_K}
K \;\equiv\; K_N \;\deq\; \min\{M,N\}\,,
\end{equation}
which is the number of nontrivial (i.e.\ nonzero) eigenvalues of $X^* X$; the remaining $N - K$ eigenvalues of $X^* X$ are zero. (Note that the $K$ nontrivial eigenvalues of $X^* X$ coincide with those of $X X^*$.) Fix a (small) $\omega \in (0,1)$
and define the domain
\begin{equation} \label{def_S_theta}
\f S \;\equiv\; \f S(\omega, K)  \;\deq\; \hb{z = E+\ii\eta \in \C \col  \kappa \leq   \omega^{-1} \,,\, K^{-1+\omega} \leq \eta \leq  \omega^{-1} \,,\, \abs{z} \geq \omega}\,.
\end{equation}
Throughout the following we regard $\omega$ as fixed once and for all, and do not track the dependence of constants on $\omega$.

\begin{theorem}[Isotropic local Marchenko-Pastur law] \label{thm: IMP gen}
Suppose that \eqref{cond on entries of X}, \eqref{NM gen}, and \eqref{moments of X-1} hold. 
Then
\begin{equation}\label{bound: Rij isotropic gen}
\absb{\scalar{\f v}{R(z) \f w} - m_\phi(z) \scalar{\f v}{\f w}} \;\prec\; \sqrt{\frac{\im m_{\phi}(z)}{N \eta}} + \frac{1}{N \eta}
\end{equation}
uniformly in $z \in \f S$ and any deterministic unit vectors $\f v, \f w \in \C^N$.
Moreover, we have
\begin{equation}\label{bound: trR gen}
\absbb{\frac{1}{N}\tr R(z) - m_\phi(z)} \;\prec\; \frac{1}{N\eta}
\end{equation}
uniformly in $z \in \f S$.
\end{theorem}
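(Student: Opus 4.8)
The plan is to use the standard two-step strategy for deriving an isotropic law from the entrywise law: first establish the result for the diagonal entries $R_{ii}(z)$ (this is essentially the entrywise local Marchenko-Pastur law, which we obtain in Section~\ref{sec: MP law}), and then control the off-diagonal fluctuations of the bilinear form $\scalar{\f v}{R(z)\f w}$ via a recursive moment estimate. Concretely, writing $\f v = (v_1,\dots,v_N)$ and $\f w = (w_1,\dots,w_N)$, we decompose
\[
\scalar{\f v}{R(z)\f w} - m_\phi(z)\scalar{\f v}{\f w} \;=\; \sum_i \ov{v_i}\, w_i \pb{R_{ii}(z) - m_\phi(z)} \;+\; \sum_{i \ne j} \ov{v_i}\, R_{ij}(z)\, w_j\,.
\]
The first sum is controlled by the entrywise law, $\abs{R_{ii} - m_\phi} \prec \sqrt{\im m_\phi/(N\eta)} + 1/(N\eta)$, together with $\sum_i \abs{v_i}\abs{w_i} \le \norm{\f v}\norm{\f w} = 1$. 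The bound \eqref{bound: trR gen} on $\frac{1}{N}\tr R$ then follows by choosing $\f v = \f w = \f e_i$ and averaging, but in fact the trace bound with the better error $1/(N\eta)$ requires the usual averaging of the self-consistent equation (an extra averaging cancels one power of $\sqrt{N\eta}$), which is part of the entrywise analysis.

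The heart of the matter is the second, off-diagonal sum. Here I would follow the graphical expansion approach advertised in the introduction (inspired by \cite{EKY2}): estimate high moments $\E \absb{\sum_{i\ne j} \ov{v_i} R_{ij} w_j}^{2p}$ for arbitrary $p$, expand each resolvent entry $R_{ij}$ using minor expansions and the Schur complement / resolvent identities collected in Section~\ref{sec:tools}, and track the resulting sum over index tuples as a graph in which edges carry the randomness of $X$. Using the independence of the columns of $X$, one takes partial expectations over individual columns; a term survives only if every column index appears at least twice, which forces pairings of the $2p$ copies of the vectors $\f v,\f w$. The key input is that each ``isolated'' resolvent entry contributes a factor $\Psi \deq \sqrt{\im m_\phi/(N\eta)}$ after the large-deviation bounds for quadratic forms (the standard estimates $\abs{\sum_\mu a_\mu (\abs{X_{i\mu}}^2 - \E\abs{X_{i\mu}}^2)} \prec \sqrt{\sum_\mu \abs{a_\mu}^2}$, etc.), while the constraint $\norm{\f v} = \norm{\f w} = 1$ means that summing a vector component against itself costs nothing but summing it ``for free'' against an off-diagonal resolvent entry costs a factor $\Psi$. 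One then shows that the dominant graphs give exactly $\Psi^{2p}$, i.e.\ $\E \absb{\sum_{i \ne j}\ov{v_i}R_{ij}w_j}^{2p} \prec \Psi^{2p}$ (plus lower-order $(N\eta)^{-2p}$ terms), and Markov's inequality together with the arbitrariness of $p$ upgrades this to the stochastic domination statement \eqref{bound: Rij isotropic gen}. A self-improving (bootstrap) scheme is likely needed: one assumes an a priori bound on $\max_{i\ne j}\abs{R_{ij}}$ and on the bilinear form with a worse constant, feeds it into the expansion, and recovers the sharp bound, the gap closing because each expansion step gains a factor of $\Psi$.

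The main obstacle, and the reason the naive approach fails, is the combinatorics of the graphical expansion: one must show that the proliferation of terms generated by iterated minor/resolvent expansions does not overwhelm the gain of $\Psi$ per expansion step, i.e.\ that the number of relevant graphs at order $2p$ grows only polynomially (in a way absorbed by $N^\epsilon$) rather than factorially in a harmful way, and that all ``non-backtracking'' or higher-order terms are genuinely subleading. Controlling this uniformly down to $\eta \ge K^{-1+\omega}$ — where $\Psi$ is only slightly smaller than $1$ — requires the stopping/bootstrap structure to be set up carefully so that the self-consistent equation for $m_\phi$ is stable; the stability of \eqref{identity for m MP} near the edges (where $\kappa$ is small and $\im m_\phi \sim \sqrt\kappa$) is what dictates the precise form of the error term $\sqrt{\im m_\phi/(N\eta)} + 1/(N\eta)$, and getting the $\im m_\phi$ (rather than a constant) in the numerator is exactly what makes the bound sharp throughout the spectrum. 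A secondary technical point is handling the possibly huge aspect ratio $\phi = M/N$: all estimates must be tracked with explicit $\phi$-dependence, using $\log M \asymp \log N$ and Remark~\ref{rem:stochdomMN} to freely interchange powers of $M$ and $N$ in the error bounds.
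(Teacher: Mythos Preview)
Your proposal is essentially correct and matches the paper's approach: decompose into diagonal plus off-diagonal, control the diagonal sum via the entrywise law, and estimate high moments of the off-diagonal sum $\sum_{i\ne j}\ol v_i R_{ij} w_j$ via a graphical expansion built from the resolvent identities of Section~\ref{sec:tools}, with a parity argument ensuring that only $\ell^2$-norms of $\f v,\f w$ survive. One minor correction: no self-improving bootstrap on the bilinear form itself is needed---the entrywise law (which \emph{is} proved via a continuity/bootstrap argument in Section~\ref{sec: MP law}) is used as a black-box input, after which the isotropic bound follows from a single, direct moment computation; the iterative scheme you describe for the isotropic step does not appear in the paper.
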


Beyond the support of the limiting spectrum, one has stronger control all the way down to the real axis. For fixed (small) $\omega \in (0,1)$ define the region
\begin{equation} \label{def_S_theta_wt}
\wt{\f S} \;\equiv\; \wt {\f S}(\omega, K) \;\deq\; \hb{z=E+\ii \eta \in \C \col E \notin [\gamma_-, \gamma_+]\,,\, K^{-2/3 + \omega} \leq \kappa \leq \omega^{-1} \,,\, \abs{z} \geq \omega\,,\, 0 < \eta \leq \omega^{-1}}
\end{equation}
of spectral parameters separated from the asymptotic spectrum by $K^{-2/3 + \omega}$, which may have an arbitrarily small positive imaginary part $\eta$.

\begin{theorem}[Isotropic local Marchenko-Pastur law outside the spectrum] \label{thm: IMP outside gen}
Suppose that \eqref{cond on entries of X}, \eqref{NM gen}, and \eqref{moments of X-1} hold.
Then
\begin{equation}\label{bound: Rij isotropic outside sc gen}
\absb{\scalar{\f v}{R(z) \f w} - m_\phi(z) \scalar{\f v}{\f w}} \;\prec\; \sqrt{\frac{\im m_\phi(z)}{N\eta}} \;\asymp\; \frac{1}{1 + \phi^{-1}} (\kappa + \eta)^{-1/4} K^{-1/2}
\end{equation}
uniformly in $z \in \wt{\f S}$ and any deterministic unit vectors $\f v, \f w \in \C^N$.
\end{theorem}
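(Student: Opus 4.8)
\emph{Proof strategy.} The plan is to reduce the bound, via Theorem~\ref{thm: IMP gen} and elementary estimates on $m_\phi$ near the edge, to the regime of very small $\eta$, and then to propagate it down to the real axis by a continuity argument in $\eta$. Fix $z=E+\ii\eta\in\wt{\f S}$, abbreviate $\kappa=\kappa(z)$ and $g(z)\deq\scalar{\f v}{R(z)\f w}-m_\phi(z)\scalar{\f v}{\f w}$, and set the \emph{crossover scale} $\eta_c\deq\kappa^{1/4}K^{-1/2}$, which is, up to constants, the $\eta$ at which $\sqrt{\im m_\phi(E+\ii\eta)/(N\eta)}$ and $1/(N\eta)$ coincide. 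The first task is a deterministic analysis of $m_\phi$: from \eqref{S_MP} and the square-root vanishing of $\varrho_\phi$ at $\gamma_\pm$ one reads off, on $\wt{\f S}$, the bounds $\im m_\phi(z)\asymp\eta\,[(1+\phi^{-1})\sqrt{\kappa+\eta}\,]^{-1}$ and $\absb{m_\phi'(z)}\lesssim[(1+\phi^{-1})\sqrt{\kappa+\eta}\,]^{-1}$; the first already gives the asserted identity $\sqrt{\im m_\phi(z)/(N\eta)}\asymp(1+\phi^{-1})^{-1}(\kappa+\eta)^{-1/4}K^{-1/2}$, and it also yields $1/(N\eta)\lesssim\sqrt{\im m_\phi(z)/(N\eta)}$ whenever $\eta\ge\eta_c$. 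Since $K^{-2/3+\omega}\le\kappa\le\omega^{-1}$ forces $K^{-1+\omega}\le\eta_c\ll\kappa$ (for $\omega$ small), any $z\in\wt{\f S}$ with $\eta\ge\eta_c$ already lies in $\f S$, so Theorem~\ref{thm: IMP gen} gives $\absb{g(z)}\prec\sqrt{\im m_\phi(z)/(N\eta)}$ directly. It thus remains to treat $0<\eta<\eta_c$.

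For that range I would argue by continuity from the scale $\eta_c$. Let $(\lambda_\alpha,\f u_\alpha)$ denote the eigenpairs of $X^*X$. Edge rigidity --- a consequence of the entrywise local Marchenko--Pastur law (Section~\ref{sec: MP law}) --- places, with high probability, every eigenvalue within $O_\prec(K^{-2/3})$ of $[\gamma_-,\gamma_+]$; combined with the constraints $\abs z\ge\omega$ and $\kappa\le\omega^{-1}$ defining $\wt{\f S}$, which keep $E$ bounded away from $0$ --- the only other possible accumulation point of eigenvalues, relevant when $\phi\ne1$ --- this gives, uniformly on $\wt{\f S}$, that $\min_\alpha\absb{\lambda_\alpha-E}\ge c\kappa$ with high probability, for some $c=c(\omega)>0$. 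On this event $R(E)$ is well defined, and the decisive estimate will be
\[
\scalar{\f v}{R(E)^2\f v}\;=\;\sum_\alpha\frac{\absb{\scalar{\f v}{\f u_\alpha}}^2}{(\lambda_\alpha-E)^2}\;\le\;\frac{2}{\eta_c}\,\im\scalar{\f v}{R(E+\ii\eta_c)\f v}\;\prec\;\frac{1}{(1+\phi^{-1})\sqrt\kappa}\,,
\]
and likewise with $\f w$ in place of $\f v$. Here the middle inequality uses $(\lambda_\alpha-E)^2+\eta_c^2\le2(\lambda_\alpha-E)^2$ (valid as $\eta_c\ll\kappa\le\absb{\lambda_\alpha-E}$) together with $\im\scalar{\f v}{R(E+\ii\eta_c)\f v}=\sum_\alpha\eta_c\absb{\scalar{\f v}{\f u_\alpha}}^2/[(\lambda_\alpha-E)^2+\eta_c^2]$; the last inequality is Theorem~\ref{thm: IMP gen} applied to the diagonal pair $(\f v,\f v)$ at $E+\ii\eta_c\in\f S$, together with $\im m_\phi(E+\ii\eta_c)\asymp\eta_c\,[(1+\phi^{-1})\sqrt\kappa\,]^{-1}$ and the fact that at $\eta=\eta_c$ all three error terms of Theorem~\ref{thm: IMP gen} are of this common size.

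Granting this, the propagation is routine. For $0<\eta<\eta_c$, the resolvent identity and Cauchy--Schwarz give
\[
\absb{\scalar{\f v}{\pb{R(E+\ii\eta)-R(E+\ii\eta_c)}\f w}}\;\le\;\eta_c\,\normb{R(E)\f v}\,\normb{R(E)\f w}\;\prec\;\frac{\eta_c}{(1+\phi^{-1})\sqrt\kappa}\;\asymp\;\frac{(\kappa+\eta)^{-1/4}K^{-1/2}}{1+\phi^{-1}}\,,
\]
where we used $\normb{R(E+\ii s)\f u}^2=s^{-1}\im\scalar{\f u}{R(E+\ii s)\f u}\le\scalar{\f u}{R(E)^2\f u}$, then the decisive estimate, and finally $\eta<\eta_c\le\kappa$. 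The deterministic counterpart $\absb{m_\phi(E+\ii\eta)-m_\phi(E+\ii\eta_c)}\le\eta_c\sup_{\eta\le s\le\eta_c}\absb{m_\phi'(E+\ii s)}\lesssim\eta_c\,[(1+\phi^{-1})\sqrt\kappa\,]^{-1}$ is of the same size. Writing out $g(E+\ii\eta)-g(E+\ii\eta_c)$ and adding these two bounds to $\absb{g(E+\ii\eta_c)}\prec\sqrt{\im m_\phi(E+\ii\eta_c)/(N\eta_c)}\asymp(1+\phi^{-1})^{-1}(\kappa+\eta)^{-1/4}K^{-1/2}$ from the first paragraph yields $\absb{g(E+\ii\eta)}\prec(1+\phi^{-1})^{-1}(\kappa+\eta)^{-1/4}K^{-1/2}\asymp\sqrt{\im m_\phi(E+\ii\eta)/(N\eta)}$, which is the claim. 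Uniformity in $z$ and in the unit vectors $\f v,\f w$ would follow from a standard net argument --- all quantities are Lipschitz in these parameters with polynomially bounded constants, and analytic in $\eta$ away from the eigenvalues, so one may descend below any polynomial scale.

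The hard part is the decisive estimate $\scalar{\f v}{R(E)^2\f v}\prec[(1+\phi^{-1})\sqrt\kappa\,]^{-1}$. The crude bound $\scalar{\f v}{R(E)^2\f v}\le\norm{R(E)}^2\lesssim\kappa^{-2}$ is hopelessly lossy here: fed into the propagation it would produce an error of size $\eta_c/\kappa^2=\kappa^{-7/4}K^{-1/2}$, too big by a factor $\asymp\kappa^{-3/2}$. The resolution is that $\scalar{\f v}{R(E)^2\f v}$ is really controlled by $\eta_c^{-1}\im\scalar{\f v}{R(E+\ii\eta_c)\f v}$, which in turn is pinned down sharply by the isotropic law at the crossover scale; this is exactly where the square-root edge behaviour $\im m_\phi\asymp\eta/\sqrt\kappa$ is converted into the required gain. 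A secondary, nontrivial bookkeeping point is to carry the $\phi$-dependence correctly throughout, so that the prefactor $(1+\phi^{-1})^{-1}$ never appears with the wrong power, and to verify --- via the case analysis in $\phi$ mentioned above --- that within $\wt{\f S}$ the null eigenvalue of $X^*X$ never spoils the spectral-gap estimate.
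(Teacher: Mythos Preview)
Your proposal is correct and follows essentially the same route as the paper: both fix the crossover scale $\eta_c=\kappa^{1/4}K^{-1/2}$, invoke the isotropic law of Theorem~\ref{thm: IMP gen} at $E+\ii\eta_c$, use edge rigidity to guarantee a spectral gap of order $\kappa\gg\eta_c$, and then propagate to smaller $\eta$ via the elementary monotonicity $\sum_\alpha\frac{|\scalar{\f v}{\f u_\alpha}|^2}{(\lambda_\alpha-E)^2+\eta^2}\le 2\sum_\alpha\frac{|\scalar{\f v}{\f u_\alpha}|^2}{(\lambda_\alpha-E)^2+\eta_c^2}$ combined with the same derivative bound $|m_\phi'|\lesssim\kappa^{-1/2}$. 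The only cosmetic difference is that the paper treats the real and imaginary parts of $\scalar{\f v}{R(z)\f v}-\scalar{\f v}{R(z_c)\f v}$ separately via spectral decomposition (after polarizing), whereas you package the same estimate as a bound on $\|R(E)\f v\|^2$ and feed it through the resolvent identity and Cauchy--Schwarz, which has the mild advantage of handling the bilinear form $\scalar{\f v}{R\f w}$ directly.
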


\begin{remark} \label{rem:all_z}
All probabilistic estimates
\eqref{bound: Rij isotropic gen}--\eqref{bound: Rij isotropic outside sc gen} of Theorems \ref{thm: IMP gen} 
and \ref{thm: IMP outside gen}  may be strengthened to hold simultaneously for all $z \in \f S$ and for all $z\in \wt{\f S}$, respectively.
For instance, \eqref{bound: Rij isotropic gen} may be strengthened to
\begin{equation*}
\P \qBB{\bigcap_{z \in \f S} \hBB{\absb{\scalar{\f v}{R(z) \f w} - m_\phi(z) \scalar{\f v}{\f w}} \leq N^\epsilon \pBB{\sqrt{\frac{\im m_{\phi}(z)}{N \eta}} + \frac{1}{N \eta}}}} \;\geq\; 1 - N^{-D}\,,
\end{equation*}
for all $\epsilon > 0$, $D > 0$, and $N \geq N_0(\epsilon, D)$.

In the case of Theorem \ref{thm: IMP outside gen} this generalization is an immediate consequence of its proof, and in the case of Theorem \ref{thm: IMP gen} it follows from a simple lattice argument combined with the Lipschitz continuity of $R$ and  $m_\phi$ on $\f S$. See e.g.\ \cite[Corollary 3.19]{EKYY1} for the details.
\end{remark}

\begin{remark}
The right-hand side of \eqref{bound: Rij isotropic outside sc gen}
is stable under the limit $\eta \to 0$, and may therefore be extended to $\eta = 0$. Recalling the previous remark, we conclude that \eqref{bound: Rij isotropic outside sc gen} also holds for $\eta = 0$.
\end{remark}

The next results are on the nontrivial eigenvalues of $X^* X$ as well as the corresponding eigenvectors. As remarked above, the matrix $X^* X$ has $K$ nontrivial eigenvalues, which we order according to $\lambda_1 \geq \lambda_2 \geq \cdots \geq \lambda_K$.
Let $\f u^{(1)}, \dots, \f u^{(K)} \in \C^N$ be the normalized eigenvectors of $X^* X$ associated with the nontrivial eigenvalues $\lambda_1, \dots, \lambda_{K}$. 
\begin{theorem}[Isotropic delocalization] \label{thm: I deloc gen}
Suppose that \eqref{cond on entries of X}, \eqref{NM gen}, and \eqref{moments of X-1} hold. Then for any $\epsilon > 0$ 
we have the bound
\begin{equation}\label{smfy gen}
\abs{\scalar{\f u^{(\alpha)}}{\f v}}^2 \;\prec\; N^{-1}
\end{equation}
uniformly for $\alpha \leq (1 - \epsilon) K$ and all normalized $\f v \in \C^N$.
If in addition $\abs{\phi - 1} \geq c$ for some constant $c > 0$, then \eqref{smfy gen} holds uniformly for all $\alpha \leq K$. 
\end{theorem}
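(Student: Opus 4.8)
The plan is to deduce the delocalization bound \eqref{smfy gen} from the isotropic local law of Theorem \ref{thm: IMP gen} via the standard spectral-decomposition argument, applied to the imaginary part of the resolvent. First, I would write the spectral decomposition $R(z) = \sum_{\alpha=1}^{K} (\lambda_\alpha - z)^{-1} \f u^{(\alpha)} (\f u^{(\alpha)})^* + (\text{zero-eigenvalue part})$, so that for any unit vector $\f v \in \C^N$,
\begin{equation*}
\im \scalar{\f v}{R(z) \f v} \;=\; \sum_{\alpha=1}^{K} \frac{\eta}{(\lambda_\alpha - E)^2 + \eta^2} \, \abs{\scalar{\f u^{(\alpha)}}{\f v}}^2 \;\geq\; \frac{\eta}{(\lambda_\beta - E)^2 + \eta^2}\, \abs{\scalar{\f u^{(\beta)}}{\f v}}^2
\end{equation*}
for any fixed index $\beta$. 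Choosing $E = \lambda_\beta$ gives the clean bound $\abs{\scalar{\f u^{(\beta)}}{\f v}}^2 \leq \eta \, \im \scalar{\f v}{R(\lambda_\beta + \ii\eta) \f v}$.

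Next, I would use Theorem \ref{thm: IMP gen} to control the right-hand side. On the event where \eqref{bound: Rij isotropic gen} holds (in its simultaneous-in-$z$ form from Remark \ref{rem:all_z}), we have $\im \scalar{\f v}{R(z) \f v} \prec \im m_\phi(z) + \sqrt{\im m_\phi(z)/(N\eta)} + (N\eta)^{-1} \ls \im m_\phi(z) + (N\eta)^{-1}$, using that $\abs{\scalar{\f v}{\f v}} = 1$ and Young's inequality. Choosing $\eta = N^{-1}$ (which is admissible since $K \asymp N$ under \eqref{NM gen}, so $K^{-1+\omega} \leq N^{-1}$ fails — hence I take $\eta = N^{-1+\omega}$ for arbitrarily small $\omega$, absorbed into $\prec$), we get $\abs{\scalar{\f u^{(\beta)}}{\f v}}^2 \prec \eta\, \im m_\phi(\lambda_\beta + \ii\eta) + N^{-1}$. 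It then remains to bound $\eta\,\im m_\phi$: for $\alpha \leq (1-\epsilon)K$, rigidity (a consequence of \eqref{bound: trR gen}, via the standard Helffer–Sjöstrand / counting-function argument, or simply the fact that the $\lambda_\alpha$ with $\alpha \leq (1-\epsilon)K$ lie in a region where $E$ is at distance $\gtrsim_\epsilon 1$ from the upper edge $\gamma_+$ while possibly close to $\gamma_-$) shows $\im m_\phi(\lambda_\beta + \ii\eta) \ls 1$ uniformly — indeed $\im m_\phi$ is bounded on compact subsets of $\C$ away from $0$ — and so $\eta\,\im m_\phi \ls \eta = N^{-1+\omega}$, which is $\prec N^{-1}$ since $\omega$ is arbitrary. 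This yields \eqref{smfy gen} for $\alpha \leq (1-\epsilon)K$.

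For the second assertion, under the separation hypothesis $\abs{\phi - 1} \geq c$ the lower edge satisfies $\gamma_- = \sqrt\phi + \phi^{-1/2} - 2 \geq c'$ for some $c' > 0$, so the entire support $[\gamma_-, \gamma_+]$ is bounded away from $0$. Hence all $K$ nontrivial eigenvalues satisfy $\lambda_\alpha \gtrsim 1$ with high probability (using that $\gamma_-$ is bounded below together with rigidity near the lower edge, which also follows from Theorem \ref{thm: IMP gen} via the extension to $\wt{\f S}$ in Theorem \ref{thm: IMP outside gen} or directly from the trace bound), and the same argument as above applies uniformly in $\alpha \leq K$: the point is that near neither edge does $\im m_\phi$ blow up, and $E = \lambda_\beta$ stays in the admissible region $\abs{z} \geq \omega$. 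The main obstacle — and the only place real care is needed — is verifying that the chosen evaluation point $z = \lambda_\beta + \ii N^{-1+\omega}$ genuinely lies in the domain $\f S(\omega, K)$ of Theorem \ref{thm: IMP gen} for the relevant range of $\beta$; this requires an a priori bound placing $\lambda_\beta$ in a compact set bounded away from $0$ (hence $\abs{z} \geq \omega$ and $\kappa \leq \omega^{-1}$), which is exactly what the restriction $\alpha \leq (1-\epsilon)K$ — or the hypothesis $\abs{\phi-1}\geq c$ — buys us, and which itself follows from the already-established local law. Everything else is the routine spectral-projection estimate above.
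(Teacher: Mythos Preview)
Your approach is essentially the same as the paper's: use the spectral decomposition to bound $\abs{\scalar{\f u^{(\beta)}}{\f v}}^2 \leq \eta \, \im \scalar{\f v}{R(\lambda_\beta+\ii\eta)\f v}$, then apply the isotropic law to control the right-hand side, with rigidity (Theorem~\ref{thm: cov-rig}) invoked to check that $\lambda_\beta + \ii\eta \in \f S$. The paper carries this out verbatim in the proof of the auxiliary Theorem~\ref{thm: I deloc}.

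There is, however, one genuine slip. Your claim that $K \asymp N$ under \eqref{NM gen} is false: that assumption allows $M = N^{1/C}$, in which case $K = M \ll N$. Then your choice $\eta = N^{-1+\omega}$ can be much smaller than $K^{-1+\omega}$ and hence lies outside the domain $\f S(\omega,K)$, so Theorem~\ref{thm: IMP gen} does not apply at that $\eta$. The paper avoids this by first reducing to the case $\phi \geq 1$ (Section~\ref{sec:phi_geq_1}), where $K = N$ and your choice is valid; the case $\phi < 1$ then follows by applying the $\phi \geq 1$ result to $X^*$ in place of $X$, via the second estimate of Theorem~\ref{thm: I deloc} (which concerns $G$ rather than $R$ and carries the extra factor $\phi^{-1}$). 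Alternatively, your argument can be repaired directly by taking $\eta = K^{-1+\omega}$ and using that $\im m_\phi \lesssim \min(1,\phi)$ on $\f S$ (from \eqref{im_m_swap} and Lemma~\ref{lemma: mg}), so that $\eta \, \im m_\phi \prec K^{-1}\min(1,\phi) = N^{-1}$ in both regimes.
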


\begin{remark}
Isotropic delocalization bounds in particular imply that the entries $u_i^{(\alpha)}$ of the eigenvectors $\f u^{(\alpha)}$ are strongly oscillating in the sense that $\sum_{i = 1}^N \abs{u_i^{(\alpha)}} \succ N^{1/2}$ but $\absb{\sum_{i = 1}^N u_i^{(\alpha)}} \prec 1$. To see this, choose $\f v = \f e_i$ in \eqref{smfy gen}, which implies $\abs{u_i^{(\alpha)}} \prec N^{-1/2}$, from which the first estimate follows using $\sum_{i = 1}^N \abs{u_i^{(\alpha)}}^2 = 1$. On the other hand, choosing $\f v = N^{-1/2} (1,1, \dots, 1)$ in \eqref{smfy gen} yields the second estimate. Note that, if $\f u = (u_1, \dots, u_N)$ is uniformly distributed on the unit sphere $\bb S^{N-1}$, the high-probability bounds $\sum_{i = 1}^N \abs{u_i} \succ N^{1/2}$ and $\abs{\sum_{i = 1}^N u_i} \prec 1$ are sharp  (in terms of the power of $N$ on the right-hand side). 
\end{remark}

The following result is on the rigidity of the nontrivial eigenvalues of $X^* X$, which coincide with the nontrivial eigenvalues of $X X^*$. 
Let $\gamma_1 \geq \gamma_2 \geq \cdots \geq \gamma_K$ be the \emph{classical eigenvalue locations according to $\varrho_{\phi}$} (see \eqref{def: rhog}), defined through
\begin{equation} \label{def:gamma_alpha}
\int_{\gamma_\alpha}^\infty \varrho_{\phi}(\dd x) \;=\; \frac{\alpha}{N}\,.
\end{equation}

\begin{theorem}[Eigenvalue rigidity]\label{thm: cov-rig}
Fix a (small) $\omega \in (0,1)$ and suppose that \eqref{cond on entries of X}, \eqref{NM gen}, and \eqref{moments of X-1} hold.
Then
\begin{equation}\label{rigidity1}
\absb{\lambda_\alpha-\gamma_\alpha}   \;\prec\; \alpha^{-1/3}K^{-2/3}
\end{equation}
uniformly   for all $\alpha \in \{1, \dots, [ (1-\omega) K]\}$.
If in addition $\abs{\phi - 1} \geq c$ for some constant $c > 0$ then 
\begin{equation}\label{rigidity2}
\absb{\lambda_\alpha-\gamma_\alpha}\;\prec\;   (K+1-\al)^{-1/3} K^{-2/3}, 
\end{equation}
uniformly   for all $\alpha  \in \{[K/2], \dots, K\}$.
\end{theorem}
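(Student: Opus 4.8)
I would deduce the rigidity bounds from the local laws together with the delocalization estimate by the standard counting-function argument (cf.\ the analogous deductions in \cite{EYY1, EKYY1, PY}); the work reduces to two inputs plus the behaviour of $\varrho_\phi$ near the edges. Write $\mathfrak n(E) \deq N^{-1}\#\h{i \col \mu_i \le E}$ and $n_\phi(E) \deq \int_{-\infty}^E \varrho_\phi(\dd x)$, where $\mu_1, \dots, \mu_N$ are all the eigenvalues of $X^*X$ (including the $N-K$ trivial ones); by \eqref{def:gamma_alpha} one has $n_\phi(\gamma_\alpha) = 1 - \alpha/N$, and for $E$ in a neighbourhood of $[\gamma_-, \gamma_+]$ one has $\lambda_\alpha = \inf\h{E \col \mathfrak n(E) \ge 1 - (\alpha-1)/N}$.

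\textbf{Step 1: counting function and absence of outliers.} First I would feed the averaged local law \eqref{bound: trR gen} into the Helffer--Sjöstrand representation of a mollified version of the indicator of $(-\infty, E]$, with mollification scale $\asymp K^{-1+\omega}$, to obtain the optimal bound $\absb{\mathfrak n(E) - n_\phi(E)} \prec N^{-1}$ uniformly in $E$; the $\eta$-integral of the local-law error $(N\eta)^{-1}$ over $[K^{-1+\omega}, \omega^{-1}]$ produces $O_\prec(N^{-1})$ up to logarithmic factors, the range $\eta < K^{-1+\omega}$ is absorbed using $\norm{R(z)} \le \eta^{-1}$, and the tail $\eta > \omega^{-1}$ is trivial. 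Second I would record that, with high probability, $X^*X$ has no eigenvalue in the region of \eqref{def_S_theta_wt} (and, when $\abs{\phi-1}\ge c$, none just below $\gamma_-$): applying Theorem~\ref{thm: IMP outside gen} with $\f v = \f w = \f e_i$ and a union bound over $i \in \{1,\dots,N\}$ gives $\max_i\absb{R_{ii}(z) - m_\phi(z)} \prec \frac{1}{1+\phi^{-1}}(\kappa+\eta)^{-1/4}K^{-1/2}$ simultaneously for all $z \in \wt{\f S}$ (Remark~\ref{rem:all_z}), hence $\max_i\abs{R_{ii}(z)} \prec 1$ down to $\eta \to 0$, since $m_\phi$ is bounded on $\wt{\f S}$ (this is where $\abs{\phi-1}\ge c$ enters near $\gamma_-$); on the other hand an eigenvalue $\lambda_\beta = E_1 \in \wt{\f S}$ would force $\max_i\abs{R_{ii}(E_1 + \ii\eta)} \ge \eta^{-1}\max_i\abs{u^{(\beta)}_i}^2 \ge (N\eta)^{-1}$, which exceeds any $N^\epsilon$ once $\eta = K^{-D}$ with $D$ large, a contradiction (here $\max_i\abs{u^{(\beta)}_i}^2 \asymp N^{-1}$ by Theorem~\ref{thm: I deloc gen}). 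In particular $\lambda_1 \le \gamma_+ + K^{-2/3+\omega}$, and $\lambda_K \ge \gamma_- - K^{-2/3+\omega}$ if $\abs{\phi-1}\ge c$.

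\textbf{Steps 2--3: edge density and inversion.} From \eqref{def: rhog}--\eqref{def:gamma_pm} one reads off $\varrho_\phi(x) \asymp (1+\phi^{-1})^{-1}\sqrt{\gamma_+ - x}$ for $x$ near $\gamma_+$ in the spectrum, and $\varrho_\phi(x) \asymp (1+\phi^{-1})^{-1}\sqrt{x - \gamma_-}$ near $\gamma_-$ \emph{provided} $\abs{\phi-1}\ge c$, which forces $\gamma_- = (\phi^{1/4} - \phi^{-1/4})^2$ to be bounded below, i.e.\ a genuine soft edge; as $\phi \to 1$ the lower edge degenerates into a hard edge and the mechanism breaks down — this is exactly why \eqref{rigidity2} is restricted to $\abs{\phi-1}\ge c$. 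Integrating and using the elementary identity $(1+\phi^{-1})/N \asymp 1/K$ (valid because $K = \min\{M,N\}$) yields $\gamma_+ - \gamma_\alpha \asymp (\alpha/K)^{2/3}$ and $\varrho_\phi(\gamma_\alpha) \gtrsim (1+\phi^{-1})^{-1}(\alpha/K)^{1/3}$ for $\alpha \le [(1-\omega)K]$, hence $\int_{\gamma_\alpha}^{\gamma_\alpha+\tau}\varrho_\phi \gtrsim N^\epsilon/N$ for $\tau \deq N^\epsilon\alpha^{-1/3}K^{-2/3}$ whenever $\gamma_\alpha + \tau \le \gamma_+$. Combined with Step 1 this gives $\mathfrak n(\gamma_\alpha + \tau) \ge n_\phi(\gamma_\alpha + \tau) - O_\prec(N^{-1}) \ge 1 - (\alpha-1)/N$ with high probability, so $\lambda_\alpha \le \gamma_\alpha + \tau$; for the few $\alpha$ of order $1$ for which $\gamma_\alpha + \tau > \gamma_+$, one has $\gamma_\alpha + \tau \ge \gamma_+ + K^{-2/3+\omega}$ once $\omega$ is chosen small relative to $\epsilon$, and $\lambda_\alpha \le \lambda_1 \le \gamma_+ + K^{-2/3+\omega}$ by Step 1 closes the gap. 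The lower bound $\lambda_\alpha \ge \gamma_\alpha - \tau$ follows symmetrically from $\int_{\gamma_\alpha-\tau}^{\gamma_\alpha}\varrho_\phi \gtrsim N^\epsilon/N$ and Step 1. Since $\epsilon>0$ is arbitrary and there are at most $K$ values of $\alpha$, this is \eqref{rigidity1}; \eqref{rigidity2} is obtained identically from the lower-edge versions of Steps 1--2, using $\abs{\phi-1}\ge c$ and replacing $\alpha$ by $K+1-\alpha$.

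\textbf{Main obstacle.} Steps 2--3 and the Helffer--Sjöstrand estimate away from the edges are routine. The delicate points, which I expect to absorb most of the effort, are (i) pushing the counting-function bound $\absb{\mathfrak n(E) - n_\phi(E)} \prec N^{-1}$ uniformly down to $\dist(E, [\gamma_-,\gamma_+]) \asymp K^{-2/3}$, where $\varrho_\phi$ is no longer comparable to a constant and the naive bookkeeping of the local-law error near the edge is too lossy; and (ii) the ``no outliers'' estimate of Step 1 — because the extremal eigenvector is random one cannot test the resolvent against it directly but must instead route through the $N$ coordinate directions together with Remark~\ref{rem:all_z}, and one must track the $\phi$-dependence precisely to verify that $m_\phi$ really is bounded on $\wt{\f S}$ (which fails near $\gamma_-$ precisely as $\phi\to1$).
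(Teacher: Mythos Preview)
Your overall strategy --- Helffer--Sj\"ostrand for the counting function, a no-outliers bound at the edge, and the square-root behaviour of $\varrho_\phi$ to invert --- matches the paper's (Section~\ref{sec:rigi_proof}, Lemma~\ref{lem: rigi-tool}). The counting-function step and Steps~2--3 are fine. But your no-outliers argument in Step~1 is circular.

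You invoke Theorem~\ref{thm: IMP outside gen} (and, in passing, Theorem~\ref{thm: I deloc gen}) to bound $\max_i \abs{R_{ii}(z)}$ on $\wt{\f S}$ down to $\eta\to 0$. In the paper's logical order, however, both of these results are proved in Section~\ref{sec:5} \emph{using} Theorem~\ref{thm: cov-rig} as input: the proof of Theorem~\ref{thm: IMP outside} explicitly appeals to rigidity to assert $E\ge\lambda_1+\eta_0$ with high probability before pushing $\eta$ below $\eta_0$, and the proof of Theorem~\ref{thm: I deloc} uses rigidity to guarantee $\lambda_\alpha+\ii\eta\in\f S$. So you are assuming precisely the edge localization you are trying to establish. (The reference to delocalization is in any case superfluous for your lower bound --- $\max_i\abs{u_i^{(\beta)}}^2\ge N^{-1}$ is just normalization --- but the dependence on Theorem~\ref{thm: IMP outside gen} is essential to your argument and cannot be removed.)

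The paper avoids this loop by deriving \eqref{lambda1_plus} and \eqref{lambdaN_minus} directly from the self-consistent equation machinery of Section~\ref{sec: MP law}, following \cite[Section~8]{PY}. The inputs are the counting-function bound \eqref{n-n_gamma}, the stability Lemma~\ref{lem:stability}, the fluctuation-averaging estimate \eqref{ZPrec}, and Lemma~\ref{lem: fluct_avg} together with Remark~\ref{rem: fluct_avg}. These yield an improved bound on $\abs{m_R-m_\phi}$ outside the spectrum that does not presuppose where the extremal eigenvalues lie; from this the absence of outliers follows. None of this requires the isotropic law or delocalization. To repair your proof, replace your resolvent-diagonal argument for no outliers with this self-consistent-equation route.
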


\begin{remark} \label{rem:XXstar}
We stated our results for the matrix $X^* X$, but they may be easily applied to the matrix $X X^*$ as well. Indeed, Theorems \ref{thm: IMP gen}, \ref{thm: IMP outside gen}, \ref{thm: I deloc gen}, and \ref{thm: cov-rig} remain valid after the following changes: $X \mapsto X^*$, $M \mapsto N$, $N \mapsto M$, and $\phi \mapsto \phi^{-1}$. (In the case of Theorem \ref{thm: cov-rig} these changes leave the statement unchanged.)  Note that the empirical distribution of the eigenvalues of $XX^*$ has the same asymptotics as $\varrho_{\phi^{-1}}(\dd x)$, whose Stieltjes transform is
\begin{equation} \label{mphi_minvphi}
m_{\phi^{-1}}(z) \;=\; \frac{1}{\phi} \pbb{m_\phi(z) + \frac{1 - \phi}{z}}\,.
\end{equation}
\end{remark}

\subsection{Generalized Wigner matrix} \label{sec: gen_wigner}

Let $H = H^*$ be an $N\times N$ Hermitian matrix whose entries $H_{ij}$ are independent complex-valued random variables for $i \leq j$. We always assume that entries are centred, i.e.\ $\E H_{ij} = 0$. Moreover, we assume that the variances
\begin{equation} \label{def_Sij}
S_{ij} \;\deq\; \E\abs{H_{ij}}^2
\end{equation}
satisfy
\begin{equation} \label{S_ass}
C^{-1} \;\leq\; N  S_{ij} \;\leq\; C\,,\qquad \sum_{j} S_{ij}\;=\;1\,,
\end{equation}
for some constant $C > 0$.
We assume that all moments of the entries of $\sqrt{N} H$ are finite in the sense that for all $p \in \N$ there exists a constant $C_p$ such that
\begin{equation} \label{tail_Wig}
\E \absb{\sqrt{N} H_{ij}}^p \;\leq\; C_p
\end{equation}
for all $N$, $i$, and $j$.

Let
\[
\varrho(\dd x) \;\deq\; \frac{1}{2 \pi} \sqrt{(4-x^2)_+} \, \dd x
\]
denote the semicircle law, and
\begin{equation} \label{def_msc}
m(z)\;\deq\;\int\frac{\varrho(\dd x)}{x-z} \;=\; \frac{-z + \sqrt{z^2 - 4}}{2}
\end{equation}
its Stieltjes transform; here we chose the square root so that $m$ is holomorphic in the upper half-plane and satisfies $m(z) \to 0$ as $z \to \infty$. 
Note that $m=m(z)$ is also characterized as the unique solution of
\begin{equation} \label{m identity}
m+\frac{1}{z+m}\;=\;0
\end{equation}
that satisfies $\im m>0$ for $\eta>0$.
Let
\[
G(z)\;\deq\;(H-z)^{-1}
\]
be the resolvent of $H$.

Fix a (small) $\omega \in (0,1)$ and define
\begin{equation}
\f S_W \;\equiv\; \f S_W(\omega, N)  \;\deq\; \hb{z=E+\ii\eta \in\C \col \abs{E}\leq \omega^{-1}\,,\, N^{-1 + \omega} \leq \eta \leq \omega^{-1}}\,.
\end{equation}
The subscript $W$ in $\f S_W$ stands for Wigner, and is added to distinguish this domain from the one defined in \eqref{def_S_theta}. 

\begin{theorem}[Isotropic local semicircle law] \label{thm: ILSC}
 Suppose that \eqref{S_ass} and \eqref{tail_Wig} hold.  Then
\begin{equation} \label{ISSC_estimate}
\absb{\scalar{\bv}{G(z)\bw}-\scalar{\bv}{\bw}m(z)} \;\prec\; \sqrt{\frac{\im m(z)}{N\eta}}+\frac{1}{N\eta}
\end{equation}
uniformly in $z\in\f S_W$ and any deterministic unit vectors $\bv, \bw \in \C^N$.
\end{theorem}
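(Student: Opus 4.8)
As noted in the introduction, the proof runs in close parallel to that of the isotropic Marchenko--Pastur law (Theorem~\ref{thm: IMP gen}), given in Section~\ref{sec:4}, so the plan is to follow that route with $X^*X$ replaced by $H$ and $m_\phi$ by $m$. The only external input is the \emph{entrywise} local semicircle law for generalized Wigner matrices, which holds under \eqref{S_ass}--\eqref{tail_Wig} by known results and supplies, uniformly on $\f S_W$, the bounds $\absb{G_{ij}(z) - \delta_{ij} m(z)} \prec \Psi$ with $\Psi \deq \sqrt{\im m(z)/(N\eta)} + 1/(N\eta)$, the a priori estimate $\abs{G_{ii}} \asymp 1$, and the Ward identity $\sum_j \abs{G_{ij}}^2 = \eta^{-1} \im G_{ii} \prec \eta^{-1}(\im m(z) + \Psi)$. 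Writing
\[
\scalar{\bv}{(G-m)\bw} \;=\; \sum_i \bar v_i w_i\, (G_{ii} - m) \;+\; \sum_{i \ne j} \bar v_i\, G_{ij}\, w_j\,,
\]
the first sum is $\prec \Psi$ immediately, since $\sum_i \abs{\bar v_i w_i} \leq \norm{\bv}\norm{\bw} = 1$. The whole task is thus to prove $\abs{S} \prec \Psi$ for $S \deq \sum_{i \ne j} \bar v_i G_{ij} w_j$; crucially, the entrywise law alone only gives $\abs{S} \lesssim N\Psi$, so one must exhibit cancellation among the $N^2$ off-diagonal entries.

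\textbf{Graphical high-moment expansion.} To capture this cancellation I would estimate $\E\abs{S}^{2p}$ for arbitrary fixed $p$. Expanding $S^p \bar S^p$ and substituting the Schur-complement (minor) identity
\[
G_{ij} \;=\; -G_{ii}\sum_{k}^{(i)} H_{ik}\, G^{(i)}_{kj} \qquad (i \ne j)\,,
\]
the superscript $(i)$ denoting omission of the index $i$, together with the resolvent-perturbation formula $G^{(i)}_{kl} = G_{kl} - G_{ki}G_{il}/G_{ii}$, one peels off explicit factors of $H$ and, after a bounded number of iterations, writes $S^p\bar S^p$ as a finite sum of monomials in $\{H_{ab}\}$ times products of entries of $G$. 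Taking $\E$, using independence of the $H_{ab}$ and the moment bound \eqref{tail_Wig}, forces a partition structure on the $H$-labels, with each pair contributing a variance $S_{ab} \asymp N^{-1}$. I would encode each resulting term as a graph: vertices are summation indices (the two external ones carrying the weights $\bar v_\bullet$, $w_\bullet$) and edges carry either such a variance factor or a resolvent entry, the latter estimated via the entrywise law ($\asymp 1$ for a ``diagonal'' entry, $\prec \Psi$ for an ``off-diagonal'' one). Deterministic bounds are then assigned by the rules: summing an internal index freely costs $N$; summing an external index against a unit vector costs only $\sqrt N$ (Cauchy--Schwarz); and whenever an internal index is shared by two off-diagonal resolvent edges one applies the Ward identity $\sum_k \abs{G_{ik}}^2 \prec \eta^{-1}(\im m + \Psi)$ rather than paying $N\Psi^2$. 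The content of the estimate is that these rules force every admissible graph to be $\prec \Psi^{2p}$, the extremal graphs (disjoint ``ladders'' realizing the two terms $\sqrt{\im m/(N\eta)}$ and $1/(N\eta)$ of $\Psi$) saturating the bound; the non-constant variance profile $S_{ij}$ enters only through the constraint $\sum_j S_{ij} = 1$, which makes the relevant index sums close, exactly as $\sum_i \E\abs{X_{i\mu}}^2$ does in the covariance case.

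\textbf{Bootstrap and conclusion.} In practice this is organized as a self-improving iteration: one posits an a priori bound $\abs{S} \prec \Lambda$ with a control parameter $\Lambda \geq \Psi$ (initially $\Lambda \asymp 1$, from $\norm{G} \leq \eta^{-1}$ and $\eta \geq N^{-1+\omega}$), uses it to estimate the high-complexity graphs in the expansion, and deduces an improved bound of the schematic form $\abs{S} \prec \Psi + (\Lambda\Psi)^{1/2} + \cdots$; iterating finitely many times drives $\Lambda$ down to $\Psi$. Markov's inequality applied to the resulting $\E\abs{S}^{2p} \lesssim_{p,\epsilon} N^\epsilon \Psi^{2p}$, with $p$ arbitrary, gives $S \prec \Psi$; combined with the diagonal bound above this yields \eqref{ISSC_estimate} at a fixed $z$, and a union bound over a fine grid in $\f S_W$ together with Lipschitz continuity of $G$ and $m$ (cf.\ Remark~\ref{rem:all_z}) upgrades it to the stated uniform statement.

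\textbf{Main obstacle.} The heart of the matter, and the only genuinely hard step, is the graphical power counting: the minor expansion must be organized so that the pairings produced by $\E$ remove enough free summation indices that each graph lands at $\Psi^{2p}$ and not at a large power of $N$ times $\Psi^{2p}$ --- equivalently, so that the $N^2$ off-diagonal terms in $S$ do not overwhelm. Tracking which resolvent edges remain ``off-diagonal'' as the expansion proceeds, and systematically extracting the Ward-identity gain at each shared internal vertex, is where essentially all the work lies; the minor identities, the moment bookkeeping, the bootstrap, and the lattice argument are routine once the covariance case is in hand. It is worth stressing that the isotropic bound is strictly stronger than the entrywise one because independence of the $H_{ij}$ singles out the standard basis: no change of variables reduces the problem to a diagonalizable model, so the cancellations must be exhibited directly, which is precisely what the expansion does.
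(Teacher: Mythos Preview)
Your overall framework---reduce to the off-diagonal sum $S$, estimate $\E\abs{S}^{2p}$ via a graphical resolvent expansion with the entrywise law \eqref{eq:lscold} as input, then conclude by Chebyshev---matches the paper exactly (Section~\ref{sec:4} for covariance matrices, adapted in Section~\ref{sec:appendix}). But two essential mechanisms are missing from your sketch, and without them the power counting does not close.

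First, your expansion peels off $H$-factors but does not set up the \emph{independence} needed to take the expectation. The paper's cure is the notion of a \emph{maximally expanded} entry (Definition~\ref{def: max expanded}): one first partitions the $2p$ external summation indices $\f a_b$ according to their coincidences (your proposal omits this step entirely), and then uses \eqref{Gij Gijk sc} repeatedly to push all of $\f a_b$ into the upper indices of every resolvent factor. Only after this does one apply \eqref{iterated sq root}, which produces $H_{a_i k} G^{(\f a_b)}_{kl} H_{l a_j}$ with the $G$-factor carrying \emph{no} external lower index. At that point the $H$-factors (each touching a black vertex) are genuinely independent of the $G^{(\f a_b)}$-factors, and the expectation factorizes. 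Your use of the ``half'' identity $G_{ij}=-G_{ii}\sum_k^{(i)}H_{ik}G^{(i)}_{kj}$ followed by $G^{(i)}_{kl}=G_{kl}-G_{ki}G_{il}/G_{ii}$ goes the wrong direction (removing upper indices rather than adding them) and never achieves this decoupling; the remaining $G$-factors still depend on the external rows, so the pairing structure you describe for $\E\prod H_{ab}$ is not valid.

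Second, your rule ``summing an external index against a unit vector costs only $\sqrt N$'' is exactly the naive $\ell^1$ bound $\sum_a\abs{v_a}\le\sqrt N$ that the paper explains (Section~\ref{sec:sketch}) is \emph{insufficient}: a lone external index then contributes $\sqrt N$ with nothing to cancel it, and the graph is off by powers of $N$. The fix is a \emph{parity argument} (Lemma~\ref{lem: final power counting} and Steps~7--8 in Section~\ref{sec:appendix}): an external vertex $i$ of degree one in $\Delta$ keeps odd degree throughout the expansion, which forces at least a third moment of some $H_{a_i\,\cdot}$ in the expectation; since each block of the induced partition of white neighbours has size $\ge2$, the odd-degree vertex loses an extra factor $N^{-1/2}$ compared with the even case. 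This $N^{-1/2}$ exactly compensates the $\sqrt N$ from $\norm{\f v}_1$, restoring the $\ell^2$-norm. Your proposal does not contain this mechanism, and the Ward-identity gain you invoke on internal vertices (while true) does not address the external-index issue.

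Finally, the bootstrap you describe is not part of the paper's argument and is not needed: once the two ingredients above are in place, one obtains $\E\abs{\cal Z}^p\prec (\Psi)^p$ directly, with no self-improving iteration on an auxiliary $\Lambda$.
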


Theorem \ref{thm: ILSC} is the isotropic generalization of the following result, proved in \cite{EKYY4}. A similar result first appeared in \cite{EYY3}.
\begin{theorem}[Local semicircle law, \cite{EKYY4, EYY3}] \label{thm: LSC}
 Suppose that \eqref{S_ass} and \eqref{tail_Wig} hold.  Then
\begin{equation}\label{eq:lscold}
\absb{G_{ij}(z)-\de_{ij}m(z)} \;\prec\; \sqrt{\frac{\im m(z)}{N\eta}}+\frac{1}{N\eta}
\end{equation}
uniformly in $z \in \f S_W$ and $i,j = 1, \dots, N$.
\end{theorem}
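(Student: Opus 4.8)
The plan is to prove the entrywise law by the standard self-consistent equation method: resolvent (Schur complement) identities, large deviation estimates for the fluctuating terms, a stability analysis of the resulting self-consistent equation, and a continuity (bootstrap) argument in $\eta$. Write $G = G(z)$ and, for an index set $\mathbb{T} \subset \{1,\dots,N\}$, let $G^{(\mathbb{T})}$ denote the resolvent of the minor of $H$ with the rows and columns in $\mathbb{T}$ removed. The Schur complement formula gives, for each $i$,
\[
\frac{1}{G_{ii}} \;=\; H_{ii} - z - \sum_{k,l\neq i} H_{ik}\,G^{(i)}_{kl}\,H_{li} \;=\; -z - \sum_{k} S_{ik}\,G^{(i)}_{kk} - Z_i\,,
\]
where $Z_i \deq \sum_{k,l\neq i} H_{ik} G^{(i)}_{kl} H_{li} - \sum_k S_{ik} G^{(i)}_{kk}$ is a fluctuation term; similarly $G_{ij} = -G_{ii} G^{(i)}_{jj} \big( H_{ij} - \sum_{k,l\neq i,j} H_{ik} G^{(ij)}_{kl} H_{lj} \big)$ for $i\neq j$. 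Using the resolvent expansion $G^{(i)}_{kk} = G_{kk} - G_{ki}G_{ik}/G_{ii}$ together with $\sum_j S_{ij} = 1$, the first display becomes an approximate closed equation for the vector $(G_{ii})_i$, to be compared with the scalar identity $m + (z+m)^{-1}=0$ from \eqref{m identity}.

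Introduce the control quantities $\Lambda_d \deq \max_i \abs{G_{ii}-m}$, $\Lambda_o \deq \max_{i\neq j}\abs{G_{ij}}$, $\Lambda \deq \Lambda_d + \Lambda_o$. On the high-probability event $\{\Lambda \le (\log N)^{-1}\}$ — obtained a priori by a crude bound and subsequently propagated by the continuity argument — the moment bound \eqref{tail_Wig}, standard large deviation estimates for linear and quadratic forms in independent centred variables, and the Ward identity $\sum_l \abs{G^{(i)}_{kl}}^2 = (\im G^{(i)}_{kk})/\eta$ give
\[
\abs{Z_i} \;\prec\; \Big( \frac{1}{N}\sum_k S_{ik}\,\frac{\im G^{(i)}_{kk}}{\eta}\Big)^{1/2} \;\lesssim\; \sqrt{\frac{\im m + \Lambda}{N\eta}}\,,
\]
and likewise $\Lambda_o \prec \sqrt{(\im m + \Lambda)/(N\eta)}$. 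Substituting into the self-consistent equation and linearising around $m$ (with $v_i \deq G_{ii}-m$, $v = (v_i)$, $Z = (Z_i)$) yields $(1 - m^2 S)\,v = m^2 Z + O_\prec(\Lambda^2 + \Lambda_o)$ up to higher-order terms.

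The essential structural input is the stability of this linear equation. The flatness condition \eqref{S_ass} forces a spectral gap of the symmetric stochastic matrix $S$ below its top eigenvalue $1$ (with Perron eigenvector $\mathbf e \deq N^{-1/2}(1,\dots,1)$), so $1 - m^2 S$ is invertible on $\mathbf e^\perp$ with $\ell^\infty\!\to\!\ell^\infty$ norm $O\big((\im m + \eta/\im m)^{-1}\big)$; in the $\mathbf e$-direction one uses instead the averaged equation for $\frac1N\tr G$, which after one further order of expansion becomes a genuinely quadratic self-consistent equation whose stability is governed by $\sqrt{\kappa + \eta}$ (with $\kappa$ the distance of $E$ to $\pm 2$), exactly as in the scalar semicircle analysis. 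Combining the two directions gives $\Lambda \prec \sqrt{(\im m + \Lambda)/(N\eta)} + (N\eta)^{-1}$ on the good event; this is self-improving and forces $\Lambda \prec \sqrt{\im m/(N\eta)} + (N\eta)^{-1}$. To make the bound unconditional one runs a continuity argument in $\eta$: at $\eta = \omega^{-1}$ the deterministic bound $\norm{G}\le\eta^{-1}$ renders the good event trivial; decreasing $\eta$ along a fine grid in $\f S_W$, each step re-enters the good event using the stronger bound from the previous step and re-runs the estimates above; a union bound over the grid together with the Lipschitz continuity of $G$ and $m$ in $z$ then yields the bound uniformly in $z\in\f S_W$. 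Since $\abs{G_{ij}-\delta_{ij}m}\le\Lambda$, this is \eqref{eq:lscold}.

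The main obstacle is the stability analysis near the spectral edges $E = \pm 2$. There $\im m \asymp \sqrt{\kappa+\eta}$ is small, so $1 - m^2 S$ is nearly singular along $\mathbf e$ and cannot be inverted directly; one must isolate that direction, exploit the extra order of cancellation in $\frac1N\tr G$ (where the linear term of the perturbative expansion drops out), and track the $z$-dependence of all constants carefully in order to reach the optimal scale $\eta = N^{-1+\omega}$ while keeping the error bound $\sqrt{\im m/(N\eta)} + (N\eta)^{-1}$ sharp.
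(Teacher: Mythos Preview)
The paper does not give its own proof of this statement; Theorem~\ref{thm: LSC} is quoted from \cite{EKYY4, EYY3} as a known input for the isotropic law. Your sketch correctly outlines the architecture of those references --- resolvent identities, large deviation bounds on $Z_i$ and $\Lambda_o$ via the Ward identity, stability of the vector equation $(1-m^2 S)v = \cdots$ using the spectral gap of $S$ from \eqref{S_ass}, and a continuity argument in $\eta$.

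There is, however, a genuine gap. Your claimed self-improving estimate $\Lambda \prec \sqrt{(\im m + \Lambda)/(N\eta)} + (N\eta)^{-1}$ needs the $(N\eta)^{-1}$ term, and you attribute it to the averaged equation for $\frac{1}{N}\tr G$ after ``one further order of expansion'', saying ``the linear term of the perturbative expansion drops out''. That trivial cancellation (that $\frac{1}{N}\sum_i (G_{ii}-m_N)=0$) is not enough: without further input, the averaged error $[Z]=\frac{1}{N}\sum_i Z_i$ is only bounded by $\max_i|Z_i|\prec\Psi_\Theta$, and the stability factor $(\kappa+\eta)^{-1/2}$ in the $\mathbf e$-direction then gives $\Theta\prec\Psi_\Theta/\sqrt{\kappa+\eta}$, which does not close to $(N\eta)^{-1}$. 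What is missing is the \emph{fluctuation averaging} mechanism --- the nontrivial probabilistic statement that $[Z]\prec\Psi_\Theta^2$, i.e.\ averaging the $Z_i$ gains an extra power of the control parameter through cancellation among the terms. This is exactly what the paper isolates as Lemma~\ref{lem: fluct_avg} in its Marchenko--Pastur analogue (Section~\ref{sec:fluct-avg}) and what drives the iteration \eqref{self-improving}; the corresponding Wigner statement is \cite[Theorem~4.7]{EKYY4}. Without it your argument stalls at a weak law of the type $\Lambda\prec(N\eta)^{-1/4}$ (compare Proposition~\ref{prop:weak_law}), and the sharp bound \eqref{eq:lscold} is not reached.
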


The proof of the isotropic law \eqref{ISSC_estimate} uses the regular, entrywise, law  from Theorem \ref{thm: LSC}  as input, in which $\f v$ and $\f w$ are taken to be parallel to the coordinate axes.
Assuming the entrywise law \eqref{eq:lscold} has been established, the proof of \eqref{ISSC_estimate} is very robust, and holds under more general assumptions than \eqref{S_ass}. For instance, we have the following result.

\begin{theorem} \label{thm:gen}
Let $\wt {\f S} \subset \f S_W$ be an $N$-dependent spectral domain, $\wt m(z)$ a deterministic function on $\wt {\f S}$ 
 satisfying $c \leq \abs{\wt m(z)} \leq C$ for $z \in \wt {\f S}$,  and $\wt \Psi(z)$ a deterministic control parameter satisfying $c N^{-1} \leq \wt\Psi(z) \leq N^{-c}$
for $z \in \wt {\f S}$ and some constant $c > 0$.
Suppose that
\begin{equation*}
\abs{G_{ij}(z) - \delta_{ij} \wt m(z)} \;\prec\; \wt \Psi(z)
\end{equation*}
uniformly in $z\in \wt {\f S}$. Suppose that the entries of $H$ satisfy \eqref{tail_Wig} and the variances \eqref{def_Sij} of $H$ satisfy $S_{ij} \leq C N^{-1}$ (which replaces the stronger assumption \eqref{S_ass}).  Then we have
\begin{equation}
\absb{\scalar{\bv}{G(z)\bw}-\scalar{\bv}{\bw}\wt m(z)} \;\prec\; \wt \Psi(z)
\end{equation}
uniformly in $z\in\wt {\f S}$ and any deterministic unit vectors $\bv, \bw \in \C^N$.
\end{theorem}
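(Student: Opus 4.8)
\textit{Proof sketch (proposal).}
The plan is to estimate a high moment of the error and conclude via Markov's inequality; the point is that this can be done using only the entrywise law, the weak variance bound $S_{ij}\le CN^{-1}$, and the moment assumption \eqref{tail_Wig}, never the full \eqref{S_ass} and never any equation for $\wt m$. By polarization --- writing $\scalar{\f v}{(G-\wt m)\f w}$ as a fixed linear combination of four errors $\scalar{\f u}{(G-\wt m)\f u}$ with $\norm{\f u}\le 2$ --- it suffices to treat $\f v=\f w$. Fixing a large $p\in\N$, the target is the bound
\[
\E\absb{\scalar{\f v}{(G-\wt m)\f v}}^{2p}\;\le\;C_{p,\e}\,N^{\e}\,\wt\Psi(z)^{2p}
\]
with $C_{p,\e}$ independent of the unit vector $\f v$ and of $z\in\wt{\f S}$, after which Markov and the arbitrariness of $p$ and $\e$ give the claim. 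Splitting $\scalar{\f v}{(G-\wt m)\f v}=\sum_i\abs{v_i}^2(G_{ii}-\wt m)+\sum_{i\ne j}\bar v_i v_j G_{ij}$, the diagonal part is $\prec\wt\Psi$ by the entrywise law and $\sum_i\abs{v_i}^2=1$, so the problem reduces to the off-diagonal sum $\cal Z\deq\sum_{i\ne j}\bar v_i v_j G_{ij}$; its naive bound $\abs{\cal Z}\le\max_{i\ne j}\abs{G_{ij}}\sum_{i\ne j}\abs{v_i}\abs{v_j}\prec N\wt\Psi$ overshoots by a factor $N$ and must be improved by exploiting the cancellations among the centred, weakly dependent entries $G_{ij}$.

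To make those cancellations explicit I would expand ${\cal Z}^{p}\,\overline{\cal Z}^{\,p}$ into powers of the entries of $H$. Iterating the resolvent identities $G_{ij}=-G_{ii}\sum_{k}^{(i)}H_{ik}G^{(i)}_{kj}$ (for $i\ne j$) and $G^{(T)}_{ij}=G^{(Tk)}_{ij}+G^{(T)}_{ik}G^{(T)}_{kj}/G^{(T)}_{kk}$ a bounded number $L=L(p,\e)$ of times, and splitting off each diagonal factor as $G^{(T)}_{kk}=\wt m+(G^{(T)}_{kk}-\wt m)$, one rewrites ${\cal Z}^{p}\,\overline{\cal Z}^{\,p}$ as a sum of finitely many monomials --- each a product of explicit entries $H_{ab}$, bounded factors of the type $G^{(T)}_{kk}$ or $(G^{(T)}_{kk}-\wt m)=O_\prec(\wt\Psi)$, genuinely off-diagonal factors $G^{(T)}_{kl}$ with $k\ne l$, and the external weights $\bar v_i,v_j$, with all indices summed over $\{1,\dots,N\}$ --- plus a remainder rendered negligible by the crude bound $\norm G\le\eta^{-1}$ and by $\wt\Psi\le N^{-c}$, since each extra level of expansion gains a genuine power of $\wt\Psi$. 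This step uses the routine stability of the entrywise law under deleting $O(1)$ rows and columns, $\absb{G^{(T)}_{kl}-\delta_{kl}\wt m}\prec\wt\Psi$ for $\abs T\le L$, which I would prove by induction on $\abs T$ from the minor identity, again only with $S_{ij}\le CN^{-1}$ and \eqref{tail_Wig}.

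It then remains to bound $\E$ of each monomial by $N^{\e}\wt\Psi^{2p}$. By independence and centring of the $H_{ab}$, a monomial survives the expectation only if its $H$-factors can be grouped so that every index carried by an $H$ occurs at least twice; encoding such a monomial as a graph whose vertices are its distinct indices and whose edges are the $H$-pairs, the external vertices carry the weights $\bar v_i$ or $v_j$. Summing an internal vertex against the $\E\abs{H}^2\le CN^{-1}$ of its incident edges gives sums $\sum_k N^{-1}\cdots$ that exactly pay for the entropy of that summation, sums over resolvent entries such as $\sum_k S_{ak}\abs{G^{(T)}_{kl}}^2$ are handled via the entrywise law together with the Ward identity $\sum_k\abs{G_{kl}}^2=\eta^{-1}\im G_{ll}$, and every off-diagonal resolvent factor supplies a spare $\wt\Psi$, so that the $2p$ original factors altogether contribute $\wt\Psi^{2p}$. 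The feature distinguishing the isotropic estimate from the entrywise one is the treatment of external vertices: each must enter the graph so that its weight is summed as $\sum_i\abs{v_i}^2=1$ rather than as $\sum_i\abs{v_i}\le N^{1/2}$; a vertex bearing a single factor $v_i$ with no compensating $H$-edge would leave an unpaired $H$-line and therefore vanishes in expectation, so it is the centring of $H$ that removes the spurious $N^{1/2}$ factors.

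The main obstacle is precisely this last power count: one must set up the graphical formalism so that, for \emph{every} surviving monomial and uniformly in $z\in\wt{\f S}$ and $\f v$, the external weights are absorbed through the $\ell^{2}$-normalization while the internal entropy is matched by the factors $N^{-1}$, the off-diagonal resolvent entries, and the Ward-identity bounds, and so that $L$ can be chosen large enough (depending on $p,\e$) that the crudely bounded remainder falls below $\wt\Psi^{2p}$; controlling the combinatorial proliferation of monomials and verifying this bound uniformly is the bulk of the argument. The remaining ingredients --- the reduction to moments, the stability of the entrywise law under minor removal, and the Ward-identity estimates --- are comparatively routine. For generalized Wigner matrices this is the adaptation of the sample-covariance proof of Section~\ref{sec:4} indicated in Section~\ref{sec:appendix}.
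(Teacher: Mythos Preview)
Your outline matches the paper's approach --- polarization, diagonal/off-diagonal split, high moments of $\cal Z=\sum_{a\neq b}\bar v_a G_{ab}v_b$, resolvent expansion encoded by graphs, partial expectation, and power counting --- and you correctly identify it as the Wigner adaptation (Section~\ref{sec:appendix}) of the argument in Section~\ref{sec:4}.

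There is, however, one point where your sketch as written would not go through. You propose to stop the expansion with monomials that still contain factors $G^{(T)}_{kk}$, $(G^{(T)}_{kk}-\wt m)$, and off-diagonal $G^{(T)}_{kl}$, and then invoke ``independence and centring of the $H_{ab}$'' to take the expectation. But such $G^{(T)}$-factors are \emph{not} independent of the $H$-entries in the same monomial unless $T$ contains every row/column index carried by those $H$-entries; without this, the expectation does not factorize and $\E H_{ab}=0$ cannot be exploited. The paper's expansion is organized precisely around this constraint: after fixing the distinct \emph{black} indices $\f a_b$ carried by the $2p$ external weights, Operations (a) and (b) (Sections~\ref{sec: exp 2}--\ref{sec: exp 4}) are iterated until every resolvent factor is either a maximally expanded diagonal entry $\wh G_{aa}$ with $a\in\f a_b$ or an entry $G^{(\f a_b)}_{kl}$ with $k,l\notin\f a_b$; then Operation (c) (Section~\ref{sec: exp 6}) uses Schur's formula~\eqref{Schur} --- not the splitting $G_{kk}=\wt m+O_\prec(\wt\Psi)$ --- to replace each $\wh G_{aa}^{\pm 1}$ by further $H$- and $G^{(\f a_b)}$-factors. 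Only after this are all $H$-factors (each carrying at least one black index) genuinely independent of the remaining $G^{(\f a_b)}$-factors, and the expectation factorizes as in Section~\ref{sec: exp 7}. The $N^{-1/2}$ gain at each black vertex $i$ with $\deg_\Delta(i)=1$ then comes from the parity argument of Lemma~\ref{lem: final power counting} (Steps 7--8 in Section~\ref{sec:appendix}): the degree at $i$ stays odd throughout the expansion, so the pairing of the $H$-edges incident to $i$ must contain a block of size at least three, which supplies the extra factor compensating $\sum_a\abs{v_a}\le N^{1/2}$. Your phrase ``an unpaired $H$-line vanishes in expectation'' points in the right direction but is not the actual count.
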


The proof of Theorem \ref{thm:gen} is the same as that of
Theorem \ref{thm: ILSC}. Below we give the proof for Theorem \ref{thm: ILSC}, which can be trivially adapted to yield Theorem \ref{thm:gen}.

Combining Theorem \ref{thm:gen} with the isotropic local semicircle law from \cite{EKYY4}, we may for instance obtain an isotropic local semicircle law for matrices where the lower bound of \eqref{S_ass} is relaxed, so that some matrix entries may vanish.

Beyond the support of the limiting spectrum $[-2,2]$, the statement of Theorem \ref{thm: ILSC} may be improved to a bound that is stable all the way down to the real axis. For fixed (small) $\omega \in (0,1)$ define the region
\begin{equation}
\wt{\f S}_W \;\equiv\; \wt {\f S}_W(\omega, N) \;\deq\; \hb{z=E+\ii\eta \in\C \col 2 + N^{-2/3 + \omega} \leq \abs{E} \leq \omega^{-1} \,,\, 0 < \eta \leq \omega^{-1}}
\end{equation}
of spectral parameters separated from the asymptotic spectrum by $N^{-2/3 + \omega}$, which may have an arbitrarily small positive imaginary part $\eta$.

\begin{theorem}[Isotropic local semicircle law outside the spectrum] \label{thm: ILSC outside}
Suppose that \eqref{S_ass} and \eqref{tail_Wig} hold. Then
\begin{equation}\label{bound: Gij isotropic outside}
\absb{\scalar{\f v}{G(z) \f w} -  m(z) \scalar{\f v}{\f w}} \; \prec \; \sqrt{\frac{\im m(z)}{N\eta}}
\end{equation}
uniformly in $z\in \wt {\f S}_W$ and any deterministic unit vectors $\bv, \bw \in \C^N$.
\end{theorem}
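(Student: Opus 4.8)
The plan is to reduce the statement outside the spectrum to the interior estimate of Theorem \ref{thm: ILSC} by a continuity argument in the imaginary part $\eta$, exploiting the fact that beyond the spectral edge one has a quantitative spectral gap. Fix $E$ with $2 + N^{-2/3+\omega} \leq \abs{E} \leq \omega^{-1}$, and consider the two spectral parameters $z = E + \ii\eta$ with $0 < \eta \leq \omega^{-1}$ and $\wt z = E + \ii\wt\eta$ with $\wt\eta \asymp \sqrt{\kappa + \eta} \cdot N^{-\omega/4}$ chosen (up to constants) so that $\wt z \in \f S_W$ and Theorem \ref{thm: ILSC} applies at $\wt z$. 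First I would record the deterministic bounds $\im m(z) \asymp \sqrt{\kappa+\eta}$ and, crucially, $\im m(z) \asymp \eta/\sqrt{\kappa+\eta}$ for $E \notin [-2,2]$, so that $\sqrt{\im m(z)/(N\eta)} \asymp (\kappa+\eta)^{-1/4} N^{-1/2}$ is indeed stable as $\eta \to 0$, matching the right-hand side of \eqref{bound: Gij isotropic outside}.

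Next I would use the spectral decomposition $\scalar{\f v}{G(z)\f w} = \sum_\alpha \scalar{\f v}{\f u^{(\alpha)}}\scalar{\f u^{(\alpha)}}{\f w}/(\lambda_\alpha - z)$, together with rigidity of the extreme eigenvalues, to show that for $E$ at distance $\kappa \geq N^{-2/3+\omega}$ from the spectrum, the smallest denominator $\abs{\lambda_\alpha - z}$ is $\gtrsim \kappa$ with high probability, uniformly in $\eta \in (0,\wt\eta]$. (The needed rigidity of $\lambda_1$ and $\lambda_N$ near the edges follows from Theorem \ref{thm: ILSC} applied with $\f v = \f w = \f e_i$ and a standard argument, or may be cited.) This yields that $z \mapsto \scalar{\f v}{G(z)\f w}$ is Lipschitz on the segment $[E + \ii\eta, \,E + \ii\wt\eta]$ with Lipschitz constant $O_\prec(\kappa^{-2}) \sum_\alpha \abs{\scalar{\f v}{\f u^{(\alpha)}}\scalar{\f u^{(\alpha)}}{\f w}}$. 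To control this last sum I would invoke the Ward identity $\sum_\alpha \abs{\scalar{\f v}{\f u^{(\alpha)}}}^2 / \abs{\lambda_\alpha - z}^2 = \im \scalar{\f v}{G(z)\f v}/\eta$ applied at the interior point $\wt z$, where Theorem \ref{thm: ILSC} gives $\scalar{\f v}{G(\wt z)\f v} = m(\wt z) + O_\prec(\sqrt{\im m(\wt z)/(N\wt\eta)})$; combined with the gap bound this shows the total spectral weight relevant to the segment is $O_\prec(1)$. Hence $\abs{\scalar{\f v}{G(E+\ii\eta)\f w} - \scalar{\f v}{G(\wt z)\f w}} \prec \kappa^{-2}\,\wt\eta \asymp (\kappa+\eta)^{-3/2} N^{-\omega/4}$, and similarly $\abs{m(E+\ii\eta) - m(\wt z)}$ obeys the same bound. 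Since $\kappa \geq N^{-2/3+\omega}$, the quantity $(\kappa+\eta)^{-3/2} N^{-\omega/4}$ is bounded by $(\kappa+\eta)^{-1} N^{-\omega/4} \cdot (\kappa+\eta)^{-1/2} \leq N^{1/3}N^{-\omega/4}\cdot\dots$, which after optimizing the choice of the exponent in $\wt\eta$ is dominated by $N^{-\omega'}(\kappa+\eta)^{-1/4}N^{-1/2}$ for some $\omega' > 0$; thus it is absorbed into the target error. Combining the three bounds via the triangle inequality at $\wt z$ and at $z$ gives \eqref{bound: Gij isotropic outside}, and the uniformity over all $z \in \wt{\f S}_W$ follows as in Remark \ref{rem:all_z} from a lattice argument plus the Lipschitz continuity just established.

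The main obstacle I anticipate is making the continuity step quantitatively efficient: the naive Lipschitz constant $\kappa^{-2}$ from differentiating the resolvent is too lossy when $\kappa$ is close to $N^{-2/3}$, so the argument must carefully balance the distance $\wt\eta$ travelled against the eigenvalue gap, and must use rigidity of the extreme eigenvalues at the sharp scale $\kappa \gtrsim N^{-2/3+\omega}$ rather than a crude deterministic bound. A cleaner route, which I would adopt if the bookkeeping above proves delicate, is to differentiate the interior estimate directly: apply the identity $G(z)' = G(z)^2$, write $\partial_z \scalar{\f v}{G(z)\f w} = \scalar{\f v}{G(z)^2\f w}$, bound this via Cauchy–Schwarz and the Ward identity by $\im\scalar{\f v}{G(z)\f v}^{1/2}\im\scalar{\f w}{G(w)\f w}^{1/2}/\eta$, and integrate from $\wt\eta$ down to $\eta$; the monotonicity $\eta \mapsto \eta\,\im\scalar{\f v}{G(E+\ii\eta)\f v}$ being non-decreasing lets one transfer the bound at $\wt z$ to all smaller $\eta$ with only a logarithmic loss, exactly as in the analogous entrywise argument. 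Either way the one genuinely new ingredient beyond Theorem \ref{thm: ILSC} is the deterministic stability $\im m(z) \asymp \eta/\sqrt{\kappa+\eta}$ outside $[-2,2]$, which is what allows the error to survive the limit $\eta \to 0$.
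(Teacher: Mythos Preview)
Your high-level strategy matches the paper's: compare $z=E+\ii\eta$ with a reference point $z_0=E+\ii\eta_0$ at which Theorem~\ref{thm: ILSC} applies, invoke rigidity of the extreme eigenvalues, and transfer. But two parts of the execution do not close.

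First, the choice of the reference height is wrong. With $\wt\eta\asymp\sqrt{\kappa+\eta}\,N^{-\omega/4}$ one has $\im m(\wt z)\asymp \wt\eta/\sqrt{\kappa}\asymp N^{-\omega/4}$, so already $\im\scalar{\f v}{G(\wt z)\f v}\prec N^{-\omega/4}$, far larger than the target $N^{-1/2}\kappa^{-1/4}$; any comparison argument that bounds the transfer error by $\im\scalar{\f v}{G(\wt z)\f v}$ is then useless. The paper takes $\eta_0\deq N^{-1/2}\kappa^{1/4}$, tuned so that $\im m(z_0)$, $\sqrt{\im m(z_0)/(N\eta_0)}$ and $(N\eta_0)^{-1}$ are all exactly of size $N^{-1/2}\kappa^{-1/4}$.

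Second, the monotonicity you quote does not give a logarithmic loss. From ``$\eta\mapsto\eta\,\im\scalar{\f v}{G(E+\ii\eta)\f v}$ nondecreasing'' one only gets $\im\scalar{\f v}{G(E+\ii t)\f v}\leq(\wt\eta/t)\,\im\scalar{\f v}{G(\wt z)\f v}$, and plugging this into your derivative bound produces $\int_\eta^{\wt\eta}t^{-2}\,\dd t$, which diverges as $\eta\to 0$. What is actually needed (and what the paper uses) is the \emph{stronger} pointwise fact that, once rigidity gives $\abs{E-\lambda_\alpha}\geq\eta_0$ for all $\alpha$, the function $t\mapsto t/((E-\lambda_\alpha)^2+t^2)$ is increasing on $[0,\eta_0]$, hence $\im\scalar{\f v}{G(E+\ii t)\f v}\leq 2\,\im\scalar{\f v}{G(z_0)\f v}$ uniformly in $t\leq\eta_0$. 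The paper does not even integrate: it reduces by polarization to the diagonal case and compares $G(z)$ with $G(z_0)$ directly, bounding the imaginary part by this monotonicity and the real-part difference by an explicit algebraic estimate,
\[
\absb{\re\scalar{\f v}{G(z)\f v}-\re\scalar{\f v}{G(z_0)\f v}}
\;\leq\;\frac{\eta_0}{E-\lambda_1}\,\im\scalar{\f v}{G(z_0)\f v}\;\leq\;\im\scalar{\f v}{G(z_0)\f v}\,.
\]
With the correct $\eta_0$ both pieces are $\prec N^{-1/2}\kappa^{-1/4}$, and the proof closes without any optimization or integration.
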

The statements in Theorems~\ref{thm: ILSC} and \ref{thm: ILSC outside} can also be strengthened to simultaneously apply for all $z \in \f S_W$ and $z \in \wt{\f S}_W$, respectively; see Remark~\ref{rem:all_z}.

Let $\f u^{(1)}, \dots, \f u^{(N)}$ denote the normalized eigenvectors of $H$ associated with the eigenvalues $\lambda_1, \dots, \lambda_N$.

\begin{theorem}[Isotropic delocalization] \label{thm: I deloc Wig}
 Suppose that \eqref{S_ass} and \eqref{tail_Wig} hold.  Then
\begin{equation*}
\abs{\scalar{\f u^{(\alpha)}}{\f v}}^2 \;\prec\; N^{-1}
\end{equation*}
uniformly for all $\alpha = 1, \dots, N$ and all deterministic unit vectors $\f v \in \C^N$.
\end{theorem}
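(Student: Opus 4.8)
The plan is to deduce the isotropic delocalization bound for $H$ from the isotropic local semicircle law (Theorem~\ref{thm: ILSC}) by the standard spectral-decomposition argument. First I would fix a deterministic unit vector $\f v \in \C^N$ and an index $\alpha$, and relate the overlap $\abs{\scalar{\f u^{(\alpha)}}{\f v}}^2$ to the imaginary part of the resolvent along $\f v$. Writing the spectral decomposition $G(z) = \sum_{\beta} \frac{\f u^{(\beta)} (\f u^{(\beta)})^*}{\lambda_\beta - z}$, we get
\begin{equation*}
\im \scalar{\f v}{G(z) \f v} \;=\; \sum_{\beta} \frac{\eta \, \abs{\scalar{\f u^{(\beta)}}{\f v}}^2}{(\lambda_\beta - E)^2 + \eta^2}\,,
\end{equation*}
so that, discarding all terms but $\beta = \alpha$ and choosing $E = \lambda_\alpha$, we obtain the pointwise bound $\abs{\scalar{\f u^{(\alpha)}}{\f v}}^2 \leq \eta \, \im \scalar{\f v}{G(\lambda_\alpha + \ii \eta) \f v}$.

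Next I would estimate the right-hand side using Theorem~\ref{thm: ILSC}. Taking $\eta = N^{-1 + \omega}$ for a small fixed $\omega$, the isotropic law gives
\begin{equation*}
\absb{\scalar{\f v}{G(z) \f v} - m(z)} \;\prec\; \sqrt{\frac{\im m(z)}{N\eta}} + \frac{1}{N\eta} \;\prec\; N^{-\omega/2}
\end{equation*}
uniformly for $z = E + \ii \eta \in \f S_W$, using the crude bounds $\im m \leq C$ and $N\eta = N^{\omega}$. Since $\abs{m(z)} \leq C$ as well, this yields $\im \scalar{\f v}{G(z) \f v} \prec 1$ uniformly for such $z$, hence $\abs{\scalar{\f u^{(\alpha)}}{\f v}}^2 \prec \eta = N^{-1+\omega}$. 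Because $\omega$ is arbitrary, and because the eigenvalues of $H$ are bounded (indeed contained in $[-\omega^{-1}, \omega^{-1}]$ with high probability, as follows from the entrywise law and a simple bound on the largest eigenvalue, so that $\lambda_\alpha + \ii N^{-1+\omega}$ indeed lies in $\f S_W$), we may send $\omega \to 0$ to conclude $\abs{\scalar{\f u^{(\alpha)}}{\f v}}^2 \prec N^{-1}$.

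The one point requiring a little care is the uniformity in $\alpha$ and the fact that $\lambda_\alpha$ is random: we need the high-probability bound from Theorem~\ref{thm: ILSC} to hold simultaneously for all $z$ in $\f S_W$, not just for each fixed $z$. This is exactly the content of Remark~\ref{rem:all_z}, which upgrades the estimate to a statement valid on the whole domain $\f S_W$ with probability $\geq 1 - N^{-D}$; on that event we may substitute the random point $z = \lambda_\alpha + \ii N^{-1+\omega}$ (on the complementary event, which has probability $\leq N^{-D}$, we use the trivial bound $\abs{\scalar{\f u^{(\alpha)}}{\f v}}^2 \leq 1$). I expect this simultaneity step, together with confining $\lambda_\alpha$ to a bounded interval so that the chosen spectral parameter lies in $\f S_W$, to be the only genuine (and still very minor) obstacle; the rest is the routine spectral-decomposition estimate. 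The argument is uniform in $\f v$ since Theorem~\ref{thm: ILSC} is, and uniform in $\alpha$ by a union bound over the $N$ values of $\alpha$, absorbing the factor $N$ into the definition of $\prec$.
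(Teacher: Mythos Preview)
Your proposal is correct and follows essentially the same route as the paper: spectral decomposition at $z=\lambda_\alpha+\ii N^{-1+\omega}$, the bound $\im \scalar{\f v}{G(z)\f v}\prec 1$ from the isotropic law, simultaneity over $z\in\f S_W$ via Remark~\ref{rem:all_z}, and letting $\omega\to 0$. The only cosmetic difference is that the paper invokes the rigidity estimate at the end of Section~\ref{sec: gen_wigner} to confine $\lambda_\alpha$ to a bounded interval (so that $\lambda_\alpha+\ii\eta\in\f S_W$ with high probability), whereas you appeal more informally to a bound on the extreme eigenvalues; either justification works here.
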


Finally, in analogy to Theorem \ref{thm: cov-rig}, we record the following rigidity result, which is a trivial consequence of \cite[Theorem 7.6]{EKYY4} with $X = C N^{-2/3}$ and $Y = C N^{-1}$; see also \cite[Theorem 2.2]{EYY3}. Write $\lambda_1\geq \lambda_2 \geq \cdots \geq \lambda_N$ for the eigenvalues of $H$, and let $\gamma_1 \geq \gamma_2  \geq \cdots \geq \gamma_N$ be their \emph{classical locations according to $\varrho$}, defined through
\begin{equation}
\int_{\gamma_\alpha}^\infty \varrho(\dd x) \;=\; \frac{\alpha}{N}\,.
\end{equation}
Then we have
\begin{equation}
\abs{\lambda_\alpha-\gamma_\alpha} \;\prec\; \pb{\min \h{\alpha, N + 1 - \alpha}}^{-1/3} N^{-2/3}
\end{equation}
for all $\alpha = 1, \dots, N$.

\section{Preliminaries} \label{sec:tools}

The rest of this paper is devoted to the proofs of our main results. They are similar for sample covariance matrices and generalized Wigner matrices, and in Sections \ref{sec:tools}--\ref{sec:5} we give the argument for sample covariance matrices (hence proving the results of Section \ref{sec: sample covariance}). How to modify these arguments to generalized Wigner matrices (and hence prove the results of Section \ref{sec: gen_wigner}) is explained in Section \ref{sec:appendix}. We choose to present our method in the context of sample covariance matrices mainly for two reasons. First, we take this opportunity to give a version of the entrywise local law (Section \ref{sec: MP law}) -- required as input for the proof of the isotropic law -- which is more general and has a simpler proof than the local law previously established in \cite{PY}. Second, the proof of the isotropic law in the case of sample covariance matrices is conceptually slightly clearer due to a natural splitting of summation indices into two categories (which we distinguish by the use of Latin and Greek letters); this splitting is an essential structure behind our proof in Section \ref{sec:4}, and is also used in the case of generalized Wigner matrices, in which case it is  however purely artificial.

We now move on to the proofs. In order to unclutter notation, we shall often omit the argument $z$ from quantities that depend on it. Thus, we for instance often write $G$ instead of $G(z)$. We put the arguments $z$ back when needed, typically if we are working with several different spectral parameters $z$.

\subsection{Basic tools} 
We begin by recording some basic large deviations estimates.
We consider complex-valued random variables  $\xi$  satisfying
\begin{equation} \label{cond on X}
\E \xi \;=\; 0\,, \qquad \E \abs{\xi}^2 \;=\; 1\,, \qquad \p{\E \abs{\xi}^p}^{1/p} \;\leq\; C_p
\end{equation}
for all $p \in \N$ and some constants $C_p$.

\begin{lemma}[Large deviation bounds] \label{lemma: LDE}
Let $\pb{\xi_i^{(N)}}$ and $\pb{\zeta_i^{(N)}}$ be independent families of random variables and 
$\pb{a_{ij}^{(N)}}$ and $\pb{b_i^{(N)}}$ be deterministic; here $N \in \N$ and $i,j = 1, \dots, N$. Suppose that all entries $\xi_i^{(N)}$ and $\zeta_i^{(N)}$ are independent and satisfy \eqref{cond on X}. Then we have the bounds
\begin{align} \label{LDE}
\sum_i b_i \xi_i &\;\prec\; \pbb{\sum_i \abs{b_i}^2}^{1/2}\,,
\\ \label{two-set LDE}
\sum_{i,j} a_{ij} \xi_i \zeta_j &\;\prec\; \pbb{\sum_{i,j} \abs{a_{ij}}^2}^{1/2}\,,
\\ \label{offdiag LDE}
\sum_{i \neq j} a_{ij} \xi_i \xi_j &\;\prec\; \pbb{\sum_{i \neq j} \abs{a_{ij}}^2}^{1/2}\,.
\end{align}
 If the coefficients $a_{ij}^{(N)}$ and $b_i^{(N)}$
depend on an additional parameter $u$, then all of these estimates are uniform in $u$ (see Definition \ref{def:stocdom}), i.e.\ the threshold $N_0= N_0(\e, D)$ in the definition of $\prec$ depends only on the family $C_p$ from \eqref{cond on X}; in particular, $N_0$ does not depend on $u$. 
\end{lemma}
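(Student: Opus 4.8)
The plan is to reduce all three estimates to moment bounds, using the equivalence (for families of nonnegative random variables with polynomially-bounded moments in $N$) between $\xi \prec \zeta$ and the statement that $\E \xi^p \leq N^{\epsilon p} (\E \zeta^p + \text{negligible})$ for every $p$, via Markov's inequality at a sufficiently large power. So first I would fix $p \in \N$ and aim to bound $\E\absb{\sum_i b_i \xi_i}^{2p}$ (and the corresponding quantities for the bilinear and off-diagonal quadratic forms), with the understanding that the final conclusion follows by Chebyshev at level $2p$ and then letting $p \to \infty$ to beat any $N^{-D}$.

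For \eqref{LDE}, expand $\E\absb{\sum_i b_i \xi_i}^{2p} = \sum \E\qb{b_{i_1}\cdots b_{i_p}\ol{b_{j_1}}\cdots\ol{b_{j_p}}\,\xi_{i_1}\cdots\xi_{i_p}\ol{\xi_{j_1}}\cdots\ol{\xi_{j_p}}}$; by independence and $\E\xi_i = 0$, only terms in which every index appears at least twice survive. A standard combinatorial counting of such index patterns (partition the $2p$ slots into blocks of size $\geq 2$), together with $\E\abs{\xi_i}^q \leq C_q$ for the relevant $q \leq 2p$ and $\sum_i \abs{b_i}^2 \eqd \norm{b}_2^2$, gives $\E\absb{\sum_i b_i \xi_i}^{2p} \leq C_p' \norm{b}_2^{2p}$, using that each block of size $k$ contributes a factor bounded by $\sum_i \abs{b_i}^k \leq \norm{b}_2^k$ (since $\abs{b_i} \leq \norm{b}_2$). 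This is exactly the bound needed. The bilinear estimate \eqref{two-set LDE} is handled identically after expanding in both the $\xi$ and $\zeta$ families: since $(\xi_i)$ and $(\zeta_j)$ are independent, $\E\absb{\sum_{ij} a_{ij}\xi_i\zeta_j}^{2p}$ factors into a sum over index patterns where the $\xi$-indices pair up among themselves and the $\zeta$-indices pair up among themselves, and the surviving terms are controlled by $\sum_{ij}\abs{a_{ij}}^2$ via the same block-counting, now applied to the Hilbert–Schmidt norm of the matrix $(a_{ij})$. The off-diagonal quadratic form \eqref{offdiag LDE} is the subtlest: here the two families coincide, so an index may be shared between an ``$i$-slot'' and a ``$j$-slot'', but the constraint $i \neq j$ removes the diagonal; one still shows that every surviving monomial has all indices repeated and bounds the contribution by $\sum_{i\neq j}\abs{a_{ij}}^2$, the key point being that the worst case is again a perfect matching of the $2p$ slots, each matched pair forced to be one of the $a$-coefficients or its conjugate. (One can alternatively symmetrize $a_{ij}$ and invoke a decoupling inequality, but the direct moment expansion is cleanest and self-contained.)

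The main obstacle is the bookkeeping in \eqref{offdiag LDE}: one must verify carefully that no ``dangerous'' partitions — those that would produce a bound like $N \cdot \max_{ij}\abs{a_{ij}}^2$ rather than $\sum_{ij}\abs{a_{ij}}^2$ — actually survive, i.e.\ that the off-diagonal constraint together with the vanishing mean genuinely forces a matching structure and not, say, a ``path'' structure that picks up an extra free summation index. This is a standard but delicate graph-theoretic estimate (each index must touch at least two $a$-factors, and a connected component on $k$ distinct indices involving $\ell$ factors satisfies $\ell \geq k$, with the extremal case $\ell = k$ giving exactly one free summation per factor after using Cauchy–Schwarz on $\abs{a_{ij}}^2$). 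The uniformity claim is immediate from the proof: the constants produced depend only on $p$ and the family $(C_q)_{q\le 2p}$ from \eqref{cond on X}, never on the coefficients $a_{ij}^{(N)}(u)$ or $b_i^{(N)}(u)$, so the threshold $N_0(\epsilon,D)$ in the definition of $\prec$ is independent of $u$.
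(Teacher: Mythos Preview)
The paper does not give an argument at all: it simply cites Lemmas B.2, B.3, and B.4 of \cite{EKYY3} and Theorem C.1 of \cite{EKYY4}, treating the result as known. Your proposal is a self-contained moment-method proof, which is in fact exactly the method those references use, so in that sense you are reconstructing the same proof rather than taking a different route.

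Your treatment of \eqref{LDE} and \eqref{two-set LDE} is correct and standard; the only place where your sketch is thin is \eqref{offdiag LDE}. The claim ``a connected component on $k$ distinct indices involving $\ell$ factors satisfies $\ell \geq k$'' is right (minimum degree $\geq 2$ in a connected multigraph forces at least as many edges as vertices), but the step from there to the bound $(\sum_{i\neq j}\abs{a_{ij}}^2)^p$ is not just one Cauchy--Schwarz: one has to sum over the $k$ free indices of a product of $\ell$ factors $\abs{a_{\cdot\cdot}}$ along the edges of the component, and show this is at most $(\sum_{i\neq j}\abs{a_{ij}}^2)^{\ell/2}$. The clean way is to pick a spanning subgraph with exactly $k$ edges (a spanning tree plus one extra edge, i.e.\ a single cycle through all vertices of the component, possibly with pendant trees), bound the remaining $\ell-k$ factors by $\max_{ij}\abs{a_{ij}} \leq (\sum\abs{a_{ij}}^2)^{1/2}$, and then sum the cycle/tree structure by repeated Cauchy--Schwarz, peeling off leaves and collapsing the cycle. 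This is routine but takes a paragraph; your parenthetical remark about decoupling is a legitimate alternative that avoids the graph combinatorics entirely. Either way there is no real gap, and your uniformity-in-$u$ observation is exactly right.
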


\begin{proof}
These estimates are an immediate consequence of Lemmas B.2, B.3, and B.4 in \cite{EKYY3}. See also Theorem C.1 in \cite{EKYY4}. 
\end{proof}

The following lemma collects basic algebraic properties of stochastic domination $\prec$. We shall use them tacitly throughout the following.

\begin{lemma} \label{lemma: basic properties of prec}
\begin{enumerate}
\item
Suppose that $\xi (u,v) \prec  \zeta  (u,v)$ uniformly in $u \in U$ and $v \in V$. If $\abs{V} \leq N^C$ for some constant $C$ then
\begin{equation*}
\sum_{v \in V} \xi(u,v) \;\prec\; \sum_{v \in V} \zeta(u,v)
\end{equation*}
uniformly in $u$.
\item
Suppose that $\xi_{1}(u) \prec \zeta_{1}(u)$ uniformly in $u$ and $\xi_{2}(u) \prec \zeta_{2}(u)$ uniformly in $u$. Then
\begin{equation*}
\xi_{1}(u) \xi_{2}(u) \;\prec\; \zeta_{1}(u) \zeta_{2}(u)
\end{equation*}
uniformly in $u$.
\item
Suppose that $\Psi(u) \geq N^{-C}$ is deterministic 
 and $\xi(u)$ is a nonnegative random variable satisfying 
 $\E \xi(u)^2 \leq N^{C}$ for all $u$. Then, provided that $\xi(u) \prec \Psi(u)$ uniformly in $u$, we have
\begin{equation*}
\E \xi(u) \;\prec\; \Psi(u)
\end{equation*}
uniformly in $u$.
\end{enumerate}
\end{lemma}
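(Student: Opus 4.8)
The statement to prove is Lemma \ref{lemma: basic properties of prec}, the three basic algebraic properties of stochastic domination $\prec$. The plan is to unwind the definition of $\prec$ in each case and combine the union bound with elementary inequalities; no random matrix input is needed, only the quantifier structure of Definition \ref{def:stocdom}.

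\textbf{Part (i) (summing).} Fix $\epsilon > 0$ and $D > 0$. Since $\abs{V} \leq N^C$, I would apply the hypothesis with parameters $\epsilon' = \epsilon/2$ and $D' = D + C$, obtaining for each $v \in V$ a bound $\P[\xi(u,v) > N^{\epsilon/2}\zeta(u,v)] \leq N^{-D-C}$ uniformly in $u$ and $v$, for $N \geq N_0(\epsilon/2, D+C)$. On the complementary event, valid simultaneously for all $v \in V$ after a union bound (which costs a factor $\abs{V} \leq N^C$, yielding total probability $\leq N^{-D}$), one has $\sum_{v} \xi(u,v) \leq N^{\epsilon/2} \sum_v \zeta(u,v) \leq N^\epsilon \sum_v \zeta(u,v)$. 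Since $\epsilon, D$ were arbitrary, this gives $\sum_v \xi(u,v) \prec \sum_v \zeta(u,v)$ uniformly in $u$. The uniformity in $u$ is automatic because $N_0$ from the hypothesis does not depend on $u$ (only on $\epsilon, D$) and $C$ is fixed.

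\textbf{Part (ii) (products).} Fix $\epsilon > 0$, $D > 0$. Apply the two hypotheses with $\epsilon/2$ in place of $\epsilon$ and $D$ unchanged, to get events $\Xi_k = \{\xi_k(u) \leq N^{\epsilon/2}\zeta_k(u)\}$, $k = 1, 2$, each of probability $\geq 1 - N^{-D}$ uniformly in $u$. On $\Xi_1 \cap \Xi_2$, nonnegativity of the $\xi_k$ and $\zeta_k$ gives $\xi_1(u)\xi_2(u) \leq N^{\epsilon}\zeta_1(u)\zeta_2(u)$; a union bound gives $\P[\Xi_1 \cap \Xi_2] \geq 1 - 2N^{-D} \geq 1 - N^{-D+1}$, and absorbing the constant by relabeling $D$ finishes the claim.

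\textbf{Part (iii) (moments).} This is the only part requiring a small computation beyond bookkeeping. Fix $\epsilon > 0$; I want to show $\E\xi(u) \leq N^\epsilon \Psi(u)$ for $N$ large. Split the expectation over the event $\Xi = \{\xi(u) \leq N^{\epsilon}\Psi(u)\}$ and its complement. On $\Xi$, $\E[\xi(u)\mathbf 1(\Xi)] \leq N^\epsilon \Psi(u)$. On $\Xi^c$, apply Cauchy–Schwarz: $\E[\xi(u)\mathbf 1(\Xi^c)] \leq (\E\xi(u)^2)^{1/2}\,\P[\Xi^c]^{1/2} \leq N^{C/2}\cdot\P[\Xi^c]^{1/2}$. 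Applying the hypothesis $\xi \prec \Psi$ with the exponent $\epsilon$ and with $D$ chosen large enough — say $D > C + 2C$ so that $N^{C/2}N^{-D/2} \leq N^{-C} \leq \Psi(u)$ — gives $\P[\Xi^c] \leq N^{-D}$, hence $\E[\xi(u)\mathbf 1(\Xi^c)] \leq N^{-C} \leq \Psi(u) \leq N^\epsilon\Psi(u)$. Adding the two contributions, $\E\xi(u) \leq 2N^\epsilon\Psi(u) \leq N^{2\epsilon}\Psi(u)$ for $N$ large, and since $\epsilon$ was arbitrary we conclude $\E\xi(u) \prec \Psi(u)$, uniformly in $u$ since all the thresholds invoked depend only on $\epsilon, D, C$.

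The \emph{main (and only mild) obstacle} is part (iii): one must be careful that the "bad event" contribution, blown up by the possibly polynomially-large second moment $N^{C/2}$, is controlled by choosing $D$ in the definition of $\prec$ large enough relative to $C$ — this is exactly why the deterministic lower bound $\Psi(u) \geq N^{-C}$ is needed. Parts (i) and (ii) are pure quantifier manipulation: the key points are that shrinking $\epsilon$ to $\epsilon/2$ leaves room to absorb polynomial union-bound losses, and that the $N$-independence of the threshold $N_0$ with respect to $u$ propagates through each argument.
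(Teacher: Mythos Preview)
Your proof is correct and follows essentially the same approach as the paper: parts (i) and (ii) are dismissed there as ``a simple union bound'', and for part (iii) the paper performs exactly the same splitting over $\{\xi \leq N^\epsilon \Psi\}$ and its complement, applies Cauchy--Schwarz on the complement, and chooses $D \geq 3C$ so that $N^{C/2 - D/2} \leq N^{-C} \leq \Psi$.
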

\begin{proof}
The claims (i) and (ii) follow from a simple union bound. For (iii), pick $\epsilon > 0$ and assume to simplify notation that $\xi$ and $\Psi$ do not depend on $u$. Then
\begin{equation*}
\E \xi \;=\; \E \xi \ind{\xi \leq N^{\epsilon} \Psi} + \E \xi \ind{\xi > N^{\epsilon} \Psi} \;\leq\; N^{\epsilon} \Psi + \sqrt{\E \xi^2} \sqrt{\P(\xi > N^\epsilon \Psi)} \;\leq\; N^\epsilon \Psi + N^{C/2 - D/2}\,,
\end{equation*}
for arbitrary $D > 0$. The claim (iii) therefore follows by choosing $D \geq 3 C$.
\end{proof}

Next, we give some basic facts about the Stieltjes transform $m_\phi$ of the Marchenko-Pastur law defined in \eqref{S_MP}. They have an especially simple form in the case $\phi \geq 1$; the complementary case $\phi < 1$ can be easily handled using \eqref{mphi_minvphi}. Recall the definition \eqref{def kappa sc} of $\kappa$. We record the following elementary properties of $m_\phi$, which may be proved e.g.\ by starting from the explicit form \eqref{S_MP}.

\begin{lemma} \label{lemma: mg}
For $z \in \f S$ and $\phi \geq 1$ we have
\begin{equation} \label{bounds on mg}
\abs{m_\phi(z)} \;\asymp\; 1 \,, \qquad \abs{1 - m_\phi(z)^2} \;\asymp\; \sqrt{\kappa + \eta}\,,
\end{equation}
and
\begin{equation} \label{im m gamma}
\im m_\phi(z) \;\asymp\;
\begin{cases}
\sqrt{\kappa + \eta} & \text{if $E \in [\gamma_-, \gamma_+]$}
\\
\frac{\eta}{\sqrt{\kappa + \eta}} & \text{if $E \notin [\gamma_-, \gamma_+]$}\,.
\end{cases}
\end{equation}
(All implicit constants depend on $\omega$ in the definition \eqref{def_S_theta} of $\f S$.)
\end{lemma}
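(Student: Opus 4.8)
The plan is to establish Lemma \ref{lemma: mg} by a direct analysis of the explicit formula \eqref{S_MP} for $m_\phi$, restricting to $\phi \geq 1$ (so that $\varrho_\phi$ is supported on $[\gamma_-, \gamma_+]$ with $\gamma_- > 0$). Write $z = E + \ii \eta$ and set $\Delta(z) \deq (z - \gamma_-)(\gamma_+ - z)$, so that $m_\phi(z) = \frac{\phi^{1/2} - \phi^{-1/2} - z + \ii \sqrt{\Delta(z)}}{2 \phi^{-1/2} z}$ with the branch fixed as in the statement. The first step is to record that, since $\phi \geq 1$ and $z \in \f S$, we have $\abs{z} \asymp 1$ (from $\abs{z} \geq \omega$ and $\kappa \leq \omega^{-1}$, which bound $\abs{z}$ above and below), and $\phi^{1/2} - \phi^{-1/2} \asymp \phi^{1/2}$ is comparable to $(\gamma_+ + \gamma_-)/2$-type quantities; one must be slightly careful that the estimates are uniform as $\phi \to \infty$. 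A cleaner route is to use the characterizing equation \eqref{identity for m MP} rather than the formula: from $m_\phi = -\big(z + z \phi^{-1/2} m_\phi - (\phi^{1/2} - \phi^{-1/2})\big)^{-1}$ one reads off that $\abs{m_\phi}$ is bounded below once the denominator is bounded above, and bounded above once the denominator is bounded below; combined with the trivial bound $\abs{m_\phi(z)} \leq \eta^{-1}$ and $\im m_\phi > 0$ this pins down $\abs{m_\phi} \asymp 1$ on $\f S$ after checking the denominator does not degenerate, which is where the edge behaviour enters.

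The second step is the square-root estimate $\abs{1 - m_\phi(z)^2} \asymp \sqrt{\kappa + \eta}$. The plan is to express $1 - m_\phi^2$ in terms of $\sqrt{\Delta(z)}$ using the explicit formula: a short computation gives $1 - m_\phi(z)^2 = \frac{\ii \sqrt{\Delta(z)}}{\phi^{-1/2} z} \cdot (\text{bounded, bounded-below factor})$, or more precisely one shows $1 - m_\phi^2$ differs from $c(z) \sqrt{\Delta(z)}$ by a factor comparable to $1$. Then one invokes the elementary fact that for $z$ with $\re z = E$ at distance $\kappa$ from $\{\gamma_-, \gamma_+\}$ and $\im z = \eta$, one has $\abs{\Delta(z)} = \abs{(z - \gamma_-)(\gamma_+ - z)} \asymp (\kappa + \eta)$, because one of the two factors is of order $1$ (the distance to the far edge, which is $\asymp 1$ on $\f S$) and the other is $\asymp \sqrt{(\kappa + \eta)^2}^{1/2}$... more carefully, $\abs{z - \gamma_\pm} \asymp \sqrt{\kappa^2 + \eta^2} + \kappa \asymp \kappa + \eta$ when $\gamma_\pm$ is the near edge; hence $\abs{\Delta(z)} \asymp \kappa + \eta$ and $\abs{\sqrt{\Delta(z)}} \asymp \sqrt{\kappa + \eta}$. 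This is the standard edge computation and I would treat it as routine.

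The third step, the two-sided bound on $\im m_\phi$, is obtained from the same representation: $\im m_\phi(z) = \im\!\Big(\frac{\phi^{1/2} - \phi^{-1/2} - z + \ii \sqrt{\Delta(z)}}{2\phi^{-1/2} z}\Big)$, and since $\abs{z} \asymp 1$ the imaginary part is comparable to $\im\big(\ii \sqrt{\Delta(z)} - z\big)$ up to a bounded multiplicative factor and a rotation; splitting $\sqrt{\Delta(z)}$ into real and imaginary parts, the key point is the behaviour of $\re \sqrt{\Delta(z)}$ and $\im \sqrt{\Delta(z)}$ near an edge. For $E$ inside $[\gamma_-,\gamma_+]$, $\Delta(E)>0$, so $\sqrt{\Delta(z)}$ has real part $\asymp \sqrt{\kappa + \eta}$, giving $\im m_\phi \asymp \sqrt{\kappa + \eta}$; for $E$ outside, $\Delta(E) < 0$ for $E$ near the edge (with $\kappa \leq \omega^{-1}$, staying in the regime where the relevant factor controls the sign), so $\sqrt{\Delta(z)}$ is nearly purely imaginary and $\re\sqrt{\Delta(z)} \asymp \frac{\eta}{\sqrt{\kappa+\eta}}$, the standard estimate $\re\sqrt{w} \asymp \frac{\im w}{\sqrt{\abs{w}}}$ for $\re w < 0$ applied to $w = \Delta(z)$ with $\im \Delta(z) \asymp \eta$ and $\abs{\Delta(z)} \asymp \kappa + \eta$. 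One must cancel the $-\im z = -\eta$ contribution against this, which is automatic since $\frac{\eta}{\sqrt{\kappa+\eta}} \gtrsim \eta$; care is needed only to ensure the $\ii\sqrt{\Delta}$ term does not cancel the $-z$ term in the interior case, which it cannot since their imaginary parts have the same sign (both positive by the branch choice and $\eta > 0$). The complementary case $\phi < 1$ then follows immediately from \eqref{mphi_minvphi}, noting $\abs{z} \asymp 1$ makes the correction $\frac{1-\phi}{z}$ real-part-dominant and harmless for the imaginary part, and bounded for the modulus.

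The main obstacle is uniformity in $\phi$ as $\phi \to \infty$ (equivalently $M \gg N$): several quantities such as $\phi^{1/2} - \phi^{-1/2}$ and $\gamma_\pm = \sqrt\phi + \phi^{-1/2} \pm 2$ blow up, and one must verify that the claimed comparabilities hold with constants depending only on $\omega$ and not on $\phi$. The resolution is that on $\f S$ we impose $\abs{z} \geq \omega$ and $\kappa \leq \omega^{-1}$; but $\kappa = \min\{\abs{\gamma_+ - E}, \abs{\gamma_- - E}\} \leq \omega^{-1}$ forces $E$ to be within $\omega^{-1}$ of one of the edges, and since the edges themselves scale like $\sqrt\phi$, this means $E \asymp \sqrt\phi$ whenever $\phi$ is large — so in fact $\abs{z}$ need not be $\asymp 1$ uniformly in $\phi$, and one should instead track the $\phi$-dependence explicitly, showing e.g. $\abs{m_\phi(z)} \asymp 1$ still holds because in the defining equation the denominator $z + z\phi^{-1/2} m_\phi - (\phi^{1/2} - \phi^{-1/2})$ has its large terms $z \approx \gamma_\pm \phi$-scaling and $\phi^{1/2}-\phi^{-1/2}$ nearly cancelling, leaving something of order $1$. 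I would carry this out by substituting $E = \gamma_\pm + O(\omega^{-1})$ into the explicit formula and expanding; this bookkeeping, while not deep, is the part requiring genuine care, and is presumably why the authors simply assert the lemma "may be proved e.g.\ by starting from the explicit form \eqref{S_MP}."
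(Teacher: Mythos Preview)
Your approach is essentially the one the paper has in mind: the paper gives no proof at all, only the remark that these are ``elementary properties of $m_\phi$, which may be proved e.g.\ by starting from the explicit form \eqref{S_MP}.'' Your outline---analyse the square root $\sqrt{(z-\gamma_-)(\gamma_+-z)}$ near the edges and read off modulus and imaginary part---is exactly that computation.

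One correction worth flagging: your first paragraph asserts $\abs{z}\asymp 1$ on $\f S$, which is false for large $\phi$, and you only catch this in your final paragraph. On $\f S$ the constraint $\kappa\le\omega^{-1}$ forces $E$ within $\omega^{-1}$ of an edge $\gamma_\pm=\sqrt\phi+\phi^{-1/2}\pm2$, so in fact $\abs{z}\asymp\sqrt\phi$. The paper handles this cleanly via the rescaling $\wt z=\phi^{-1/2}z$ and $\wh z=z-\phi^{1/2}+\phi^{-1/2}$ (see \eqref{def_rescaled_z}--\eqref{rescaled_z}), for which $\abs{\wt z}\asymp1$ and $\abs{\wh z}\le C$; the identity \eqref{identity for m MP} then reads $\wt z\,m_\phi^2+\wh z\,m_\phi+1=0$, whose discriminant is exactly $(z-\gamma_-)(z-\gamma_+)$. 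Working in these rescaled variables from the start would streamline your argument and make the $\phi$-uniformity automatic. Also, the aside about $\phi<1$ is unnecessary since the lemma explicitly assumes $\phi\ge1$.
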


¥
The basic object is the $M \times N$ matrix $X$. We use the indices $i,j = 1, \dots, M$ to denote the rows of $X$ and $\mu,\nu = 1, \dots, N$ to denote its columns. Unrestricted summations over Latin indices always run over the set $\{ 1, 2, \ldots , M\}$ while
Greek indices run over $\{1, 2,\ldots, N\}$.

In addition to the resolvent $R$ from \eqref{def_R}, we shall need another resolvent, $G$:
\begin{equation*}
G(z) \;\deq\; (XX^* - z)^{-1}\,, \qquad R(z) \;\deq\; (X^* X - z)^{-1}\,.
\end{equation*}
Although our main results only pertain to $R$, the resolvent $G$ will play a crucial role in the proofs, in which we consider both $X^* X$ and $X X^*$ in tandem.
In the following formulas the spectral parameter $z$ plays no explicit role, and we therefore omit it from the notation, as explained at the beginning of this section.

\begin{remark} \label{rem:index sets}
More abstractly, we may introduce two sets of indices, $I_{\rm population}$ and $I_{\rm sample}$, such that $I_{\rm population}$ indexes the entries of $G$ and $I_{\rm sample}$ the entries of $R$.
Elements of $I_{\rm population}$ are always denoted by Latin letters and elements of $I_{\rm sample}$ by Greek letters. These two sets are to be viewed as distinct. For convenience of notation, we shall always use the customary representations $I_{\rm population} \deq \{1, \dots, M\}$ and $I_{\rm sample} \deq \{1, \dots, N\}$, bearing in mind that neither should be viewed as a subset of the other. This terminology originates from the statistical application of sample covariance matrices. The idea is that we are observing the statistics of a population of size $M$ by making $N$ independent measurements (``samples'') of the population. Each observation is a column of $X$. Hence the population index $i$ labels the rows of $X$ and the sample index $\mu$ the columns of $X$.
\end{remark}

\begin{definition}[Removing rows] \label{def: removing rows}
For $T \subset \{1, \dots, M\}$ define
\begin{equation*}
(X^{(T)})_{i\mu} \;\deq\; \ind{i \notin T} X_{i\mu}\,.
\end{equation*}
Moreover, for $i,j \notin T$ we define the resolvents entries
\begin{equation*}
G^{(T)}_{ij}(z) \;\deq\; \pb{X^{(T)}(X^{(T)})^* - z}^{-1}_{ij}\,, \qquad
R^{(T)}_{\mu \nu}(z) \;\deq\; \pb{(X^{(T)})^*X^{(T)} - z}^{-1}_{\mu \nu}\,.
\end{equation*}
When $T = \{a\}$, we abbreviate $(\{a\})$ by $(a)$ in the above definitions; similarly, we write $(ab)$ instead of $(\{a,b\})$.
\end{definition}

We shall use the following expansion formulas for $G$. They are elementary consequences of Schur's complement formula; see e.g.\ Lemma 4.2 of \cite{EYY1} and Lemma 6.10 of \cite{EKYY2} for proofs of similar identities. 

\begin{lemma}[Resolvent identities for $G$] \label{lem: res identity 1}
For $i,j,k \notin T$ and $i,j \neq k$ we have
\begin{equation} \label{Gij Gijk sc}
G_{ij}^{(T)} \;=\; G_{ij}^{(Tk)} + \frac{G_{ik}^{(T)} G_{kj}^{(T)}}{G_{kk}^{(T)}}\,,
\qquad
\frac{1}{G_{ii}^{(T)}} =  \frac{1}{G_{ii}^{(Tk)}} - \frac{ G_{ik}^{(T)} G_{ki}^{(T)}}{  G_{ii}^{(T)} G_{ii}^{(Tk)} G_{kk}^{(T)}}\,.
\end{equation}
For $i \notin T$ we have
\begin{equation} \label{Gii expanded sc}
\frac{1}{ G_{ii}^{(T)}} \;=\; - z -  z \sum_{\mu,\nu} X_{i\mu} R^{(T i)}_{\mu \nu} X^*_{\nu i}\,.
\end{equation}
Moreover, for $i,j \notin T$ and $i \neq j$ we have
\begin{equation} \label{Gij expanded sc}
G_{ij}^{(T)} \;=\;  z  G_{ii}^{(T)}  G_{jj}^{(i T)} \sum_{\mu,\nu} X_{i\mu} R^{(T i j)}_{\mu \nu} X^*_{\nu j}\,.
\end{equation}
\end{lemma}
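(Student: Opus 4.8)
The final statement above is Lemma \ref{lem: res identity 1}, the resolvent identities for $G$. The plan is to derive all three formulas as consequences of Schur's complement formula applied to the matrices $X X^*$ and $X^* X$, treating the two index sets (Latin for rows, Greek for columns) in tandem.

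\textbf{The minor resolvent identities \eqref{Gij Gijk sc}.} First I would establish the general fact that for any invertible Hermitian matrix $A$ and any index $k$, the entries of $A^{-1}$ restricted to the complement of $\{k\}$ are related to the entries of the corresponding minor's inverse by the formula $(A^{-1})_{ij} = ((A^{(k)})^{-1})_{ij} + (A^{-1})_{ik}(A^{-1})_{kj}/(A^{-1})_{kk}$ for $i,j \neq k$, together with its companion formula for $1/(A^{-1})_{ii}$. This is a purely linear-algebraic statement (it is exactly Lemma 4.2 of \cite{EYY1}), and it applies verbatim with $A = X X^* - z$ restricted to rows/columns not in $T$; note that removing the $k$-th row of $X$ produces precisely the matrix whose resolvent is $G^{(Tk)}$, so the minor of $XX^{(T)*}-z$ obtained by deleting index $k$ is $X^{(Tk)}(X^{(Tk)})^* - z$. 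Substituting yields \eqref{Gij Gijk sc} directly.

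\textbf{The Schur formula for $1/G_{ii}^{(T)}$ \eqref{Gii expanded sc}.} Here I would use Schur's complement in the form: if $A$ is Hermitian and we single out index $i$, then $(A^{-1})_{ii} = (A_{ii} - \sum_{k,l \neq i} A_{ik} (A^{(i)})^{-1}_{kl} A_{li})^{-1}$. Applying this with $A = X^{(T)}(X^{(T)})^* - z$ and singling out the population index $i$, the diagonal entry $A_{ii}$ equals $\sum_\mu X_{i\mu} X^*_{\mu i} - z$, the off-diagonal blocks $A_{ik} = \sum_\mu X_{i\mu}X^*_{\mu k}$ for $k \neq i$, and the minor $A^{(i)}$ is $X^{(Ti)}(X^{(Ti)})^* - z$. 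The key algebraic manipulation is to recognize the combination $\sum_\mu X_{i\mu}X^*_{\mu i} - \sum_{k,l} (\sum_\mu X_{i\mu}X^*_{\mu k})(A^{(i)})^{-1}_{kl}(\sum_\nu X_{l\nu}X^*_{\nu i})$ and rewrite it using the identity relating $X^*X$ and $XX^*$: one has $(X^*X - z)^{-1} X^* = X^*(XX^* - z)^{-1}$ (and correspondingly for the matrices with row $i$ removed), which after a short computation collapses the double sum over population indices $k,l$ into the single quadratic form $-z \sum_{\mu\nu} X_{i\mu} R^{(Ti)}_{\mu\nu} X^*_{\nu i}$, leaving $1/G^{(T)}_{ii} = -z - z\sum_{\mu\nu} X_{i\mu}R^{(Ti)}_{\mu\nu}X^*_{\nu i}$. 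This ``sample-population duality'' step is the only real content; I expect it to be the main obstacle, since it requires carefully tracking which matrix (with or without which rows) the resolvent $R^{(Ti)}$ refers to and invoking the intertwining relation at the level of minors.

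\textbf{The off-diagonal Schur formula \eqref{Gij expanded sc}.} Finally, for $i \neq j$ I would use the standard two-index Schur/resolvent-expansion identity $(A^{-1})_{ij} = -(A^{-1})_{ii} (A^{(i)})^{-1}_{jj} (A - \text{diag terms})\ldots$ — more precisely, the cofactor-type formula $G^{(T)}_{ij} = -G^{(T)}_{ii} G^{(iT)}_{jj}\big(A_{ij} - \sum_{k,l \neq i,j} A_{ik}(A^{(ij)})^{-1}_{kl}A_{lj}\big)$, where $A = X^{(T)}(X^{(T)})^*-z$. Since $A_{ij} = \sum_\mu X_{i\mu}X^*_{\mu j}$ for $i\neq j$ (no $z$ term appears off-diagonal), the same sample-population duality that collapsed the double sum in the previous step applies here, turning the bracket into $z \sum_{\mu\nu} X_{i\mu} R^{(Tij)}_{\mu\nu}X^*_{\nu j}$ and giving \eqref{Gij expanded sc}. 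All of these steps are routine once the duality identity $(X^*X-z)^{-1}X^* = X^*(XX^*-z)^{-1}$ is established for the relevant minors; I would state that as a preliminary observation and then the three formulas follow by bookkeeping. I would also note, as the excerpt does, that parallel identities hold in \cite{EYY1, EKYY2} and so a reader may simply be referred there for the detailed computation.
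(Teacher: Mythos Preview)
Your proposal is correct and follows exactly the approach the paper indicates: the paper simply states that these identities ``are elementary consequences of Schur's complement formula'' and refers to Lemma~4.2 of \cite{EYY1} and Lemma~6.10 of \cite{EKYY2} without giving details. Your outline fills in precisely the expected computation, including the key intertwining identity $Y^*(YY^*-z)^{-1}Y = I + z(Y^*Y-z)^{-1}$ that collapses the population-index sum into the $R^{(Ti)}$ (respectively $R^{(Tij)}$) quadratic form and produces the factor of $z$.
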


Definition \ref{def: removing rows} and Lemma \ref{lem: res identity 1} have the following analogues for removing columns and identities for $R$.

\begin{definition}[Removing columns] \label{def: removing columns}
For $T \subset \{1, \dots, N\}$ define
\begin{equation*}
(X^{[T]})_{i\mu} \;\deq\; \ind{\mu \notin T} X_{i\mu}\,.
\end{equation*}
Moreover, for $\mu,\nu \notin T$ we define the resolvents entries
\begin{equation*}
G^{[T]}_{ij}(z) \;\deq\; \pb{X^{[T]}(X^{[T]})^* - z}^{-1}_{ij}\,, \qquad
R^{[T]}_{\mu \nu}(z) \;\deq\; \pb{(X^{[T]})^*X^{[T]} - z}^{-1}_{\mu \nu}\,.
\end{equation*}
When $T = \{\mu\}$, we abbreviate $[\{\mu\}]$ by $[\mu]$ in the above definitions; similarly, we write $[\mu \nu]$ instead of $[\{\mu ,\nu\}]$.
\end{definition}

We use the following expansion formulas for $R$, which are analogoues to those of Lemma \ref{lem: res identity 1}.
\begin{lemma}[Resolvent identities for $R$] \label{lem: res identity 2}
For $\mu,\nu,\rho \notin T$ and $\mu,\nu \neq \rho$ we have
\begin{equation}\label{RT+}
R_{\mu \nu}^{[T]} \;=\; R_{\mu \nu}^{[T\rho]} + \frac{R_{\mu \rho}^{[T]} R_{\rho \nu}^{[T]}}{R_{\rho \rho}^{[T]}}\,, \qquad
\frac{1}{R_{\mu \mu}^{[T]}} \;=\; \frac{1}{R_{\mu \mu}^{[T \rho]}} - \frac{ R_{\mu\rho}^{[T]} R_{\rho\mu}^{[T]}}{  R_{\mu\mu}^{[T]} R_{\mu\mu}^{[T\rho]} R_{\rho\rho}^{[T]}}\,. 
\end{equation}
For $\mu \notin T$ we have
\begin{equation}\label{RTuu}
\frac{1}{R_{\mu \mu}^{[T]}} \;=\; - z -  z \sum_{i,j} X_{\mu i}^* G^{[T\mu]}_{ij} X_{j \mu}\,.
\end{equation}
Moreover, for $\mu,\nu \notin T$ and $\mu \neq \nu$ we have
\begin{equation}\label{RTuv}
R_{\mu \nu}^{[T]} \;=\;  z  R_{\mu \mu}^{[T]} R_{\nu \nu}^{[\mu T]} \sum_{i,j} X^*_{\mu i} G^{[T \mu \nu]}_{ij} X_{j \nu}\,.
\end{equation}
\end{lemma}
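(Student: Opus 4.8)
The quickest plan is to deduce Lemma~\ref{lem: res identity 2} directly from Lemma~\ref{lem: res identity 1} by applying the latter with $X^*$ in place of $X$. Indeed $(X^*)(X^*)^* = X^*X$ and $(X^*)^*(X^*) = XX^*$, so this substitution interchanges the roles of $G$ and $R$; deleting a row of $X^*$ is deleting the corresponding column of $X$, which matches the bracket superscripts of Definition~\ref{def: removing columns}; and $(X^*)_{\mu i} = X^*_{\mu i}$ produces exactly the entries appearing on the right-hand sides of \eqref{RTuu} and \eqref{RTuv}. Under this dictionary ($X \leftrightarrow X^*$, $G \leftrightarrow R$, Latin $\leftrightarrow$ Greek indices), the three displays \eqref{RT+}, \eqref{RTuu}, \eqref{RTuv} are term by term the images of \eqref{Gij Gijk sc}, \eqref{Gii expanded sc}, \eqref{Gij expanded sc}.

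For completeness I would also give the argument from scratch, as for Lemma~\ref{lem: res identity 1}: it is the Schur complement (block inversion) formula applied to the Hermitian matrix $A \deq (X^{[T]})^* X^{[T]} - z$, whose block over $T$ equals $-z$ and decouples from the rest, so one may work with the principal submatrix on the indices outside $T$. For \eqref{RTuu}, split off the index $\mu$: with $\bx_\mu$ the $\mu$-th column of $X$ and $\wh X$ the matrix of columns $\bx_\rho$, $\rho \notin T \cup \{\mu\}$ (so that $\wh X \wh X^*$ has resolvent $G^{[T\mu]}$), Schur's formula gives $1/R^{[T]}_{\mu\mu} = A_{\mu\mu} - \bx_\mu^* \wh X (\wh X^*\wh X - z)^{-1}\wh X^*\bx_\mu$ with $A_{\mu\mu} = \normb{\bx_\mu}^2 - z$. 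The push-through identity $\wh X(\wh X^*\wh X - z)^{-1}\wh X^* = I + z(\wh X\wh X^* - z)^{-1}$ then turns the last term into $\normb{\bx_\mu}^2 + z\sum_{i,j} X^*_{\mu i} G^{[T\mu]}_{ij} X_{j\mu}$; the two $\normb{\bx_\mu}^2$ terms cancel and \eqref{RTuu} follows.

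The first identity of \eqref{RT+} is the standard block-inversion relation stating that deleting one further row and column $\rho$ from an invertible matrix changes its inverse by $C_{\mu\nu} \mapsto C_{\mu\nu} - C_{\mu\rho}C_{\rho\nu}/C_{\rho\rho}$; here it reads $R^{[T\rho]}_{\mu\nu} = R^{[T]}_{\mu\nu} - R^{[T]}_{\mu\rho}R^{[T]}_{\rho\nu}/R^{[T]}_{\rho\rho}$, and the second identity follows by subtracting reciprocals and setting $\nu = \mu$. For \eqref{RTuv} I would instead split off the pair $\{\mu,\nu\}$: the same push-through computation (now with $\wh X$ the matrix of columns indexed outside $T\cup\{\mu,\nu\}$) shows the $2\times 2$ Schur complement $\mathcal S$ has $\mathcal S_{\mu\nu} = \bx_\mu^*\bx_\nu - \bx_\mu^*\wh X(\wh X^*\wh X - z)^{-1}\wh X^*\bx_\nu = -z\sum_{i,j} X^*_{\mu i} G^{[T\mu\nu]}_{ij} X_{j\nu}$ and, by \eqref{RTuu} once more, $\mathcal S_{\nu\nu} = 1/R^{[\mu T]}_{\nu\nu}$. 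Inverting the $2\times 2$ matrix gives $R^{[T]}_{\mu\mu} = \mathcal S_{\nu\nu}/\det\mathcal S$ and $R^{[T]}_{\mu\nu} = -\mathcal S_{\mu\nu}/\det\mathcal S$, whence $R^{[T]}_{\mu\nu} = -\mathcal S_{\mu\nu} R^{[T]}_{\mu\mu}/\mathcal S_{\nu\nu} = z R^{[T]}_{\mu\mu} R^{[\mu T]}_{\nu\nu}\sum_{i,j} X^*_{\mu i} G^{[T\mu\nu]}_{ij} X_{j\nu}$, which is \eqref{RTuv}.

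I do not anticipate any genuine obstacle: the lemma is pure linear algebra. The only thing needing care is the bookkeeping of the bracketed superscripts — tracking precisely which columns of $X$ have been deleted — and invoking the push-through identity with the correct minor $\wh X$, so that the $G$ it produces carries the superscript $[T\mu]$ (respectively $[T\mu\nu]$) rather than $[T]$.
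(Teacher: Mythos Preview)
Your proposal is correct and aligns with the paper's treatment: the paper does not give an explicit proof of Lemma~\ref{lem: res identity 2} at all, merely stating it as the analogue of Lemma~\ref{lem: res identity 1} (itself cited as an elementary consequence of Schur's complement formula). Both your routes --- the symmetry argument $X \leftrightarrow X^*$ and the direct Schur/push-through computation --- are exactly the sort of argument the paper has in mind, and your bookkeeping of the minors is correct.
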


Here we draw attention to a fine notational distinction. Parentheses $(\cdot)$ in $X^{(T)}$ indicate removal of rows (indexed by Latin letters),
while square brackets $[\cdot]$ in $X^{[T]}$ indicate removal of columns (indexed by Greek letters). In particular, this notation makes it unambiguous whether $T$ is required to be a subset of $\{1, \dots, M\}$ or of $\{1, \dots, N\}$.

The following lemma is an immediate consequence of the fact that for $\phi \geq 1$ the spectrum of $X X^*$ is equal to the spectrum of $X^* X$ plus $M - N$ zero eigenvalues. (A similar result holds for for $\phi \leq 1$, and if $X$ is replaced with $X^{[T]}$ or $X^{(U)}$.) 

\begin{lemma} \label{lem: tr identity}
Let $T \subset \{1, \dots, N\}$ and $U \subset \{1, \dots, M\}$. Then we have
\begin{equation*}
\tr R^{[T]}-\tr G^{[T]}  \;=\; \frac{M-(N-\abs{T})}{z}
\end{equation*}
as well as
\begin{equation*}
\tr R^{(U)}-\tr G^{(U)} \;=\; \frac{(M-\abs{U})-N}{z}
\end{equation*}
\end{lemma}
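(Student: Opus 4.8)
The plan is to prove the trace identity purely by linear algebra, exploiting the fact that $X^*X$ and $XX^*$ have the same nonzero eigenvalues with the same multiplicities. First I would recall the standard fact that for any $M' \times N'$ matrix $Y$, the matrices $Y^*Y$ (of size $N' \times N'$) and $YY^*$ (of size $M' \times M'$) share the same spectrum away from zero, including multiplicities; this follows because if $Y^*Y v = \lambda v$ with $\lambda \neq 0$ then $Yv \neq 0$ and $YY^*(Yv) = \lambda(Yv)$, and this map $v \mapsto Yv$ is a bijection between the corresponding eigenspaces. Consequently, for $z \notin \{0\} \cup \spec(Y^*Y)$, the resolvents differ only in their contribution from the zero eigenvalue: writing $r_0 = N' - \rank(Y)$ for the multiplicity of the eigenvalue $0$ of $Y^*Y$ and $r_0' = M' - \rank(Y)$ for that of $YY^*$, one has
\begin{equation*}
\tr (Y^*Y - z)^{-1} - \tr(YY^* - z)^{-1} \;=\; \frac{r_0 - r_0'}{-z} \cdot (-1)^{?}
\end{equation*}
— more carefully, each zero eigenvalue contributes $(0-z)^{-1} = -1/z$, so the difference of traces is $-\tfrac{r_0}{z} + \tfrac{r_0'}{z} = \tfrac{r_0' - r_0}{z} = \tfrac{(M' - \rank Y) - (N' - \rank Y)}{z} = \tfrac{M' - N'}{z}$.

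Next I would apply this to the two cases at hand. For the first identity, take $Y = X^{[T]}$, which is an $M \times N$ matrix but whose columns indexed by $T$ vanish; effectively it has $M$ rows and $N - \abs{T}$ "active" columns, so the relevant dimensions are $M' = M$ and $N' = N - \abs{T}$ (the extra $\abs{T}$ zero columns of $X^{[T]}$ contribute zero eigenvalues to $(X^{[T]})^*X^{[T]}$ on both sides in a way that is already accounted for — one should note that $R^{[T]}$ as defined restricts to indices $\mu,\nu \notin T$, i.e. it is the resolvent of the genuine $M \times (N - \abs{T})$ submatrix). Thus $\tr R^{[T]} - \tr G^{[T]} = \tfrac{M - (N - \abs{T})}{z}$. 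For the second identity, take $Y = X^{(U)}$, an $M \times N$ matrix with rows indexed by $U$ vanishing; the genuine dimensions are $M' = M - \abs{U}$ and $N' = N$, and $G^{(U)}$ is the resolvent restricted to row/column indices $i,j \notin U$. This gives $\tr R^{(U)} - \tr G^{(U)} = \tfrac{(M - \abs{U}) - N}{z}$.

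The only mild subtlety — and the step I would be most careful about — is matching the bookkeeping between the abstract statement "same nonzero spectrum" and the precise meaning of $\tr R^{[T]}$ and $\tr G^{[T]}$ in Definitions \ref{def: removing columns} and \ref{def: removing rows}, namely that these traces range only over indices outside $T$ (resp. $U$), so that they are honestly the traces of the resolvents of the reduced matrices $\wt X := (X^{[T]})$ viewed as an $M \times (N-\abs T)$ matrix (resp. an $(M - \abs U) \times N$ matrix). Once that identification is made, the computation above applies verbatim with $Y = \wt X$. I would also note that $z$ can be taken in the upper half-plane (where all resolvents are well-defined since $X^*X, XX^* \geq 0$ are Hermitian), so there is no issue with $z$ lying in the spectrum. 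No deeper input is needed; the whole proof is two or three lines once the spectral correspondence is invoked.
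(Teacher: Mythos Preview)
Your argument is correct and is exactly the approach the paper takes: the paper simply declares the lemma ``an immediate consequence of the fact that \ldots the spectrum of $XX^*$ is equal to the spectrum of $X^*X$ plus $M-N$ zero eigenvalues,'' and you have supplied precisely that computation, together with the correct bookkeeping that $\tr R^{[T]}$ and $\tr G^{(U)}$ are to be read as traces of the resolvents of the genuinely reduced $M\times(N-\abs{T})$ (resp.\ $(M-\abs{U})\times N$) matrices. There is nothing to add.
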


In particular,  we find
\begin{equation} \label{trG_tr_R}
\frac{1}{M} \tr G \;=\; \frac{1}{\phi} \, \frac {1}{N} \tr R + \frac{1}{\phi} \frac{1 - \phi}{z}\,,
\end{equation}
in agreement with the
relation \eqref{mphi_minvphi} and the heuristics $M^{-1} \tr G \sim m_{\phi^{-1}}$ and $N^{-1} \tr R \sim m_\phi$.

The following lemma is an easy consequence of the well-known interlacing property of the eigenvalues of $X X^*$ and $X^{(i)} (X^{(i)})^*$, as well as the eigenvalues of $X^* X$ and $(X^{[\mu]})^* X^{[\mu]}$. 

\begin{lemma}[Eigenvalue interlacing] \label{lem: inter}
For any $T \subset \{1, \dots, N\}$ and $U \subset \{1, \dots, M\}$, there exists a constant $C$, depending only on $\abs{T}$ and $\abs{U}$, such that 
\begin{equation*}
\absb{\tr R^{[T]}-\tr R} \;\leq\; C\eta^{-1} \,, \qquad \absb{\tr R^{(U)}-\tr R} \;\leq\; C\eta^{-1}\,.
\end{equation*}
\end{lemma}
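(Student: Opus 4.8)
The plan is to reduce both estimates to the \emph{Cauchy interlacing theorem} applied to the Gram matrices $X^* X$ and $X X^*$, combined with an integration-by-parts identity expressing the trace of a resolvent through the eigenvalue counting function.

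First I would record the structural fact underlying the definitions in Definition~\ref{def: removing columns}: the matrix $(X^{[T]})^* X^{[T]}$ is obtained from $X^* X$ by setting to zero all rows and columns indexed by $T$, so its spectrum consists of the $N-\abs{T}$ eigenvalues of the principal submatrix of $X^* X$ on $\{1,\dots,N\}\setminus T$, together with $\abs{T}$ extra zero eigenvalues. Since $X^* X$ is positive semidefinite, those extra zeros sit at the bottom of the spectrum, so iterating Cauchy interlacing $\abs{T}$ times (deleting one row-and-column at a time) gives, for the counting functions $n_A(x) \deq \#\{\alpha : \lambda_\alpha(A) \le x\}$,
\[
\absb{n_{X^* X}(x) - n_{(X^{[T]})^* X^{[T]}}(x)} \;\le\; \abs{T} \qquad \text{for all } x \in \R\,.
\]
The identical argument applied to $X X^*$ and the principal submatrix picked out by deleting the rows in $U$ gives $\absb{n_{X X^*}(x) - n_{X^{(U)}(X^{(U)})^*}(x)} \le \abs{U}$ for all $x$.

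Next I would use the elementary identity, valid for Hermitian matrices $A, B$ of the same size,
\[
\tr (A - z)^{-1} - \tr (B - z)^{-1} \;=\; \int_\R \frac{n_A(x) - n_B(x)}{(x - z)^2}\, \dd x\,,
\]
obtained by writing $\tr(A-z)^{-1} = \int (x-z)^{-1}\, \dd n_A(x)$ and integrating by parts, the boundary terms vanishing because $n_A - n_B$ is bounded and compactly supported. Applied to $A = X^* X$ and $B = (X^{[T]})^* X^{[T]}$, together with the counting-function bound above, this gives
\[
\absb{\tr R^{[T]} - \tr R} \;\le\; \abs{T} \int_\R \frac{\dd x}{(x - E)^2 + \eta^2} \;=\; \frac{\pi \abs{T}}{\eta}\,,
\]
which is the first claim. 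For the second, the same identity applied to $A = X X^*$ and $B = X^{(U)}(X^{(U)})^*$ yields $\absb{\tr G^{(U)} - \tr G} \le \pi \abs{U}/\eta$, and then Lemma~\ref{lem: tr identity} (which gives $\tr R^{(U)} - \tr R = (\tr G^{(U)} - \tr G) - \abs{U}/z$, with $\abs{1/z}\le 1/\eta$) converts this into $\absb{\tr R^{(U)} - \tr R} \le (\pi+1)\abs{U}/\eta$; alternatively one observes that $(X^{(U)})^* X^{(U)} = X^* X - \sum_{i\in U} \f r_i^* \f r_i$ (with $\f r_i$ the $i$-th row of $X$) is a rank-$\le\abs{U}$ positive-semidefinite perturbation of $X^* X$, so its counting function again lies within $\abs{U}$ of that of $X^* X$, and one applies the integral identity directly. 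Taking $C \deq (\pi+1)\bke{\abs{T}+\abs{U}}$ finishes the proof.

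The only fiddly points are bookkeeping: keeping track of the $\abs{T}$ (resp.\ $\abs{U}$) zero eigenvalues created by zeroing rows/columns, so that one really is comparing counting functions of matrices of the same dimension and the integration-by-parts boundary terms vanish, and checking the trivial integral $\int_\R \bke{(x-E)^2+\eta^2}^{-1}\dd x = \pi/\eta$. Neither is a genuine obstacle; the substance of the lemma is entirely in the interlacing input.
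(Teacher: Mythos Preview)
Your proposal is correct and follows exactly the approach the paper intends: the paper gives no proof beyond the one-line remark that the lemma is ``an easy consequence of the well-known interlacing property,'' and your argument via Cauchy interlacing on the counting functions together with the resolvent--counting-function integral identity is the standard way to make that remark precise. The bookkeeping you flag (the extra zero eigenvalues and the boundary terms in the integration by parts) is handled correctly.
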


Finally, we record the fundamental identity
\begin{equation} \label{Ward}
\sum_{j} \absb{G_{ij}^{[T]}}^2 \;=\; \frac{1}{\eta} \im G_{ii}^{[T]}\,,
\end{equation}
which follows easily by spectral decomposition of $G^{[T]}$.

\subsection{Reduction to the case $\phi \geq 1$} \label{sec:phi_geq_1}
We shall prove Theorems \ref{thm: IMP gen} and \ref{thm: IMP outside gen} by restricting ourselves to the case $\phi \geq 1$ but considering both $X^* X$ and $X X^*$ simultaneously.  In this short section we give the details of this reduction. Define the control parameter
\begin{equation} \label{def of Psi MP}
\Psi(z) \;\equiv\; \Psi_\phi(z) \;\deq\; \sqrt{\frac{\im m_\phi(z)}{N\eta}}+\frac{1}{N\eta}\,.
\end{equation}

We shall in fact prove the following. Recall the definitions \eqref{def_S_theta} of $\f S$ and \eqref{def_S_theta_wt} and of $\wt {\f S}$.
\begin{theorem} \label{thm: IMP}
Suppose that \eqref{cond on entries of X}, \eqref{NM gen}, \eqref{moments of X-1}, and $\phi \geq 1$ hold.
Then
\begin{equation}\label{bound: Gij isotropic}
\absb{\scalar{\f v}{G(z) \f w} - m_{\phi^{-1}}(z) \scalar{\f v}{\f w}} \; \prec \; \frac{1}{\phi}\, \Psi(z)
\end{equation}
uniformly in $z \in \f S$ and any deterministic unit vectors $\f v, \f w \in \C^M$. 
Similarly,
\begin{equation}\label{bound: Rij isotropic}
\absb{\scalar{\f v}{R(z) \f w} - m_\phi(z) \scalar{\f v}{\f w}} \;\prec\; \Psi(z)
\end{equation}
uniformly in $z \in \f S$ and any deterministic unit vectors $\f v, \f w \in \C^N$.
Moreover, we have
\begin{equation}\label{bound: trR}
\absbb{\frac{1}{N}\tr R(z) - m_\phi(z)} \;\prec\;   \frac{1}{N\eta}\,, \qquad
\absbb{\frac{1}{M}\tr G(z) - m_{\phi^{-1}}(z)} \;\prec\;   \frac{1}{M\eta}
\end{equation}
uniformly in $z \in \f S$.
\end{theorem}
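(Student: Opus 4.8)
The plan is to prove Theorem~\ref{thm: IMP} in two stages. \textbf{Stage 1} establishes the \emph{entrywise} local Marchenko--Pastur law together with the averaged (trace) bounds, and \textbf{Stage 2} bootstraps this into the isotropic estimates. For Stage~1 I would follow and streamline \cite{PY}: from Schur's complement one has the self-consistent relation \eqref{RTuu}, $R_{\mu\mu}^{-1}=-z-z\sum_{ij}X^*_{\mu i}G^{[\mu]}_{ij}X_{j\mu}$, and the partial expectation of the quadratic form over column $\mu$ contributes (up to interlacing errors controlled by Lemma~\ref{lem: inter}) a term proportional to $\frac1N\tr R$, the remainder being a fluctuation bounded by Lemma~\ref{lemma: LDE}. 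Comparison with the defining equation \eqref{identity for m MP} and the stability bounds of Lemma~\ref{lemma: mg} then give $\max_{\mu\nu}\abs{R_{\mu\nu}-\delta_{\mu\nu}m_\phi}\prec\Psi$ (the off-diagonal entries via \eqref{RTuv}), and a fluctuation-averaging step upgrades the trace error to $\frac1{N\eta}$; the analogous statements for $G$ follow from \eqref{trG_tr_R}, \eqref{mphi_minvphi} and the fact that for $\phi\ge1$ the spectrum of $XX^*$ is that of $X^*X$ with $M-N$ extra zeros. This already yields \eqref{bound: trR}, and is carried out in Section~\ref{sec: MP law}.

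For Stage~2, fix deterministic unit vectors $\f v,\f w\in\C^N$ and write
\[
\scalar{\f v}{R\f w}-m_\phi\scalar{\f v}{\f w}\;=\;\sum_\mu\bar v_\mu(R_{\mu\mu}-m_\phi)w_\mu\;+\;\sum_{\mu\neq\nu}\bar v_\mu R_{\mu\nu}w_\nu\,.
\]
The diagonal sum is $\prec\Psi$ at once from Stage~1 and $\sum_\mu\abs{\bar v_\mu w_\mu}\le\norm{\f v}\norm{\f w}=1$. The off-diagonal sum is the heart of the matter: a term-by-term bound $\abs{R_{\mu\nu}}\prec\Psi$ loses a whole factor $N$, so one must exploit the cancellations among the $R_{\mu\nu}$. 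I would insert the resolvent identity \eqref{RTuv}, $R_{\mu\nu}=zR_{\mu\mu}R^{[\mu]}_{\nu\nu}\sum_{ij}X^*_{\mu i}G^{[\mu\nu]}_{ij}X_{j\nu}$, replace the diagonal factors by $m_\phi+O_\prec(\Psi)$ and $G^{[\mu\nu]}_{ij}$ by $\delta_{ij}m_{\phi^{-1}}$ plus its fluctuation, and then \emph{iterate}: each fluctuation is expanded in turn by \eqref{Gij Gijk sc}, \eqref{Gij expanded sc}, \eqref{RT+}, \eqref{RTuv}, generating a hierarchy of terms organized by a graphical expansion with edges labelled by entries of $X$ and vertices by summation indices, in the spirit of \cite{EKY2}. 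In each fully expanded monomial the entries of $X$ attached to distinct columns are mutually independent and independent of the accompanying resolvent factors --- exactly the structure recorded by the Latin/Greek index splitting --- so each index sum is controlled by Lemma~\ref{lemma: LDE}, with $\sum_j\abs{G_{ij}}^2$ reduced to $\eta^{-1}\im G_{ii}$ by the Ward identity \eqref{Ward}. Since the expansions for $R$ and $G$ feed into one another, one sets up a single coupled system for the two isotropic errors; working at the level of high moments $\E\,\absb{\scalar{\f v}{(R-m_\phi)\f w}}^{2p}$ (for the uniformity required by $\prec$), and denoting by $\Phi$ an a priori polynomial bound on these errors, the truncated expansion shows that they are $\prec\Psi+N^{-c}\Phi$, which after finitely many iterations collapses to $\prec\Psi$, i.e.\ \eqref{bound: Rij isotropic}, and likewise $\prec\phi^{-1}\Psi$ for $G$ (with control parameter $\phi^{-1}\Psi$ rather than $\Psi$ because of the scaling of $m_{\phi^{-1}}$ and of $z$ on $\f S$), i.e.\ \eqref{bound: Gij isotropic}. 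A concise overview is given in Section~\ref{sec:sketch}.

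The main obstacle lies entirely in Stage~2: one must show that the graphical expansion can be truncated after finitely many generations with a remainder that is negligible uniformly on $\f S$ (in particular down to $\eta\ge K^{-1+\omega}$), and --- the crucial point --- that once the dangerous leading fluctuations have cancelled, the surviving terms really are summable to $\Psi$ rather than accumulating the $\sqrt N$-type losses of the crude term-by-term estimate. Organizing the expansion so that the independence needed for Lemma~\ref{lemma: LDE} always holds, i.e.\ carefully tracking which index sets have been removed from which resolvent, while not over-expanding, is the delicate combinatorial core of the argument. A secondary technical point is that $\im m_\phi$ and $\abs{1-m_\phi^2}$ degenerate near the edges $\gamma_\pm$ (Lemma~\ref{lemma: mg}), so the stability of the relevant fixed-point relations has to be monitored throughout.
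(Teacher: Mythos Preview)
Your Stage~1 matches the paper's Section~\ref{sec: MP law}. Stage~2 has the right scaffolding---high moments, a graphical expansion via the resolvent identities, and the independence of the $X$-entries from the maximally reduced resolvent---but the mechanism you propose for closing the argument does not work, and is not what the paper does. The self-improving scheme $\Phi\prec\Psi+N^{-c}\Phi$ is not available: after replacing the diagonal factors by $m_\phi+O_\prec(\Psi)$, the error terms are objects like
\[
\sum_{\mu\neq\nu}\bar v_\mu\,(R_{\mu\mu}-m_\phi)\,R^{[\mu]}_{\nu\nu}\,\pbb{\sum_{i,j}X^*_{\mu i}G^{[\mu\nu]}_{ij}X_{j\nu}}\,w_\nu\,,
\]
which are not the isotropic error times something small; each summand is $O_\prec(\Psi^2)$ pointwise, so the sum is only $O_\prec\pb{\norm{\f v}_1\norm{\f w}_1\Psi^2}\le O_\prec(N\Psi^2)$, far larger than $\Psi$ in the regime of interest---precisely the obstacle you flag at the end. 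And Lemma~\ref{lemma: LDE} bounds a single quadratic form for \emph{fixed} $\mu,\nu$; it does not control a weighted sum of $N^2$ such forms whose coefficients (through $G^{[\mu\nu]}$) are correlated across $\mu,\nu$.

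The paper's resolution is structurally different. In $\E\abs{\cal Z}^p$ one first partitions the $2p$ outer (black) indices $\f b=(b_{kr})$ according to their coincidences, producing the graphs $\Delta(P)$ of \eqref{E Xp 2}; this is what eventually converts $\abs{v_{a_i}}$ into $\abs{v_{a_i}}^{\deg_\Delta(i)}$. After the full expansion into entries of $X$ and $R^{(\f a_b)}$, one takes the \emph{expectation} in the $X$-entries directly---no large-deviation bound is used at this stage. The expectation forces the white (Greek) vertices adjacent to each black vertex to merge into blocks of size at least two, which controls the white summations. For the black vertices, those with $\deg_\Delta(i)\ge2$ contribute $\sum_a\abs{v_a}^2=1$; the dangerous ones have $\deg_\Delta(i)=1$, each costing $\sum_a\abs{v_a}\le M^{1/2}$. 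The idea missing from your outline is a \emph{parity argument} (Lemma~\ref{lem: final power counting} and Step~8 of Section~\ref{sec:sketch}): the operations $\tau_0,\tau_1,\rho$ change $\deg(i)$ only by multiples of $4$, so a degree-one black vertex of $\Delta$ has degree $\ge5$ in every nonvanishing fully expanded graph, forcing a high moment of the incident $X$-entries and an extra factor $M^{-1/2}$ that exactly cancels the $\ell^1$-loss. Without this observation the power counting does not close, and no iteration recovers it.
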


\begin{theorem} \label{thm: IMP outside}
Suppose that \eqref{cond on entries of X}, \eqref{NM gen}, \eqref{moments of X-1}, and $\phi \geq 1$ hold. Then
\begin{equation}\label{bound: Gij isotropic outside sc}
\absb{\scalar{\f v}{G(z) \f w} - m_{\phi^{-1}}(z) \scalar{\f v}{\f w}} \; \prec \; \frac{1}{\phi} \, \sqrt{\frac{\im m_\phi(z)}{N\eta}}
\end{equation}
uniformly in $z \in \wt {\f S}$ and any deterministic unit vectors $\f v, \f w \in \C^M$. Similarly,
\begin{equation}\label{bound: Rij isotropic outside sc}
\absb{\scalar{\f v}{R(z) \f w} - m_\phi(z) \scalar{\f v}{\f w}} \;\prec\; \sqrt{\frac{\im m_\phi(z)}{N\eta}}
\end{equation}
uniformly in $z \in \wt{\f S}$ and any deterministic unit vectors $\f v, \f w \in \C^N$.
\end{theorem}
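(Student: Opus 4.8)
The plan is to derive Theorem~\ref{thm: IMP outside} from the bulk isotropic law (Theorem~\ref{thm: IMP}) and eigenvalue rigidity (Theorem~\ref{thm: cov-rig}) by a deterministic continuity argument in $\eta = \im z$. Since $\wt{\f S}(\omega)$ shrinks as $\omega$ grows we may assume $\omega$ small; recall also that $K=N$ for $\phi\geq 1$. I will only treat \eqref{bound: Rij isotropic outside sc}; the estimate \eqref{bound: Gij isotropic outside sc} for $G=(XX^*-z)^{-1}$ follows by the same argument with the roles of rows and columns exchanged and $m_{\phi^{-1}}$ in place of $m_\phi$ (the extra factor $\phi^{-1}$ being accounted for by \eqref{mphi_minvphi}), using that for $\phi\geq 1$ the matrices $X^*X$ and $XX^*$ share the same nonzero eigenvalues, so $E$ is separated from one spectrum exactly when it is separated from the other. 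Throughout write $f(z)\deq\scalar{\f v}{R(z)\f w}-m_\phi(z)\scalar{\f v}{\f w}$; by \eqref{im m gamma} the target $\sqrt{\im m_\phi(z)/(N\eta)}$ equals, up to constants, $(\kappa+\eta)^{-1/4}K^{-1/2}$ on $\wt{\f S}$, which stays bounded as $\eta\downarrow 0$.

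First, Theorem~\ref{thm: IMP} already gives the sharp bound on the part of $\wt{\f S}$ where $(N\eta)^{-1}\leq\im m_\phi(z)$: by \eqref{im m gamma} this condition reads $\sqrt{\kappa+\eta}\lesssim N\eta^2$, which together with $\kappa\geq K^{-2/3+\omega}$ forces $\eta\gtrsim N^{-2/3+\omega/4}\geq K^{-1+\omega}$ (here $\omega$ small), so $z\in\f S$ and $\abs{f(z)}\prec\Psi(z)=\sqrt{\im m_\phi(z)/(N\eta)}+(N\eta)^{-1}\asymp\sqrt{\im m_\phi(z)/(N\eta)}$. In particular, for $\eta_*\deq N^{-1/2}\kappa^{1/4}$ (note $\eta_*\ll\kappa$, $\eta_*\geq K^{-1+\omega}$, and $(N\eta_*)^{-1}\asymp\im m_\phi(E+\ii\eta_*)$), the point $E+\ii\eta_*$ lies in this region, so $\abs{f(E+\ii\eta_*)}\prec\Psi^*$, where $\Psi^*\deq\kappa^{-1/4}K^{-1/2}\asymp\sqrt{\im m_\phi(E+\ii\eta)/(N\eta)}$ for all $0<\eta\leq\eta_*$. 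It remains to propagate $\abs{f(E+\ii\eta)}\prec\Psi^*$ from $\eta=\eta_*$ down to arbitrarily small $\eta$.

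To this end, by Theorem~\ref{thm: cov-rig} the event $\Xi$ that $X^*X$ has no eigenvalue within distance $\kappa/2$ of $E$ has high probability. On $\Xi$ one has $\norm{R(E\pm\ii\zeta)}\leq 2\kappa^{-1}$, and -- since for $\phi\geq 1$ the matrix $X^*X$ has no zero eigenvalue -- the spectral representation
\[
\im\scalar{\f u}{R(E+\ii\zeta)\f u} \;=\; \sum_{\alpha=1}^N \frac{\zeta\,\absb{\scalar{\f u^{(\alpha)}}{\f u}}^2}{(\lambda_\alpha-E)^2+\zeta^2}
\]
shows that, still on $\Xi$, $\zeta\mapsto\im\scalar{\f u}{R(E+\ii\zeta)\f u}$ is nondecreasing on $(0,\kappa/2)$ for every unit $\f u$. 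Applying this with $\f u\in\{\f v,\f w\}$, together with the identity $\norm{R(E\pm\ii\zeta)\f u}^2=\zeta^{-1}\im\scalar{\f u}{R(E+\ii\zeta)\f u}$, the bound $\abs{f(E+\ii\eta_*)}\prec\Psi^*$ of the previous step, and $\im m_\phi(E+\ii\eta_*)\asymp\Psi^*$, yields the a priori bound
\[
\norm{R(E\pm\ii\zeta)\f u}^2 \;\prec\; \min\hb{\Psi^*\zeta^{-1},\ 4\kappa^{-2}} \qquad(0<\zeta\leq\eta_*,\ \f u\in\{\f v,\f w\})\,,
\]
valid on a single high-probability event.

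Finally, using $\partial_\zeta R(E+\ii\zeta)=\ii R(E+\ii\zeta)^2$ I would write
\[
f(E+\ii\eta)-f(E+\ii\eta_*) \;=\; \ii\int_\eta^{\eta_*}\pB{\scalar{\f v}{R(E+\ii\zeta)^2\f w}-m_\phi'(E+\ii\zeta)\scalar{\f v}{\f w}}\,\dd\zeta\,,
\]
and estimate $\absb{\scalar{\f v}{R(E+\ii\zeta)^2\f w}}\leq\norm{R(E-\ii\zeta)\f v}\,\norm{R(E+\ii\zeta)\f w}\prec\min\{\Psi^*\zeta^{-1},4\kappa^{-2}\}$ by Cauchy--Schwarz and the a priori bound, while $\abs{m_\phi'(E+\ii\zeta)}\lesssim(\kappa+\zeta)^{-1/2}$ for $\zeta\leq\eta_*$ (from \eqref{S_MP}, using $\abs z\geq\omega$ on $\wt{\f S}$). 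Splitting the first integral at $\zeta\asymp\Psi^*\kappa^2$ and using $\eta_*\kappa^{-1/2}\asymp\Psi^*$ and $\kappa\geq K^{-2/3+\omega}$ yields $\abs{f(E+\ii\eta)-f(E+\ii\eta_*)}\lesssim\Psi^*\log K$, hence $\abs{f(E+\ii\eta)}\prec\Psi^*$ for all $0<\eta<\eta_*$, which together with the first step proves \eqref{bound: Rij isotropic outside sc}. Since the argument is pathwise on $\Xi$, it also yields the version simultaneous in $z\in\wt{\f S}$ of Remark~\ref{rem:all_z}. I expect the last continuity step to be the main obstacle: one must beat the naive deterministic bound $\abs{f}\lesssim\kappa^{-1}$ and recover the sharp power $(\kappa+\eta)^{-1/4}K^{-1/2}$, which succeeds precisely because away from the spectrum $\im m_\phi(E+\ii\eta)$ drops below $\Psi^*$ as soon as $\eta\lesssim\eta_*$, so that the monotonicity pins $\im\scalar{\f u}{R(E+\ii\zeta)\f u}\lesssim\Psi^*$ throughout and each unit of the $\zeta$-integral costs only $\Psi^*$ rather than $\kappa^{-1}$; keeping track of this bootstrap and its harmless $\log K$ loss is the delicate point.
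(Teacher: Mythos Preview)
Your proposal is correct and shares the paper's architecture: use rigidity (Theorem~\ref{thm: cov-rig}) to exclude eigenvalues near $E$, set the comparison scale $\eta_0\deq N^{-1/2}\kappa^{1/4}$ (your $\eta_*$), apply Theorem~\ref{thm: IMP} at $E+\ii\eta_0$, and exploit the monotonicity of $\zeta\mapsto\im\scalar{\f u}{R(E+\ii\zeta)\f u}$ on the rigidity event to propagate the bound to small $\eta$. Where you differ is only in the final comparison step. You integrate $\partial_\zeta R=\ii R^2$ along $[\eta,\eta_*]$ and bound $\abs{\scalar{\f v}{R^2\f w}}$ via Cauchy--Schwarz and the a priori estimate $\norm{R\f u}^2\prec\min\{\Psi^*\zeta^{-1},4\kappa^{-2}\}$, paying a harmless $\log K$. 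The paper avoids integration altogether: after polarizing to $\f v=\f w$, it compares $\scalar{\f v}{G(z)\f v}$ and $\scalar{\f v}{G(z_0)\f v}$ directly through the spectral decomposition, bounding the imaginary part by the very same monotonicity and the real-part difference by $\im\scalar{\f v}{G(z_0)\f v}$ via an elementary identity (see \eqref{strong 4}--\eqref{strong 5}); this loses no logarithm. Your anticipated ``delicate point'' thus admits a more direct resolution than the one you outline, but both routes are valid.
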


Let $\phi \geq 1$, i.e.\ $N\leq M$.
Recall that $\f u^{(1)}, \dots, \f u^{(N)} \in \C^N$ denote the normalized eigenvectors of $X^* X$ associated with the nontrivial eigenvalues $\lambda_1, \dots, \lambda_N$, and let  $\wt {\f u}\!\,^{(1)}, \dots, \wt {\f u}\!\,^{(N)} \in \C^M$ denote
 the normalized eigenvectors of $X X^*$ associated with the same eigenvalues $\lambda_1, \dots, \lambda_N$.
\begin{theorem} \label{thm: I deloc}
Suppose that \eqref{cond on entries of X}, \eqref{NM gen}, \eqref{moments of X-1}, and $\phi \geq 1$ hold. For any $\epsilon > 0$ 
we have the bounds
\begin{equation}\label{smfy}
\abs{\scalar{\f u^{(\alpha)}}{\f v}}^2 \;\prec\; N^{-1}\,, \qquad \abs{\scalar{\wt{\f u}\,\!^{(\alpha)}}{\f w}}^2 \;\prec\; M^{-1}\,
\end{equation}
uniformly for $\alpha \leq (1 - \epsilon) N$ and all normalized $\f v \in \C^N$ and $\f w \in \C^M$.
If in addition $\phi \geq 1 + c$ for some constant $c > 0$, then \eqref{smfy} holds uniformly for all $\alpha \leq N$. 
\end{theorem}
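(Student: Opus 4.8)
The plan is to derive both estimates in \eqref{smfy} from the isotropic local laws of Theorem~\ref{thm: IMP} together with the eigenvalue rigidity of Theorem~\ref{thm: cov-rig}, via the standard resolvent--eigenvector identity. For a unit vector $\f v \in \C^N$, spectral decomposition of $X^* X$ (which for $\phi \geq 1$ has exactly $K = N$ eigenvalues) gives, for $z = E + \ii \eta$ with $\eta > 0$,
\begin{equation*}
\im \scalar{\f v}{R(z) \f v} \;=\; \sum_{\alpha = 1}^N \frac{\eta \, \abs{\scalar{\f u^{(\alpha)}}{\f v}}^2}{(\lambda_\alpha - E)^2 + \eta^2}\,,
\end{equation*}
so retaining only the $\alpha$-th summand and setting $E = \lambda_\alpha$ yields the elementary bound $\abs{\scalar{\f u^{(\alpha)}}{\f v}}^2 \leq \eta \, \im \scalar{\f v}{R(\lambda_\alpha + \ii \eta) \f v}$. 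Everything then reduces to evaluating $R$ at a spectral parameter whose real part is the random point $\lambda_\alpha$ and whose imaginary part $\eta$ is taken as small as $\f S$ allows, namely $\eta = N^{-1+\omega}$ with $\omega$ small.

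First I would check that $z = \lambda_\alpha + \ii\eta$ lies in $\f S = \f S(\omega, N)$. By Theorem~\ref{thm: cov-rig}, on a high-probability event $\abs{\lambda_\alpha - \gamma_\alpha} \prec \alpha^{-1/3} N^{-2/3} = o(1)$; since the classical location satisfies $c(\epsilon) \leq \gamma_\alpha \leq \gamma_+$ uniformly in $\alpha \leq (1-\epsilon) N$ and in $\phi \geq 1$ (a standard fact about the Marchenko--Pastur density, whose mass near the lower edge $\gamma_-$ tends to $0$ as the cutoff approaches it), on this event $z \in \f S$ for all such $\alpha$ simultaneously once $\omega$ is chosen small enough depending on $\epsilon$. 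Next I would bound the resolvent: by \eqref{bound: Rij isotropic} (in the uniform-in-$z$ form of Remark~\ref{rem:all_z}) and Lemma~\ref{lemma: mg} one has $\im \scalar{\f v}{R(z) \f v} \leq \im m_\phi(z) + O_\prec(\Psi(z)) \prec 1$ on $\f S$, because $\im m_\phi(z) \leq C$ and $\Psi(z) \leq C$ when $N\eta \geq N^\omega \geq 1$. Combining gives $\abs{\scalar{\f u^{(\alpha)}}{\f v}}^2 \prec N^{-1+\omega}$ uniformly in $\alpha \leq (1-\epsilon)N$ and in unit $\f v$; since $\omega$ may be taken arbitrarily small, the definition of $\prec$ upgrades this to $\abs{\scalar{\f u^{(\alpha)}}{\f v}}^2 \prec N^{-1}$.

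For the second bound in \eqref{smfy} I would run the identical argument with $R$ replaced by $G = (XX^* - z)^{-1}$, the spectral decomposition of $XX^*$ (whose nonzero eigenvalues are again $\lambda_1, \dots, \lambda_N$), and a unit $\f w \in \C^M$, obtaining $\abs{\scalar{\wt{\f u}^{(\alpha)}}{\f w}}^2 \leq \eta \, \im\scalar{\f w}{G(\lambda_\alpha + \ii\eta)\f w}$. The essential new input is that the limiting density seen by $G$ is smaller by a factor $\phi^{-1}$: by \eqref{mphi_minvphi} and Lemma~\ref{lemma: mg}, $\im m_{\phi^{-1}}(z) \asymp \phi^{-1}\sqrt{\kappa + \eta} \leq C\phi^{-1}$. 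Feeding this and \eqref{bound: Gij isotropic} into the bound gives $\im\scalar{\f w}{G(z)\f w} \leq \im m_{\phi^{-1}}(z) + O_\prec(\phi^{-1}\Psi(z)) \prec \phi^{-1}$, hence $\abs{\scalar{\wt{\f u}^{(\alpha)}}{\f w}}^2 \prec \eta \phi^{-1} = N^{-1+\omega} N/M = N^\omega M^{-1}$, which becomes $\prec M^{-1}$ by the same $\omega \to 0$ bookkeeping.

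I expect the main obstacle to be precisely that factor of $\phi^{-1}$: since $\f S$ only permits $\eta \geq N^{-1+\omega}$ and never $\eta \sim M^{-1}$ when $M$ is a large power of $N$, the naive estimate $\im\scalar{\f w}{G\f w} \prec 1$ would produce only the far-too-weak $N^{-1+\omega}$ instead of $M^{-1}$, so one genuinely must extract the smallness $\im m_{\phi^{-1}} \asymp \phi^{-1}$. The other point requiring care is the restriction to $\alpha \leq (1-\epsilon)N$: when $\phi = 1$ the Marchenko--Pastur law has a hard edge at $0$ and the constraint $\abs{z} \geq \omega$ in $\f S$ forbids placing $z$ near the smallest eigenvalues, so some spectral indices are inaccessible; this obstruction disappears exactly when $\phi \geq 1 + c$, since then $\gamma_- = (\phi^{1/4} - \phi^{-1/4})^2$ is bounded below, the entire spectrum is separated from $0$, and the argument extends to all $\alpha \leq N$ (using also the lower-edge rigidity bound \eqref{rigidity2}).
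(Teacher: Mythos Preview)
Your proposal is correct and follows essentially the same approach as the paper's proof: both use the resolvent--eigenvector inequality $\abs{\scalar{\f u^{(\alpha)}}{\f v}}^2 \leq \eta \, \im \scalar{\f v}{R(\lambda_\alpha + \ii\eta)\f v}$ with $\eta = N^{-1+\omega}$, verify via rigidity that $\lambda_\alpha + \ii\eta \in \f S$ with high probability (which is where the restriction $\alpha \leq (1-\epsilon)N$ or $\phi \geq 1+c$ enters, to keep $\abs{z}$ bounded away from $0$), and then invoke the isotropic law in its uniform-in-$z$ form (Remark~\ref{rem:all_z}) together with \eqref{im_m_swap} to extract the extra factor $\phi^{-1}$ for the $G$-estimate. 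Your discussion of the obstacles is accurate and matches the paper's handling.
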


Theorems \ref{thm: IMP gen}, \ref{thm: IMP outside gen}, and \ref{thm: I deloc gen} are easy consequences of Theorems \ref{thm: IMP}, \ref{thm: IMP outside}, and \ref{thm: I deloc} respectively, combined with the observation that
\begin{equation} \label{im_m_swap}
\im m_{\phi^{-1}}(z) \;\asymp\; \frac{1}{\phi} \im m_\phi(z)
\end{equation}
for $z \in \f S$; in addition, the asymptotic equivalence in \eqref{bound: Rij isotropic outside sc gen} follows from \eqref{im m gamma} and \eqref{im_m_swap}. The estimate \eqref{im_m_swap} itself can be proved by noting that \eqref{mphi_minvphi} implies
\begin{equation*}
\im m_{\phi^{-1}}(z) \;=\; \frac{1}{\phi} \pbb{\im m_\phi(z) + \frac{\phi-1}{\abs{z}^2} \eta}\,.
\end{equation*}
Since $\abs{z}^2 \asymp \phi$ for $z \in \f S$, \eqref{im_m_swap} for $\phi \geq 1$ follows from Lemma \ref{lemma: mg}. Replacing $\phi$ with $\phi^{-1}$ in \eqref{im_m_swap}, we conclude that \eqref{im_m_swap} holds for all $\phi$. 

What remains therefore is to prove Theorems \ref{thm: cov-rig}, \ref{thm: IMP}, \ref{thm: IMP outside}, and \ref{thm: I deloc}. We shall prove Theorem \ref{thm: cov-rig} in Section \ref{sec:rigi_proof}, Theorem \ref{thm: IMP} in Section \ref{sec:4}, and Theorems \ref{thm: IMP outside} and \ref{thm: I deloc} in Section \ref{sec:5}.

\section{The entrywise local Marchenko-Pastur law} \label{sec: MP law}

In this section we prove a entrywise version of Theorem \ref{thm: IMP}, in which the vectors $\f v$ and $\f w$ from \eqref{bound: Rij isotropic} and \eqref{bound: Gij isotropic} are assumed to lie in the direction of a coordinate axis. A similar result was previously proved in \cite[Theorem 3.1]{PY}. Recall the definition of $\Psi$ from \eqref{def of Psi MP}.

\begin{theorem}[Entrywise local Marchenko-Pastur law]\label{thm: cov-loc}
Suppose that \eqref{cond on entries of X}, \eqref{NM gen}, \eqref{moments of X-1}, and $\phi \geq 1$ hold.
Then
\begin{equation}\label{bound: Rij}
\absb{R_{\mu \nu}(z)-\delta_{\mu \nu} m_\phi(z)}  \;\prec\;   \Psi(z)\,,
\end{equation}
uniformly in $z\in \f S$ and $\mu,\nu \in \{1, \dots, N\}$.
Similarly,
\begin{equation}\label{bound: Gij}
\absb{G_{ij}(z)- \delta_{ij} m_{\phi^{-1}}(z)} \;\prec\; \frac{1}{\phi} \Psi(z)\,,
\end{equation}
uniformly in $z \in \f S$ and $i,j \in \{1, \dots, M\}$.
Moreover, \eqref{bound: trR} holds uniformly in $z \in \f S$.
\end{theorem}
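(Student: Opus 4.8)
The plan is to follow the classical self-consistent-equation strategy, carried out \emph{simultaneously} for $R = (X^*X-z)^{-1}$ and $G = (XX^*-z)^{-1}$ (as anticipated in Section~\ref{sec:phi_geq_1}), with a continuity argument in $\eta$ to reach the optimal scale $\eta \ge K^{-1+\omega}$. Write $m \deq N^{-1}\tr R$. First I would set up the approximate self-consistent equation: Schur's complement formula \eqref{RTuu} gives, for each $\mu$,
\[
  \frac{1}{R_{\mu\mu}} \;=\; -z - z\sum_{i,j} X^*_{\mu i} G^{[\mu]}_{ij} X_{j\mu}\,,
\]
and, since the $\mu$-th column of $X$ is independent of $G^{[\mu]}$ with entries of variance $(NM)^{-1/2}$ by \eqref{cond on entries of X}, the partial expectation of the quadratic form equals $(NM)^{-1/2}\tr G^{[\mu]}$. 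Using the interlacing bound of Lemma~\ref{lem: inter} to replace $\tr G^{[\mu]}$ by $\tr G$ and then \eqref{trG_tr_R} to write $(NM)^{-1/2}\tr G = \phi^{-1/2}m + \phi^{-1/2}(1-\phi)/z$, I obtain
\[
  \frac{1}{R_{\mu\mu}} \;=\; -\bigl(z + z\phi^{-1/2}m - (\phi^{1/2}-\phi^{-1/2})\bigr) - z\,Z_\mu + (\text{negligible})\,,
\]
where $Z_\mu$ is the deviation of the quadratic form from its partial expectation. Averaging over $\mu$ and comparing with the defining relation \eqref{identity for m MP} for $m_\phi$ produces a perturbed version of that scalar equation for $m$; the analogous manipulation of \eqref{Gii expanded sc} yields a coupled equation for $G_{ii}$.

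The second step is to estimate the fluctuations. Conditioned on $G^{[\mu]}$, after splitting off the diagonal part of $G^{[\mu]}$, the error $Z_\mu$ has exactly the form covered by the large-deviation bounds \eqref{two-set LDE}--\eqref{offdiag LDE} of Lemma~\ref{lemma: LDE}, whose hypotheses are supplied by \eqref{moments of X-1}; thus $Z_\mu \prec (NM)^{-1/2}\bigl(\sum_{i,j}|G^{[\mu]}_{ij}|^2\bigr)^{1/2}$ plus a lower-order diagonal contribution. The Ward identity \eqref{Ward} turns $\sum_{i,j}|G^{[\mu]}_{ij}|^2$ into $\eta^{-1}\sum_i \im G^{[\mu]}_{ii}$, so on the event where the diagonal entries of $G$ are bounded and within $\Lambda$ of $m_{\phi^{-1}}$, and using interlacing together with \eqref{im_m_swap}, one gets $Z_\mu \prec \sqrt{(\im m_\phi + \Lambda)/(N\eta)}$ up to $\phi$-factors; the same applies to the error in the $G_{ii}$-equation.

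Next I would run the stability/bootstrap argument. By Lemma~\ref{lemma: mg} the scalar map behind \eqref{identity for m MP} is stable with degenerating ``gap'' $|1-m_\phi^2| \asymp \sqrt{\kappa+\eta}$; feeding the fluctuation bounds into the perturbed equations produces a self-improving (quadratic) inequality for the maximal diagonal deviation $\Lambda$, forcing it into either a small regime ($\Lambda \prec \Psi$) or a large one. Starting at $\eta = \omega^{-1}$, where the crude bound $\|R\|,\|G\| \le \eta^{-1}$ puts us in the small regime, and using Lipschitz continuity of $R, G, m_\phi$ on $\f S$ along a sufficiently fine grid of spectral parameters, the small regime propagates down to $\eta \ge K^{-1+\omega}$; iterating the large-deviation step with the improved input $\Lambda \prec \Psi$ then closes $|R_{\mu\mu}-m_\phi| \prec \Psi$ and $|G_{ii}-m_{\phi^{-1}}| \prec \phi^{-1}\Psi$. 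For the off-diagonal entries I would invoke \eqref{RTuv} and \eqref{Gij expanded sc}: for $\mu\ne\nu$, $R_{\mu\nu} = z R_{\mu\mu}R^{[\mu]}_{\nu\nu}\sum_{i,j}X^*_{\mu i}G^{[\mu\nu]}_{ij}X_{j\nu}$, and since now two independent columns appear, \eqref{two-set LDE} together with \eqref{Ward} gives $R_{\mu\nu}\prec\Psi$ and likewise $G_{ij}\prec\phi^{-1}\Psi$ for $i\ne j$. Finally, the sharper trace bound $|N^{-1}\tr R - m_\phi| \prec (N\eta)^{-1}$ (and its counterpart for $M^{-1}\tr G$) comes from a fluctuation-averaging estimate: averaging $Z_\mu$ over $\mu$ gains an extra power, since the $Z_\mu$ are only weakly correlated; this is the standard fluctuation-averaging lemma, whose high-moment proof applies essentially verbatim.

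I expect the main obstacle to be the continuity argument of the third step near the spectral edges, where $\kappa+\eta\to0$ and the stability of \eqref{identity for m MP} degenerates, so the bootstrap must be carried with the $z$-dependent control parameter $\Psi$ rather than a fixed power of $N$, and every estimate must remain uniform as $\phi$ ranges over the whole window \eqref{NM gen} --- in particular $\phi$ may be a large power of $N$, which is exactly why $R$ and $G$ are handled in tandem and why the $\phi$-dependent prefactors must be tracked throughout. The remaining manipulations are routine applications of the calculus of $\prec$ from Lemma~\ref{lemma: basic properties of prec}.
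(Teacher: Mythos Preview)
Your plan is the paper's approach: Schur complement to get a perturbed version of \eqref{identity for m MP}, large-deviation control of the fluctuation $Z_\mu$ via Lemma~\ref{lemma: LDE} and the Ward identity, stability of the quadratic equation, a continuity argument in $\eta$, and fluctuation averaging. One step is misordered, though, and as written it does not close.

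You assert that the stability--continuity bootstrap already forces the small branch $\Lambda \prec \Psi$. It does not. With only the single-$\mu$ bound $Z_\mu \prec \Psi_\Theta$, the error in the self-consistent equation is $\abs{\cal D(m_R)} \prec \Psi_\Theta$; the stability lemma then yields at best $\Theta \prec \Psi_\Theta / \sqrt{\kappa+\eta+\Psi_\Theta}$, and since $\Psi_\Theta$ itself contains $\Theta$, the fixed point of this self-referential inequality is only a weak power of $(N\eta)^{-1}$ (the paper stops the bootstrap at $\Lambda \prec (N\eta)^{-1/4}$, Proposition~\ref{prop:weak_law}). In particular $\Psi_\Theta$ is not yet comparable to $\Psi$, so neither the diagonal nor the off-diagonal entrywise bounds follow at this stage; your sentence ``iterating the large-deviation step with the improved input $\Lambda \prec \Psi$'' is circular.

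The fix is to move fluctuation averaging \emph{inside} the iteration rather than tacking it on at the end for the trace. Once the weak law holds, averaging $Z_\mu$ over $\mu$ upgrades $\abs{[Z]} \prec \Psi_\Theta$ to $\abs{[Z]} \prec \Psi_\Theta^2$ (Lemma~\ref{lem: fluct_avg}), so that $\abs{\cal D(m_R)} \prec \Psi_\Theta^2$; feeding this back into the stability estimate produces the genuinely self-improving scheme $\Theta \prec (N\eta)^{-\tau} \Rightarrow \Theta \prec (N\eta)^{-1/2 - \tau/2}$ (see \eqref{self-improving}), which after finitely many rounds reaches $\Theta \prec (N\eta)^{-1}$. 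Only then does $\Psi_\Theta \prec \Psi$, and the entrywise bounds \eqref{bound: Rij} and \eqref{bound: Gij} follow from \eqref{Rmumu-mR}, \eqref{ajaj}, and the off-diagonal expansions you already wrote down. In short: weak law $\to$ fluctuation averaging iterated $\to$ optimal $\Theta$ $\to$ optimal $\Lambda$; your outline has the last two steps reversed.
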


Theorem \ref{thm: cov-loc} differs from Theorem 3.1 in \cite{PY} in the following two ways. 
\begin{enumerate}
\item
The restriction $1 \leq \phi \leq C$ in \cite{PY} is relaxed to $1 \leq \phi \leq N^C$  (and hence, as explained in Section \ref{sec:phi_geq_1}, to $N^{-C} \leq \phi \leq N^C$). 
\item
The uniform subexponential decay assumption of \cite{PY} is relaxed to \eqref{moments of X-1}. On the other hand, thanks to the stronger subexponential decay assumption the statement of Theorem 3.1 of \cite{PY} is slightly stronger than Theorem \ref{thm: cov-loc}: in Theorem 3.1 of \cite{PY}, the error bounds $N^\epsilon$ in the definition of $\prec$ are replaced with $(\log N)^{C \log \log N}$.
\end{enumerate}

The difference (ii) given above is technical and amounts to using Lemma \ref{lemma: LDE}, which is tailored for random variables satisfying \eqref{moments of X-1}, for the large deviation estimates. We remark that all of the arguments of the current paper
may be translated to the setup of \cite{PY}, explained in (ii) above, by modifying the definition of $\prec$. 
The essence of  the proofs remains unchanged;  the only nontrivial difference is
 that in Section \ref{sec:4} we have to control moments  whose power depends
 weakly on $N$; this entails keeping track of some  basic combinatorial bounds.
 We do not pursue this modification any further. 

The difference (i) is more substantial, and requires to keep track of the $\phi$-dependence of all appropriately rescaled
 quantities throughout the proof.
 In addition, we take this opportunity to simplify and streamline the argument from \cite{PY}.
 This provides a short and self-contained proof of Theorem 4.1,
 up to a fluctuation averaging result, Lemma \ref{lem: fluct_avg} below, which was proved in the current simple and general form in \cite{EKYY4}.

\subsection{A weak local Marchenko-Pastur law}

We begin with the proof of \eqref{bound: Rij} and \eqref{bound: trR}. For the following it will be convenient to use the rescaled spectral parameters
\begin{equation} \label{def_rescaled_z}
\wt z \;\deq\; \phi^{-1/2} z\,, \qquad \wh z \;\deq\; z - \phi^{1/2} + \phi^{-1/2}\,.
\end{equation}
Using $\wt z$ and $\wh z$ we may write the the defining equation \eqref{identity for m MP} of $m_\phi$ as
\begin{equation} \label{MP_id_z}
m(z)+\frac{1}{\wh z+\wt z m(z)} \;=\; 0\,.
\end{equation}
From the definition \eqref{def_S_theta} of $\f S$, we find
\begin{equation} \label{rescaled_z}
\abs{\wt z} \;\asymp\; 1 \,, \qquad \abs{\wh z} \;\leq\; C
\end{equation}
for all $z \in \f S$. We remark that, as in \cite{PY}, the Stieltjes transform $m_\phi$ satisfies $\abs{m_\phi(z)} \asymp 1$ for $z \in \f S$; see \eqref{bounds on mg}.

We define the $z$-dependent random control parameters
\begin{equation} \label{def_control_param}
\Lambda(z) \;\deq\; \max_{\mu, \nu} \abs{R_{\mu \nu}(z) - \delta_{\mu \nu} m_\phi(z)}\,, \qquad \Lambda_o(z) \;\deq\; \max_{\mu \neq \nu} \abs{R_{\mu \nu}(z)}\,, \qquad \Theta(z) \;\deq\; \abs{m_R(z) - m_\phi(z)}\,,
\end{equation}
where we defined the Stieltjes transform of the empirical density of $X^* X$,
\begin{equation*}
m_R(z) \;\deq\; \frac{1}{N} \tr R(z)\,. 
\end{equation*}

The goal of this subsection is to prove the following weaker variant of Theorem \ref{thm: cov-loc}.

\begin{proposition} \label{prop:weak_law}
We have $\Lambda \prec (N \eta)^{-1/4}$ uniformly in $z \in \f S$.
\end{proposition}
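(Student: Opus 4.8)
The strategy is the standard self-consistent equation / bootstrap argument adapted to the $\phi$-dependent normalization. First I would derive a self-consistent equation for the diagonal entries $R_{\mu\mu}$. Starting from the resolvent identity \eqref{RTuu}, write
\[
\frac{1}{R_{\mu\mu}} \;=\; -z - z\sum_{i,j} X^*_{\mu i} G^{[\mu]}_{ij} X_{j\mu}\,.
\]
The key computation is to isolate the deterministic part of the quadratic form: $\E_\mu \sum_{i,j} X^*_{\mu i} G^{[\mu]}_{ij} X_{j\mu} = (NM)^{-1/2}\tr G^{[\mu]}$, where $\E_\mu$ denotes expectation over the $\mu$-th column. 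Using Lemma \ref{lem: tr identity} (or rather \eqref{trG_tr_R} and Lemma \ref{lem: inter}) one converts $\tr G^{[\mu]}$ into $\tr R^{[\mu]}$ up to an explicit error, and then into $N m_R$ up to $O(\eta^{-1})$; after rescaling via \eqref{def_rescaled_z} this yields
\[
\frac{1}{R_{\mu\mu}} \;=\; -\wh z - \wt z\, m_R - z\, \Upsilon_\mu\,,
\]
where $\Upsilon_\mu$ collects the fluctuating part of the quadratic form plus the negligible trace corrections. The large deviation bound \eqref{two-set LDE} together with the Ward identity \eqref{Ward} controls the fluctuating part: $|z\Upsilon_\mu| \prec (\phi N)^{-1/2}(\sum_{ij}|G^{[\mu]}_{ij}|^2)^{1/2} + (N\eta)^{-1} \prec \sqrt{\im m_R/(N\eta)} + (N\eta)^{-1}$, using $|z|\asymp\phi^{1/2}$ and $\tr|G|^2 = \eta^{-1}\im\tr G$ which again one relates to $\im m_R$. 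So on the event $\{\Lambda \le (N\eta)^{-1/4}\}$ (say), $\max_\mu |z\Upsilon_\mu| \prec (N\eta)^{-1/4}$ as well.

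Next I would run the standard stability analysis. Summing $1/R_{\mu\mu} = -\wh z - \wt z m_R - z\Upsilon_\mu$ and comparing with the defining equation \eqref{MP_id_z}, $m_\phi + 1/(\wh z + \wt z m_\phi) = 0$, I get a self-consistent equation for $m_R$ of the form
\[
m_R + \frac{1}{\wh z + \wt z m_R} \;=\; O_\prec(\Lambda + |\Upsilon|)\,,
\]
which by the stability of the map (using $|1-m_\phi^2|\asymp\sqrt{\kappa+\eta}$ from Lemma \ref{lemma: mg} and standard quadratic-equation perturbation bounds) gives $\Theta \prec (\text{small})$; for the off-diagonal entries I use \eqref{RTuv} plus \eqref{two-set LDE} to get $\Lambda_o \prec \sqrt{\im m_R/(N\eta)} + (N\eta)^{-1}$ directly. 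Combining, $\Lambda \prec \sqrt{(\Lambda + \im m_\phi)/(N\eta)} + (N\eta)^{-1}$ whenever $\Lambda$ is a priori small; since $\im m_\phi \le C$ on $\f S$ this self-improving inequality closes to give $\Lambda \prec (N\eta)^{-1/4}$. The bootstrap is seeded at large $\eta$ (where $\Lambda \prec \eta^{-1}$ trivially from $\|R\|\le\eta^{-1}$ and $|m_\phi|\asymp 1$) and propagated down to $\eta \ge K^{-1+\omega}$ by the usual continuity argument on a fine grid of $z$'s, using Lipschitz continuity of $R$ and $m_\phi$ in $z$ (with constants polynomial in $\eta^{-1}$).

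The main obstacle I anticipate is purely bookkeeping: tracking the $\phi$-dependence through every rescaled quantity, so that the $(NM)^{-1/2}$ normalization, the factors $|z|\asymp\phi^{1/2}$, and the conversions between $\tr G$ and $\tr R$ (which carry the $(1-\phi)/z$ term from \eqref{trG_tr_R}) all combine to leave the final bound $\phi$-free when expressed in terms of $m_\phi$ and $\Psi$. One must be careful that the trace-difference errors from Lemmas \ref{lem: tr identity} and \ref{lem: inter} — which are $O(\eta^{-1})$, hence $O((N\eta)^{-1})$ after dividing by $N$ — are indeed absorbed into $\Psi$ and do not spoil the stability estimate; because $\phi\ge 1$ this is fine, but the argument genuinely uses $\phi\ge 1$ here (one needs $|\wt z|\asymp 1$ and $|\wh z|\le C$, i.e.\ \eqref{rescaled_z}). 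A secondary technical point is establishing the a priori smallness of $\Lambda$ needed to even write the self-consistent equation (division by $R_{\mu\mu}$ requires $|R_{\mu\mu}|\asymp 1$): this is handled by the continuity/bootstrap argument, assuming inductively $\Lambda \le (N\eta)^{-1/4} \ll 1$ on $\f S$.
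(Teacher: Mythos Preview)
Your overall strategy matches the paper's: derive the self-consistent equation from \eqref{RTuu}, control the fluctuation $Z_\mu$ (your $z\Upsilon_\mu$) by $\Psi_\Theta$ via large deviations and the Ward identity, invoke stability of the quadratic equation \eqref{MP_id_z}, and bootstrap down in $\eta$ via a continuity argument on a grid. Two points, however, do not work as written.

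First, the seed is not trivial. On $\f S$ one has $\eta \le \omega^{-1}$, so $\|R\| \le \eta^{-1}$ together with $|m_\phi| \asymp 1$ only gives $\Lambda = O(1)$, which is useless for starting a bootstrap that needs $\Lambda \ll 1$. The paper instead runs the self-consistent argument at $\eta \ge 1$ (Lemma~\ref{lem: new6.10}): there the deterministic bounds $|R_{\mu\nu}| \le 1$ and $\Theta = O(1)$ give $\Psi_\Theta = O(N^{-1/2})$, so the large deviation estimate \eqref{ajaj2} yields $|\cal D(m_R)| \prec N^{-1/2}$ without any a priori smallness of $\Lambda$; stability then produces the seed $\Lambda \prec N^{-1/4}$.

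Second, and more seriously, your error $O_\prec(\Lambda + |\Upsilon|)$ for $\cal D(m_R)$ is too weak to close the bootstrap. On the event $\{\Lambda \le (N\eta)^{-1/4}\}$ this error is only $O_\prec((N\eta)^{-1/4})$, and stability near the edge (where $|1-m_\phi^2| \asymp \sqrt{\kappa+\eta}$ may be tiny) then gives merely $\Theta \prec (N\eta)^{-1/8}$, worse than the hypothesis. The remedy is to note that all diagonal entries concentrate around $m_R$ to order $\Psi_\Theta$, not merely $\Lambda$: from $1/R_{\mu\mu} = -\wh z - \wt z m_R + O_\prec(\Psi_\Theta)$ one reads off $R_{\mu\mu} - R_{\nu\nu} = O_\prec(\Psi_\Theta)$, hence $|R_{\mu\mu} - m_R| \prec \Psi_\Theta$; Taylor expanding $1/R_{\mu\mu}$ around $1/m_R$ and averaging over $\mu$ then gives $\cal D(m_R) = -[Z] + O_\prec(\Psi_\Theta^2)$, so $|\cal D(m_R)| \prec \Psi_\Theta \le C(N\eta)^{-1/2}$ on the event $\Xi$. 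Stability now yields $\Theta \prec (N\eta)^{-1/4}$ and the bootstrap closes. In particular, the exponent $1/4$ is produced by the square-root loss in the stability estimate applied to the input $(N\eta)^{-1/2}$, not by solving your displayed self-improving inequality (which, taken literally, would give the too-strong conclusion $(N\eta)^{-1/2}$).
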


The rest of this subsection is devoted to the proof of Proposition \ref{prop:weak_law}. We begin by introducing the basic $z$-dependent event
\begin{equation*}
\Xi(z) \;\deq\; \hb{\Lambda(z) \leq (\log N)^{-1} }\,.
\end{equation*}

\begin{lemma}\label{lem: new 6.6}
For any $\ell \in \N$ there exists a constant $C \equiv C_\ell$ such that for $z \in \f S$, all $T \subset \{1,2,\ldots, N\}$ satisfying $\abs{T} \leq \ell$, and all $\mu,\nu \notin T$ we have
\begin{equation} \label{nxnwmyqzdnh}
\ind{\Xi} \absb{R_{\mu \nu}^{[T]} - R_{\mu \nu}} \;\leq\; C \Lambda_o^2
\end{equation}
and
\begin{equation}
\ind{\Xi} C^{-1} \;\leq\; \ind{\Xi} \abs{R_{\mu \mu}^{[T]}} \;\leq\; C
\end{equation}
for large enough $N$ depending on $\ell$.
\end{lemma}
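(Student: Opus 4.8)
The plan is to run an induction on $\abs{T}$, using the first resolvent identity in \eqref{RT+} as the basic tool. The base case $T = \emptyset$ is trivial, since then $R_{\mu\nu}^{[T]} = R_{\mu\nu}$ and, on the event $\Xi$, $\abs{R_{\mu\mu} - m_\phi} \le \Lambda \le (\log N)^{-1}$, while by Lemma \ref{lemma: mg} we have $\abs{m_\phi} \asymp 1$ uniformly on $\f S$; hence $\ind\Xi \abs{R_{\mu\mu}}$ is bounded above and below by positive constants for large $N$. For the inductive step, write $T = T' \cup \{\rho\}$ with $\abs{T'} = \abs T - 1$ and $\rho \notin T'$, $\rho \ne \mu,\nu$. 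First I would establish the lower/upper bound on $\ind\Xi\abs{R_{\mu\mu}^{[T]}}$: applying the second identity in \eqref{RT+} gives
\begin{equation*}
\frac{1}{R_{\mu\mu}^{[T']}} \;=\; \frac{1}{R_{\mu\mu}^{[T]}} + \frac{R_{\mu\rho}^{[T']} R_{\rho\mu}^{[T']}}{R_{\mu\mu}^{[T']} R_{\mu\mu}^{[T]} R_{\rho\rho}^{[T']}}\,,
\end{equation*}
so that on $\Xi$, using the inductive bounds $\ind\Xi C^{-1} \le \ind\Xi\abs{R_{\mu\mu}^{[T']}}, \ind\Xi\abs{R_{\rho\rho}^{[T']}} \le C$ and $\ind\Xi\abs{R_{\mu\rho}^{[T']}}, \ind\Xi\abs{R_{\rho\mu}^{[T']}} \le \ind\Xi(\Lambda_o + C\Lambda_o^2) \le C\Lambda_o \le C(\log N)^{-1}$ (the last by \eqref{nxnwmyqzdnh} for $T'$ plus $\Lambda_o \le \Lambda$ on $\Xi$), one gets $\ind\Xi\abs{1/R_{\mu\mu}^{[T]} - 1/R_{\mu\mu}^{[T']}} \le C(\log N)^{-2}$, hence $\ind\Xi\abs{1/R_{\mu\mu}^{[T]}}$ stays within a constant factor of $\ind\Xi\abs{1/R_{\mu\mu}^{[T']}}$, which is bounded away from $0$ and $\infty$ for large $N$. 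Inverting gives the claimed two-sided bound on $\ind\Xi\abs{R_{\mu\mu}^{[T]}}$.

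Next, for \eqref{nxnwmyqzdnh}, I would telescope along the removal of indices of $T$ one at a time; it suffices to bound $\ind\Xi\absb{R_{\mu\nu}^{[T]} - R_{\mu\nu}^{[T']}}$ and then sum the (boundedly many) contributions. By the first identity in \eqref{RT+},
\begin{equation*}
R_{\mu\nu}^{[T']} - R_{\mu\nu}^{[T]} \;=\; \frac{R_{\mu\rho}^{[T']} R_{\rho\nu}^{[T']}}{R_{\rho\rho}^{[T']}}\,,
\end{equation*}
and on $\Xi$ the numerator is $O(\Lambda_o^2)$ by the inductive off-diagonal bound while the denominator is $\asymp 1$ by the diagonal bound just proved (for $T'$). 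This yields $\ind\Xi\absb{R_{\mu\nu}^{[T']} - R_{\mu\nu}^{[T]}} \le C\Lambda_o^2$. Note that here $\Lambda_o = \Lambda_o(z)$ without superscript: the inductive hypothesis gives off-diagonal entries with superscript $[T']$ controlled by $\Lambda_o + C\Lambda_o^2 \le C\Lambda_o$ on $\Xi$, so all the quadratic terms reduce to the ambient $\Lambda_o$. Summing over the $\abs T \le \ell$ steps of the telescoping gives $\ind\Xi\absb{R_{\mu\nu}^{[T]} - R_{\mu\nu}} \le C_\ell \Lambda_o^2$, completing the induction.

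The only mild subtlety — and the step I would be most careful about — is the bookkeeping of constants in the induction: each telescoping step introduces multiplicative constants (from the lower bound on the diagonal entries) and additive constants (in passing from $\Lambda_o^{[T']}$-type quantities back to $\Lambda_o$), so one must check that after $\ell$ steps the constant $C_\ell$ is still finite and that "large enough $N$" can be chosen uniformly — which is fine since $\ell$ is fixed and $(\log N)^{-1} \to 0$. There is no genuine difficulty here; the lemma is a standard perturbation-of-minors estimate, and the event $\Xi$ is precisely engineered so that all the relevant resolvent entries are close to $m_\phi\delta_{\mu\nu}$, making every denominator encountered bounded below.
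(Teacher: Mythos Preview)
Your proposal is correct and follows exactly the approach the paper indicates: the paper's proof reads in full ``a simple induction argument using \eqref{RT+} and the bound $\abs{m_\phi} \geq c$ from \eqref{bounds on mg}. We omit the details.'' One small cleanup: your use of the second identity in \eqref{RT+} for the diagonal bound is mildly circular as stated, since the correction term carries $R_{\mu\mu}^{[T]}$ in its denominator; it is tidier to use the \emph{first} identity in \eqref{RT+} with $\mu=\nu$, which gives $R_{\mu\mu}^{[T']}-R_{\mu\mu}^{[T]} = R_{\mu\rho}^{[T']}R_{\rho\mu}^{[T']}/R_{\rho\rho}^{[T']}$ and involves only $T'$-quantities, yielding the two-sided diagonal bound directly.
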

\begin{proof}
The proof is a simple induction argument using \eqref{RT+} and the bound $\abs{m_\phi} \geq c$ from \eqref{bounds on mg}. We omit the details.
\end{proof}

As in the works \cite{EYY3, PY}, the main idea of the proof is to derive a self-consistent equation for $m_R = \frac{1}{N} \sum_\mu R_{\mu \mu}$ using the resolvent identity \eqref{RTuu}. To that end, we introduce the conditional expectation
\begin{equation} \label{cond_exp}
\E^{[\mu]}  (\cdot) \;\deq\; \E \p{\,\cdot\, | X^{[\mu]}}\,,
\end{equation}
i.e.\ the partial expectation in the randomness of the $\mu$-th column of $X$. 
We define
\begin{equation} \label{eqn:Zi}
Z_\mu \;\deq\; \pb{1-\E^{[\mu]}}z \sum_{i,j} X_{\mu i}^* G^{[\mu]}_{ij} X_{j \mu}
\;=\; z \sum_{i, j} X_{\mu i}^* G^{[\mu]}_{ij} X_{j \mu}  - \frac{\wt z}{N} \tr  G^{ [\mu]}\,,
\end{equation}
where in the last step we used \eqref{cond on entries of X} and  \eqref{def_rescaled_z}. 
Using \eqref{RTuu} with $T=\emptyset$, Lemma~\ref{lem: tr identity}, and \eqref{def_rescaled_z}, we find
\begin{equation} \label{futu}
\frac{1}{R_{\mu\mu}}
\;=\; -z - \frac{\wt z}{N}\tr  G^{ [\mu]} -  Z_\mu
\;=\; -\wh z- \frac{\wt z}{N} \tr  R^{ [\mu]} -  Z_\mu - \frac{1}{\sqrt{\phi} N}\,.
\end{equation}
The following lemma contains the key estimates needed to control the error terms $Z_\mu$ and $\Lambda_o$. The errors are controlled using of the (random) control parameter
\be \label{eqn:defpsi}
\Psi_\Theta \;\deq\; \sqrt{\frac{\im m_\phi+\Theta}{N\eta}}\,,
\ee
whose analogue in the context of Wigner matrices first appeared in \cite{EYY3}.

\begin{lemma} \label{lem: new 6.8}
For $z \in \f S$ we have
\be\label{ajaj}
\ind{\Xi} \pb{\abs{Z_\mu} + \Lambda_o} \;\prec\; \Psi_\Theta
\ee
as well as
\be\label{ajaj2}
\indb{\eta \geq (\log N)^{-1}} \, \pb{\abs{Z_\mu} + \Lambda_o} \;\prec\; \Psi_\Theta\,.
\ee
\end{lemma}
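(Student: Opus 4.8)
The plan is to control the two error quantities $\abs{Z_\mu}$ and $\Lambda_o$ separately, both on the event $\Xi$ (and, for \eqref{ajaj2}, on the event $\h{\eta \geq (\log N)^{-1}}$, which — by \eqref{im m gamma} and a trivial bound on $\Theta$ — entails $\Lambda \prec (\log N)^{-1}$, hence contains $\Xi$ up to a negligible event, so the two statements are proved by the same argument). First I would treat $Z_\mu$. By its definition \eqref{eqn:Zi}, $Z_\mu = z\sum_{i \neq j} X^*_{\mu i} G^{[\mu]}_{ij} X_{j\mu} + z\sum_i (\abs{X_{i\mu}}^2 - \E\abs{X_{i\mu}}^2) G^{[\mu]}_{ii}$, i.e.\ the fluctuating part of a quadratic form in the $\mu$-th column of $X$, which is independent of $G^{[\mu]}$. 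Conditioning on $X^{[\mu]}$ and applying the large deviation bounds \eqref{two-set LDE} and \eqref{LDE} from Lemma \ref{lemma: LDE} (using that the rescaled entries $(NM)^{1/4} X_{i\mu}$ satisfy \eqref{cond on X} by \eqref{moments of X-1}, and that $\E\abs{X_{i\mu}}^2 = (NM)^{-1/2}$), we get
\[
\abs{Z_\mu} \;\prec\; \abs{z}\,\frac{1}{\sqrt{NM}}\pbb{\sum_{i,j}\absb{G^{[\mu]}_{ij}}^2}^{1/2} \;=\; \abs{z}\,\frac{1}{\sqrt{NM}}\pbb{\frac{1}{\eta}\sum_i \im G^{[\mu]}_{ii}}^{1/2}\,,
\]
where the last step is the Ward identity \eqref{Ward}. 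Now on $\Xi$, Lemma \ref{lem: tr identity} (in the form \eqref{trG_tr_R}, applied to $X^{[\mu]}$) together with Lemma \ref{lem: inter} lets me replace $\sum_i \im G^{[\mu]}_{ii} = \im\tr G^{[\mu]}$ by $\phi^{-1}\im\tr R^{[\mu]} + O(\eta^{-1})$, and then by $\phi^{-1} N(\im m_\phi + \Theta) + O(\eta^{-1})$. Since $\abs{z}^2 \asymp \phi$ on $\f S$, the prefactor $\abs{z}^2/(NM)$ times $\phi^{-1}N$ is $\asymp (N\eta)^{-1}\cdot\eta\cdot\ldots$ — more precisely the bound collapses to $\abs{Z_\mu} \prec \sqrt{(\im m_\phi + \Theta)/(N\eta)} + (N\eta)^{-1} \asymp \Psi_\Theta$, using $\Psi_\Theta \geq (N\eta)^{-1/2} \geq (N\eta)^{-1}$ on $\f S$ (the additive $(N\eta)^{-1}$ coming from the $O(\eta^{-1})$ interlacing error, which is dominated by $\Psi_\Theta$).

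Next I would bound $\Lambda_o$, i.e.\ the off-diagonal entries $R_{\mu\nu}$ with $\mu \neq \nu$. Here the tool is the expansion \eqref{RTuv} with $T = \emptyset$: $R_{\mu\nu} = z\, R_{\mu\mu} R^{[\mu]}_{\nu\nu} \sum_{i,j} X^*_{\mu i} G^{[\mu\nu]}_{ij} X_{j\nu}$. On $\Xi$ the diagonal factors $R_{\mu\mu}$ and $R^{[\mu]}_{\nu\nu}$ are $\asymp 1$ by Lemma \ref{lem: new 6.6} (and $\abs{m_\phi} \asymp 1$ by \eqref{bounds on mg}). The remaining sum $\sum_{i,j} X^*_{\mu i} G^{[\mu\nu]}_{ij} X_{j\nu}$ is a bilinear form in the \emph{two independent} columns $(X_{i\mu})_i$ and $(X_{j\nu})_j$, both independent of $G^{[\mu\nu]}$, so \eqref{two-set LDE} of Lemma \ref{lemma: LDE} applies directly (no diagonal-subtraction needed), giving the bound $\prec \abs{z}(NM)^{-1/2}(\sum_{i,j}\abs{G^{[\mu\nu]}_{ij}}^2)^{1/2}$, which is estimated exactly as above via \eqref{Ward}, Lemma \ref{lem: new 6.6} (to pass from $G^{[\mu\nu]}$ to $G^{[\mu]}$, or directly), Lemma \ref{lem: tr identity}, and Lemma \ref{lem: inter}, yielding $\ind{\Xi}\Lambda_o \prec \Psi_\Theta$. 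Combining the two bounds and using Lemma \ref{lemma: basic properties of prec}(i) to take the maximum over the (polynomially many) indices $\mu,\nu$ gives \eqref{ajaj}; the proof of \eqref{ajaj2} is identical, replacing the role of $\Xi$ with the observation that $\eta \geq (\log N)^{-1}$ forces $\Psi(z) \prec (\log N)^{-1/2}$ and hence (once the entrywise law has been bootstrapped, or directly from a crude a priori bound) the diagonal entries are still $\asymp 1$.

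\textbf{Main obstacle.} The delicate point is the bookkeeping of the $\phi$-dependence: because $\phi = M/N$ may be as large as a power of $N$, one must carefully track factors of $\phi$ through the rescaled parameters $\wt z, \wh z$ and through the two trace identities, verifying at each step that the $\phi$'s cancel so that the final bound is the $\phi$-free quantity $\Psi_\Theta$. In particular the combination $\abs{z}^2 (NM)^{-1} \cdot \phi^{-1} N \asymp \phi \cdot (NM)^{-1} \cdot \phi^{-1} N = N/(NM) = M^{-1}$ must then be reconciled with $(N\eta)^{-1}$ via $\im\tr R^{[\mu]}$, not $N$ — so the real content is that $\sum_i \im G^{[\mu]}_{ii} \asymp \phi^{-1} N \im m_\phi$ and the resulting $\abs{z}^2(NM)^{-1}\phi^{-1}N\,\im m_\phi/\eta \asymp \im m_\phi/(N\eta)$, which is exactly the square of the leading term of $\Psi_\Theta$. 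A secondary nuisance is that $Z_\mu$, unlike the off-diagonal term, requires the centered large deviation estimates \eqref{two-set LDE} and \eqref{LDE} applied to the decomposition into off-diagonal ($i\neq j$) and centered-diagonal parts; this is routine but must be spelled out because the diagonal entries $G^{[\mu]}_{ii}$ are not small.
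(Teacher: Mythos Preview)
Your overall strategy---apply the large-deviation bounds of Lemma~\ref{lemma: LDE} to the quadratic/bilinear forms in the columns of $X$, then use the Ward identity \eqref{Ward} and the trace identity of Lemma~\ref{lem: tr identity}---is the paper's. But there is a real gap in how you close the bound on $\Xi$ for small $\eta$.

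The claim $\Psi_\Theta \geq (N\eta)^{-1/2}$ is false. By definition $\Psi_\Theta^2 = (\im m_\phi + \Theta)/(N\eta)$; on $\Xi$ one has $\Theta \leq (\log N)^{-1}$, and near the edge $\im m_\phi \asymp \sqrt{\kappa+\eta}$ by \eqref{im m gamma}. At $\kappa=0$, $\eta = N^{-1+\omega}$ this gives $\Psi_\Theta \asymp N^{-1/4-\omega/4}$, which for small $\omega$ is \emph{much smaller} than the interlacing error $(N\eta)^{-1} = N^{-\omega}$. So if you pass from $\tr R^{[\mu]}$ or $\tr R^{[\mu\nu]}$ to $\tr R$ via Lemma~\ref{lem: inter}, the leftover $(N\eta)^{-1}$ is not absorbed by $\Psi_\Theta$ and \eqref{ajaj} fails. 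The paper avoids this by using, on $\Xi$, the entrywise comparison \eqref{nxnwmyqzdnh} of Lemma~\ref{lem: new 6.6} instead of interlacing: this replaces the $(N\eta)^{-1}$ by a $\Lambda_o^2/(N\eta)$ term inside the square root, giving the self-referential estimate
\[
\ind{\Xi}\,\Lambda_o \;\prec\; \ind{\Xi}\bigl(\Psi_\Theta + (N\eta)^{-1/2}\Lambda_o\bigr)\,,
\]
and since $(N\eta)^{-1/2}\leq N^{-\omega/2}$ on $\f S$ the second term is absorbed into the left-hand side. The bound on $Z_\mu$ is then handled the same way.

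Your treatment of \eqref{ajaj2} is also circular: asserting that $\eta\geq(\log N)^{-1}$ ``entails $\Lambda\prec(\log N)^{-1}$, hence contains $\Xi$'' presupposes the very entrywise law one is building (\eqref{ajaj2} is the input for the large-$\eta$ starting bound, Lemma~\ref{lem: new6.10}). The correct substitute for $\Xi$ here is the trivial deterministic bound $\abs{R_{\mu\mu}R^{[\mu]}_{\nu\nu}}\leq\eta^{-2}\leq(\log N)^2$; in this regime interlacing \emph{is} adequate for the trace comparison, since now $(N\eta)^{-1}\leq(\log N)/N\ll\Psi_\Theta$. Two minor points: the off-diagonal part of $Z_\mu$ is a quadratic form in a \emph{single} column of $X$, so the relevant estimate is \eqref{offdiag LDE}, not \eqref{two-set LDE}; and Lemma~\ref{lem: new 6.6} is about entries of $R$, not of $G$, so it cannot be used to ``pass from $G^{[\mu\nu]}$ to $G^{[\mu]}$''.
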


\begin{proof}
The proof is very similar to that of Theorems 6.8 and 6.9 of \cite{PY}. We consequently only give the details for the estimate of $\Lambda_o$; the estimate of $Z_\mu$ is similar.

For $\mu \neq \nu$ we use \eqref{RTuv} with $T=\emptyset$
to expand $R_{\mu \nu}$. Conditioning on $X^{[\mu \nu]}$ and invoking \eqref{two-set LDE} yields
\begin{equation} \label{R_munu_estimate}
\abs{R_{\mu \nu}} \;\prec\; \absb{R_{\mu \mu} R_{\nu \nu}^{[\mu]}} \, \frac{\abs{\wt z}}{N} \, \sqrt{\sum_{i,j} \absb{G_{ij}^{[\mu \nu]}}^2}\,.
\end{equation}
On the event $\Xi$, we estimate the right-hand side using
\begin{multline} \label{R_munu_estimate_1}
\ind{\Xi} \frac{\abs{\wt z}}N \sqrt{\sum_{i,j} \absb{G_{ij}^{[\mu \nu]}}^2}  \;=\; \ind{\Xi} \frac{\abs{\wt z}}N \sqrt{\frac{\im \tr G^{[\mu\nu]}}{ \eta}}
\;\leq\; 
\ind{\Xi} \frac{C}N \sqrt{\frac{\im \tr R^{[\mu\nu]}- ((\phi - 1)N + 2)  \im z^{-1}}{ \eta}}
\\
\leq\; C \ind{\Xi} \sqrt{\frac{\im m_\phi+\Theta +\Lambda_o^2 }{N\eta}+\frac{1}{N}}
\;\leq\; C \ind{\Xi} \sqrt{\frac{\im m_\phi+\Theta +\Lambda_o^2 }{N\eta}}\,,
\end{multline}
where the first step follows from \eqref{Ward}, the second from Lemma \ref{lem: tr identity}, the third from $\im z^{-1} = - \eta \abs{z}^{-2} \geq - C \eta /\phi$ and \eqref{nxnwmyqzdnh}, and the fourth from the fact that $\im m_\phi \geq c \eta$ by \eqref{im m gamma}.

Recalling \eqref{bounds on mg}, we have therefore proved that
\be\label{gjdzqyq}
\ind{\Xi} \Lambda_o \;\prec\; \ind{\Xi} \pb{\Psi_\Theta+(N\eta)^{-1/2}\Lambda_o}\,.
\ee
Since $(N \eta)^{-1/2} \leq N^{-\omega/2}$ on $\f S$, we find
\begin{equation*}
\ind{\Xi} \Lambda_o \;\prec\; \ind{\Xi} \Psi_\Theta\,,
\end{equation*}
which, together with the analogous bound for $Z_\mu$, concludes the proof of \eqref{ajaj}.

In order to prove the estimate $\Lambda_o \prec \Psi_\Theta$ from \eqref{ajaj2} for $\eta \geq (\log N)^{-1}$, we proceed similarly. From \eqref{R_munu_estimate} and the trivial deterministic bound $\absb{R_{\mu \mu} R_{\nu \nu}^{[\mu]}} \leq \eta^{-2} \leq (\log N)^2$ we get
\begin{equation*}
\abs{R_{\mu \nu}} \;\prec\; \frac{1}{N} \sqrt{\frac{\im \tr G^{[\mu\nu]}}{ \eta}}
\;=\; 
\frac{1}{N} \sqrt{\frac{\im \tr R^{[\mu\nu]}- ((\phi - 1)N + 2)  \im z^{-1}}{ \eta}}
\;\leq\; C \sqrt{\frac{\im m_\phi+\Theta}{N\eta}+\frac{1}{(N \eta)^2}}\,,
\end{equation*}
where the estimate is similar to \eqref{R_munu_estimate_1}, except that in the last step we use Lemma \ref{lem: inter} to estimate $\tr R^{\mu \nu} - \tr R$. 
Since $\eta \geq (\log N)^{-1}$, we easily find that $\abs{R_{\mu \nu}} \prec \Psi_\Theta$. This concludes the proof.
\end{proof}

As in \cite[Equation (6.13)]{PY}, in order to analyse the stability of the equation \eqref{MP_id_z} we introduce the operation $\cal D$ on functions $u \col \f S \to \C$, defined through
\begin{equation} \label{def_calD}
\cal D(u)(z) \;\deq\; \frac{1}{u(z)}+\wt zu(z)+\wh z\,.
\end{equation}
Note that, by \eqref{MP_id_z}, the function $m_\phi$ satisfies $\cal D(m_\phi) = 0$.

Next, we derive a stability result for $\cal D^{-1}$. Roughly, we prove that if $\cal D(u)$ is small then $u$ is close to $m_\phi$. Note that this result is entirely deterministic. It relies on a discrete continuity argument, whose essence is the existence of a sufficiently large gap between the two solutions of $\cal D(\cdot)= 0$.
Once this gap is established, then, together with the fact that $u$ is close to $m_\phi$ for large $\eta$, we may conclude that $u$ is close to $m_\phi$ for smaller $\eta$ as well. We use a discrete version of a continuity argument (as opposed to a continuous one used e.g.\ in \cite{PY}), which allows us to bypass several technical issues when applying it to estimating the random quantity $\abs{m_R - m_\phi}$. For more details of this application, see the explanation following \eqref{cont_main_step}.

For $z \in \f S$ introduce the discrete set
\begin{equation*}
L(z) \;\deq\; \{z\} \cup \hb{w \in \f S \col \re w = \re z \,,\, \im w \in [\im z, 1] \cap (N^{-5} \N)}\,.
\end{equation*}
Thus, if $\im z \geq 1$ then $L(z) = \{z\}$ and if $\im z \leq 1$ then $L(z)$ is a one-dimensional lattice with spacing $N^{-5}$ plus the point $z$. Clearly, we have the bound
\begin{equation} \label{size of L}
\abs{L(z)} \;\leq\; N^5\,.
\end{equation}
\begin{lemma}[Stability of $\cal D^{-1}$] \label{lem:stability}
There exists a constant $\epsilon > 0$ such that the following holds. Suppose that $\delta \col \f S \to \C$ satisfies $N^{-2} \leq \delta(z) \leq \epsilon$ for $z \in \f S$ and that $\delta$ is Lipschitz continuous with Lipschitz constant $N$. Suppose moreover that for each fixed $E$, the function $\eta \mapsto \delta(E + \ii \eta)$ is nonincreasing for $\eta > 0$. Suppose that $u \col \f S \to \C$ is the Stieltjes transform of a probability measure. Let $z \in \f S$, and suppose that for all $w \in L(z)$ we have
\begin{equation*}
\absb{\cal D(u)(w)} \;\leq\; \delta(w)\,.
\end{equation*}
Then we have
\begin{equation*}
\abs{u(z) - m_\phi(z)} \;\leq\; \frac{C \delta(z)}{\sqrt{\kappa + \eta + \delta(z)}}\,.
\end{equation*}
for some constant $C$ independent of $z$ and $N$. 
\end{lemma}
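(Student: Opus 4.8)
The plan is to run a discrete continuity argument in $\eta$, sweeping down the lattice $L(z)$ from the top (large $\eta$, where $u$ is automatically close to $m_\phi$) to the target point $z$, using the quadratic stability of the equation $\mathcal D(\cdot)=0$ to propagate the bound. First I would set up the elementary algebra: writing $v \deq u - m_\phi$, and using $\mathcal D(m_\phi)=0$ together with \eqref{MP_id_z}, one expands $\mathcal D(u) = \mathcal D(u) - \mathcal D(m_\phi)$ and obtains an identity of the schematic form
\begin{equation*}
\pb{\wt z - \tfrac{1}{u m_\phi}} v \;=\; \mathcal D(u)\,,
\end{equation*}
so that $v \pb{\wt z u m_\phi - 1} = u m_\phi\, \mathcal D(u)$. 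Since $\abs{m_\phi} \asymp 1$ and $\abs{1 - m_\phi^2} \asymp \sqrt{\kappa+\eta}$ by Lemma \ref{lemma: mg}, and $\abs{\wt z} \asymp 1$, a short computation shows that the coefficient $\wt z u m_\phi - 1$ is, up to bounded factors, comparable to $\sqrt{\kappa+\eta}$ as long as $\abs{v}$ is small, but can degenerate when $\abs{v} \gtrsim \sqrt{\kappa+\eta}$; quantitatively one gets $\absb{\wt z u m_\phi - 1} \geq c(\sqrt{\kappa+\eta} - C\abs{v})$ or, alternatively, $\geq c\abs v$ when $\abs v$ dominates. Feeding this into the identity yields the fundamental dichotomy: either $\abs{v(z)} \leq C\delta(z)/\sqrt{\kappa+\eta}$ (the ``good'' regime), or $\abs{v(z)} \geq c\sqrt{\kappa+\eta}$ while simultaneously $\abs{v(z)}^2 \leq C\delta(z)$, i.e.\ $\abs{v(z)} \geq c\sqrt{\kappa+\eta}$ forces $\sqrt{\kappa+\eta} \leq C\sqrt{\delta(z)}$, which is exactly the borderline captured by the bound $C\delta/\sqrt{\kappa+\eta+\delta}$. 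So at each single point $w$, the hypothesis $\abs{\mathcal D(u)(w)}\leq\delta(w)$ already forces $\abs{v(w)}$ into one of two disjoint intervals: a ``small'' one of size $O(\delta(w)/\sqrt{\kappa+\eta+\delta(w)})$ and a ``large'' one bounded below by $c\sqrt{\kappa+\eta}$ (roughly).

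The continuity argument then rules out the ``large'' branch. At the top of the lattice, $\im w \in [\,1/2,1\,]$ say (or whatever the largest element of $L(z)$ is), both $u$ and $m_\phi$ are Stieltjes transforms of probability measures evaluated at a point with $\im w \asymp 1$, so $\abs{v(w)} \leq 2/\im w = O(1)$, which is far smaller than the lower end $c\sqrt{\kappa+\eta} \asymp $ const of the ``large'' interval once $\epsilon$ is chosen small — more carefully, one checks directly that at $\im w\asymp 1$ the quantity $\abs v(w)$ lies in the small regime. Then I move down the lattice one step at a time. The key point is that $v$ is Lipschitz in $z$ with a polynomial-in-$N$ constant: $u$ is a Stieltjes transform of a probability measure, hence $\abs{u'(w)} \leq (\im w)^{-2} \leq N^{2(1-\omega)}$ on $\f S$, and $m_\phi$ is likewise Lipschitz there, so $\abs{v}$ changes by at most $N^{C}\cdot N^{-5} \ll $ the gap between the two intervals (which is of order $\sqrt{\kappa+\eta} \geq N^{-1/3+\cdots}$ or, at worst, of order $\delta^{1/2}/\cdots$, comfortably larger than $N^{-5+C}$ — here one uses $\delta \geq N^{-2}$ and the Lipschitz bound on $\delta$ to see the gap cannot close between consecutive lattice points). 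Since $\abs v$ is continuous along the lattice (changing by less than the gap at each step) and starts in the ``small'' interval at the top, it cannot jump to the ``large'' interval; hence $\abs{v(w)}$ stays in the ``small'' interval for every $w\in L(z)$, in particular for $w = z$. That is precisely the claimed bound.

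The step I expect to be the main obstacle — and the place requiring the most care — is making the dichotomy fully quantitative and checking that the gap between the two solution branches genuinely exceeds the lattice spacing times the Lipschitz constant, uniformly over $\f S$, including near the spectral edges where $\sqrt{\kappa+\eta}$ is small. One must verify that $\sqrt{\kappa+\eta+\delta(z)} \geq c\sqrt{\delta(z)} \geq cN^{-1}$, that the lower bound $\delta \geq N^{-2}$ together with $\delta$ being Lipschitz-$N$ and monotone in $\eta$ prevents $\delta$ from oscillating wildly between neighbouring lattice points, and that the constant $\epsilon$ can be fixed once and for all (independently of $N$) so that the ``small'' interval $\{\abs v \leq C\delta/\sqrt{\kappa+\eta+\delta}\}$ and the ``large'' region are separated by at least, say, $\tfrac12\sqrt{\kappa+\eta}$ — this is where the precise form of the estimate $\absb{1 - m_\phi^2} \asymp \sqrt{\kappa+\eta}$ from Lemma \ref{lemma: mg} is essential, and where one has to be honest about the implicit constants. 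Everything else (the algebraic identity, the bound at large $\eta$, and the one-step propagation) is routine once this separation is in hand. Finally I would note that the hypothesis that $u$ is a Stieltjes transform is used only to get the a priori $O(1)$ bound at the top of the lattice and the Lipschitz bound; no probabilistic input enters, consistent with the stated deterministic nature of the lemma.
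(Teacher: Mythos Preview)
Your proposal is correct and follows essentially the same route as the paper's proof: both establish a two-branch dichotomy for $u-m_\phi$ coming from the quadratic nature of $\cal D(\cdot)=\cal R$, and then run a discrete continuity argument down the lattice $L(z)$ to rule out the wrong branch. The paper arrives at the dichotomy via the explicit quadratic formula for the two roots $u_1,u_2$ of $\cal D(\cdot)=\cal R$ (their \eqref{solved_quadr}--\eqref{u1-u2}), whereas you extract it from the factored identity $v(\wt z u m_\phi-1)=u m_\phi\,\cal D(u)$; these are equivalent. Two minor points where the paper is more explicit than your sketch: (i) it separates out the case where the branches are not well separated (their \eqref{two_cases}), showing that then $\kappa+\eta\lesssim\delta$ and the conclusion follows directly without any continuity; (ii) for the anchor at large $\eta$ it works at the single point $z=\ii$ and uses continuity of the roots in $\cal R$ from $\cal R=0$ to pick the correct branch, which is cleaner than your ``$|v|=O(1)$ is far smaller than $c\sqrt{\kappa+\eta}$'' (which as stated is not quite enough, though your hedge ``more carefully, one checks directly'' is right---at $\eta\asymp 1$ the large branch requires simultaneously $|v|\geq c$ and $|v|\leq C\sqrt\epsilon$, hence is empty for small $\epsilon$).
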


\begin{proof}
Let $u$ be as in Lemma \ref{lem:stability}, and abbreviate $\cal R \deq \cal D(u)$. Hence, by assumption on $u$, we have $\abs{\cal R} \leq \delta$.
We introduce $u_1 \equiv u_1^{\cal R}$ and $u_2 \equiv u_2^{\cal R}$ by setting $u_1 \deq u$ and defining $u_2$ as the other solution of the quadratic equation $\cal D(u) = \cal R$. Note that each $u_i$ is continuous. Explicitly, for $\abs{\cal R} \leq 1/2$ we get
\begin{equation} \label{solved_quadr}
u_{1,2} \;=\; \frac{\cal R - \wh z \pm \ii \sqrt{(z - \lambda_{-,\cal R}) (\lambda_{+,\cal R} - z)}}{2 \wt z} \,, \qquad \lambda_{\pm, \cal R} \;\deq\; \phi^{1/2} + \phi^{-1/2} + \cal R \pm 2 \sqrt{1 + \phi^{-1/2} \cal R}\,,
\end{equation}
where the square root in $\sqrt{1 + \phi^{-1/2} \cal R}$ is the principal branch. (Note that the sign $\pm$ in the expression for $u_{1,2}$ bears no relation to the indices $1,2$, since we have not even specified which complex square root we take.)  In particular, for $\cal R=0$ we have
$\lambda_{\pm,\cal R=0} =\gamma_\pm$, defined in  \eqref{S_MP}.
 Observe that for any complex square root $\sqrt{\cdot}$ and $w,\zeta \in \C$ we have $\abs{\sqrt{w + \zeta} - \sqrt{w}} \leq (\abs{w} + \abs{\zeta})^{-1/2} \abs{\zeta}$ or $\abs{\sqrt{w + \zeta} + \sqrt{w}} \leq (\abs{w} + \abs{\zeta})^{-1/2} \abs{\zeta}$. We use these formulas to compare \eqref{solved_quadr} 
with a small $\cal R$ with \eqref{solved_quadr} with $\cal R=0$.
Thus we conclude from \eqref{solved_quadr} and \eqref{rescaled_z}
 that for $i = 1$ or for $i = 2$ we have
\begin{equation} \label{ui-m}
\abs{u_i - m_\phi} \;\leq\; \frac{C_0 \abs{\cal R}}{\sqrt{\kappa + \eta + \abs{\cal R}}}
\end{equation}
for some constant $C_0 \geq 2$. What remains is to show that \eqref{ui-m} holds for $i = 1$. We shall prove this using a continuity argument.

Note first that \eqref{solved_quadr} and \eqref{rescaled_z} yield
\begin{equation} \label{u1-u2}
C_1^{-1} \sqrt{(\kappa + \eta - \abs{\cal R})_+} \;\leq\; \abs{u_1 - u_2} \;\leq\; C_1 \sqrt{\kappa + \eta + \abs{\cal R}}
\end{equation}
for some constant $C_1 \geq 1$.

Now consider $z = \ii$. Clearly, for $\cal R(\ii) = 0$ we have $u_1^0(\ii) = m_\phi(\ii)$. Note that by the lower bound of \eqref{u1-u2} the two roots $u_1^{\cal R}(\ii)$ and $u_2^{\cal R}(\ii)$ are distinct, and they are continuous in $\cal R$. Therefore there is an $\epsilon \in (0,1/2]$ such that for $\abs{\cal R(\ii)} \leq \epsilon$ we have, after possibly increasing $C_0$, that 
\begin{equation} \label{u_1_at_t}
\abs{u_1 - m_\phi} \;\leq\; C_0 \abs{\cal R}
\end{equation}
at $z = \ii$.
Next, we note that \eqref{ui-m} and \eqref{u1-u2} imply, for any $z$ with $\im z \geq 1$, that $\abs{u_i - m_\phi} \leq C_0 \abs{\cal R}$ for some $i \in \{1,2\}$, and that $\abs{u_1 - u_2} \geq (2 C_1)^{-1}$. Hence, requiring that $\epsilon \leq (8 C_0 C_1)^{-1}$ we find from \eqref{u_1_at_t} with $z = \ii$ and using the continuity of $u_1$ that \eqref{u_1_at_t} holds provided $\im z \geq 1$.

Next, for arbitrary $z \in \f S$ with $\im z < 1$ we consider two cases, depending on whether
\begin{equation} \label{two_cases}
\frac{C_0 \delta}{\sqrt{\kappa + \eta + \delta}} \;\leq\; \frac{1}{4  C_1} \sqrt{(\kappa + \eta - \delta)_+}
\end{equation}
holds or not. If \eqref{two_cases} does not hold, then we have $\kappa + \eta \leq 4  C_0 C_1 \delta$, so that \eqref{ui-m}, 
$\abs{\cal R} \leq \delta$, and the upper bound of \eqref{u1-u2} imply
\begin{equation*}
\abs{u_1 - m_\phi} \;\leq\; \frac{C_0 \abs{\cal R}}{\sqrt{\kappa + \eta + \delta}} + C_1 \sqrt{\kappa + \eta + \abs{\cal R}} \;\leq\; C \sqrt{\delta} \;\leq\; \frac{C \delta}{\sqrt{\kappa + \eta + \delta}}\,.
\end{equation*}

What remains is the case where \eqref{two_cases} holds. We use a continuity argument along the set $L(z)$, which we parametrize as $L(z) = \{z_0, \dots, z_L\}$, where $\im z_0 = 1$, $z_L = z$, and $\im z_{l+1} < \im z_l$. Note that $\abs{z_{l+1} - z_l} \leq N^{-5}$. By assumption, $\abs{\cal R} \leq \delta$ at each $z_l \in L(z)$, so that \eqref{ui-m} and \eqref{u1-u2} yield
\begin{equation} \label{tools for induction}
\exists \, i = 1,2 \;\col\; \abs{u_i - m_\phi} \;\leq\; \frac{C_0 \delta}{\sqrt{\kappa + \eta + \delta}} \,, \qquad
C_1^{-1} \sqrt{(\kappa + \eta - \delta)_+} \;\leq\; \abs{u_1 - u_2}
\end{equation}
at each $z_l \in L(z)$. (Here the quantities $\kappa \equiv \kappa(z_l)$, $\eta \equiv \eta(z_l)$, and $\delta \equiv \delta(z_l)$ are understood as functions of the spectral parameters $z_l$.) 
 Moreover, since \eqref{two_cases} holds at $z = z_L$, by the monotonicity assumption on $\delta$ we find that \eqref{two_cases} holds for all $z_l \in L(z)$. We now prove that
\begin{equation} \label{est_z_l}
\abs{u_1(z_l) - m_\phi(z_l)} \;\leq\; \frac{C_0 \delta}{\sqrt{\kappa + \eta + \delta}} \biggr|_{z_l}
\end{equation}
for all $l = 1, \dots, L$ by induction on $l$. For $l = 0$ the bound \eqref{est_z_l} is simply \eqref{u_1_at_t} proved above.
 Suppose therefore that \eqref{est_z_l} holds for some $l$. Since $u_1$ and $m_\phi$ are Lipschitz continuous with Lipschitz constant $N$, we get
\begin{multline} \label{main induction step}
\abs{u_1(z_{l + 1}) - m_\phi(z_{l + 1})} \;\leq\; 2 N N^{-5} + \absb{u_1(z_l) - m_\phi(z_l)}
\;\leq\; 2 N^{-4} + \frac{C_0 \delta}{\sqrt{\kappa + \eta + \delta}} \biggr|_{z_l}
\\
\leq\; 2 N^{-4} + C N^2 N^{-5} + \frac{C_0 \delta}{\sqrt{\kappa + \eta + \delta}} \biggr|_{z_{l + 1}}
\;\leq\; \frac{2 C_0 \delta}{\sqrt{\kappa + \eta + \delta}} \biggr|_{z_{l + 1}}\,,
\end{multline}
where in the second step we used the induction assumption, in the third step the Lipschitz continuity of $\delta$ and the bound $\eta \geq N^{-1}$, and in the last step the bounds $\delta \geq N^{-2}$ and $\kappa + \eta + \delta \leq C$. Next, recalling \eqref{two_cases}, it is easy to deduce \eqref{est_z_l} with $l$ replaced by $l+1$, using the bounds \eqref{tools for induction} and \eqref{main induction step}. This concludes the proof.
\end{proof}

We may now combine the probabilistic estimates from Lemma \ref{lem: new 6.8} with the stability of $\cal D^{-1}$ from Lemma \ref{lem:stability} to get the following result for $\eta \geq 1$, which will be used as the starting estimate in the bootstrapping in $\eta$.

\begin{lemma}\label{lem: new6.10}
We have $\Lambda \prec N^{-1/4}$ uniformly in $z \in \f S$ satisfying $\im z \geq 1$.
\end{lemma}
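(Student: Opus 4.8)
The plan is to establish the large-$\eta$ anchor of the bootstrap in $\eta$ that underlies Proposition~\ref{prop:weak_law}. On $\f S\cap\{\im z\ge1\}$ everything is of order one, so the resolvent identity \eqref{futu} closes to an approximate quadratic for $m_R$ with $O_\prec(N^{-1/2})$ error, and the stability statement Lemma~\ref{lem:stability} then pins $m_R$ (and hence each $R_{\mu\mu}$) to $m_\phi$. Concretely, I would first record the a~priori estimates: for $z\in\f S$ with $\im z\ge1$ one has $\kappa+\eta\asymp1$ and $\abs z\asymp1$, so Lemma~\ref{lemma: mg} gives $\abs{m_\phi}\asymp1$, $\im m_\phi\asymp1$, $\abs{1-m_\phi^2}\asymp1$, and \eqref{rescaled_z} gives $\abs{\wt z}\asymp1$, $\abs{\wh z}\le C$; moreover the deterministic bound $\norm{R(z)}\le\eta^{-1}\le1$ yields $\abs{R_{\mu\nu}}\le1$, $\abs{m_R}\le1$, hence $\Theta,\Lambda,\Lambda_o\le C$ deterministically and $\Psi_\Theta\le C(N\eta)^{-1/2}\le CN^{-1/2}$. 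Since $\eta\ge1\ge(\log N)^{-1}$, estimate \eqref{ajaj2} of Lemma~\ref{lem: new 6.8} applies (its indicator equals one) and gives $\abs{Z_\mu}+\Lambda_o\prec\Psi_\Theta\prec N^{-1/2}$ uniformly in $\mu$.

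Next I would close the self-consistent equation. Using Lemma~\ref{lem: inter} to replace $\frac1N\tr R^{[\mu]}$ by $m_R$ at cost $O(\eta^{-1}N^{-1})=O(N^{-1})$, the identity \eqref{futu} becomes $R_{\mu\mu}^{-1}=-\wh z-\wt z m_R+O_\prec(N^{-1/2})$. Since $\norm{R(z)}\le\eta^{-1}$ forces $\abs{R_{\mu\mu}^{-1}}\ge\eta\ge1$, the $\mu$-independent quantity $g\deq-\wh z-\wt z m_R$ obeys $\tfrac12\le\abs g\le C$ on a high-probability event; inverting and averaging over $\mu$ gives $m_R=g^{-1}+O_\prec(N^{-1/2})$, i.e.\ $\wt z m_R^2+\wh z m_R+1=O_\prec(N^{-1/2})$. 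On the same event $\abs{m_R}\ge c$, so dividing by $m_R$ turns this into $\abs{\cal D(m_R)(z)}\prec N^{-1/2}$, with $\cal D$ as in \eqref{def_calD} and $\cal D(m_\phi)=0$ by \eqref{MP_id_z}.

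Then I would invoke stability: apply Lemma~\ref{lem:stability} with the constant control function $\delta\equiv N^{-1/2+\epsilon}$, which for large $N$ and any fixed small $\epsilon>0$ satisfies all its hypotheses; since $\im z\ge1$ the set $L(z)$ reduces to $\{z\}$, so only the bound $\abs{\cal D(m_R)(z)}\le\delta(z)$ is needed, and it holds on a high-probability event. The lemma yields $\abs{m_R-m_\phi}\le C\delta/\sqrt{\kappa+\eta}\le CN^{-1/2+\epsilon}$, hence $\Theta\prec N^{-1/2}$; combined with $R_{\mu\mu}=m_R+O_\prec(N^{-1/2})$ (from $R_{\mu\mu}=g^{-1}+O_\prec(N^{-1/2})$ and $m_R=g^{-1}+O_\prec(N^{-1/2})$) and $\Lambda_o\prec N^{-1/2}$, this gives $\Lambda\prec N^{-1/2}$, uniformly in $z$ and in $\mu,\nu$; in particular $\Lambda\prec N^{-1/4}$, which is the claim. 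All the $\prec$-bounds above are uniform in $z\in\f S$ with $\im z\ge1$, since every implicit constant, including the one in Lemma~\ref{lem:stability}, is independent of $z$ and $N$.

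The \emph{main obstacle} is the standard root-selection issue: $m_R$ solves $\wt z m^2+\wh z m+1=0$ only up to $O_\prec(N^{-1/2})$, and a~priori it could be close to the spurious root $1/(\wt z m_\phi)$, which for $\eta$ of order one is not separated from $m_\phi$ by the sign of the imaginary part alone (the sign of $\re z$ interferes). This is exactly what Lemma~\ref{lem:stability} handles, via its internal continuity argument anchored at $z=\ii$ and propagated over $\{\im z\ge1\}$; once $\abs{\cal D(m_R)(z)}\prec N^{-1/2}$ is in hand the conclusion is automatic. The only other point requiring care is that Lemma~\ref{lem:stability} is a deterministic statement, so one conditions on the relevant high-probability event before invoking it, which preserves the uniformity in $z$.
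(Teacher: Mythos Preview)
Your proof is correct and follows essentially the same route as the paper's: use \eqref{futu} together with \eqref{ajaj2} and Lemma~\ref{lem: inter} to get $\abs{\cal D(m_R)}\prec N^{-1/2}$, then invoke Lemma~\ref{lem:stability} (with $L(z)=\{z\}$ since $\im z\ge 1$) and feed the resulting control of $\Theta$ back into $R_{\mu\mu}$ and $\Lambda_o$. Two minor remarks: the assertion $\abs{z}\asymp 1$ is not correct in general (for large $\phi$ one has $\abs{z}\asymp\sqrt\phi$ on $\f S$), but you never actually use it---you rely only on $\abs{\wt z}\asymp 1$ and $\abs{\wh z}\le C$ from \eqref{rescaled_z}; and your sharper use of the denominator $\sqrt{\kappa+\eta+\delta}\ge 1$ in Lemma~\ref{lem:stability} in fact yields $\Lambda\prec N^{-1/2}$, which is stronger than needed (the paper only records $N^{-1/4}$).
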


\begin{proof}
Let $z \in \f S$ with $\im z \geq 1$. From \eqref{futu} and the estimate on $Z_\mu$ from \eqref{ajaj2} we find
\begin{equation} \label{dxmm2}
R_{\mu\mu} \;=\; \frac{1}{-\wh z-\wt zm_R+O_\prec(\Psi_\Theta+N^{-1}) } \;=\; \frac{1}{-\wh z-\wt zm_R+O_\prec(N^{-1/2}) }\,,
\end{equation}
where in the last step we used that $\Psi_\Theta = O(N^{-1/2})$ since $\eta \geq 1$ and $\im m_\phi+\Theta=O(1)$, as follows from \eqref{bounds on mg} and the trivial bound $\abs{m_R} \leq C$. Taking the average over $\mu$ yields $1/m_R = -\wh z - \wt z m_R + O_\prec(N^{-1/2})$, i.e.\ $\abs{\cal D(m_R)} \prec N^{-1/2}$; see \eqref{def_calD}. Since $L(z) = \{z\}$, we therefore get from Lemma \ref{lem:stability} that $\abs{m_R-m_\phi} \prec N^{-1/4}$. Returning to \eqref{dxmm2} and recalling \eqref{MP_id_z} and \eqref{bounds on mg}, we get $\abs{R_{\mu \mu} - m_\phi} \prec N^{-1/4}$. Together with the estimate on $\Lambda_o$ from \eqref{ajaj2}, we therefore get $\Lambda \prec N^{-1/4}$ uniformly in $z \in \f S$  satisfying $\im z \geq 1$.
\end{proof}

Next, we plug the estimates from Lemma \ref{lem: new 6.8} into \eqref{futu} in order to obtain estimates on $m_R$. The summation in $m_R = \frac{1}{N}\sum_\mu R_{\mu \mu}$ will give rise to an error term of the form
\begin{equation} \label{Z_avg}
[Z] \;\deq\; \frac{1}{N} \sum_\mu Z_\mu\,.
\end{equation}
For the proof of Proposition \ref{prop:weak_law}, it will be enough to estimate $\abs{[Z]} \leq \max_\mu \abs{Z_\mu}$, but for the eventual proof of Theorem \ref{thm: cov-loc}, we shall need to exploit cancellation in the averaging in $[Z]$. Bearing this in mind, we state our estimates in terms of $[Z]$ to avoid repeating the following argument in Section \ref{sec:fluct-avg}.

\begin{lemma}\label{lem: new 6.11} We have
\begin{equation} \label{Rmumu-mR}
\ind{\Xi} \abs{R_{\mu \mu} - m_R} \;\prec\; \Psi_\Theta
\end{equation}
uniformly in $\mu$ and $z \in \f S$, as well as
\be\label{res 611}
\ind{\Xi} \cal D(m_R) \;=\; \ind{\Xi} \pb{-[Z] + O_\prec(\Psi_\Theta^2)}
\ee
uniformly in $z \in \f S$.
\end{lemma}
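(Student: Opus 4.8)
The plan is to derive both estimates directly from the single-entry identity \eqref{futu} combined with the probabilistic control from Lemma \ref{lem: new 6.8}. First I would rewrite \eqref{futu} as
\begin{equation*}
\frac{1}{R_{\mu\mu}} \;=\; -\wh z - \wt z\, m_R - \wt z\, (m_R^{[\mu]} - m_R) - Z_\mu - \frac{1}{\sqrt\phi N}\,,
\end{equation*}
where $m_R^{[\mu]} \deq \frac1N \tr R^{[\mu]}$ (note the normalization is still $1/N$, not $1/(N-1)$). The term $m_R^{[\mu]} - m_R$ is $O(1/(N\eta))$ deterministically by Lemma \ref{lem: inter}, hence $O(\Psi_\Theta^2)$; the constant term $\frac{1}{\sqrt\phi N}$ is likewise $O((N\eta)^{-1}) = O(\Psi_\Theta^2)$; and on $\Xi$ we have $\abs{Z_\mu} \prec \Psi_\Theta$ by \eqref{ajaj}. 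Since on $\Xi$ we also have $\abs{R_{\mu\mu}} \asymp 1$ and $\abs{m_\phi} \asymp 1$ (by Lemma \ref{lem: new 6.6} and \eqref{bounds on mg}), and since $\Psi_\Theta$ is bounded by a small power of $N$ on $\f S$, I can invert: on $\Xi$,
\begin{equation*}
R_{\mu\mu} \;=\; \frac{1}{-\wh z - \wt z\, m_R + O_\prec(\Psi_\Theta)}\,.
\end{equation*}
The right-hand side no longer depends on $\mu$ except through the error term, so a first-order Taylor expansion of $x \mapsto 1/(-\wh z - \wt z\, m_R + x)$ around $x = 0$ — using that the denominator $-\wh z - \wt z\, m_R$ is bounded below in absolute value on $\Xi$ (because it is within $O_\prec(\Lambda)$ of $-\wh z - \wt z\, m_\phi = 1/m_\phi$, which has modulus $\asymp 1$) — gives $\ind{\Xi}\abs{R_{\mu\mu} - f(m_R)} \prec \Psi_\Theta$ where $f(m_R) \deq (-\wh z - \wt z m_R)^{-1}$ is $\mu$-independent. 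Averaging over $\mu$ gives $\ind{\Xi}\abs{m_R - f(m_R)} \prec \Psi_\Theta$, and subtracting this from the per-$\mu$ bound yields \eqref{Rmumu-mR}.

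For \eqref{res 611} I would be more careful about the error, tracking the quadratic term in the expansion. Averaging the inverted identity over $\mu$ and using the definition \eqref{Z_avg} of $[Z]$ gives, on $\Xi$,
\begin{equation*}
m_R \;=\; \frac1N \sum_\mu \frac{1}{-\wh z - \wt z\, m_R - Z_\mu + O_\prec(\Psi_\Theta^2)}\,.
\end{equation*}
Writing $D \deq -\wh z - \wt z\, m_R$ (which satisfies $\abs{D} \asymp 1$ on $\Xi$) and expanding each summand to second order in $Z_\mu$,
\begin{equation*}
\frac{1}{D - Z_\mu + O_\prec(\Psi_\Theta^2)} \;=\; \frac1D + \frac{Z_\mu}{D^2} + O_\prec\pb{\Psi_\Theta^2}\,,
\end{equation*}
where the error absorbs both the $O_\prec(\Psi_\Theta^2)$ shift and the genuinely quadratic term $Z_\mu^2/D^3 = O_\prec(\Psi_\Theta^2)$ (using $\abs{Z_\mu} \prec \Psi_\Theta$ on $\Xi$). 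Averaging over $\mu$ and using $\abs{m_R} \le C$ and the definition \eqref{def_calD} of $\cal D$, together with $1/D = 1/(-\wh z - \wt z m_R)$, rearranges to
\begin{equation*}
\ind{\Xi}\, \cal D(m_R) \;=\; \ind{\Xi}\pb{-[Z] + O_\prec(\Psi_\Theta^2)}\,,
\end{equation*}
after multiplying through by $m_R$ and the bounded quantities $D$; one checks that the algebraic identity $m_R\cdot(-\wh z - \wt z m_R)^{-1} = m_R + \cal D(m_R)\cdot m_R \cdot(\cdots)$ works out so that the leading discrepancy is exactly $-[Z]$ up to the stated error. (Here I use $\ind{\Xi} \le 1$ freely to move the indicator in and out of $O_\prec$.)

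The only mild subtlety — not really an obstacle — is making sure the Taylor expansions are justified uniformly: this needs the denominator $D = -\wh z - \wt z m_R$ to be bounded away from zero \emph{on the event $\Xi$}, which holds because on $\Xi$ we have $\abs{m_R - m_\phi} \le \Lambda \le (\log N)^{-1}$, hence $D$ is within $(\log N)^{-1}$ in modulus of $1/m_\phi$, which has modulus $\asymp 1$ by \eqref{bounds on mg}. The factors $\ind{\Xi}$ throughout ensure all of this is legitimate, and no continuity or bootstrap argument is needed here — that is deferred to the global bound on $\Lambda$. Everything else is the routine arithmetic of stochastic domination from Lemma \ref{lemma: basic properties of prec}.
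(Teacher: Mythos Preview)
Your overall strategy is the same as the paper's, but there is one genuine gap: the claim that $(N\eta)^{-1}=O(\Psi_\Theta^2)$ is false uniformly on $\f S$. By definition $\Psi_\Theta^2=(\im m_\phi+\Theta)/(N\eta)$, so $(N\eta)^{-1}\le C\Psi_\Theta^2$ would require $\im m_\phi+\Theta\ge c>0$; but for $E\notin[\gamma_-,\gamma_+]$ with small $\eta$ one has $\im m_\phi\asymp\eta/\sqrt{\kappa+\eta}\ll 1$, and on $\Xi$ also $\Theta\le(\log N)^{-1}\ll 1$. Consequently the interlacing estimate $\abs{m_R^{[\mu]}-m_R}\le C/(N\eta)$ from Lemma~\ref{lem: inter} does not yield $O_\prec(\Psi_\Theta^2)$, and in fact not even $O_\prec(\Psi_\Theta)$, so already your displayed expression $R_{\mu\mu}=1/(-\wh z-\wt z\,m_R+O_\prec(\Psi_\Theta))$ is unjustified. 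The fix is to replace interlacing by the resolvent identity as in Lemma~\ref{lem: new 6.6}: from \eqref{RT+} one gets $\ind{\Xi}\abs{R_{\nu\nu}^{[\mu]}-R_{\nu\nu}}\le C\Lambda_o^2\prec\Psi_\Theta^2$ on $\Xi$ (using \eqref{ajaj}), which after averaging over $\nu$ gives the required $\ind{\Xi}\abs{m_R^{[\mu]}-m_R}\prec\Psi_\Theta^2$. (The separate bound $1/(\sqrt\phi N)\le 1/N\le C\Psi_\Theta^2$ is fine, but only because one goes through $1/N$, not $1/(N\eta)$.)

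Once that gap is closed, your route to \eqref{res 611} works but is slightly more roundabout than the paper's: you invert first and average $R_{\mu\mu}$, obtaining $m_R=1/D+[Z]/D^2+O_\prec(\Psi_\Theta^2)$ with $D=-\wh z-\wt z m_R$, which gives $\cal D(m_R)=-[Z]/(m_RD)+O_\prec(\Psi_\Theta^2)$; an extra step is then needed to argue $m_RD=1+O_\prec(\Psi_\Theta)$ so that $[Z]/(m_RD)=[Z]+O_\prec(\Psi_\Theta^2)$. The paper instead averages $1/R_{\mu\mu}$ directly and Taylor expands $1/R_{\mu\mu}$ around $m_R$, so that the first-order term $-(R_{\mu\mu}-m_R)/m_R^2$ vanishes \emph{exactly} upon averaging and only the quadratic remainder (bounded via the already-proved \eqref{Rmumu-mR}) survives; this delivers $\cal D(m_R)=-[Z]+O_\prec(\Psi_\Theta^2)$ without any further algebraic massage.
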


\begin{proof}
From \eqref{futu}, \eqref{nxnwmyqzdnh}, and \eqref{ajaj} we get
\begin{equation} \label{Xi_Rmumu}
\ind{\Xi} \frac{1}{R_{\mu \mu}} \;=\; \ind{\Xi} \pB{- \wh z - \wt z m_R - Z_\mu + O_\prec(\Psi_\Theta^2)}\,,
\end{equation}
where we absorbed the error term $N^{-1}$ on the right-hand side of \eqref{futu} into $\Psi_\Theta^2$ using \eqref{im m gamma}. Thus, using \eqref{ajaj} we get
\begin{equation*}
\ind{\Xi} (R_{\mu \mu} - R_{\nu \nu}) \;=\; \ind{\Xi} R_{\mu \mu} R_{\nu \nu} O_\prec(\Psi_\Theta) \;=\; O_\prec(\Psi_\Theta)\,.
\end{equation*}
Hence \eqref{Rmumu-mR} follows. Next, expanding $R_{\mu \mu} = m_R + (R_{\mu \mu} - m_R)$ yields
\begin{equation*}
\frac{1}{R_{\mu \mu}} \;=\; \frac{1}{m_R} - \frac{1}{m_R^2} (R_{\mu \mu} - m_R) + \frac{1}{m_R^2} (R_{\mu \mu} - m_R)^2 \frac{1}{R_{\mu \mu}}\,.
\end{equation*}
After taking the average $[\cdot] = \frac{1}{N} \sum_{\mu} \cdot \, $ the second term on the right-hand side vanishes. Taking the average of \eqref{Xi_Rmumu} therefore yields, using \eqref{Rmumu-mR} and \eqref{lem: new 6.6},
\begin{equation*}
\ind{\Xi} \pbb{\frac{1}{m_R} + O_\prec (\Psi_\Theta^2)} \;=\; \ind{\Xi} \pB{- \wh z - \wt z m_R - [Z] + O_\prec(\Psi_\Theta^2)}\,,
\end{equation*}
from which the claim follows.
\end{proof}

From \eqref{res 611} and \eqref{ajaj} we get
\begin{equation} \label{bootstrap_step}
\ind{\Xi} \abs{\cal D(m_R)} \;\prec\; \ind{\Xi} \Psi_\Theta \;\leq\; C(N \eta)^{-1/2}
\end{equation}
uniformly in $\f S$.
In order to conclude the proof of Proposition \ref{prop:weak_law}, we use a continuity argument. The main ingredients are \eqref{bootstrap_step}, Lemma \ref{lem:stability}, Lemma \ref{lem: new6.10}. Choose $\epsilon < \omega / 4$ and an arbitrary $D > 0$. It is convenient to introduce the random function
\begin{equation*}
v(z) \;\deq\; \max_{w \in L(z)} \Lambda(w) (N \im w)^{1/4}\,.
\end{equation*}
Our goal is to prove that with high probability there is a gap in the range of $v$, i.e.
\begin{equation} \label{cont_main_step}
\P \pb{v(z) \leq N^\epsilon \,,\, v(z) > N^{\epsilon/2}} \;\leq\; N^{-D + 5}
\end{equation}
for all $z \in \f S$ and large enough $N \geq N_0(\epsilon, D)$.
This equation says that with high probability the range of $v$ has a gap: it cannot take values in the interval $(N^{\epsilon/2}, N^\epsilon]$. 

The basic idea behind the proof of \eqref{cont_main_step} is to use the deterministic result from Lemma \ref{lem:stability} to propagate smallness of the random variable $\Lambda(z)$ from large values of $\eta$ to smaller values of $\eta$. Since we are dealing with random variables, one has to keep track of probabilities of exceptional events. To that end, we only work on a discrete set of values of $\eta$, which allows us to control the exceptional probabilities by a simple union bound. We remark that the first instance of such a \emph{stochastic continuity
argument} combined with stability of a self-consistent equation was
given in \cite{ESY2} in the context of Wigner matrices. Over the years it has been improved
through several papers in the context of Wigner matrices
\cite{EKYY4, EYY1, EYY3} as well as in the context
of sample covariance matrices \cite{ESYY, PY}.

Next, we prove \eqref{cont_main_step}.
Since $\{v(z) \leq N^\epsilon\} \subset \Xi(z) \cap \Xi(w)$ for all $z \in \f S$ and $w \in L(z)$, we find that \eqref{bootstrap_step} implies for all $z \in \f S$ and $w \in L(z)$ that
\begin{equation*}
\P \pB{v(z) \leq N^\epsilon \,,\, \abs{\cal D(m_R)(w)} (N \im w)^{1/2} > N^{\epsilon/2}} \;\leq\; N^{-D}
\end{equation*}
for large enough $N \geq N_0(\epsilon, D)$ (independent of $z$ and $w$). Using \eqref{size of L} and a union bound, we therefore get
\begin{equation*}
\P \pB{v(z) \leq N^\epsilon \,,\, \max_{w \in L(z)}\abs{\cal D(m_R)(w)} (N \im w)^{1/2} > N^{\epsilon/2}} \;\leq\; N^{-D + 5}\,.
\end{equation*}
Next, we use Lemma \ref{lem:stability} with $u = m_R$ and $\delta = N^{\epsilon / 2} (N \eta)^{-1/2}$ to get
\begin{equation*}
\P \pB{v(z) \leq N^\epsilon \,,\, \max_{w \in L(z)} \Theta(w) (N \im w)^{1/4} > N^{\epsilon/4}} \;\leq\; N^{-D + 5}\,.
\end{equation*}
(Here we used the trivial observation that the conclusion of Lemma \ref{lem:stability} is valid not only at $z$ but in the whole set $L(z)$.) Using \eqref{ajaj} and \eqref{Rmumu-mR} we therefore get \eqref{cont_main_step}.

We conclude the proof of Proposition \ref{prop:weak_law} by combining \eqref{cont_main_step} and Lemma \ref{lem: new6.10} with a continuity argument, similar to the proof of \cite[Proposition 5.3]{EKYY4}. We choose a lattice $\Delta \subset \f S$ such that $\abs{\Delta} \leq N^{10}$ and for each $z \in \f S$ there exists a $w \in \Delta$ satisfying $\abs{z - w} \leq N^{-4}$. Then \eqref{cont_main_step} combined with a union bound yields
\begin{equation} \label{cont_step2}
\P \pb{\exists w \in \Delta \col v(w) \in (N^{\epsilon/2}, N^\epsilon]} \;\leq\; N^{-D + 15}\,.
\end{equation}
From the definitions of $\Lambda$ and $\f S$ we find that $v$ is Lipschitz continuous on $\f S$, with Lipschitz constant $N^2$. Hence \eqref{cont_step2} and the definition of $\Delta$ imply
\begin{equation} \label{cont_step3}
\P \pb{\exists z \in \f S \col v(z) \in (2 N^{\epsilon/2}, 2 N^\epsilon/2]} \;\leq\; N^{-D + 15}\,.
\end{equation}
By Lemma \ref{lem: new6.10}, we have
\begin{equation} \label{cont_step4}
\P\pb{v(z) > N^{\epsilon/2}} \;\leq\; N^{-D + 15}
\end{equation}
for some (in fact any) $z \in \f S$ satisfying $\im z \geq 1$. It is not hard to infer from \eqref{cont_step3} and \eqref{cont_step4} that
\begin{equation} \label{result on v}
\P\pB{\max_{z \in \f S} v(z) > 2 N^{\epsilon/2}} \;\leq\; 2N^{-D + 15}\,.
\end{equation}
Since $\epsilon$ can be made arbitrarily small and $D$ arbitrarily large, Proposition \ref{prop:weak_law} follows from \eqref{result on v}.

\subsection{Fluctuation averaging and conclusion of the proof of Theorem \ref{thm: cov-loc}} \label{sec:fluct-avg}

In order to improve the negative power of $(N \eta)$ in Proposition \ref{prop:weak_law}, and hence prove the optimal bound in Theorem \ref{thm: cov-loc}, we shall use the following result iteratively. Recall the definition of $\Theta$ from \eqref{def_control_param} and definition of $[Z]$ from \eqref{Z_avg}.

\begin{lemma}\label{lem: new 7.1}
Let $\tau \in (0,1)$ and suppose that we have

\be\label{LPrec}
\Theta \;\prec\;  (N\eta)^{-\tau}
\ee
uniformly in $z \in \f S$.
Then we have
\be\label{ZPrec}
\abs{[Z]} \;\prec\; \frac{\im m_\phi+(N\eta)^{-\tau}}{N\eta}
\ee
uniformly in $z \in \f S$.
\end{lemma}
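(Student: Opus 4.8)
The plan is to reduce \eqref{ZPrec} to the single fluctuation averaging bound $\abs{[Z]} \prec \Psi_\Theta^2$: indeed, by \eqref{eqn:defpsi} and the hypothesis $\Theta \prec (N\eta)^{-\tau}$ we have $\Psi_\Theta^2 = (\im m_\phi + \Theta)/(N\eta) \prec (\im m_\phi + (N\eta)^{-\tau})/(N\eta)$, which is exactly the asserted estimate. So the entire content is the gain of a \emph{second} power of $\Psi_\Theta$ when passing from $\max_\mu \abs{Z_\mu}$, which is already controlled by $\Psi_\Theta$ in Lemma \ref{lem: new 6.8}, to the average $[Z] = \frac{1}{N}\sum_\mu Z_\mu$.

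First I would localize to the high-probability event $\Xi$. By Proposition \ref{prop:weak_law} we have $\Lambda \prec (N\eta)^{-1/4} \ll (\log N)^{-1}$, hence $\ind{\Xi^c} \prec 0$; since $\abs{[Z]} \le \max_\mu \abs{Z_\mu} \prec N^C$ for some $C$ (using \eqref{moments of X-1} and $\norm{G^{[\mu]}} \le \eta^{-1}$), a routine Cauchy--Schwarz estimate on $\E\abs{[Z]}^{2p}$ reduces the claim to $\ind{\Xi}\abs{[Z]} \prec \Psi_\Theta^2$. On $\Xi$ I would then replace the random parameter $\Psi_\Theta$ by the deterministic $\Phi \deq \bigl((\im m_\phi + (N\eta)^{-\tau})/(N\eta)\bigr)^{1/2}$, which dominates $\Psi_\Theta$ by the hypothesis, and collect the required inputs: Lemmas \ref{lem: new 6.6} and \ref{lem: new 6.8} give, on $\Xi$ and uniformly over $\abs T \le C$, the bounds $\max_{\mu\ne\nu}\absb{R^{[T]}_{\mu\nu}} \prec \Phi$, $\abs{Z_\mu} \prec \Phi$, and $R^{[T]}_{\mu\mu} \asymp 1$. (The diagonal estimate $\abs{R_{\mu\mu}-m_\phi} \prec \Theta$ is weaker than $\Phi$ but is not needed in what follows.)

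With these inputs in hand, the bound $\ind{\Xi}\abs{[Z]} \prec \Phi^2$ is an instance of the general fluctuation averaging result, Lemma \ref{lem: fluct_avg}, applied in the two-index-set framework of Remark \ref{rem:index sets}: the averaging index $\mu$ ranges over $I_{\rm sample}$, while the quadratic form defining $Z_\mu$ is summed over $I_{\rm population}$. The mechanism is the usual one: one expands $\E\,\ind{\Xi}\abs{[Z]}^{2p}$, uses $\E^{[\mu]}Z_\mu = 0$ together with the rank-one resolvent identities of Lemma \ref{lem: res identity 1} to express $G^{[\mu]}$ in terms of $G^{[\mu\nu]}$ and the $\nu$-th column of $X$, and groups the resulting terms by the partition of the summation indices $\mu_1,\dots,\mu_{2p}$ into coincidence blocks; a singleton block forces an extra gain $\prec \Phi$ (from the vanishing conditional expectation and one resolvent expansion), and the power count then produces $\Phi^2$ rather than $\Phi$. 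The $\phi$-dependence is harmless throughout, since $\abs{z}^2 \asymp \phi$, $\abs{\wt z} \asymp 1$, $\abs{\wh z} \le C$, and the naturally rescaled resolvent entries are $O(1)$. Unfolding $\Phi^2 = (\im m_\phi + (N\eta)^{-\tau})/(N\eta)$ and combining with $\ind{\Xi^c} \prec 0$ gives \eqref{ZPrec}.

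The step I expect to be the main obstacle is precisely the verification that $[Z]$ fits the template of Lemma \ref{lem: fluct_avg} in this two-set setting — in particular that the minors of $Z_\mu$ (obtained by replacing $G^{[\mu]}$ by $G^{[\mu T]}$) obey the same bounds, so that the graphical expansion closes — together with keeping careful track of the $\phi$-dependent rescalings. The combinatorics of the expansion that yields the decisive second factor of $\Phi$ is entirely mechanical once the setup is fixed; it is exactly what Lemma \ref{lem: fluct_avg} packages. (A self-contained alternative is to run the moment expansion directly, which is longer but not conceptually harder.)
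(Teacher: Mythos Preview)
Your overall strategy is correct and matches the paper's: localize to $\Xi$ via Proposition~\ref{prop:weak_law}, replace $\Psi_\Theta$ by the deterministic parameter $\Phi_o \deq \bigl((\im m_\phi + (N\eta)^{-\tau})/(N\eta)\bigr)^{1/2}$, and invoke the fluctuation averaging Lemma~\ref{lem: fluct_avg} to gain the second factor $\Phi_o$.

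However, you are missing the one-line identity that collapses what you call ``the main obstacle'' to nothing. From \eqref{RTuu} with $T=\emptyset$ and the definition \eqref{eqn:Zi} one reads off directly
\[
-Z_\mu \;=\; \bigl(1 - \E^{[\mu]}\bigr)\frac{1}{R_{\mu\mu}}\,,
\]
so that $[Z]$ is \emph{exactly} the quantity appearing on the left-hand side of \eqref{avg_conclusion}. There is nothing to ``verify'' about fitting the template, no two-set graphical expansion to close, and no need to expand $G^{[\mu]}$ in terms of $G^{[\mu\nu]}$ (your reference to Lemma~\ref{lem: res identity 1} is in any case the wrong set of identities here, since $\mu$ is a column index). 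The paper simply checks that $\Lambda_o \prec \Phi_o$ (from \eqref{ajaj} and the hypothesis) and $\Lambda \prec (N\eta)^{-1/4}$ (from Proposition~\ref{prop:weak_law}) satisfy the size conditions of Lemma~\ref{lem: fluct_avg}, and concludes. Your proposed direct moment expansion would work but is a substantial detour; the identity above is the intended shortcut.
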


In order to prove Lemma \ref{lem: new 7.1}, we invoke the following fluctuation averaging result. 
We remark that the fluctuation averaging mechanism was first exploited in \cite{EYY2}. Here we use the result from \cite{EKYY4}, where a general version with a streamlined proof was given. Recall the definition of the partial expectation $\E^{[\mu]}$ from \eqref{cond_exp}.

\begin{lemma}[Fluctuation averaging \cite{EKYY4}] \label{lem: fluct_avg}
Suppose that $\Phi$ and $\Phi_o$ are positive, $N$-dependent, deterministic functions on $\f S$ satisfying $N^{-1/2} \leq \Phi, \Phi_o \leq N^{-c}$ for some constant $c > 0$. Suppose moreover that $\Lambda \prec \Phi$ and $\Lambda_o \prec \Phi_o$. Then
\begin{equation} \label{avg_conclusion}
\frac{1}{N} \sum_{\mu} \pb{1 - \E^{[\mu]}} \frac{1}{R_{\mu \mu}} \;=\; O_\prec(\Phi_o^2)\,.
\end{equation}
\end{lemma}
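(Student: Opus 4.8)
The plan is to establish the bound by a high-moment estimate combined with Markov's inequality, following the fluctuation-averaging scheme of \cite{EYY2, EKYY4}.

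First I would rewrite the left-hand side in a more workable form. By \eqref{futu} — in which every summand other than $Z_\mu$ is $X^{[\mu]}$-measurable — and the fact that $\E^{[\mu]} Z_\mu = 0$ (from \eqref{eqn:Zi}), we have $(1-\E^{[\mu]})\frac{1}{R_{\mu\mu}} = -Z_\mu$, so the expression to be bounded equals $-[Z]$ with $[Z]$ as in \eqref{Z_avg}. It thus suffices to prove $\E\absb{[Z]}^{p} \leq \pb{C_p \Phi_o^2}^{p}$ for every even $p$ (with $C_p$ independent of the hidden parameters): since $\Phi_o^2 \geq N^{-1}$, the conclusion $[Z] = O_\prec(\Phi_o^2)$ then follows from Markov's inequality and Definition \ref{def:stocdom}. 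Expanding the moment gives $N^{-p}\sum_{\mu_1,\dots,\mu_p}\E\prod_{k=1}^{p}\pb{\pm Z_{\mu_k}}$, half the factors conjugated; for each index tuple write $\bb S \deq \{\mu_1,\dots,\mu_p\}$ and $s \deq \abs{\bb S}$.

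The key step is to localise the $X$-column dependence of each factor $-Z_{\mu_k} = (1-\E^{[\mu_k]})\frac{1}{R_{\mu_k\mu_k}}$. Using the resolvent identities of Lemmas \ref{lem: res identity 1} and \ref{lem: res identity 2} (together with the elementary identity, from Schur's complement formula, for the effect on $G$ of deleting a column of $X$), I would expand this factor by successively deleting the finitely many columns indexed by $\bb S\setminus\{\mu_k\}$. This produces an $N$-independent (but $p$-dependent) number of terms: a single \emph{leading} term $(1-\E^{[\mu_k]})\frac{1}{R^{[\bb S\setminus\{\mu_k\}]}_{\mu_k\mu_k}}$, which depends on no $X$-column with index in $\bb S$ other than column $\mu_k$, plus \emph{subleading} terms each carrying one or more off-diagonal resolvent entries with distinct indices. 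For the product of the leading terms I would use the tower property: if some index $\mu_k$ occurs only once among $\mu_1,\dots,\mu_p$, then conditioning on all columns but column $\mu_k$ pulls every other factor out of $\E^{[\mu_k]}$, and what remains has $\E^{[\mu_k]}$-conditional expectation zero. Hence the surviving leading contributions have every index of $\bb S$ occurring at least twice, so $s\leq p/2$; combined with the bound $\absb{(1-\E^{[\mu_k]})\frac{1}{R^{[\bb S\setminus\{\mu_k\}]}_{\mu_k\mu_k}}} \prec \Phi_o$ — which follows, exactly as in the proof of Lemma \ref{lem: new 6.8}, from the conditional large-deviation estimates of Lemma \ref{lemma: LDE}, the Ward identity \eqref{Ward}, and the inputs $\Lambda\prec\Phi$, $\Lambda_o\prec\Phi_o$ — the leading part contributes at most $N^{-p}\cdot C_p N^{p/2}\cdot\Phi_o^{p} = C_p\pb{N^{-1/2}\Phi_o}^{p}\leq C_p\Phi_o^{2p}$, using $\Phi_o\geq N^{-1/2}$ in the last step.

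The subleading terms are where the real work lies, and handling them is the step I expect to be the main obstacle. Each off-diagonal entry $R^{[\cdot]}_{\alpha\beta}$ ($\alpha\neq\beta$) is $O_\prec(\Lambda_o)=O_\prec(\Phi_o)$ and so supplies an extra power of $\Phi_o$, but it also links two members of $\bb S$ and thereby weakens the simple vanishing argument above; moreover it couples to the index summations through the Ward identity \eqref{Ward}, which bounds $\sum_\nu\absb{R^{[\cdot]}_{\alpha\nu}}^2$ by $\eta^{-1}\im R^{[\cdot]}_{\alpha\alpha}$, hence effectively by $N\Phi_o^2$. The required bookkeeping represents each expansion term by a graph on $\bb S$ whose edges are its off-diagonal factors, tracks the number of off-diagonal entries, the number of free index summations, and the residual vanishing coming from once-occurring, weakly coupled indices, and shows that in every case the term is again $O_\prec(\Phi_o^{2p})$ — the worst case being the leading term analysed above. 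This combinatorial estimate is precisely what is proved, in general and streamlined form, in \cite{EKYY4}; since the whole expansion contributes only $O_p(1)$ terms, summing them yields the moment bound $\E\absb{[Z]}^p\leq(C_p\Phi_o^2)^p$ and completes the proof.
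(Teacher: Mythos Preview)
Your proposal is correct and takes essentially the same approach as the paper: both ultimately defer to \cite{EKYY4} for the full high-moment combinatorial argument. The paper's proof is even more terse than your sketch---it simply observes that Theorem~4.7 of \cite{EKYY4} carries over word for word upon replacing $G_{ij}^{(T)}$ there by $R_{\mu\nu}^{[T]}$ here, since the argument in \cite{EKYY4} relies only on the resolvent identity whose analogue in the present setting is \eqref{RT+}.
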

\begin{proof}
This result was given in a slightly different context in Theorem 4.7 in \cite{EKYY4}. However, it is a triviality that the proof of Theorem 4.7 in \cite{EKYY4} carries over word for word, provided one replaces $G_{ij}^{(T)}$ there with $R_{\mu \nu}^{[T]}$; see Remark B.3 in \cite{EKYY4}. The proof relies only on the identity \eqref{RT+}, which is the analogue of Equation (4.6) in \cite{EKYY4}.
\end{proof}

\begin{remark} \label{rem: fluct_avg}
The conclusion of Lemma \ref{lem: fluct_avg} remains true under somewhat more general hypotheses, whereby $\Lambda$ is not required to be small. Indeed, \eqref{avg_conclusion} holds provided that $\Phi_o$ is as in Lemma \ref{lem: fluct_avg} and that
\begin{equation*}
\absbb{\frac{1}{R_{\mu \mu}}} \;\prec\; 1 \,, \qquad \absbb{\pb{1 - \E^{[\mu]}} \frac{1}{R_{\mu \mu }}} \;\prec\; \Phi_o \,, \qquad \Lambda_o \;\prec\; \Phi_o\,.
\end{equation*}
The proof is the same as that of Theorem 4.7 in \cite{EKYY4}.
\end{remark}

\begin{proof}[Proof of Lemma \ref{lem: new 7.1}]
We apply Lemma \ref{lem: fluct_avg} to
\begin{equation} \label{Ztrick}
- Z_\mu \;=\; \pb{1 - \E^{[\mu]}} \frac{1}{R_{\mu \mu}}\,,
\end{equation}
where we used \eqref{RTuu}. From Proposition \ref{prop:weak_law} we get $1 \prec \ind{\Xi}$, i.e.\ $\Xi$ holds with high probability. Therefore \eqref{ajaj} yields $\Lambda_o \prec \Psi_\Theta$. Using this bound for $\Lambda_o$ and Proposition \ref{prop:weak_law} again to estimate $\Lambda \prec (N \eta)^{-1/4}$, we therefore get
\begin{equation*}
\Lambda_o \;\prec\; \Phi_o \,, \qquad \Lambda \;\prec\; \Phi \,, \qquad 
\Phi_o \;\deq\; \sqrt{\frac{\im m_\phi + (N \eta)^{-\tau}}{N \eta}}\,, \qquad
\Phi \;\deq\; (N \eta)^{-1/4}\,.
\end{equation*}
Using \eqref{im m gamma}, it is easy to check that these definitions of $\Phi_o$ and $\Phi$ satisfy the assumptions of Lemma \ref{lem: fluct_avg}. Hence the claim follows from Lemma \ref{lem: fluct_avg} and \eqref{Ztrick}.
\end{proof}

Now suppose that $\Theta \prec (N \eta)^{-\tau}$. From Lemma \ref{lem: new 7.1}, the fact that $1 - \ind{\Xi} \prec 0$ from Proposition \ref{prop:weak_law}, Lemma \ref{LPrec}, and \eqref{res 611}, we find
\begin{equation*}
\abs{\cal D(m_R)} \;\prec\; \frac{\im m_\phi + (N \eta)^{-\tau}}{N \eta}
\end{equation*}
uniformly in $z \in \f S$. Using \eqref{size of L} and a simple union bound,  we may invoke Lemma \ref{lem:stability} to get
\begin{equation*}
\Theta \;\prec\; \frac{\im m_\phi}{N \eta} \frac{1}{\sqrt{\kappa + \eta}} + \sqrt{\frac{(N \eta)^{-\tau}}{N \eta}} \;\leq\; \frac{C}{N \eta} + (N \eta)^{-1/2 - \tau /2} \;\leq\; C (N \eta)^{-1/2 - \tau /2}\,,
\end{equation*}
where in the second step we used \eqref{im m gamma}.
Summarizing, we have proved the self-improving estimate
\begin{equation} \label{self-improving}
\Theta \;\prec\; (N \eta)^{-\tau} \qquad \Longrightarrow \qquad \Theta \;\prec\; (N \eta)^{-1/2 - \tau /2}\,.
\end{equation}
From Proposition \ref{prop:weak_law} we know that $\Theta \prec (N \eta)^{-1/4}$. Thus, for any $\epsilon > 0$, we iterate \eqref{self-improving} an order $C_\epsilon$ times to get $\Theta \prec (N \eta)^{-1 + \epsilon}$. This concludes the proof of the first bound of  \eqref{bound: trR}. 
 The second bound of \eqref{bound: trR} follows from the first one and the identity \eqref{trG_tr_R}.

Next, \eqref{bound: Rij} follows from \eqref{bound: trR} and $1 - \ind{\Xi} \prec 0$, combined with \eqref{Rmumu-mR} and \eqref{ajaj}.

What remains is the proof of \eqref{bound: Gij}. To that end, in analogy to the partial expectation $\E^{[\mu]}$ defined above, we define $\E^{(i)}(\cdot) \deq \E(\cdot \vert X^{(i)})$. Introducing $1 = \E^{(i)} + \pb{1 - \E^{(i)}}$ into the right-hand side of \eqref{Gii expanded sc} yields
\begin{equation*}
\frac{1}{G_{ii}} \;=\; - z -  z \sum_{\mu,\nu} X_{i\mu} R^{(i)}_{\mu \nu} X^*_{\nu i} \;=\; -z - \frac{\wt z}{N} \sum_{\mu} R^{(i)}_{\mu \mu} - (1 - \E^i) z \sum_{\mu,\nu} X_{i\mu} R^{(i)}_{\mu \nu} X^*_{\nu i}\,.
\end{equation*}
Using Lemma \ref{lem: inter}, we rewrite the sum in the second term according to $N^{-1} \sum_\mu R_{\mu \mu}^{(i)} = m_R + O \pb{(N \eta)^{-1}}$.
Moreover, the third term is estimated exactly as $Z_\mu$ in Lemma \ref{lem: new 6.8}, using Lemma \ref{lemma: LDE}. Putting everything together yields
\begin{equation*}
G_{ii} \;=\; \frac{1}{-z-\wt zm_R +O_\prec(\Psi_\Theta)} \;=\; \frac{1}{1/ m_{\phi^{-1}} + O_\prec(\Theta + \Psi_\Theta)}\,,
\end{equation*}
as follows after some elementary algebra using \eqref{MP_id_z}.
Moreover, using \eqref{bounds on mg} it is not hard to see that $m_{\phi^{-1}} \asymp \phi^{-1/2}$
on the domain $\f S$. This yields $G_{ii} = m_{\phi^{-1}} + O(\phi^{-1} \Psi)$, where we used $\Theta \prec (N \eta)^{-1}$ to estimate $\Theta + \Psi_\Theta \prec \Psi$. This concludes the proof of \eqref{bound: Gij} for $i = j$.

In particular, $\abs{G_{ii}} \prec \phi^{-1/2}$. The same argument applied to the matrix $X^{(j)}$ instead of $X$ yields $\abs{G_{ii}^{(j)}} \prec \phi^{-1/2}$. Thus we get from \eqref{Gij expanded sc} that for $i \neq j$ we have
\begin{equation*}
\abs{G_{ij}} \;\prec\; \phi^{-1} \absbb{\sum_{\mu,\nu} X_{i\mu} R^{(i j)}_{\mu \nu} X^*_{\nu j}} \;\prec\; \phi^{-1} \Psi\,,
\end{equation*}
where the last step follows using \eqref{two-set LDE}, exactly as in the proof of Lemma \ref{lem: new 6.8}, and \eqref{bound: Rij}. This concludes the proof of \eqref{bound: Gij}, and hence of Theorem \ref{thm: cov-loc}.

\subsection{Proof of Theorem \ref{thm: cov-rig}} \label{sec:rigi_proof}
The proof of Theorem \ref{thm: cov-rig} is similar to that of Theorem 2.2 in \cite{EYY3} and Theorem 3.3 in \cite{PY}. We therefore only sketch the argument.
First we observe that, since the nontrivial eigenvalues $\lambda_1, \dots, \lambda_K$ of $X^* X$ and $X X^*$ coincide and
\begin{equation*}
N \int_{\gamma}^\infty \varrho_{\phi}(\dd x) \;=\; M \int_{\gamma}^\infty \varrho_{\phi^{-1}}(\dd x)
\end{equation*}
for all $\gamma > 0$, it suffices to prove Theorem \ref{thm: cov-rig} for $\phi \geq 1$, i.e.\ $K=N$.

Define the normalized counting functions
\begin{equation*}
n_\phi(E_1,E_2) \;\deq\; \int_{E_1}^{E_2} \varrho_\phi(\dd x)\,, \qquad
n(E_1, E_2) \;\deq\; \frac{1}{N} \absb{\h{\alpha \col E_1 \leq \lambda_\alpha \leq E_2}}\,.
\end{equation*}
The proof relies on the following key estimates.

\begin{lemma} \label{lem: rigi-tool}
We have
\begin{equation} \label{n-n_gamma}
\absb{n(E_1, E_2) - n_\phi(E_1, E_2)} \;\prec\; \frac{1}{N}
\end{equation}
uniformly for any $E_1$ and $E_2$ satisfying $E_1 + \ii \in \f S$ and $E_2 + \ii \in \f S$. Moreover, we have
\begin{equation} \label{lambda1_plus}
\abs{\lambda_1 - \gamma_+} \;\prec\; N^{-2/3}\,.
\end{equation}
Finally, if $\phi \geq 1 + c$ for some constant $c > 0$, then
\begin{equation} \label{lambdaN_minus}
\abs{\lambda_N - \gamma_-} \;\prec\; N^{-2/3}\,.
\end{equation}
\end{lemma}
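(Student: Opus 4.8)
The plan is to derive all three statements from the entrywise local Marchenko--Pastur law, Theorem~\ref{thm: cov-loc} --- above all from the averaged bound $\abs{m_R(z)-m_\phi(z)}\prec(N\eta)^{-1}$ of \eqref{bound: trR} --- together with the properties of $m_\phi,\varrho_\phi$ in Lemma~\ref{lemma: mg}; by the reduction of Section~\ref{sec:phi_geq_1} we may assume $\phi\geq1$, so $K=N$. The only extra input I would record is the elementary edge asymptotics $\varrho_\phi(\gamma_\pm\mp s)\asymp\sqrt s$ for small $s>0$: the implied constants are uniform in $\phi\geq1$ at the upper edge (since $\sqrt\phi/\gamma_+\asymp1$), but uniform in $\phi$ at the lower edge only once $\phi\geq1+c$, because $\gamma_-=(\sqrt\phi-1)^2/\sqrt\phi$ stays bounded away from the hard edge $0$ exactly when $\phi$ is bounded away from $1$. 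This is the structural reason the hypothesis $\phi\geq1+c$ enters \eqref{lambdaN_minus}.

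\emph{The counting function estimate \eqref{n-n_gamma}.} I would pass from the resolvent to the eigenvalue counting function by Helffer--Sjöstrand functional calculus, following \cite{EYY3} and the proof of \cite[Theorem~3.3]{PY}. Fix $E_1<E_2$ with $E_j+\ii\in\f S$, let $f$ be smooth, equal to $1$ on $[E_1,E_2]$, supported within distance $\eta_0$ of $[E_1,E_2]$ with $\norm{f^{(k)}}_\infty\ls\eta_0^{-k}$, and let $\wt f(x+\ii y)=(f(x)+\ii yf'(x))\chi(y)$ be the usual almost-analytic extension, so that
\begin{equation*}
\frac1N\sum_\alpha f(\lambda_\alpha)-\int f\,\dd\varrho_\phi \;=\; \frac1{2\pi}\int_{\R^2}\partial_{\bar z}\wt f(x+\ii y)\,\pb{m_R-m_\phi}(x+\ii y)\,\dd x\,\dd y\,.
\end{equation*}
I would split the $y$-integration at $\abs y=N^{-1+\omega}$: for $\abs y\geq N^{-1+\omega}$ the points lie in $\f S$, \eqref{bound: trR} gives $\abs{m_R-m_\phi}\prec(N\abs y)^{-1}$, and integration against $\partial_{\bar z}\wt f$ --- supported near $E_1,E_2$ and near $\abs y\asymp1$, with the $\eta_0^{-2}$ coming from $f''$ absorbed by the standard integration-by-parts manipulation --- contributes only $O(N^{-1}\log N)$; for $\abs y<N^{-1+\omega}$ the imaginary part of the integrand is controlled by $\im m_R(x+\ii N^{-1+\omega})\prec\im m_\phi+N^{-\omega}$, which is bounded. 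The remaining smoothing errors $\abs{\tfrac1N\sum_\alpha f(\lambda_\alpha)-n(E_1,E_2)}$ and $\abs{\int f\,\dd\varrho_\phi-n_\phi(E_1,E_2)}$ are bounded by $\tfrac1N$ times the number of eigenvalues within $O(\eta_0)$ of $E_1$ or $E_2$, plus $O(\eta_0)$; controlling this via $\#\{\alpha\col\abs{\lambda_\alpha-E}\leq t\}\leq CNt\,\im m_R(E+\ii t)$ and the local law, optimising the choice of $\eta_0$, and collecting the bounds (as in the cited references) yields \eqref{n-n_gamma}.

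\emph{The extreme eigenvalues \eqref{lambda1_plus} and \eqref{lambdaN_minus}.} One inequality in each is immediate from \eqref{n-n_gamma}. Applying it with $E_1=\gamma_+-N^{-2/3+\epsilon}$ and $E_2=\gamma_+$ (both in $\f S$), the edge asymptotics give $n_\phi(E_1,E_2)\asymp\pb{N^{-2/3+\epsilon}}^{3/2}=N^{-1+3\epsilon/2}$, so with high probability $N n(E_1,E_2)\geq N n_\phi(E_1,E_2)-N^{\epsilon/2}>0$; hence $\lambda_1\geq\gamma_+-N^{-2/3+\epsilon}$, and since $\epsilon>0$ is arbitrary this is half of \eqref{lambda1_plus}. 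The bound $\lambda_N\leq\gamma_-+N^{-2/3+\epsilon}$ follows identically, using $\varrho_\phi(\gamma_-+s)\asymp\sqrt s$ (valid once $\phi\geq1+c$). The reverse inequalities --- that the spectrum does not protrude past $[\gamma_-,\gamma_+]$, i.e.\ $\lambda_1\leq\gamma_++N^{-2/3+\epsilon}$ and (for $\phi\geq1+c$) $\lambda_N\geq\gamma_--N^{-2/3+\epsilon}$ --- are harder, since the error $(N\eta)^{-1}$ in \eqref{bound: trR} is precisely the size of a single eigenvalue's contribution to $m_R$ and so cannot by itself exclude a lone outlying eigenvalue. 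Here I would argue as in \cite{EYY3} and \cite[Theorem~3.3]{PY}: start from a crude a priori bound $\lambda_1\leq C$ obtained from a high-moment estimate on $\E\tr\pb{X^*X}^p$ (using \eqref{moments of X-1}), then run a bootstrap that repeatedly halves the gap between the current upper bound on $\lambda_1$ and $\gamma_+$ by feeding the improved bound back into the local law (and \eqref{n-n_gamma}) just to the right of $\gamma_+$, continuing down to scale $N^{-2/3+\epsilon}$; the edge $\gamma_-$ is treated symmetrically, using $\phi\geq1+c$ to keep $\gamma_-$ away from $0$.

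The main obstacle is exactly this last bootstrap --- excluding eigenvalues beyond the asymptotic spectrum --- and carrying it out \emph{uniformly} over the whole range $N^{1/C}\leq M\leq N^C$ of \eqref{NM gen}. Both the edge locations $\gamma_\pm$ and the crude operator-norm bound depend on $\phi$, so one must verify that after the reduction to $\phi\geq1$ the relevant edge density is comparable to $\sqrt s$ with $\phi$-independent constants near $\gamma_+$ (and, under $\phi\geq1+c$, near $\gamma_-$); this uniformity is what ultimately makes the right-hand sides of \eqref{lambda1_plus}--\eqref{lambdaN_minus} (and of \eqref{rigidity1}--\eqref{rigidity2}) free of $\phi$.
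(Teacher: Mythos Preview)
Your treatment of \eqref{n-n_gamma} via Helffer--Sj\"ostrand matches the paper exactly, as does your derivation of the ``interior'' halves of \eqref{lambda1_plus}--\eqref{lambdaN_minus} from the counting estimate and the square-root edge density. Your explanation of why $\phi\geq1+c$ is needed at the lower edge is also the right one.

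Where your sketch diverges from the paper is the ``exterior'' direction --- excluding eigenvalues beyond $\gamma_\pm$. You correctly diagnose that the error $(N\eta)^{-1}$ in \eqref{bound: trR} is exactly the size of a single eigenvalue's contribution to $\im m_R$ and hence cannot rule out an outlier. But your proposed remedy, a bootstrap that ``repeatedly halves the gap'' by feeding a bound on $\lambda_1$ back into the local law, is not the mechanism in \cite[Section~8]{PY}, and as stated does not close: the error in \eqref{bound: trR} is $(N\eta)^{-1}$ irrespective of any prior information on $\lambda_1$, so there is nothing to feed back at that level. The paper is explicit that the key inputs here are Lemma~\ref{lem:stability}, \eqref{ZPrec}, and Lemma~\ref{lem: fluct_avg} together with Remark~\ref{rem: fluct_avg}. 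The engine is fluctuation averaging: one uses \eqref{res 611} and Lemma~\ref{lem: fluct_avg} to get $\abs{\cal D(m_R)}\prec\Psi_\Theta^2$, then the stability Lemma~\ref{lem:stability} converts this into an \emph{improved} bound on $\Theta$ outside the spectrum (gaining a factor $(\kappa+\eta)^{-1/2}$ over the bulk estimate), which is what actually rules out the outlier via a continuity argument in $\eta$ at fixed $E\notin[\gamma_-,\gamma_+]$. Remark~\ref{rem: fluct_avg} is essential here, since outside the spectrum one does not know a priori that $\Lambda$ is small --- only that $\abs{1/R_{\mu\mu}}\prec1$ and $\Lambda_o\prec\Phi_o$. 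You cite the right references, but you should replace the ``halving the gap'' description with the fluctuation-averaging-plus-stability machinery already assembled in Section~\ref{sec:fluct-avg}.
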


Starting from Lemma \ref{lem: rigi-tool}, the proof of Theorem \ref{thm: cov-rig} is elementary. (The details are given e.g.\ on the last page of Section 5 in \cite{EYY3}.)

\begin{proof}[Proof of Lemma \ref{lem: rigi-tool}]
The estimate \eqref{n-n_gamma} is a standard consequence of \eqref{bound: trR}, using Helffer-Sj\"ostrand functional calculus; see e.g.\ \cite[Section 5]{EYY3}.

What remains is the proof of \eqref{lambda1_plus} and \eqref{lambdaN_minus}. Here the argument from \cite[Section 8]{PY} applies with trivial modifications. The key inputs in our case are \eqref{n-n_gamma}, Lemma \ref{lem:stability}, \eqref{ZPrec}, and Lemma \ref{lem: fluct_avg} combined with Remark \ref{rem: fluct_avg}. We omit further details.
\end{proof}

\section{The isotropic law: proof of Theorem \ref{thm: IMP}} \label{sec:4}
In this section we complete the proof of Theorem \ref{thm: IMP}. Since \eqref{bound: trR} was proved in Section \ref{sec: MP law}, we only need to prove \eqref{bound: Gij isotropic} and \eqref{bound: Rij isotropic}. For definiteness, we give the details of the proof of \eqref{bound: Gij isotropic}; the proof of \eqref{bound: Rij isotropic} is very similar, and the required modifications are outlined at the end of Section \ref{sec: exp 9} below.

\subsection{Rescaling} \label{sec:rescaling}
It is convenient to introduce the rescaled quantities
\begin{equation*}
\wt G(z) \;\deq\; \phi^{1/2} \, G(z)\,, \qquad \wt z \;\deq\; \phi^{-1/2} z\,.
\end{equation*}
The reason for this scaling is that for $z \in \f S$ the diagonal entries of $\wt G$ and $\wt z$ are of order one (See \eqref{rescaled_z} as well as \eqref{wt m gamma is bounded} and \eqref{bound: Gij 2} below). Note that all formulas from Lemma \ref{lem: res identity 1} hold after the replacement $(z,G) \mapsto (\wt z, \wt G)$.

We also introduce the rescaled quantity
\begin{equation}\label{def tilde m}
\wt m_\phi \;\deq\; \phi^{1/2} m_{\phi^{-1}} \;=\;  \phi^{-1/2} \pbb{m_\phi + \frac{1 - \phi}{z}}\,.
\end{equation}
 The motivation behind this definition is that 
\begin{equation} \label{wt m gamma is bounded}
\abs{\wt m_\phi} \;\asymp\; 1
\end{equation}
for $z \in \f S$, as can be easily seen using \eqref{bounds on mg}. (Recall that $\phi \geq 1$ by assumption.)
The following result is an immediate corollary of Theorem \ref{thm: cov-loc}. Recall the definition of $\Psi$ from \eqref{def of Psi MP}.
\begin{lemma}
In $\f S$ we have
\begin{align}
\label{bound: Gij 2}
\absb{\wt G_{ij} - \delta_{ij} \wt m_\phi} &\;\prec\; \phi^{-1/2}\Psi\,,
\\
\label{bound: Rij 2}
\abs{R_{\mu \nu} -\delta_{\mu \nu} m_\phi} &\;\prec\;\Psi\,.
\end{align}
\end{lemma}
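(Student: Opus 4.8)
The plan is simply to read off both bounds from the entrywise local law, Theorem~\ref{thm: cov-loc}, after accounting for the rescaling introduced in Section~\ref{sec:rescaling}.

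For \eqref{bound: Rij 2} there is nothing to do: it is verbatim the estimate \eqref{bound: Rij} of Theorem~\ref{thm: cov-loc}, which holds uniformly in $z \in \f S$ and $\mu, \nu \in \{1, \dots, N\}$.

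For \eqref{bound: Gij 2} I would take the estimate \eqref{bound: Gij} of Theorem~\ref{thm: cov-loc}, namely $\absb{G_{ij} - \delta_{ij} m_{\phi^{-1}}} \prec \phi^{-1}\Psi$ uniformly in $z \in \f S$ and $i,j \in \{1, \dots, M\}$, and multiply both sides by the deterministic factor $\phi^{1/2}$ (which is harmless for $\prec$). Recalling $\wt G = \phi^{1/2} G$ from Section~\ref{sec:rescaling} and $\wt m_\phi = \phi^{1/2} m_{\phi^{-1}}$ from \eqref{def tilde m}, the left-hand side becomes $\absb{\wt G_{ij} - \delta_{ij}\wt m_\phi}$ and the right-hand side becomes $\phi^{-1/2}\Psi$, which is exactly \eqref{bound: Gij 2}. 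The uniformity in $z$ is inherited directly.

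There is no genuine obstacle here; the lemma merely restates the entrywise law in the normalization that will be used throughout the rest of Section~\ref{sec:4}, in which all diagonal resolvent entries of $\wt G$ and the spectral parameter $\wt z$ are of order one. (If desired, one may also note that $\phi \geq 1$ gives $\phi^{-1/2} \leq 1$, so \eqref{bound: Gij 2} is no weaker than the corresponding bound with $\Psi$ on the right, but this observation is not needed for the proof.)
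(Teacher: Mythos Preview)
Your proposal is correct and matches the paper's own treatment: the paper states this lemma as ``an immediate corollary of Theorem~\ref{thm: cov-loc}'', and what you have written is precisely the two-line unpacking of that corollary (namely, \eqref{bound: Rij 2} is literally \eqref{bound: Rij}, and \eqref{bound: Gij 2} follows from \eqref{bound: Gij} after multiplying by the deterministic scalar $\phi^{1/2}$ and using the definitions $\wt G = \phi^{1/2} G$ and $\wt m_\phi = \phi^{1/2} m_{\phi^{-1}}$).
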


\begin{lemma} \label{lem: wt G bounds}
Fix $\ell \in \N$. Then we have, uniformly in $\f S$ and for $\abs{T} \leq \ell$ and $i,j \notin T$,
\begin{equation}\label{bound tilde G}
\absb{\wt G_{ij}^{(T)} - \delta_{ij} \wt m_\phi} \;\prec\; \phi^{-1/2} \Psi
\end{equation}
as well as
\begin{equation*}
\absb{\wt G^{(T)}_{ii}} \;\prec\; 1\,, \qquad \absbb{\frac{1}{\wt G^{(T)}_{ii}}} \;\prec\; 1\,.
\end{equation*}
\end{lemma}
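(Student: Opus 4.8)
\subsection*{Proof proposal}

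The plan is a finite induction on $\abs{T}$, with the entrywise law \eqref{bound: Gij 2} as the base case and the resolvent expansion \eqref{Gij Gijk sc} as the inductive mechanism. Recall that \eqref{Gij Gijk sc} is invariant under the rescaling $(z,G)\mapsto(\wt z,\wt G)$, so that $\wt G_{ij}^{(T)} = \wt G_{ij}^{(Tk)} + \wt G_{ik}^{(T)}\wt G_{kj}^{(T)}/\wt G_{kk}^{(T)}$ for $i,j,k\notin T$ and $i,j\neq k$. Before starting I would record two facts valid on $\f S$: since $\phi\geq 1$ (so $K=N$) and $\eta\geq N^{\omega-1}$, we have $\Psi \leq C(N\eta)^{-1/2}\leq C N^{-\omega/2}$, hence $\phi^{-1/2}\Psi\leq\Psi$ is a small negative power of $N$; and $\abs{\wt m_\phi}\asymp 1$ by \eqref{wt m gamma is bounded}. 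Moreover there are only $O(N^{C\ell})$ sets $T$ with $\abs{T}\leq\ell$, so all union bounds implicit in the uniformity of $\prec$ are harmless by Lemma \ref{lemma: basic properties of prec}(i).

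For the base case $\abs{T}=0$: \eqref{bound tilde G} is precisely \eqref{bound: Gij 2}, and then $\wt G_{ii} = \wt m_\phi + O_\prec(\phi^{-1/2}\Psi)$ immediately gives $\absb{\wt G_{ii}}\prec 1$; since $\abs{\wt m_\phi}\geq c$ while the error is $\prec N^{-c}$, we also get $\absb{\wt G_{ii}}\geq c/2$ with high probability, whence $\absb{1/\wt G_{ii}}\prec 1$. For the inductive step, suppose all three bounds hold for every $T$ with $\abs{T}\leq m<\ell$. Given $T'$ with $\abs{T'}=m+1$, write $T'=T\cup\{k\}$ with $\abs{T}=m$, $k\notin T$, and fix $i,j\notin T'$. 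Rearranging the rescaled form of \eqref{Gij Gijk sc},
\[
\wt G_{ij}^{(T')} \;=\; \wt G_{ij}^{(T)} - \frac{\wt G_{ik}^{(T)}\wt G_{kj}^{(T)}}{\wt G_{kk}^{(T)}}\,.
\]
By the induction hypothesis at level $\abs{T}=m$: $\wt G_{ij}^{(T)}=\delta_{ij}\wt m_\phi + O_\prec(\phi^{-1/2}\Psi)$; since $i,j\neq k$ we have $\wt G_{ik}^{(T)}, \wt G_{kj}^{(T)} = O_\prec(\phi^{-1/2}\Psi)$; and $\absb{1/\wt G_{kk}^{(T)}}\prec 1$. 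Hence the last term is $O_\prec(\phi^{-1}\Psi^2)=O_\prec(\phi^{-1/2}\Psi)$, using $\phi^{-1/2}\Psi\leq 1$, and we obtain $\wt G_{ij}^{(T')}=\delta_{ij}\wt m_\phi + O_\prec(\phi^{-1/2}\Psi)$, i.e.\ \eqref{bound tilde G} at level $m+1$. Setting $i=j$ and arguing as in the base case gives $\absb{\wt G_{ii}^{(T')}}\prec 1$ and $\absb{1/\wt G_{ii}^{(T')}}\prec 1$. After $\ell$ steps this proves the lemma; note the implied constants and the threshold $N_0$ in $\prec$ acquire a dependence on $\ell$, which is allowed.

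I do not expect a serious obstacle here: this is a routine resolvent-expansion induction of the same flavour as Lemma \ref{lem: new 6.6}. The only points requiring (minor) care are the bookkeeping of the $\phi^{1/2}$ prefactors, which works out precisely because \eqref{Gij Gijk sc} is scale invariant and $\phi^{-1/2}\Psi\cdot\phi^{-1/2}\Psi\leq\phi^{-1/2}\Psi$ on $\f S$, and verifying that $\Psi$ is a genuine negative power of $N$ on $\f S$ so that the off-diagonal minor entries are small and the diagonal minors $\wt G_{kk}^{(T)}$ stay bounded away from $0$ with high probability, which is exactly what makes $1/\wt G_{kk}^{(T)}$ controllable at each step.
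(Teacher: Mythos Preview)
Your proposal is correct and follows essentially the same approach as the paper: the base case is \eqref{bound: Gij 2} together with \eqref{wt m gamma is bounded}, and the inductive step is the first identity of \eqref{Gij Gijk sc} combined with $\phi^{-1/2}\Psi\leq 1$. The paper's proof is just a terser version of what you wrote, omitting the details you spelled out.
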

\begin{proof}
From \eqref{bound: Gij 2} and \eqref{wt m gamma is bounded} we easily find
\begin{equation*}
\absb{\wt G_{ij} - \delta_{ij} \wt m_\phi} \;\prec\; \phi^{-1/2} \Psi\,, \qquad \absb{\wt G_{ii}} \;\prec\; 1\,, \qquad \absb{1 / \wt G_{ii}} \;\prec\; 1\,.
\end{equation*}
The statement for general $T$ satisfying $\abs{T} \leq \ell$ then follows easily by induction on the size of $T$, using the identity \eqref{Gij Gijk sc} and the fact that $\phi^{-1/2} \Psi \leq 1$.
\end{proof}

\subsection{Reduction to off-diagonal entries}
By linearity and polarization, in order to prove \eqref{bound: Gij isotropic} it suffices to prove that
\begin{equation*}
\absb{\scalar{\f v}{G \f v} - \phi^{-1/2} \wt m_\phi} \;\prec\; \phi^{-1} \Psi
\end{equation*}
for deterministic unit vectors $\f v$. All of our estimates will be trivially uniform in the unit vector $\f v$ and $z \in \f S$, and we shall not mention this uniformity any more. Thus, for the following we fix a deterministic unit vector $\f v \in \C^M$.

We write
\begin{equation*}
\scalar{\f v}{G \f v} - \phi^{-1/2} \wt m_\phi \;=\; \sum_a \abs{v_a}^2 (G_{aa} - \phi^{-1/2} \wt m_\phi) + \cal Z \,,
\end{equation*}
where we defined
\begin{equation} \label{def_calZ}
\cal Z \;\deq\; \sum_{a \neq b} \ol v_a G_{ab} v_b \;=\; \phi^{-1/2} \sum_{a \neq b} \ol v_a \wt G_{ab} v_b\,.
\end{equation}
By \eqref{bound: Gij} we have
\begin{equation*}
\absbb{\sum_a \abs{v_a}^2 (G_{aa} - \phi^{-1/2}\wt m_\phi)} \;\prec\; \phi^{-1} \Psi\,.
\end{equation*}
Hence it suffices to prove that
\begin{equation} \label{claim on X}
\abs{\cal Z} \;\prec\; \phi^{-1} \Psi\,.
\end{equation}
The rest of this section is devoted to the proof of \eqref{claim on X}.

\subsection{Sketch of the proof}\label{sec:sketch}

The basic  reason why \eqref{claim on X} holds is that $G_{ab}$ can be expanded, to leading order,  as a
sum of independent random variables using the identity \eqref{Gij expanded sc}. To simplify the presentation in this sketch, we set $M=N$, so that $\phi=1$ and the rescalings from Section \ref{sec:rescaling} indicated by a tilde are trivial. Hence we drop all tildes. From \eqref{Gij expanded sc}  we get
\begin{equation} \label{sketch 1}
\sum_{a \neq b}   \ol v_a  G_{ab} v_b \;=\;  z \sum_{a\neq b} G_{aa} G_{bb}^{(a)}\sum_{\mu,\nu}  \ol v_a X_{a\mu}  R^{(ab)}_{\mu\nu} X_{\nu b}^* v_b\,.
\end{equation}
If we could replace the diagonal entries by the deterministic value $m_\phi$, it would suffice to estimate the sum $\sum_{a\neq b} \sum_{\mu,\nu}  \ol v_a X_{a\mu}  R^{(ab)}_{\mu\nu} X_{\nu b}^* v_b$. By the independence of the entries of $X$ we have, using \eqref{two-set LDE},
\begin{multline*}
\absbb{\sum_{a\neq b} \sum_{\mu\nu}  \ol v_a X_{a\mu}  R^{(ab)}_{\mu,\nu} X_{\nu b}^* v_b}
\;\prec\; \pbb{ \frac{1}{N^2}\sum_{a\neq b} \sum_{\mu,\nu} \abs{v_a}^2 \abs{v_b}^2 \absb{ R^{(ab)}_{\mu\nu}}^2}^{1/2}
\\
=\;  \pbb{\frac{1}{N^2\eta}\sum_{a\neq b}  \abs{v_a}^2 \abs{v_b}^2   \im \tr R^{(ab)}}^{1/2} \;\prec\; \Psi\,,
\end{multline*}
where we used  the analogue of \eqref{Ward} for $R$, \eqref{bound: Rij}, \eqref{RT+},  and the normalization of $\f v$. 
 Hence, if we could ignore the error arising from the approximation $G_{aa} \approx m_\phi$, the proof of Theorem \ref{thm: IMP} would 
be very simple. 

The error made in the approximation $G_{aa} \approx m_\phi$ is of order $\Psi$ by \eqref{bound: Gij 2}, 
so that the corresponding error term on the right-hand side of \eqref{sketch 1} may be bounded using \eqref{two-set LDE} by 
$$
O_\prec(\Psi) \sum_{a\neq b}  \abs{v_a}\abs{v_b} \absbb{\sum_{\mu\nu}  X_{a\mu}  R^{(ab)}_{\mu\nu} X_{\nu b}^* }
\;\prec\; \Psi^2 \sum_{a\neq b}  \abs{v_a}\abs{v_b} \;\leq\; \Psi^2 \norm{\bv}_1^2\,.
$$
However, the vector $\bv$ is normalized not in $\ell^1$ but in $\ell^2$.  In general, all that can be said about its $\ell^1$-norm is $\norm{\bv}_1\leq M^{1/2} \norm{\bv}_2 = M^{1/2}$. This estimate is sharp if $\f v$ is delocalized, i.e.\ if the entries of $\f v$ have size of order $M^{-1/2}$. The $\ell^1$- and $\ell^2$-norms of $\f v$ are of the same order precisely when only a finite number of entries of $\f v$ are nonzero, in which case Theorem \ref{thm: IMP} is anyway a trivial consequence of Theorem \ref{thm: cov-loc}. 

We conclude that the simple replacement of $G_{aa}$ with its deterministic approximation in \eqref{sketch 1} is not affordable.
Not only the leading term but also every error term has to be expanded in the entries of $X$.  This expansion is most effectively controlled if performed within a high-moment estimate. Thus, for large and even $p$ we shall estimate 
\begin{equation}
  \E \absbb{ \sum_{a\neq b}  \ol v_a  G_{ab} v_b }^p \;=\; \E \sum_{a_1\neq b_1}\ldots \sum_{a_p\neq b_p} \prod_{i=1}^p \ol v_{a_i}  G_{a_ib_i} v_{b_i}\,.
\label{zp}
\end{equation}
 (To simplify notation we drop the unimportant complex conjugations on $p/2$ factors.) 
We shall show that the expectation forces many indices  of the leading-order terms  to coincide, at least in pairs, so that eventually every $v_a$ appears at least to the second power, 
which consistently leads to estimates in terms of the $\ell^2$-norm of $\f v$. Any index that remains single gives rise to a small factor $M^{-1/2}$ which counteracts the large factor $\norm{\bv}_1\leq M^{1/2}$. The trivial bound (arising from estimating each entry $\abs{v_a}$ by 1 and the summation over $a$ and $b$ by $M^2$) is affordable only at a very high order, when
  the number of factors $\Psi \leq N^{-\omega/2}$ that have been generated is sufficient to compensate the loss from the trivial bound. 
 This idea will be used to stop the expansion after  a sufficiently large, but finite,  number of steps.

Before explaining the general strategy, we sketch a second moment calculation.
First, we write
\begin{equation}\label{Zsq}
  \E \absbb{ \sum_{a \neq b} \ol v_a  G_{ab} v_b }^2 \;=\; \E \sum_{a\neq b}\sum_{c\neq d}  \ol v_a  G_{ab} v_b \; \ol v_c  G_{cd}^* v_d\,.
\end{equation}
Using \eqref{Gij Gijk sc}, we \emph{maximally expand} all resolvent entries in the indices $a, b, c,d$. This means that
we use \eqref{Gij Gijk sc} repeatedly until each term in the expansion is independent of all Latin indices that do not explicitly appear among its lower indices;  here an entry is independent of an index if that index is an upper index of the entry. This generates a series of \emph{maximally expanded terms}, whereby a resolvent entry is by definition maximally expanded if we cannot add to it upper indices from the set $a,b,c,d$ by using the identity \eqref{Gij Gijk sc}. In other words, $G_{ij}^{(T)}$ is maximally expanded
if and only if $T = \{a, b,c, d\} \setminus \{i,j\}$. 

To illustrate this procedure,  we assume temporarily that $a,b,c,d$ are all distinct,  and write, using \eqref{Gij Gijk sc}, 
\be\label{Gab}
G_{ab} \;=\; G_{ab}^{(c)} + \frac{G_{ac}G_{cb}}{G_{cc}} \;=\;  G_{ab}^{(cd)} + \frac{G_{ac}G_{cb}}{G_{cc}} + \frac{G_{ad}^{ (c) }
 G_{db}^{ (c) }}{G_{dd}^{(c) }}\,.
\ee
Here the first term is maximally expanded, but the second and third are not; we therefore continue
to expand them  in a similar fashion by applying \eqref{Gij Gijk sc} to each resolvent entry. In general, this procedure does not terminate, but it does generate
finitely many maximally expanded terms with no more than a fixed number, say $\ell$,
of off-diagonal resolvent entries, in addition to finitely many terms that are not
maximally expanded but contain more than $\ell$ off-diagonal entries. 
By choosing $\ell$ large enough, these latter terms may be estimated trivially. 
We therefore focus on the maximally expanded terms, and we write
$$
  G_{ab} \;=\;  G_{ab}^{(cd)} + \frac{G_{ac}^{(bd)}G_{cb}^{(ad)}}{G_{cc}^{(abd)}}+
  \frac{G_{ad}^{(bc)}G_{db}^{(ac)}}{G_{dd}^{(abc)}}  + \ldots\,.
$$
 We get a similar expression for $G_{cd}^*$. We plug both of these expansions into \eqref{Zsq} and multiply out the product. 
The leading term is
$$ 
   \E \sum_{a\neq b} \sum_{c\neq d}  \ol v_a  G_{ab}^{(cd)} v_b \, \ol v_c  G_{cd}^{*(ab)} v_d\,.
$$
 We now expand both resolvent entries using \eqref{Gij expanded sc}, which gives 
\begin{multline*}
\E \sum_{a\neq b}\sum_{c\neq d}  \ol v_a  G_{ab}^{(cd)} v_b \, \ol v_c  G_{cd}^{*(ab)} v_d
\\
=\;  \sum_{a\neq b}\sum_{c\neq d}\sum_{\mu,\nu,\alpha,\beta} \ol v_a v_b \ol v_c v_d \, \E \pB{ G_{aa}^{(cd)} G_{bb}^{(acd)}  X_{a\mu}
 R_{\mu\nu}^{(abcd)} X^*_{\nu b}
   G_{cc}^{*(ab)}G_{dd}^{*(abc)} X_{c\al} R_{\al\beta}^{*(abcd)} X_{\beta d}^*}
\end{multline*}
The goal is to use the expectation to get a pairing (or a more general partition) of the entries of $X$. In order to do that, we shall require all terms that are not entries of $X$ to be independent of the randomness in the rows $a,b,c,d$ of $X$. While the entries of $R$ satisfy this condition, the entries of $G$ do not. We shall hence have to perform a further expansion on them using the identities \eqref{Gij Gijk sc} and \eqref{Gij expanded sc}.
 In fact, these two types of expansions will have to be performed in tandem, using a two-step recursive procedure.
The main reason behind this is that even if all entries of $G$ are maximally expanded, each application of \eqref{Gij expanded sc}
produces a diagonal entry that is not maximally expanded; for such terms the expansion using \eqref{Gij Gijk sc}
has to be repeated.
 For the purposes of this sketch, however, we omit the details of the further expansion of the entries of $G$, and replace them with their deterministic leading order, $m_\phi$ (see \eqref{bound: Gij 2}).
This approximation gives
\begin{equation}\label{square}
\E \sum_{a\neq b}\sum_{c\neq d}  \ol v_a  G_{ab}^{(cd)} v_b \, \ol v_c  G_{cd}^{*(ab)} v_d \;\approx\; \abs{m_\phi}^4 
 \sum_{a\neq b, c\neq d} \ol v_a v_b \ol v_c v_d\sum_{\mu,\nu,\alpha,\beta}\E \pB{ X_{a\mu} R_{\mu\nu}^{(abcd)} X^*_{\nu b}
  X_{c\al} R_{\al\beta}^{*(abcd)} X_{\beta d}^* }\,.
\end{equation}
Since all entries of $R$ are independent of all entries of $X$, we can compute the expectation with respect to the rows $a,b,c,d$.
Note that the only possible pairing is $a=d$, $\mu=\beta$, $b=c$, and $\nu=\al$. This results in the expression
$$
\frac{\abs{m_\phi}^4}{N^2} 
 \sum_{a\neq b} | v_a|^2 \abs{v_b}^2 \E \sum_{\mu\nu} \absb{R_{\mu\nu}^{(abcd)}}^2 \;\approx\; \frac{1}{N\eta}  \sum_{a\neq b} \abs{v_a}^2 \abs{v_b}^2
 \frac{1}{N} \E  \tr  \im R^{(abcd)} \;\approx\; \frac{\im m_{\phi }}{N\eta} \;\sim\; \Psi^2\,,
$$
where we used that $\bv$ is $\ell^2$-normalized.

This calculation, while giving the right order, was in fact an oversimplification, since  the expansion \eqref{Gab}
required that $b\neq c$ and $d\neq a$.  The correct argument requires first a decomposition of the summation over $a,b,c,d$ into a finite number of terms, indexed by the partitions of four elements, according to the coincidences among these four indices. 
Thus, we write
\be\label{part}
 \sum_{a\neq b, c\neq d} \;=\; \sum_{a,b,c,d}^* + \sum_{a=c, b, d}^* + \sum_{a=d, b,c}^* + \sum_{b=c, a, d}^* + \sum_{b=d,a, c}^*
 + \sum_{a=c, b=d}^* + \sum_{a=d, b=c}^* \,,
\ee
where a star over the summation indicates that all summation indices 
 that are not explicitly equal to each other have to be  distinct. The above calculation
leading to \eqref{square} is valid for the first summation of \eqref{part}, whose contribution (up to leading order) is
zero, since the only possible pairing contradicts the condition that the indices $a,b,c,d$ be all distinct. It is not too hard to see
that, among the sums in \eqref{part}, only the last one gives a nonzero contribution (up to leading order), and it is,
going back to \eqref{Zsq}, equal to
$$
 \E \sum_{a\neq b} \abs{v_a}^2 \abs{v_b}^2 \abs{G_{ab}}^2 \;\prec\; \Psi^2\,;
$$
here we used the bound \eqref{bound: Gij 2}. Notice that taking the expectation forced us to chose 
the pairing $a=d$, $b=c$ to get a non-zero term.  This example provides a glimpse into  the mechanism that guarantees
that the $\ell^2$-norm of $\bv$ appears.

Next, we consider a subleading term from the first summation in \eqref{part}, which has three off-diagonal entries:
$$ \sum_{a,b,c,d}^* \ol v_a v_b \ol v_c v_d
\frac{G_{ac}^{(bd)}G_{cb}^{(ad)}}{G_{cc}^{(abd)}} G_{cd}^{*(ab)}\,.
$$
We proceed as before, expanding all off-diagonal entries of $G$ using \eqref{Gij expanded sc}. Up to leading order, we get 
$$
\sum_{a,b,c,d}^*
 \ol v_a v_b \ol v_c v_d\sum_{\mu,\nu,\alpha,\beta,\gamma,\delta}\E \Big[  \big(X_{a\mu} R_{\mu\nu}^{(abcd)} X^*_{\nu c}\big)\big( X_{c\al} R_{\al\beta}^{(abcd)} X^*_{\beta b}
\big)\big(  X_{c\gamma} R_{\gamma\delta}^{*(abcd)} X_{\delta d}^*\big) \Big] 
$$
The expectation again renders this term zero if $a, b,c,d$ are distinct.

Based on these preliminary heuristics, we outline the main steps in estimating a high moment of $\cal Z$.

\begin{description}

\item[Step 1.] Partition the indices in \eqref{zp} according to their coincidence structure: indices in the same block of the partition are required to coincide and indices in different blocks are required to be distinct. This leads to a reduced family, $T$, of distinct indices.

\item[Step 2.]
Make all entries of $G$ maximally expanded by repeatedly applying the identity \eqref{Gij Gijk sc}. Roughly, this entails adding upper indices from the family $T$ to each entry of $G$ using the identity \eqref{Gij Gijk sc}. We stop the iteration if either \eqref{Gij Gijk sc} cannot be applied to any entry of $G$ or we have generated a sufficiently large number of off-diagonal entries of $G$.

\item[Step 3.] Apply \eqref{Gij expanded sc} to each maximally expanded off-diagonal entry of $G$. This yields factors of the form $\sum_{\mu,\nu} X_{a\mu} R^{(T)}_{\mu\nu} X_{\nu b}^*$ with $a,b\in T$ and $R^{(T)}$ is independent of all entries of $X$ by construction. In addition, this application of \eqref{Gij expanded sc} produces new diagonal entries of $G$ that are not maximally expanded.

\item[Step 4.]
Repeat Steps 2 and 3 recursively in tandem until we only have a sum of terms whose factors consist of maximally expanded diagonal entries of $G$, entries of $R^{(T)}$, and entries of $X$ from the rows indexed by $T$.

\item[Step 5.]
Apply \eqref{Gii expanded sc} to each maximally expanded diagonal entry of $G$. We end up with factors consisting only of entries of $R^{(T)}$ and entries of $X$ from the rows indexed by $T$.

\item[Step 6.] Using the fact that all entries of $R$ are independent of all entries of $X$, take a partial expectation over the rows of $X$ indexed by the set $T$; this only involves the entries of $X$. Only those terms give a nonzero contribution whose Greek indices have substantial coincidences. 

\item[Step 7.] For entropy reasons, the leading-order term arises from the smallest number of 
 constraints among the summation vertices that still results in a nonzero contribution. This corresponds to a pairing both among the Greek and the Latin indices. This naturally leads to estimates in terms to the $\ell^2$-norm of $\f v$.

\item[Step 8.] Observe that if a Latin index $i$ remained single in the partitioning of Step 1 (so that the corresponding weight factor will involve the $\ell^1$-norm $\sum_i \abs{v_i}$) then, by a simple parity argument, the number of appearances of the index $i$ will remain odd along the expansion of Steps 2--5. 
This forces us to take at least a third (but in fact at least a fifth) moment of some entry $X_{i\mu}$, which reduces the combinatorics of the summations 
compared with the fully paired situation from Step 7. This combinatorial gain offsets the factor $M^{1/2}$ lost in taking the $\ell^1$-norm of $\f v$.

\end{description}

Steps 1--6 require a careful expansion algorithm and a meticulous bookkeeping
of the resulting terms. We shall develop a graphical language  that encodes
the resulting monomials. 
Expansion steps will be recorded via operations
on graphs such as merging certain vertices or replacing 
some vertex or edge by a small subgraph.
Several ingredients of the 
graphical representation and the concept
of graph operations are inspired by tools from \cite{EKY2} developed for random band matrices.
Once the appropriate graphical
language is in place and  the expansion algorithm has been constructed,  the observations in Steps 7 and 8 will yield the desired
estimate by a power counting  coupled with a parity argument.

\subsection{The $p$-th moment of $\cal Z$ and introduction of graphs} \label{sec: graphs}

We shall estimate $\cal Z$ with high probability by estimating its $p$-th moment for a large but fixed $p$. It is convenient to rename the summation variables in the definition of $\cal Z$ as $(a,b) = (b_1,b_2)$. Let $p$ be an even integer and write
\begin{equation} \label{E Xp 1}
\E \abs{\cal Z}^{p} \;=\; \phi^{-p/2} \, \E \sum_{b_{11} \neq b_{12}} \cdots \sum_{b_{p1} \neq b_{p2}} \pBB{\prod_{k = 1}^{p/2} \ol v_{b_{k1}} \wt G_{b_{k1} b_{k2}} v_{b_{k2}}} \pBB{\prod_{k = p/2 + 1}^{p} \ol v_{b_{k1}} \wt G_{b_{k1} b_{k2}}^* v_{b_{k2}}}\,,
\end{equation}
where we recall the definition of $\cal Z$ from \eqref{def_calZ}.

We begin by partitioning the summation according to the coincidences among the indices $\f b = (b_{kr} \col 1 \leq k \leq p\,,\, r = 1,2)$. Denote by $\cal P(\f b)$ the partition of $\{1, \dots, p\} \times \{1,2\}$ defined by the equivalence relation $(k,r) \sim (l,s)$ if and only if $b_{kr} = b_{ls}$. We define $\fra P_p$ as the set of partitions $P$ of $\{1, \dots, p\} \times \{1,2\}$ such that, for all $k = 1, \dots, p$, the elements $(k,1)$ and $(k,2)$ are not in the same block of $P$. Hence we may rewrite \eqref{E Xp 1} as 
\begin{equation} \label{E Xp 2}
\E \abs{\cal Z}^{p} \;=\; \phi^{-p/2} \, \E \sum_{P \in \fra P_p} \sum_{\f b} \ind{\cal P(\f b) = P} \pBB{\prod_{k = 1}^{p/2} \ol v_{b_{k1}} \wt G_{b_{k1} b_{k2}} v_{b_{k2}}} \pBB{\prod_{k = p/2 + 1}^{p} \ol v_{b_{k1}} \wt G_{b_{k1} b_{k2}}^* v_{b_{k2}}}\,.
\end{equation}
We shall perform the summation by first fixing the partition $P \in \fra P_p$ and by deriving an upper bound that is uniform in $P$; at the very end we shall sum trivially over $P \in \fra P_p$.

In order to handle expressions of the form \eqref{E Xp 2}, as well as more complicated ones required in later stages of the proof, we shall need to develop a graphical notation. The basic idea is to associate matrix indices with vertices and resolvent entries with edges. The following definition introduces graphs suitable for our purposes.

\begin{definition}[Graphs] \label{def: graphs}
By a \emph{graph} we mean a finite, directed, edge-coloured, multigraph
\begin{equation*}
\Gamma \;=\; \pb{V(\Gamma), E(\Gamma), \xi(\Gamma)} \;\equiv\; (V,E,\xi)\,.
\end{equation*}
Here $V$ is a finite set of vertices, $E$ a finite set of directed edges, and $\xi$ is a ``colouring of $E$'', i.e.\ a mapping from $E$ to some finite set of colours. The graph $\Gamma$ may have multiple edges and loops. More precisely, $E$ is some finite set with maps $\alpha, \beta \col E \to V$; here $\alpha(e)$ and $\beta(e)$ represent the source and target vertices of the edge $e \in E$. We denote by $\deg_\Gamma(i)$ the degree of the vertex $i \in V(\Gamma)$.
\end{definition}

We may now express the right-hand side of \eqref{E Xp 2} using graphs. Fix the partition $P \in \fra P_p$. We associate a graph $\Delta \equiv \Delta(P)$ with $P$ as follows. The vertex set $V(\Delta)$ is given by the blocks of $P$, i.e.\ $V(\Delta) = P$. The set of colours, i.e.\ the range of $\xi$, is $\{G, G^*\}$ (we emphasize that these two colours are simply formal symbols whose name is supposed to evoke their meaning). The set of edges $E(\Delta)$ is parametrized as follows by the resolvent entries on the right-hand side of \eqref{E Xp 2}. Each resolvent entry $G_{b_{k1} b_{k2}}^\#$ gives rise to an edge $e \in E(\Delta)$ with colour $\xi(e) = G$ if $\#$ is nothing and $\xi(e) = G^*$ if $\#$ is $*$. The source vertex $\alpha(e)$ of this edge is the unique block of $P$ satisfying $(k,1) \in \alpha(e)$, and its target vertex $\beta(e)$ the unique block of $P$ satisfying $(k,2) \in \beta(e)$.
Figure \ref{fig: construction of Delta} illustrates the construction of $\Delta(P)$, where the two different types of line correspond to the two colours $G, G^*$. The graph $\Delta$ has no loops.
\begin{figure}[ht!]
\begin{center}
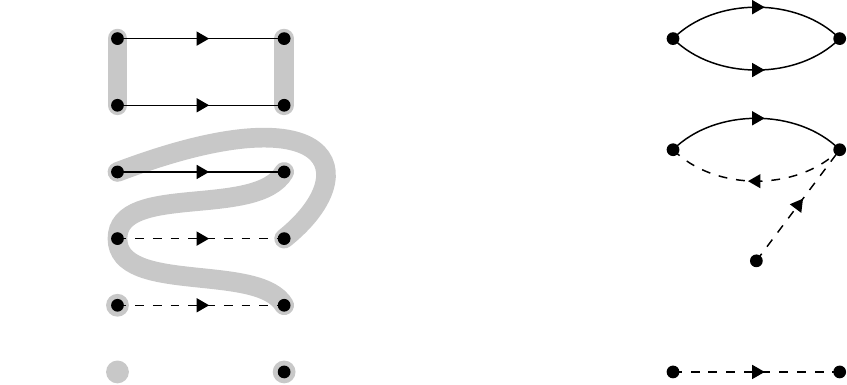
\end{center}
\caption{The construction of $\Delta$. Here we took $p = 6$. On the left we give a graphical representation of the right-hand side of \eqref{E Xp 2}; the vertices $(k,r)$ are labelled by $k = 1, \dots, 6$ and $r = 1,2$; each edge is associated with an entry of $\wt G$ (drawn with a solid line) or $\wt G^*$ (drawn with a dashed line); the partition $P$ is drawn using grey regions representing the blocks of $P$. 
On the right we draw $\Delta(P)$, with $\abs{V(\Delta)} = \abs{P} = 7$ vertices. The partition depicted here corresponds
to the index coincidences $b_{11}=b_{21}$, $b_{12}=b_{22}$, $b_{31}=b_{42}$, $b_{32}= b_{41}= b_{52}$; apart from these constraints, all indices are distinct in the summation over $\f b$ in \eqref{E Xp 2}.  The index associated with block $i \in V(\Delta)$ is denoted by $a_i$, so that $a_i = b_{kr}$ for any $(k,r)$ in the block $i$ of the partition $P$.  \label{fig: construction of Delta}}
\end{figure}

Using the graph $\Delta \equiv \Delta(P)$ we may rewrite the right-hand side of \eqref{E Xp 2}. Each vertex $i \in V(\Delta)$, associated with a block of $P$, is assigned a summation index $a_i \in \{1, 2, \ldots, M\}$,
 and we write $\f a = (a_i)_{i \in V(\Delta)}$. The indicator function on the right-hand side of \eqref{E Xp 2} translates to the condition that $a_i \neq a_j$ for $i \neq j$ (where $i,j \in V(\Delta)$). We use the notation $\sum^{*}$ to denote summation subject to this condition (distinct summation indices). Thus we may rewrite \eqref{E Xp 2} as
\begin{equation} \label{E Zp}
\E \abs{\cal Z}^p \;=\; \phi^{-p/2} \sum_{P \in \fra P_p} Y(\Delta(P))\,,
\end{equation}
where we defined
\begin{equation} \label{def Y Delta}
Y(\Delta) \;\deq\; \sum_{\f a}^{*} w_{\f a}(\Delta) \, \E \cal A_{\f a}(\Delta)
\end{equation}
using the abbreviations
\begin{equation} \label{def w and cal A}
w_{\f a}(\Delta) \;\deq\; \prod_{e \in E(\Delta)} \ol v_{a_{\alpha(e)}} v_{a_{\beta(e)}}\,, \qquad
\cal A_{\f a}(\Delta) \;\deq\; \prod_{e \col \xi(e) = G} \wt G_{a_{\alpha(e)} a_{\beta(e)}} \prod_{e \col \xi(e) = G^*} \wt G_{a_{\alpha(e)} a_{\beta(e)}}^*\,.
\end{equation}
The function $w_{\f a}(\Delta)$ has the interpretation of a deterministic (complex) weight for the summation over $\f a$; it satisfies the basic estimate
\begin{equation} \label{estimate for w}
\abs{w_{\f a}(\Delta)} \;\leq\; \prod_{i \in V(\Delta)} \abs{v_{a_i}}^{\deg_\Delta(i)}\,.
\end{equation}
We record the following basic properties of $\Delta$.
\begin{itemize}
\item
$\abs{E(\Delta)} = p$.
\item
$\Delta$ has no loops, i.e.\ $\alpha(e) \neq \beta(e)$ for all $e \in E(\Delta)$.
\item
$1 \leq \abs{V(\Delta)} \leq 2 p$.
\end{itemize}

Our first goal is to  use the expansion formulas \eqref{Gij Gijk sc}--\eqref{Gij expanded sc}
to express $\cal A_{\f a}(\Delta)$ as a sum of monomials involving only entries of $X$ and $R$,  so that no entries of $G$ remain.
The entries of $R$ and $X$ will be independent by construction, which will make the evaluation of the expectation possible. The result will be given in Proposition \ref{prop:Y_tree} below, which expresses $Y(\Delta)$ as a sum of terms associated with graphs, which are themselves conveniently indexed using a finite binary tree, denoted by $\cal T$. 
To bookkeep this expansion, we shall need a more general class of graphs than $\Delta$.

\subsection{Generalized colours and encoding} \label{sec: exp 1}

For the following we fix $p \in 2 \N$ and a partition $P \in \fra P_p$, and set $\Delta = \Delta(P)$. We shall develop an
 expansion scheme for monomials of type $\cal A_{\f a}(\Delta)$. A fundamental notion in our expansion is that of \emph{maximally expanded entries of $\wt G$}, given in Definition \ref{def: max expanded} below.

We shall need to enlarge the set of colours of edges, so as to be able to encode entries of not only $\wt G$ and $\wt G^*$, but also entries of $R$, $R^*$, $X$, and $X^*$; in addition, we shall need to encode diagonal entries of $\wt G$ and $\wt G^*$ that are in the denominator, as in the formulas \eqref{Gij Gijk sc}, as well as to keep track of upper indices. We need all of these factors, since our expansion relies on a repeated application of the identities \eqref{Gij Gijk sc}, \eqref{Gii expanded sc}, and \eqref{Gij expanded sc}.

In order to define the graphs precisely, we consider graphs $\Gamma$ satisfying Definition \ref{def: graphs} whose vertex set $V(\Gamma) = V_b(\Gamma) \cup V_w(\Gamma)$ is partitioned into \emph{black vertices} $V_b(\Gamma)$ and \emph{white vertices} $V_w(\Gamma)$. Informally, black vertices are incident to edges encoding entries of $\wt G$ and $\wt G^*$, and white vertices to edges encoding entries of $\wt R$ and $\wt R^*$.  In other words, \emph{black vertices} are associated with \emph{Latin} summation indices in the \emph{population space} $\{1, \dots, M\}$; \emph{white vertices} are associated with \emph{Greek} summation indices in the \emph{sample space} $\{1, \dots, N\}$. See Remark \ref{rem:index sets}. 
We shall only consider graphs $\Gamma$ satisfying
\begin{equation}\label{VB}
V_b(\Gamma) \;=\; V(\Delta)\,,
\end{equation}
an assumption we make throughout the following. This means that only new Greek summation indices
but no new Latin indices are generated, corresponding to the repeated applications of \eqref{Gii expanded sc} and \eqref{Gij expanded sc}.
In particular, the vertex colouring for our graph is very simple: the
vertices of $\Delta$ are black and all other vertices are white.

As our set of colours we choose
\begin{equation} \label{set of colours}
\hb{\xi = (\xi_1, \xi_2, \xi_3) \col \xi_1 \in  \{G, G^*, R, R^*, X, X^*\}  \,,\, \xi_2 \in  \{+,-\}  \,,\, \xi_3 \subset V_b(\Gamma)}\,.
\end{equation}
Note that these colours are to be interpreted merely as list of formal symbols; the choice of their names is supposed to evoke their meaning. 
The component $\xi_1$ determines whether the edge encodes an entry of $\wt G$ (corresponding to $\xi_1 = G$), of $\wt G^*$ (corresponding to $\xi_1 = G^*$), of $R$ (corresponding to $\xi_1 = R$), of $R^*$ (corresponding to $\xi_1 = R^*$), of $X$ (corresponding to $\xi_1 = X$), or of $X^*$ (corresponding to $\xi_1 = X^*$).  The component $\xi_2$ determines whether the entry is in the numerator (corresponding to $\xi_2 = +$) or in the denominator (corresponding to $\xi_2 = -$).
Finally, the component $\xi_3$ is used to keep track of the upper indices of entries of $\wt G$ and $\wt G^*$, which we shall set to be $\f a_{\xi_3} \deq \{a_i \col i \in \xi_3\}$. The entries of $R$ and $R^*$ also have upper indices, but they always carry the
maximal set $\f a_b$ of upper indices, i.e.\ they always appear in the form $R^{(\f a_b)}$ and $R^{*(\f a_b)}$. Hence, upper indices need not
be tracked for the entries of $R$ and $R^*$, and for them we set $\xi_3(e) =  \emptyset $.
Let $\Gamma$ be a graph with colour set \eqref{set of colours}.

We now list some properties of all graphs we shall consider. To that end, we call $e \in E(\Gamma)$ a \emph{$G$-edge} if $\xi_1(e) \in \{G,G^*\}$, an \emph{$R$-edge} if  $\xi_1(e) \in \{R,R^*\}$, and an \emph{$X$-edge} if $\xi_1(e) \in \{X, X^*\}$.
\begin{enumerate}
\item
If $e$ is a $G$-edge then $\alpha(e),\beta(e) \in V_b(\Gamma)$.
\item
If $e$ is an $R$-edge then $\alpha(e),\beta(e) \in V_w(\Gamma)$.
\item
If $\xi_1(e) = X$ then $\alpha(e) \in V_b(\Gamma)$ and $\beta(e) \in V_w(\Gamma)$.
\item
If $\xi_1(e) = X^*$ then $\alpha(e) \in V_w(\Gamma)$ and $\beta(e) \in V_b(\Gamma)$.
\item
If $\xi_2(e) = -$ then $\xi_1(e) \in \{G,G^*\}$ and $\alpha(e) = \beta(e)$.
\item
If $\xi_3(e) \neq \emptyset$ then $\xi_1(e) \in \{G,G^*\}$ and $\xi_3(e) \subset V_b(\Gamma) \setminus \{\alpha(e), \beta(e)\}$.
\end{enumerate}
Properties (i)--(iv) are straightforward compatibility conditions which are obvious in light of the type of matrix entry that the edge $e$ encodes. Property (v) states that only diagonal entries of $\wt G$ and $\wt G^*$ may be in the denominator. Property (vi) states that only entries of $G$ or $\wt G$ may have a (nontrivial) upper index and
the lower indices of an entry of $\wt G$ or $\wt G^*$ may not coincide with its upper indices (by definition of minors).

In order to give a precise definition of the monomial encoded by a coloured edge, and hence of a graph $\Gamma$, it is convenient to split the vertex indices as $\f a = (a_i)_{i \in V(\Gamma)} = (\f a_b, \f a_w)$, where
\begin{equation*}
\f a_b \;\deq\; (a_i)_{i \in V_b(\Gamma)} \;\in\; \{1, \dots, M\}^{\abs{V_b(\Gamma)}}\,, \qquad 
\f a_w \;\deq\; (a_i)_{i \in V_w(\Gamma)} \;\in\; \{1, \dots, N\}^{\abs{V_w(\Gamma)}}\,.
\end{equation*}
Under the former convention, indices assigned to black vertices (elements of $\{1, \dots, N\}$) were Latin letters, while indices
assigned to white vertices (elements of $\{1, \dots, M\}$) were Greek letters. In the above expression,
 all indices assigned to vertices of $V(\Gamma)$  (the indices of $\f a$) are also denoted by Latin letters $i$.
This notation is independent of the previous convention: we simply do not have a third alphabet available.
We always assume that the indices $\f a_b$ are distinct; we impose no constraints on the indices $\f a_w$. 
For the following definitions we fix a collection of vertex indices $\f a$. At the end of the proof, we shall sum over $\f a_b$ under the constraint that the indices of $\f a_b$ be distinct.

For $e \in E(\Gamma)$ with $\xi_1(e) \in \{G,G^*\}$ we define the \emph{resolvent entry encoded by $e$ in $\Gamma$} as
\begin{equation} \label{definition of A e}
\cal A_{\f a}(e, \Gamma) \;\deq\;
\begin{cases}
\wt G_{a_{\alpha(e)} a_{\beta(e)}}^{(\f a_{\xi_3(e)})} & \text{if } \xi_1(e) = G \text{ and } \xi_2(e) = +
\\
\wt G_{a_{\alpha(e)} a_{\beta(e)}}^{^* (\f a_{\xi_3(e)})} & \text{if } \xi_1(e) = G^* \text{ and } \xi_2(e) = +
\\
1 / \wt G_{a_{\alpha(e)} a_{\beta(e)}}^{(\f a_{\xi_3(e)})} & \text{if } \xi_1(e) = G \text{ and } \xi_2(e) = -
\\
1 / \wt G_{a_{\alpha(e)} a_{\beta(e)}}^{^* (\f a_{\xi_3(e)})} & \text{if } \xi_1(e) = G^* \text{ and } \xi_2(e) = -\,.
\end{cases}
\end{equation} 
When drawing graphs, we represent a black vertex as a black dot and a white vertex as a white dot. An edge with $\xi_1 = G$ is represented as a solid directed line joining two black dots, and an edge with $\xi_1 = G^*$ as a dashed directed line joining two black dots. If $\xi_2 = -$ we indicate this by decorating the edge with a white diamond (not to be confused with a white dot).
Notice that such edges are always loops, according to property (v).
Sometimes we also indicate the component $\xi_3(e)$ in our graphs, simply by writing it next to the edge $e$.
See Figure \ref{fig: diagonal} for our graphical conventions when depicting edges with $\xi_1 \in \{G,G^*\}$.
\begin{figure}[ht!]
\begin{center}
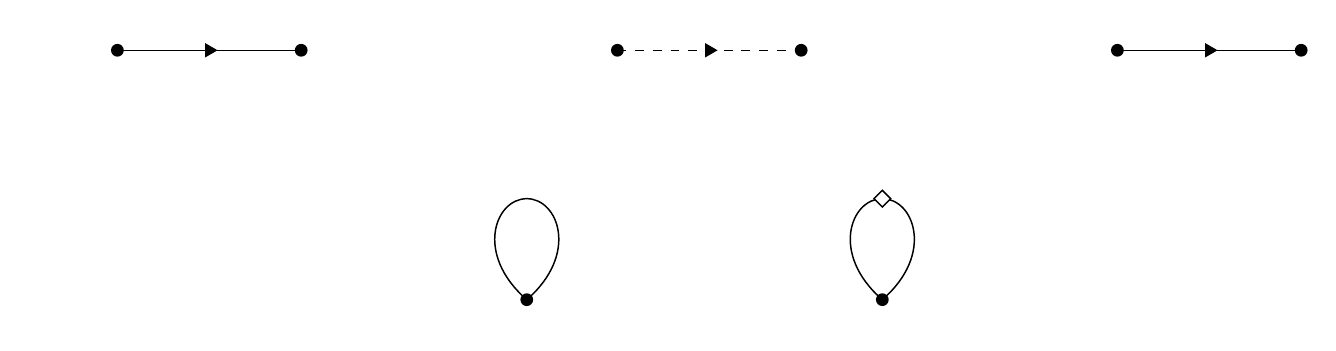
\end{center}
\caption{The graphical conventions for off-diagonal and diagonal edges. Here we draw the case $\xi_1 = G$ (solid lines encoding entries of $\wt G$); if $\xi_1 = G^*$ (encoding entries of $\wt G^*$) we use dashed lines, but the pictures are otherwise identical. The case $\xi_2 = +$ (encoding resolvent entries in the numerator) is drawn without any decorations; if $\xi_2 = -$ (encoding resolvent entries in the denominator) we indicate this with with a white diamond attached to the edge. Note that, since $\xi_2 = -$ only for diagonal entries (encoded by loops), the orientation of the edge is immaterial and the arrow therefore superfluous. \label{fig: diagonal}}
\end{figure}

For the other edges, $e \in E(\Gamma)$ with $\xi_1(e) \in \{R,R^*,X,X^*\}$, we set
\begin{equation*}
\cal A_{\f a}(e, \Gamma) \;\deq\; 
\begin{cases}
R^{(\f a_b)}_{a_{\alpha(e)} a_{\beta(e)}} & \text{if } \xi_1(e) = R
\\
R^{*(\f a_b)}_{a_{\alpha(e)} a_{\beta(e)}} & \text{if } \xi_1(e) = R^*
\\
X_{a_{\alpha(e)} a_{\beta(e)}} & \text{if } \xi_1(e) = X
\\
X_{a_{\alpha(e)} a_{\beta(e)}}^* & \text{if } \xi_1(e) = X^*\,.
\end{cases}
\end{equation*}
When drawing graphs, we represent an edge with $\xi_1 = R$ as a solid directed line joining two white vertices, an edge with $\xi_1 = R^*$ as a dashed directed line joining two white vertices, an edge with $\xi_1 = X$ as a dotted directed line from a black to a white vertex, and an edge with $\xi_1 = X^*$ as a dotted directed line from a white to a black vertex. Note that we use the same line style to draw $X$- and $X^*$-edges, since the orientation of the edge together with the vertex colouring distinguishes them uniquely. 
See Figure \ref{fig: extended definitions} for an illustration of these conventions, and Figure \ref{fig: Gab expansion} for an illustration of \eqref{Gab expansion}.
\begin{figure}[ht!]
\begin{center}
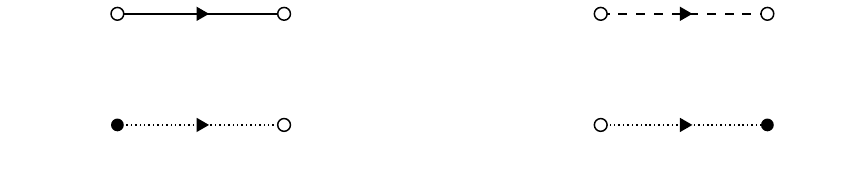
\end{center}
\caption{The graphical conventions for entries of $R^{(\f a_b)}$ (corresponding to $\xi_1 = R$), $R^{*(\f a_b)}$ (corresponding to $\xi_1 = R^*$), $X$ (corresponding to $\xi_1 = X$), and $X^*$ (corresponding to $\xi_1 = X^*$).\label{fig: extended definitions}}
\end{figure}

Having defined $\cal A_{\f a}(e, \Gamma)$ for an arbitrary graph $\Gamma$ with colour set \eqref{set of colours} and $e \in E(\Gamma)$, we define the \emph{monomial encoded by $\Gamma$},
\begin{equation} \label{extended def of cal A}
\cal A_{\f a}(\Gamma) \;\deq\; \prod_{e \in E(\Gamma)} \cal A_{\f a}(e, \Gamma)\,.
\end{equation}
Note that \eqref{extended def of cal A} extends \eqref{def w and cal A}. At this point we introduce a convention that will simplify notation throughout the proof. We allow the monomial $\cal A_{\f a}(\Gamma)$ to be multiplied by a deterministic function of $z$ that is bounded, i.e.\ in general we replace \eqref{extended def of cal A} with
\begin{equation} \label{extended def of cal A with f}
\cal A_{\f a}(\Gamma) \;\deq\; u(\Gamma) \prod_{e \in E(\Gamma)} \cal A_{\f a}(e, \Gamma)\,,
\end{equation}
where $u(\Gamma)$ is some deterministic function of $z$ satisfying $\abs{u(\Gamma,z)} \leq C_\Gamma$ for $z \in \f S$. This will allow us to forget 
signs and various factors of $\wt z$ and $m_\phi$ that are generated along the expansion. The functions $u(\Gamma)$ could be easily tracked throughout the proof, but all that we need to know about them is that they satisfy the conditions 
 listed after \eqref{extended def of cal A with f}.  Not tracking the precise form of these prefactors is sufficient for our purposes, since after 
completing the graphical expansion we shall estimate each graph individually, without making use of further cancellations among different graphs.

\subsection{$R$-groups} \label{sec: exp 1.1}

We define an \emph{$R$-group} to be an induced subgraph of $\Gamma$ consisting of three edges, $e_1$, $e_2$, $e_3$, such that $e_1$ and $e_3$ are $X$-edges and $e_2$ is an $R$-edge, and they form a chain in the sense that $\beta(e_1) = \alpha(e_2)$, $\beta(e_2) = \alpha(e_3)$, and both of these vertices have degree two. We call $e_2$ the \emph{centre} of the $R$-group and define $A(e_2) \deq \alpha(e_1)$ and $B(e_2) \deq \beta(e_3)$. If $A(e_2) = B(e_2)$ we call the $R$-group \emph{diagonal}; otherwise we call it \emph{off-diagonal}. See Figure \ref{fig: R-group} for an illustration.
\begin{figure}[ht!]
\begin{center}
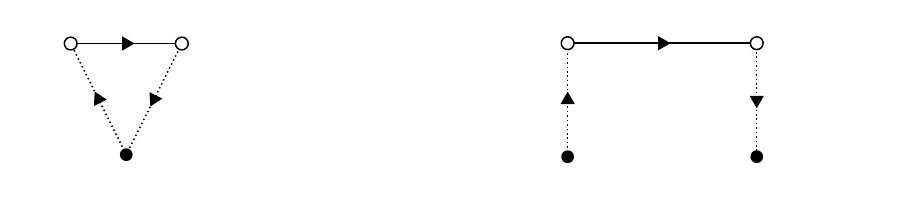
\end{center}
\caption{A diagonal $R$-group (left) and an off-diagonal $R$-group (right). We label the centre of the group by $e$. \label{fig: R-group}}
\end{figure}
We require that our graphs $\Gamma$ satisfy the following property.
\begin{itemize}
\item[(vii)]
Each $X$-edge and $R$-edge of $\Gamma$ belongs to some $R$-group of $\Gamma$. In particular, all white vertices have degree two, and an $R$-group is uniquely determined by its centre.
\end{itemize}
The $R$-groups constitute graphical representations of the monomials on the right-hand sides of \eqref{Gii expanded sc} and \eqref{Gij expanded sc}. 
It is important to stress that there is no restriction on possible coincidences among the white-vertex indices $(a_i)_{i\in V_w}$; this means
that even if two Greek summation indices arising from two different applications \eqref{Gii expanded sc} or \eqref{Gij expanded sc} coincide, they will nevertheless be encoded by distinct white vertices.  This allows us to keep the graphical structure involving $R$ and $X$  edges very simple.

Note that the initial graph $\Delta = \Delta(P)$ trivially satisfies the properties (i)--(vii).

\subsection{Maximally expanded edges and sketch of the expansion} \label{sec:max_exp}

The following definition introduces a notion that underlies our entire expansion. Note that it only applies to $G$-edges.

\begin{definition} \label{def: max expanded}
The $G$-edge $e \in E(\Gamma)$ is \emph{maximally expanded} if $\xi_3(e) = V_b(\Gamma) \setminus \{\alpha(e), \beta(e)\}$. If $e$ is maximally expanded then we also call the entry encoded by it, $\cal A_{\f a}(e, \Gamma)$, \emph{maximally expanded}.
\end{definition}
For instance, if $e$ encodes an entry of the form $\wt G_{a_i a_j}^{(T)}$ then this entry is maximally expanded if and only if $T = \f a_b \setminus \{a_i, a_j\}$. The idea behind this definition is that a maximally expanded entry has as many upper indices from the set $\f a_b$ as possible.

We conclude this section with an outline of the expansion algorithm that will ultimately yield a family of graphs, whose contributions can be explicitly estimated and whose encoded monomials sum up to the monomial encoded by $\Delta = \Delta(P)$ from Section \ref{sec: graphs}. The goal of the expansion is to get rid of all $G$-edges, by replacing them with $R$-groups. Of course, this replacement has to be done in such a manner that the original monomial $\cal A_{\f a}(\Delta)$ can be expressed as a sum of the monomials encoded by the new graphs. Having done the expansion, we shall be able to exploit the fact that the $R$-entries and the $X$-entries are independent.
This independence originates from the upper indices $i$  and $j$ in the entries of $R$ in \eqref{Gii expanded sc} and \eqref{Gij expanded sc}. It allows us to take the expectation in the $X$-variables. Combined with sufficient information about the graphs generated by the expansion, this yields a reduction in the summation that is sufficient to complete the proof.

The expansion relies of three main operations:
\begin{enumerate}
\item[(a)]
make one of the  $G$-entries maximally expanded by adding upper indices using the identity \eqref{Gij Gijk sc};
\item[(b)]
expand  all  off-diagonal maximally expanded $G$-entries of in terms of $X$ using the identity \eqref{Gij expanded sc};
\item[(c)]
expand all  diagonal maximally expanded $G$-entries in terms of $X$ using \eqref{Gii expanded sc}.
\end{enumerate}
We shall  implement  each ingredient by a graph surgery procedure.  Operation (a) is the subject of Section \ref{sec: exp 2}; it creates two new graphs, $\tau_0(\Gamma)$ and $\tau_1(\Gamma)$, from an initial graph $\Gamma$.  Operation (b)  is the subject of Section \ref{sec: exp 3}; it creates one new graph, $\rho(\Gamma)$, from an initial graph $\Gamma$.
As it turns out, Operations (a) and (b) have to
be performed in tandem using a coupled recursion, described by a tree $\cal T$, which is the subject of Section \ref{sec: exp 4}. 
Once this recursion has terminated, Operation (c) may be performed (see Section \ref{sec: exp 5}).

\subsection{Operation (a): construction of  the graphs  $\tau_0(\Gamma)$ and $\tau_1(\Gamma)$} \label{sec: exp 2}
In order to avoid trivial ambiguities, we choose and fix an arbitrary ordering of the vertices $V(\Delta)$ and of the family of resolvent entries $\p{\wt G_{ab}^{(T)} \col a,b \notin T}$. Hence we may speak of the first vertex of $V(\Delta)$ and the first factor of a monomial in the entries $\wt G_{ab}^{(T)}$.

We now describe Operation (a) of the expansion. It relies on the identities
\begin{equation} \label{res identity type 1}
\wt G_{ab}^{(T)} \;=\; \wt G_{ab}^{(T c)} + \frac{\wt G_{ac}^{(T)} \wt G_{cb}^{(T)}}{\wt G_{cc}^{(T)}}\,, \qquad
\frac{1}{\wt G_{aa}^{(T)}} \;=\; \frac{1}{\wt G_{aa}^{(Tc)}} - \frac{\wt G_{ac}^{(T)} \wt G_{ca}^{(T)}}{\wt G_{aa}^{(T)} \wt G_{aa}^{(Tc)} \wt G_{cc}^{(T)}}\,,
\end{equation}
which follow immediately from \eqref{Gij Gijk sc}; here $a,b,c \in \f a_b \setminus T$ and $a,b \neq c$. The same identities hold for $\wt G^*$. The basic idea is to take some graph $\Gamma$ with at least one $G$-entry that is not maximally expanded, to pick the first such $G$-entry, and to apply the first identity of \eqref{res identity type 1} if this entry is in the numerator and the second identity if this entry is in the denominator. By Definition \ref{def: max expanded}, if the $G$-entry is not maximally expanded, there is a $c \in \f a_b$ such that \eqref{res identity type 1} may be applied. The right-hand sides of \eqref{res identity type 1} consist of two terms: the first one has one additional upper index, and the second one at least one additional off-diagonal $G$-entry. 
These two terms can be described by two new graphs, derived from $\Gamma$, denoted by $\tau_0(\Gamma)$ and $\tau_1(\Gamma)$. The graph
$\tau_0(\Gamma)$ is almost identical to $\Gamma$, except that the edge corresponding to the selected entry $\wt G_{ab}^{(T)}$
receives an additional upper index $c$, so that the upper indices of the chosen entry are changed as $T\to (Tc)$. The graph $\tau_1(\Gamma)$
also differs from $\Gamma$ only locally: the single edge of $\wt G_{ab}^{(T)}$ is replaced by two edges and loop
with a diamond.

We now give the precise definition of Operation (a). Take a graph $\Gamma$ that has a $G$-edge that is not maximally expanded. We shall define two new graphs, $\tau_0(\Gamma)$ and $\tau_1(\Gamma)$ as follows. Let $e$ be the first\footnote{Recall that we fixed an arbitrary ordering of the resolvent entries of $G$, which induces an ordering on the edges of $\Gamma$ via the map $e \mapsto \cal A_{\f a}(e, \Gamma)$.} $G$-edge of $\Gamma$ that is not maximally expanded, and let $i$ be the first vertex of $V_b(\Gamma) \setminus \pb{\xi_3(e) \cup \{\alpha(e), \beta(e)\}}$; note that by assumption on $\Gamma$ and $e$ this set of vertices is not empty. We now apply \eqref{res identity type 1} to the entry $\cal A_{\f a}(e, \Gamma)$. We set $a = a_{\alpha(e)}$, $b = a_{\beta(e)}$, $c = a_i$, and $T = \f a_{\xi_3(e)}$ in \eqref{res identity type 1}, and express $\cal A_{\f a}(e, \Gamma)$ as a sum of two terms given by the right-hand sides of \eqref{res identity type 1}; we use the first identity of \eqref{res identity type 1} if $\xi_2(e) = +$ and the second if $\xi_2(e) = -$. This results in a splitting of the whole monomial into a sum of two monomials,
\begin{equation*}
\cal A_{\f a}(\Gamma) \;=\; \cal A_{0, \f a}(\Gamma) + \cal A_{1, \f a}(\Gamma)\,,
\end{equation*}
in self-explanatory notation. By definition, $\tau_0(\Gamma)$ is the graph that encodes $\cal A_{0, \f a}(\Gamma)$ and $\tau_1(\Gamma)$ the graph that encodes $\cal A_{1, \f a}(\Gamma)$. Hence, by definition, we have
\begin{equation} \label{sigma splitting}
\cal A_{\f a}(\Gamma) \;=\; \cal A_{\f a}(\tau_0(\Gamma)) + \cal A_{\f a}(\tau_1(\Gamma))\,.
\end{equation}
Moreover, it follows immediately that the maps $\tau_0$ and $\tau_1$ do not change the vertices, so that we have
\begin{equation} \label{Vb tau}
V_b(\tau_0(\Gamma)) \;=\; V_b(\tau_1(\Gamma)) \;=\; V_b(\Gamma)\,, \qquad
V_w(\tau_0(\Gamma)) \;=\; V_w(\tau_1(\Gamma)) \;=\; V_w(\Gamma)\,.
\end{equation}
The procedure $\Gamma \mapsto (\tau_0(\Gamma), \tau_1(\Gamma))$ may also be explicitly described on the level graphs alone, but we shall neither need nor do this. Instead, we give a graphical depiction of this process in Figure \ref{fig: res identity 1}.
\begin{figure}[ht!]
\begin{center}
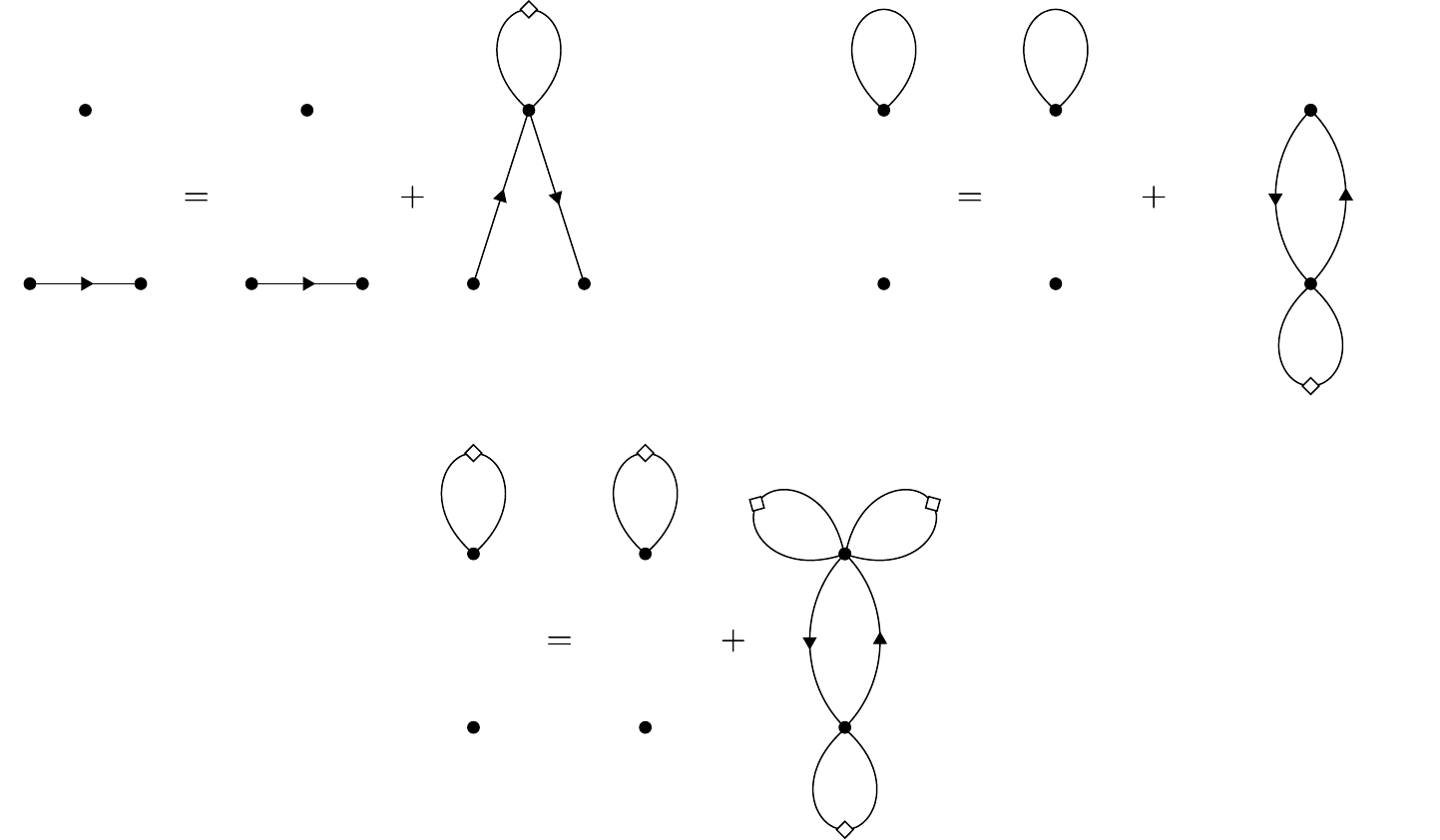
\end{center}
\caption{A graphical depiction of the splitting $\Gamma \mapsto (\tau_0(\Gamma), \tau_1(\Gamma))$ arising from \eqref{res identity type 1}. We only draw the edge $e$ and the vertices $\alpha(e)$, $\beta(e)$, and $i$. All other edges of $\Gamma$ are left unchanged by the operation, and are not drawn. The set $\xi_3$ is indicated in parentheses next to each edge, provided it is not empty. 
 The first graph depicts the operation for the case $\alpha(e) \neq \beta(e)$ (encoding an off-diagonal entry), the second for the case $\alpha(e) = \beta(e)$ and $\xi_2(e) = +$ (encoding a diagonal entry in the numerator), and the third for the case $\alpha(e) = \beta(e)$ and $\xi_2(e) = -$ (encoding a diagonal entry in the denominator). The first  graph on the right-hand side in each identity encodes $\tau_0(\Gamma)$ and the second $\tau_1(\Gamma)$. Recall that the graphs do not track irrelevant signs according to the convention made around
\eqref{extended def of cal A with f}.
\label{fig: res identity 1}}
\end{figure}

The following result is trivial.

\begin{lemma} \label{lem: properties 1}
If $\Gamma$ satisfies the properties (i)--(vii) from Sections \ref{sec: exp 1} and \ref{sec: exp 1.1} then so do $\tau_0(\Gamma)$ and $\tau_1(\Gamma)$.
\end{lemma}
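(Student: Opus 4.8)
The plan is to check each of the properties (i)--(vii) directly, exploiting the fact that $\tau_0$ and $\tau_1$ are purely local surgeries on the single $G$-edge $e$ fixed in the construction. The first observation is that neither operation creates or destroys any $R$-edge or $X$-edge, and, by \eqref{Vb tau}, neither changes the vertex set or its partition into black and white vertices. Hence properties (ii), (iii), (iv) (which constrain $R$- and $X$-edges only) and (vii) (which concerns $R$-groups and white vertices only) are inherited verbatim from $\Gamma$, as is the standing assumption \eqref{VB}. All edges of $\Gamma$ other than $e$ are carried over unchanged by both maps, so for them properties (i), (v), (vi) persist as well. It therefore remains to verify (i), (v), (vi) for the edge $e$ in $\tau_0(\Gamma)$ and for the new $G$-edges in $\tau_1(\Gamma)$ produced by \eqref{res identity type 1}.

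First I would handle (i) and (v). For (i): in $\tau_0(\Gamma)$ the edge $e$ still joins the black vertices $\alpha(e)$ and $\beta(e)$; in $\tau_1(\Gamma)$, every new off-diagonal $G$-edge joins one of $\alpha(e), \beta(e)$ to the vertex $i$, and the new loops sit at $i$ or at $\alpha(e) = \beta(e)$, all of which lie in $V_b(\Gamma)$ --- for $i$ this holds because $i$ was chosen from $V_b(\Gamma)$. For (v): the only edges carrying $\xi_2 = -$ that $\tau_1$ introduces are the diamond-loops $1/\wt G_{cc}^{(T)}$ and, in the denominator case, $1/\wt G_{aa}^{(T)}$ and $1/\wt G_{aa}^{(Tc)}$ appearing on the right-hand sides of \eqref{res identity type 1}; each is a genuine loop with $\xi_1 \in \{G, G^*\}$, so $\alpha = \beta$ as required. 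The edge $e$ in $\tau_0(\Gamma)$ keeps its $\xi_1$ and $\xi_2$, so (v) is unaffected there.

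The one point that needs a little care is (vi), and it is precisely here that the choice of $i$ as the first vertex of $V_b(\Gamma) \setminus \pb{\xi_3(e) \cup \{\alpha(e), \beta(e)\}}$ is used. In $\tau_0(\Gamma)$ the edge $e$ acquires upper-index component $\xi_3(e) \cup \{i\}$; since $\xi_3(e) \subset V_b(\Gamma) \setminus \{\alpha(e), \beta(e)\}$ by (vi) for $\Gamma$ and $i \in V_b(\Gamma) \setminus \pb{\xi_3(e) \cup \{\alpha(e), \beta(e)\}}$, this enlarged set still lies in $V_b(\Gamma) \setminus \{\alpha(e), \beta(e)\}$. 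In $\tau_1(\Gamma)$ the new off-diagonal $G$-edges (such as the one encoding $\wt G_{ac}^{(T)}$, joining $\alpha(e)$ to $i$) retain the component $\xi_3(e)$, and one checks $\xi_3(e) \subset V_b(\Gamma) \setminus \{\alpha(e), i\}$ from $\xi_3(e) \subset V_b(\Gamma) \setminus \{\alpha(e), \beta(e)\}$ together with $i \notin \xi_3(e)$; the new loop at $i$ carries $\xi_3(e)$ and needs only $i \notin \xi_3(e)$; and in the denominator case the loop encoding $1/\wt G_{aa}^{(Tc)}$ carries $\xi_3(e) \cup \{i\}$, which lies in $V_b(\Gamma) \setminus \{\alpha(e)\}$ because additionally $i \neq \alpha(e) = \beta(e)$. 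Running through the three cases of \eqref{res identity type 1} --- off-diagonal numerator, diagonal numerator, diagonal denominator --- exhausts all the $G$-edges the two maps produce.

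I do not anticipate a genuine obstacle: the lemma is essentially a bookkeeping exercise. The place that demands attention is exactly the last step --- in the denominator case one must keep straight which of the several new loops and off-diagonal $G$-edges carries $\xi_3(e)$ and which carries the enlarged set $\xi_3(e) \cup \{i\}$, and confirm in each instance that source, target, and upper-index component are pairwise disjoint, as (vi) requires.
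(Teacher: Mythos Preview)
Your verification is correct; the paper itself gives no proof beyond the single sentence ``The following result is trivial.'' Your explicit walkthrough of properties (i)--(vii), isolating (vi) as the only place where the specific choice of $i \in V_b(\Gamma) \setminus \pb{\xi_3(e) \cup \{\alpha(e), \beta(e)\}}$ is used, is exactly the intended (and only) route, just spelled out in full.
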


\subsection{Operation (b): construction of the graph $\rho(\Gamma)$} \label{sec: exp 3}

In this section we give the second operation, (b), outlined in Section \ref{sec:max_exp}.
 The idea is that Operation (a) from Section \ref{sec: exp 1} generates off-diagonal $G$-entries that are maximally expanded. They in turn will have to be expanded further using \eqref{Gij expanded sc}, so as to extract their explicit $X$-dependence. Roughly, the map $\rho$ replaces each maximally expanded off-diagonal $G$-edge by an off-diagonal $R$-group.

It will be convenient to have a shorthand for a maximally expanded entry of $\wt G$. To that end, we define, for $a,b \in \f a_b$, the maximally expanded entry
\begin{equation*}
\wh G_{ab} \;\deq\; \wt G_{ab}^{(\f a_b \setminus \{a, b\})}\,.
\end{equation*}
Using \eqref{Gij expanded sc} we may write, for $a \neq b$,
\begin{align}
\wh G_{ab} &\;=\; \wt z \wt G_{aa}^{(\f a_b \setminus \{a,b\})} \, \wh G_{bb} \sum_{\mu,\nu} X_{a \mu} R^{(\f a_b)}_{\mu \nu} X^*_{\nu b}
\notag \\ \label{Gab expansion}
\wh G_{ab}^* &\;=\; \wt z^* \wt G_{aa}^{* (\f a_b \setminus \{a,b\})} \, \wh G_{bb}^* \sum_{\mu,\nu} X_{a \mu} R^{*(\f a_b)}_{\mu \nu} X^*_{\nu b}\,,
\end{align} 
where $\wt z^*$ denotes the complex conjugate of $\wt z$.
Note that the first diagonal term on the right-hand side is not maximally expanded (while the second one is).

The identity \eqref{Gab expansion} may also be formulated in terms of graphs. We denote by $\rho(\Gamma)$ the graph encoding the monomial obtained from $\cal A_{\f a}(\Gamma)$ by applying the identity \eqref{Gab expansion} to each maximally expanded off-diagonal $G$-entry of $\Gamma$.
 This replacement can be done in any order.  By definition of $\rho(\Gamma)$, we have
\begin{equation} \label{identity for rho}
\sum_{\f a_w} \cal A_{\f a_b, \f a_w}(\Gamma) \;=\; \sum_{\f a_w} \cal A_{\f a_b, \f a_w}(\rho(\Gamma))\,.
\end{equation}
Note that both sides depend on $\f a_b$. Each application of \eqref{Gab expansion} adds two white vertices, so that in general $V_w(\rho(\Gamma)) \supset V_w(\Gamma)$. In particular, in \eqref{identity for rho} we slightly abuse notation by using the symbol $\f a_w$ for different families on the left- and right-hand sides. The point is that we always perform an unrestricted summation of the Greek indices 
 associated with the white vertices of the graph.  
 However, the black vertices are left unchanged, so that we have
\begin{equation} \label{Vb rho}
V_b(\rho(\Gamma)) \;=\; V_b(\Gamma)\,.
\end{equation}
Like $\tau_0$ and $\tau_1$, the map $\rho$ may be explicitly defined on the level of graphs, which we shall however not do in order to avoid unnecessary and heavy notation. See Figure \ref{fig: Gab expansion} for an illustration of $\rho$.
\begin{figure}[ht!]
\begin{center}
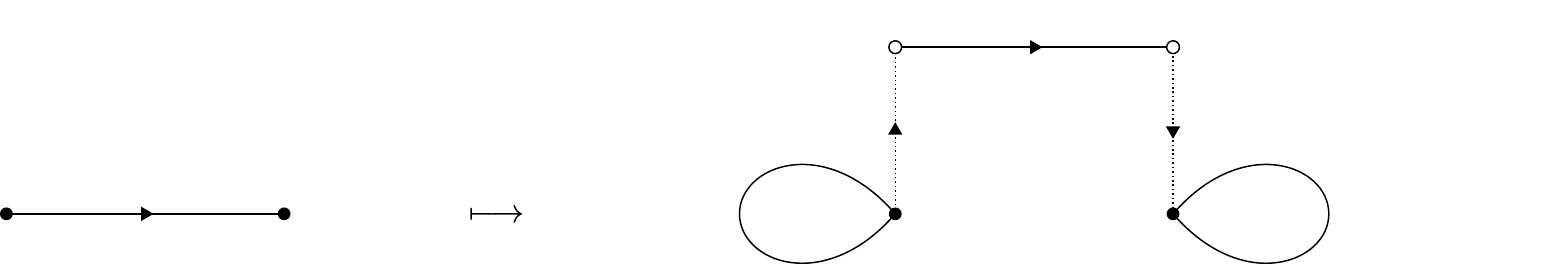
\end{center}
\caption{A graphical depiction of the map $\rho$ resulting from applications of \eqref{Gab expansion}. For simplicity, we draw a graph with a single edge. The indices $a,b,\mu,\nu$ of \eqref{Gab expansion} are associated with the vertices $i,j,k,l$, so that we have $a = a_i$, $b = a_j$, $\mu = a_k$, and $\nu = a_l$. In the picture we abbreviated $V_b = V_b(\Gamma)$. Note that $V_b(\cdot)$ remains unchanged under $\rho$ while $V_w(\cdot)$ is increased by the addition of two new white vertices, $k, l$. The prefactor $\wt z$ is omitted from the graphical representation. \label{fig: Gab expansion}}
\end{figure}

The following result is an immediate corollary of the definition of $\rho$.

\begin{lemma} \label{lem: properties 2}
If $\Gamma$ satisfies the properties (i)--(vii) from Sections \ref{sec: exp 1} and \ref{sec: exp 1.1} then so does $\rho(\Gamma)$.
\end{lemma}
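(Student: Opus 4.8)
The plan is to exploit the fact that $\rho$ is a purely \emph{local} graph surgery. By construction, $\rho$ acts only on the maximally expanded off-diagonal $G$-edges of $\Gamma$, replacing each one by the subgraph dictated by \eqref{Gab expansion}, and leaves every other edge and vertex of $\Gamma$ untouched. Since \eqref{Vb rho} gives $V_b(\rho(\Gamma)) = V_b(\Gamma)$, and the only effect on the white vertices is to adjoin new ones, every edge of $\rho(\Gamma)$ already present in $\Gamma$ trivially still satisfies properties (i)--(vii), as do all white vertices of $\Gamma$ (their degrees and their membership in $R$-groups are unaffected). Thus I would reduce the statement to checking (i)--(vii) for the pieces freshly introduced by a single application of \eqref{Gab expansion}.

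So fix one such application, say to an edge $e$ encoding $\wh G_{ab}$ with $a = a_i$, $b = a_j$, $i \neq j$ (the $\wh G_{ab}^*$ case being identical up to replacing $G$ by $G^*$). Reading off \eqref{Gab expansion} and Figure \ref{fig: Gab expansion}, it is replaced by: a diagonal $G$-loop at $i$ with $\xi_3 = V_b(\Gamma)\setminus\{i,j\}$ (not maximally expanded), a diagonal $G$-loop at $j$ with $\xi_3 = V_b(\Gamma)\setminus\{j\}$ (maximally expanded), an $X$-edge from $i$ to a new white vertex $k$, an $R$-edge from $k$ to a new white vertex $l$, and an $X^*$-edge from $l$ to $j$. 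Since $i,j \in V_b(\Gamma)$, the two new $G$-loops satisfy (i); being in the numerator ($\xi_2 = +$) they satisfy (v) vacuously; and their colour components $\xi_3$ satisfy (vi) because $V_b(\Gamma)\setminus\{i,j\} \subset V_b(\Gamma)\setminus\{i\}$ and $V_b(\Gamma)\setminus\{j\}$ is exactly $V_b(\Gamma)\setminus\{\alpha,\beta\}$ for the loop at $j$. The $X$-edge $i \to k$ satisfies (iii), the $X^*$-edge $l \to j$ satisfies (iv), and the $R$-edge $k \to l$ satisfies (ii); the latter has $\xi_2 = +$ and $\xi_3 = \emptyset$, so (v), (vi) are vacuous for it. Finally, $k$ and $l$ are each incident to exactly two (non-loop) edges, hence have degree two, and the triple ($X$-edge, $R$-edge, $X^*$-edge) forms a chain whose two middle vertices have degree two, with $A = i \neq j = B$; this is precisely an off-diagonal $R$-group, so (vii) holds for these new edges, while all pre-existing $X$- and $R$-edges retain their $R$-groups.

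I expect the only point requiring a word of care to be the combination of several simultaneous applications of \eqref{Gab expansion}. One should note (as already remarked below \eqref{Gab expansion}) that the replacements at distinct $G$-edges are mutually disjoint surgeries — each introduces its own pair of fresh white vertices and alters only the single $G$-edge it acts on, without touching the $\xi_3$-components of other edges — so the per-edge verification above composes directly to yield (i)--(vii) for $\rho(\Gamma)$, independently of the order of the replacements. Apart from this bookkeeping, there is no genuine obstacle: the argument is simply a matter of matching colours against \eqref{Gab expansion}, which is why the statement can legitimately be called an immediate corollary of the definition of $\rho$.
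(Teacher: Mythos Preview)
Your proposal is correct and follows exactly the approach the paper has in mind: the paper itself offers no argument beyond declaring the lemma ``an immediate corollary of the definition of $\rho$'', and what you have written is precisely the routine edge-by-edge verification that justifies that claim. Your decomposition into untouched edges versus the five new pieces produced by one application of \eqref{Gab expansion}, together with the observation that distinct replacements are disjoint, is the natural way to make the ``immediate'' explicit.
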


\subsection{Constructing the tree $\cal T$: recursion using (a) and (b)} \label{sec: exp 4}

We now apply Operations (a) and (b) alternately and recursively to the graph $\Delta = \Delta(P)$. 
We start by applying Operation (a) to the graph $\Delta$;  the two new graphs thus produced may have newly created
maximally expanded off-diagonal $G$-entries. We then apply $\rho$ to these edges. Along the procedure 
we get new $R$-groups and additional diagonal entries, some which may not be maximally expanded. We then
repeat the cycle: apply Operation (a) and then Operation (b).
For some graphs the
procedure stops because all $G$-edges have become maximally expanded. For some other graphs, the
algorithm would continue indefinitely, since Operation (b) keeps on producing
diagonal $G$-entries that are not maximally expanded. We shall however show that in such graphs the number of
off-diagonal $G$-edges and $R$-groups increases as the algorithm is run.
Since both of these objects are small, after the accumulation of a sufficiently large number of them
we can stop the recursion and estimate such terms brutally. In summary,
the end result will be a family of graphs encoding monomials in the entries of $R^{(\f a_b)}, R^{* (\f a_b)}, X, X^*$ as well as diagonal entries of $\wh G, \wh G^*$. In addition, by a brutal truncation in this procedure, the algorithm yields terms that do not satisfy this property, but contain a large enough number of off-diagonal $G$-edges and $R$-groups to be negligible.

The algorithm generates a family of graphs $\Theta_\sigma$ which are indexed by finite binary strings $\sigma$, or, equivalently, by vertices of a rooted binary tree $\cal T = (V(\cal T), E(\cal T))$. We start the algorithm with $\Theta_{\emptyset } \deq \Delta$, corresponding to the empty string or the root of the tree. The tree is constructed recursively according to
\begin{equation*}
\Theta_0 \;\deq\; \rho(\tau_0(\Delta))\,, \quad \Theta_1 \;\deq\; \rho(\tau_1(\Delta))\,, \quad
\Theta_{00} \;\deq\; \rho(\tau_0(\Theta_0))\,, \quad \Theta_{10} \;\deq\; \rho(\tau_1(\Theta_0))\,, \quad \Theta_{01}\; \deq\; \rho(\tau_0(\Theta_1))\,,
\end{equation*}
and so on, until a stopping rule is satisfied (see Definition \ref{def:stopping_rule} below). See Figure \ref{fig:tree} for an illustration of the resulting tree.

\begin{figure}[ht!]
\begin{center}
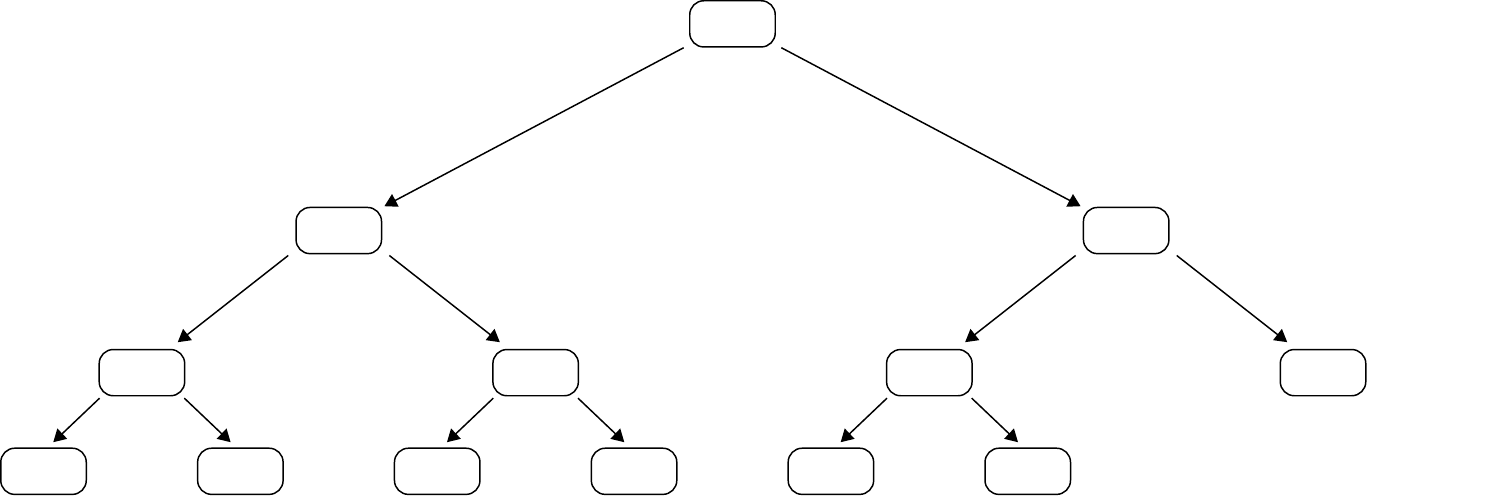
\end{center}
\caption{The tree $\cal T$ whose vertices are binary strings $\sigma$. The root is the empty string $\emptyset$. Each vertex of $\sigma \in V(\cal T)$ encodes a graph $\Theta_\sigma$. The graph associated with the two children of a vertex $\sigma$ are obtained from $\Theta_\sigma$ using the maps $\tau_0$, $\tau_1$, and $\rho$. More precisely, an arrow towards the left corresponds to the map $\rho \circ \tau_0$ and an arrow towards the right to the map $\rho \circ \tau_1$. In this example, the graph $\Theta_{11}$ satisfies the stopping rule from Definition \ref{def:stopping_rule}, and is therefore a leaf of $\cal T$.\label{fig:tree}} 
\end{figure}

We use the notation $i \sigma$, for $i = 0,1$, to denote the binary string $\sigma$ to which $i$ has been appended on the left. The children  in $\cal T$   of the vertex  $\sigma \in V(\cal T)$  are $0 \sigma$ and $1 \sigma$.
The precise construction of $\Theta_\sigma$ and the binary tree $\cal T$ is as follows. 
Let $\ell > 0$ be a cutoff to be chosen later (see \eqref{choice of ell} below); it will be used as a threshold for the stopping rule which ensures that the tree $\cal T$ is finite. Let $d(\Gamma)$ denote the number of off-diagonal $G$-edges plus the number of off-diagonal $R$-groups of $\Gamma$, i.e.
\begin{equation} \label{definition of d}
d(\Gamma) \;\deq\; \sum_{e \in E(\Gamma)} \pB{\ind{\xi_1(e) \in \{G,G^*\}} \indb{\alpha(e) \neq \beta(e)} + \indb{\xi_1(e) \in \{R,R^*\}} \indb{A(e) \neq B(e)}}\,.
\end{equation}
The construction of the tree $\cal T$ relies on the following stopping rule.
\begin{definition}[Stopping rule] \label{def:stopping_rule}
We say that a graph $\Gamma$ \emph{satisfies the stopping rule} if $d(\Gamma) \geq \ell$ or if all $G$-edges of $\Gamma$ are maximally expanded.
\end{definition}
The tree $\cal T$, along with the graphs $(\Theta_\sigma)_{\sigma \in V(\cal T)}$, is constructed recursively from the trivial tree, consistsing of the single vertex $\emptyset$ with $\Theta_\emptyset = \Delta$, as follows. Let $\sigma$ be a leaf of the tree such that $\Theta_\sigma$ does not satisfy the stopping rule. We add the children of $\sigma$, i.e.\ $0 \sigma$ and $1 \sigma$, to the tree, and set
\begin{equation*}
\Theta_{0 \sigma} \;\deq\; \rho (\tau_0(\Theta_\sigma))\,, \qquad \Theta_{1 \sigma} \;\deq\; \rho (\tau_1(\Theta_\sigma))\,.
\end{equation*}
We continue this recursion on each leaf until all leaves satisfy the stopping rule from Definition \ref{def:stopping_rule}. By Lemma \ref{lem: finite tree} below, the resulting tree $\cal T$ is finite, i.e.\ the recursion terminates after a finite number of steps.

\begin{lemma} \label{lem: props of Gamma sigma}
The graphs $\Theta_\sigma$ have the following two properties. First,
\begin{equation} \label{prop 1 of Gamma sigma}
V_b(\Theta_{0 \sigma}) \;=\; V_b(\Theta_{1 \sigma}) \;=\; V_b(\Theta_\sigma)\,.
\end{equation}
In particular,  the set of black vertices remains unchanged throughout the recursion:  $V_b(\Theta_\sigma) = V_b(\Delta)$. Second,
\begin{equation} \label{prop 2 of Gamma sigma}
\sum_{\f a_w} \cal A_{\f a_b, \f a_w} (\Theta_{0 \sigma}) + \sum_{\f a_w} \cal A_{\f a_b, \f a_w} (\Theta_{1 \sigma}) \;=\; \sum_{\f a_w} \cal A_{\f a_b, \f a_w} (\Theta_\sigma)\,.
\end{equation}
Note that both sides depend on $\f a_b$, and we slightly abuse notation as explained after \eqref{identity for rho}.

Moreover, each $\Theta_\sigma$ for $\sigma \in V(\cal T)$ satisfies the properties (i)--(vii) from Sections \ref{sec: exp 1} and \ref{sec: exp 1.1}. 
\end{lemma}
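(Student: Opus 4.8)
The plan is to prove Lemma \ref{lem: props of Gamma sigma} by induction on the tree $\cal T$, establishing \eqref{prop 1 of Gamma sigma}, \eqref{prop 2 of Gamma sigma}, and the preservation of properties (i)--(vii) all at once. The base case is the root $\Theta_\emptyset = \Delta = \Delta(P)$, which was already observed to satisfy (i)--(vii) at the end of Section \ref{sec: exp 1.1}. For the inductive step, suppose $\sigma$ is a leaf at some stage of the construction whose graph $\Theta_\sigma$ satisfies (i)--(vii); we must verify that its two children $\Theta_{0\sigma} = \rho(\tau_0(\Theta_\sigma))$ and $\Theta_{1\sigma} = \rho(\tau_1(\Theta_\sigma))$ do as well, and that the two displayed identities hold.

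First I would handle the structural claim \eqref{prop 1 of Gamma sigma}. This is immediate by composing \eqref{Vb tau} (the maps $\tau_0, \tau_1$ leave both $V_b$ and $V_w$ unchanged) with \eqref{Vb rho} (the map $\rho$ leaves $V_b$ unchanged, possibly enlarging $V_w$). Hence $V_b(\Theta_{i\sigma}) = V_b(\rho(\tau_i(\Theta_\sigma))) = V_b(\tau_i(\Theta_\sigma)) = V_b(\Theta_\sigma)$ for $i = 0, 1$, and iterating along the unique path from the root gives $V_b(\Theta_\sigma) = V_b(\Delta)$ for every $\sigma \in V(\cal T)$. The preservation of properties (i)--(vii) is then nothing more than combining Lemma \ref{lem: properties 1} (which states $\tau_0, \tau_1$ preserve (i)--(vii)) with Lemma \ref{lem: properties 2} (which states $\rho$ preserves (i)--(vii)): since $\Theta_\sigma$ satisfies (i)--(vii), so does $\tau_i(\Theta_\sigma)$, and hence so does $\rho(\tau_i(\Theta_\sigma)) = \Theta_{i\sigma}$.

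Next I would establish the monomial identity \eqref{prop 2 of Gamma sigma}. Here we start from \eqref{sigma splitting}, which gives the edge-level (hence $\f a$-level, for fixed $\f a$) identity $\cal A_{\f a}(\Theta_\sigma) = \cal A_{\f a}(\tau_0(\Theta_\sigma)) + \cal A_{\f a}(\tau_1(\Theta_\sigma))$; summing over $\f a_w$ with $\f a_b$ fixed yields
\begin{equation*}
\sum_{\f a_w} \cal A_{\f a_b, \f a_w}(\Theta_\sigma) \;=\; \sum_{\f a_w} \cal A_{\f a_b, \f a_w}(\tau_0(\Theta_\sigma)) + \sum_{\f a_w} \cal A_{\f a_b, \f a_w}(\tau_1(\Theta_\sigma))\,.
\end{equation*}
To each of the two summands I then apply \eqref{identity for rho}, which says precisely that $\sum_{\f a_w} \cal A_{\f a_b, \f a_w}(\Gamma) = \sum_{\f a_w} \cal A_{\f a_b, \f a_w}(\rho(\Gamma))$, taking $\Gamma = \tau_0(\Theta_\sigma)$ and $\Gamma = \tau_1(\Theta_\sigma)$ respectively. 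This converts the right-hand side into $\sum_{\f a_w} \cal A_{\f a_b, \f a_w}(\Theta_{0\sigma}) + \sum_{\f a_w} \cal A_{\f a_b, \f a_w}(\Theta_{1\sigma})$, which is exactly \eqref{prop 2 of Gamma sigma}. The only point requiring care is the notational abuse flagged after \eqref{identity for rho}: the symbol $\f a_w$ denotes genuinely different index families on the two sides of \eqref{identity for rho}, because $\rho$ adjoins new white vertices, but since the summation over Greek indices associated with white vertices is always unrestricted this causes no difficulty—each identity is just a rearrangement of a finite sum.

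The main obstacle, such as it is, is purely bookkeeping: one must be scrupulous that \eqref{sigma splitting} and \eqref{identity for rho} were established for \emph{arbitrary} graphs satisfying (i)--(vii), so that they may legitimately be invoked at every node of the recursion, and that the application of \eqref{identity for rho} is valid only because $\tau_i(\Theta_\sigma)$ satisfies (i)--(vii) (which is exactly what Lemma \ref{lem: properties 1} supplies, and why the three assertions of the lemma must be proved simultaneously by a single induction rather than separately). There is no analytic content; the proof is a short composition of Lemmas \ref{lem: properties 1} and \ref{lem: properties 2} with the identities \eqref{sigma splitting} and \eqref{identity for rho}, together with \eqref{Vb tau} and \eqref{Vb rho}.
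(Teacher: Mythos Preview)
Your proof is correct and follows essentially the same approach as the paper's own proof, which simply cites \eqref{Vb tau}, \eqref{Vb rho}, \eqref{sigma splitting}, \eqref{identity for rho}, and Lemmas \ref{lem: properties 1} and \ref{lem: properties 2} in the same combinations you use. You have merely made the inductive structure explicit and added a careful remark on the notational abuse concerning $\f a_w$, which the paper leaves implicit.
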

\begin{proof}
The identity \eqref{prop 1 of Gamma sigma} follows immediately from \eqref{Vb tau} and \eqref{Vb rho}. Similarly, \eqref{prop 2 of Gamma sigma} follows from \eqref{sigma splitting} and \eqref{identity for rho}.
 The final statement follows immediately from Lemmas \ref{lem: properties 1} and \ref{lem: properties 2}. 
\end{proof}

The interpretation of \eqref{prop 2 of Gamma sigma} is that the value of any graph $\Theta_\sigma$ is equal to the sum of the values of its two children, $\Theta_{0\sigma}$ and $\Theta_{1 \sigma}$.

The following estimate ensures that the tree $\cal T$ is finite, i.e.\ that the expansion procedure does not produce
and infinite sequence of graphs whose value $d(\cdot)$ remains below $\ell$ indefinitely.
\begin{lemma} \label{lem: finite tree}
The tree $\cal T$ has depth at most $2p (p + 6 \ell)$  and consequently at most $2^{2p(p+6\ell)}$ vertices. 
\end{lemma}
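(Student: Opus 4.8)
The plan is to bound the depth of the tree $\cal T$ by controlling how the quantity $d(\Theta_\sigma)$ from \eqref{definition of d} changes along a path in $\cal T$, and by bounding how many consecutive steps along such a path can occur before $d$ must strictly increase. First I would track the effect of the two elementary operations on the relevant counters. Recall that the centre of each step is the first $G$-edge $e$ of $\Theta_\sigma$ that is not maximally expanded. Operation (a): the map $\tau_0$ leaves the number of off-diagonal $G$-edges unchanged and merely adds one upper index to $e$, so the missing-upper-index count $\sum_{e'} \abs{V_b(\Gamma) \setminus (\xi_3(e') \cup \{\alpha(e'), \beta(e')\})}$ taken over $G$-edges $e'$ strictly decreases by exactly one; the map $\tau_1$ replaces $e$ by two new off-diagonal $G$-edges (plus a harmless diagonal loop), strictly increasing the number of off-diagonal $G$-edges by at least one. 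Then Operation (b), $\rho$, converts each maximally expanded off-diagonal $G$-edge into an off-diagonal $R$-group, leaving $d(\cdot)$ unchanged, while each maximally expanded diagonal $G$-edge is replaced via \eqref{Gab expansion} — wait, \eqref{Gab expansion} is for off-diagonal entries; the diagonal ones are handled later in Operation (c), so in fact $\rho$ only acts on off-diagonal maximally expanded $G$-edges, producing exactly one off-diagonal $R$-group per such edge and one non-maximally-expanded diagonal $G$-edge $\wt G_{aa}^{(\f a_b \setminus \{a,b\})}$. The key bookkeeping fact is thus that $d(\Theta_{0\sigma}) \geq d(\Theta_\sigma)$ and $d(\Theta_{1\sigma}) \geq d(\Theta_\sigma) + 1$ always, so $d$ is nondecreasing along any path and strictly increases at every ``$\tau_1$-step''.

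Next I would bound the number of consecutive $\tau_0$-steps. Along a maximal run of $\tau_0$-applications (possibly interspersed with $\rho$), the off-diagonal $G$-edge count is frozen, so the total missing-upper-index count over all $G$-edges can only change in a controlled way: each $\tau_0$ decreases it by one, and each application of $\rho$ can create new non-maximally-expanded diagonal $G$-edges, each of which carries at most $\abs{V_b(\Delta)} - 1 \leq 2p - 1$ missing upper indices. Since $\rho$ adds at most a bounded number of such diagonal edges per step — one per off-diagonal $R$-group created, and the number of $R$-groups is itself bounded by $\ell$ once the stopping rule has not yet triggered, hence at most $O(\ell)$ of them, with each $G$-edge having at most $2p$ missing upper indices — the total missing-upper-index budget available at any point is at most $O(p(p + \ell))$. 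Because $\tau_0$ decreases this budget by one each time and it is nonnegative, there can be at most $O(p(p+\ell))$ consecutive $\tau_0$-steps before either the stopping rule fires (all $G$-edges maximally expanded) or a $\tau_1$-step must occur. Combining with the previous paragraph, $d$ increases by at least one every $O(p(p+\ell))$ steps, and once $d \geq \ell$ the stopping rule halts that branch; hence the depth is $O(\ell \cdot p(p+\ell)) = O(p(p + \ell)^2)$, and a slightly more careful accounting of the constants (each $G$-edge contributing at most $2p$ missing indices, at most $p + 6\ell$ $G$-edges ever present since we start with $p$ and each of the at most $\ell$ $\tau_1$-steps plus $\rho$-steps adds $O(1)$) gives the stated bound $2p(p + 6\ell)$ on the depth. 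The $2^{2p(p+6\ell)}$ bound on the number of vertices is then immediate from the binary branching.

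The main obstacle, and the step deserving the most care, is the precise accounting in the second paragraph: one must verify that the auxiliary monovariant — something like the total number of ``missing'' upper indices summed over all $G$-edges, plus a penalty term counting off-diagonal $G$-edges and $R$-groups — is genuinely bounded by an explicit polynomial in $p$ and $\ell$ at every node, and that it decreases (or the branch terminates) at every step. The subtlety is that $\rho$ both removes maximally expanded off-diagonal $G$-edges (good) and reintroduces non-maximally-expanded diagonal $G$-edges (bad for the monovariant), so one needs that the number of such reintroductions per node is controlled by $d(\Theta_\sigma) < \ell$, which is exactly the regime in which the recursion continues. Everything else — the behaviour of $\tau_0, \tau_1, \rho$ on edge counts, and the conversion from depth bound to vertex-count bound — is routine once this monovariant is pinned down, so I would state the monovariant explicitly at the outset, check its one-step behaviour under $\tau_0$, $\tau_1$, and $\rho$ in three short cases, and then read off the depth bound.
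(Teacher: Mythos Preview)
Your proposal is essentially correct and follows the same two-part strategy as the paper: bound the number of $\tau_1$-steps by $\ell$ via the monotonicity of $d$, then bound the number of $\tau_0$-steps via an upper-index counting argument. The one difference worth noting is the choice of monovariant for the second part. You track the total \emph{missing} upper indices, which decreases under $\tau_0$ but can be replenished by $\tau_1$ and $\rho$ --- this is exactly the subtlety you flag as the main obstacle. The paper instead tracks the total number of upper indices \emph{present} over all $G$-edges: this increases by exactly one under $\tau_0$ and is never decreased by $\tau_1$ or $\rho$, so the accounting is one-sided. One then just needs a uniform upper bound on it, namely $2p$ times the number of $G$-edges; the latter is at most $p+5\ell$ via the auxiliary quantity $f(\Gamma) = \#\{G\text{-edges}\} - \#\{R\text{-edges}\}$ (invariant under $\tau_0$ and $\rho$, increased by at most $4$ under $\tau_1$), together with the bound $\#\{R\text{-edges}\} \leq \ell$. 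This gives at most $2p(p+5\ell)$ zeros in $\sigma$ directly, with no per-segment analysis needed. Your per-segment bound yields the weaker $O(\ell \cdot p(p+\ell))$; the ``more careful accounting'' you allude to, needed to reach $2p(p+6\ell)$, is exactly this passage to a global bound, and it is cleanest with the paper's monovariant rather than its dual.
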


\begin{proof}
Observe that $\tau_0$ and $\rho$ leave the function $d$ defined in \eqref{definition of d}  invariant: 
  $d(\tau_0(\Gamma))=d(\Gamma)$, $d(\rho(\Gamma))=d(\Gamma)$. Moreover, $\tau_1$ increases $d$ by at least one (for a diagonal entry the increase is two): $d(\tau_1(\Gamma)) \geq d(\Gamma)+1$. We conclude that, by the stopping rule from Definition \ref{def:stopping_rule}, any string $\sigma$ of the tree contains at most $\ell$ ones, i.e.\ that $\tau_1$ has been applied at most $\ell$ times.

Next, let $f= f(\Gamma)$ denote the number of $G$-edges minus the number of $R$-edges in the graph $\Gamma$. It follows immediately that $f$ is left
 invariant  by $\tau_0$ and $\rho$, and is increased by at most 4 by $\tau_1$: $f(\tau_0(\Gamma)) = f(\rho(\Gamma)) = f(\Gamma)$ and $f(\tau_1(\Gamma))\leq f(\Gamma)+ 4$.  Since in the initial graph there is no $R$-edge, so that $f(\Delta) =  \abs{E(\Delta)}$, we conclude that $f(\Theta_\sigma) \leq \abs{E(\Delta)} + 4 \ell = p + 4 \ell$ for all $\sigma \in V(\cal T)$. By Definition \ref{def:stopping_rule}, the number of $R$-edges is bounded by $\ell$. (Note that only off-diagonal $R$-groups have been created along the procedure, so that
the number of $R$-edges is the same as the number of off-diagonal $R$ groups. Diagonal $R$-groups will appear in later in Section \ref{sec: exp 6}).
 Hence we conclude that the number of $G$-edges of any $\Theta_\sigma$ is bounded by $p + 5 \ell$.

In order to estimate the number of zeros in the string $\sigma$, we note that, since each $G$-entry can have at most $\abs{V(\Delta)} \leq 2p$ upper indices, the total number of upper indices in all the $G$-entries of $\cal A_{\f a}(\Theta_\sigma)$ is bounded by $2p (p + 5 \ell)$. We conclude by noting that $\tau_1$ and $\rho$ do not decrease the total number of upper indices in the $G$-entries, while $\tau_0$ increases this number by one. Hence the total number of zeros in any string $\sigma$ is bounded by $2p (p + 5 \ell)$. Thus, the total length of $\sigma$ is 
bounded by $2p (p + 5 \ell)+\ell \leq 2p(p +6\ell)$. This  concludes the proof.
\end{proof}

Next, we express $Y(\Delta)$ from \eqref{def Y Delta} in terms of the graphs we just introduced.
By Lemma \ref{lem: props of Gamma sigma} and the fact that $V_b(\Delta)=V(\Delta)$, we have for all $\sigma \in V(\cal T)$ that
\begin{equation}
V_b(\Theta_\sigma) \;=\; V(\Delta).
\end{equation}
Let $L(\cal T) \subset  V(\cal T)$ denote the leaves of $\cal T$. The identity \eqref{prop 2 of Gamma sigma} states that if $\sigma \in V(\cal T)$ is not a leaf of $\cal T$, we may replace the value of $\Theta_\sigma$ by the sum of the values of its two children. Starting from the root $\emptyset$ and the graph $\Theta_\emptyset = \Delta$, we may propagate this identity recursively from the root down to the leaves. We conclude that
\begin{equation} \label{Delta as sum over leaves}
\cal A_{\f a_b}(\Delta) \;=\; \sum_{\sigma \in L(\cal T)} \sum_{\f a_w} \cal A_{\f a_b, \f a_w}(\Theta_\sigma)\,.
\end{equation}
Recalling the definition \eqref{def Y Delta} of $Y(\Delta)$, we get the following result.
\begin{proposition} \label{prop:Y_tree}
The quantity $Y(\Delta)$ defined in \eqref{def Y Delta} may be written in terms of the tree $\cal T$ as
\begin{equation} \label{Y in terms of tree}
Y(\Delta) \;=\; \sum_{\sigma \in L(\cal T)} \sum_{\f a_b}^{*} w_{\f a_b}(\Delta) \sum_{\f a_w}  \E \, \cal A_{\f a_b, \f a_w}(\Theta_\sigma)\,.
\end{equation}
\end{proposition}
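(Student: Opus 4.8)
The plan is to read off \eqref{Y in terms of tree} directly from the definition \eqref{def Y Delta} of $Y(\Delta)$, the telescoping identity \eqref{prop 2 of Gamma sigma}, and the finiteness of the tree $\cal T$ from Lemma \ref{lem: finite tree}. No new idea is needed: the proposition merely packages the bookkeeping carried out in Lemmas \ref{lem: props of Gamma sigma} and \ref{lem: finite tree}.

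First I would note that $\Delta = \Delta(P)$ has no white vertices, so $V(\Delta) = V_b(\Delta)$, the summation family $\f a$ appearing in \eqref{def Y Delta} is exactly $\f a_b$, and $\cal A_{\f a}(\Delta) = \cal A_{\f a_b}(\Delta)$. Next I would establish \eqref{Delta as sum over leaves}, namely that for every fixed tuple $\f a_b$ of distinct black indices,
\begin{equation*}
\cal A_{\f a_b}(\Delta) \;=\; \sum_{\sigma \in L(\cal T)} \sum_{\f a_w} \cal A_{\f a_b, \f a_w}(\Theta_\sigma)\,.
\end{equation*}
To see this, attach to each $\sigma \in V(\cal T)$ the ``value'' $\sum_{\f a_w} \cal A_{\f a_b, \f a_w}(\Theta_\sigma)$, the $\f a_w$-sum being unrestricted (as flagged after \eqref{identity for rho}), while the black indices $\f a_b$ are common to all the $\Theta_\sigma$ by \eqref{prop 1 of Gamma sigma}. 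By \eqref{prop 2 of Gamma sigma} the value of an internal vertex equals the sum of the values of its two children; starting from the root $\emptyset$, whose value is $\cal A_{\f a_b}(\Delta)$ (the $\f a_w$-sum there consisting of a single term since $\Theta_\emptyset = \Delta$ has no white vertices) and repeatedly replacing internal vertices by their two children, this process terminates after finitely many steps because $\cal T$ is finite by Lemma \ref{lem: finite tree}, and it leaves precisely the sum over the leaves $L(\cal T)$.

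Finally I would substitute into \eqref{def Y Delta}:
\begin{equation*}
Y(\Delta) \;=\; \sum_{\f a_b}^{*} w_{\f a_b}(\Delta)\, \E\, \cal A_{\f a_b}(\Delta) \;=\; \sum_{\f a_b}^{*} w_{\f a_b}(\Delta)\, \E \pBB{\sum_{\sigma \in L(\cal T)} \sum_{\f a_w} \cal A_{\f a_b, \f a_w}(\Theta_\sigma)}\,.
\end{equation*}
Since $L(\cal T)$ is finite (Lemma \ref{lem: finite tree}) and each inner $\f a_w$-sum is finite, the expectation may be exchanged with $\sum_{\sigma}\sum_{\f a_w}$; the weight $w_{\f a_b}(\Delta)$ depends only on $\f a_b$ and hence commutes past the $\f a_w$-sum; and the restriction $\sum^*$ of distinct indices concerns only the $\f a_b$-variables, which the recursion never alters. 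This yields \eqref{Y in terms of tree}. The one point worth flagging — rather than a genuine obstacle — is the interchange of expectation and summation, which is legitimate precisely because $\cal T$ is finite; everything else is a restatement of Lemma \ref{lem: props of Gamma sigma}.
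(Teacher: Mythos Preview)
Your proposal is correct and follows essentially the same route as the paper: the paper derives \eqref{Delta as sum over leaves} by propagating \eqref{prop 2 of Gamma sigma} from the root to the leaves, then states Proposition~\ref{prop:Y_tree} as an immediate consequence of the definition \eqref{def Y Delta}. Your write-up is slightly more explicit about the interchange of expectation and the finite sums, but the argument is the same.
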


For the following we partition $L(\cal T) = L_0(\cal T) \cup L_1(\cal T)$ into the \emph{trivial leaves} $L_0(\cal T)$ and the \emph{nontrivial leaves} $L_1(\cal T)$. By definition, the trivial leaves of $\cal T$ are those $\sigma \in  V(\cal T)$ satisfying $d(\Theta_\sigma) \geq \ell$. We shall estimate the contribution of the trivial leaves brutally in Section \ref{sec: exp 5} below, using the fact that they contain a large enough number of small factors.

By Definition \ref{def:stopping_rule}, if $\sigma \in L_1(\cal T)$ is a nontrivial leaf then all $G$-edges of $\Theta_\sigma$ are diagonal and maximally expanded. The estimate of the nontrivial leaves will be performed in Sections \ref{sec: exp 6}--\ref{sec: exp 8}.

\subsection{The trivial leaves} \label{sec: exp 5}
In this section we estimate the contribution of $\Theta_\sigma$ for a trivial leaf $\sigma \in L_0(\cal T)$. Thus, fix $\sigma \in L_0(\cal T)$. From \eqref{Gab expansion} and Lemma \ref{lem: wt G bounds} we get for $a \neq b$
\begin{equation} \label{estimates on R from rho}
\sum_{\mu,\nu} X_{a \mu} R^{(\f a_b)}_{\mu \nu} X^*_{\nu b} \;\prec\; \phi^{-1/2} \Psi\,, \qquad
\sum_{\mu,\nu} X_{a \mu} R^{*(\f a_b)}_{\mu \nu} X^*_{\nu b} \;\prec\; \phi^{-1/2} \Psi\,.
\end{equation}
We therefore conclude that each off-diagonal $R$-group of $\Gamma$ yields a contribution of size $O_\prec (\phi^{-1/2} \Psi)$ after summation over the indices associated with the vertices incident to its centre. Moreover, by definition of $\cal T$, each $R$-group of $\Theta_\sigma$ is off-diagonal. In addition, each off-diagonal $G$-edge yields a contribution of size $\phi^{-1/2} \Psi$ by Lemma \ref{lem: wt G bounds}. Thus we get, summing out all indices associated with white vertices (i.e.\ inner vertices of $R$-groups),
\begin{equation*}
\sum_{\f a_w} \cal A_{\f a_b, \f a_w}(\Theta_\sigma) \;\prec\; (\phi^{-1/2} \Psi)^{d(\Theta_\sigma)} \;\leq\; (\phi^{-1/2} \Psi)^{\ell}\,.
\end{equation*}
Hence the contribution of $\Theta_\sigma$ to the right-hand side of \eqref{Y in terms of tree} may be bounded by
\begin{equation*}
\sum_{\f a_b}^{*} w_{\f a_b}(\Delta) \sum_{\f a_w}  \E \, \cal A_{\f a_b, \f a_w}(\Theta_\sigma) \;\prec\; M^{2p} (\phi^{-1/2} \Psi)^\ell\,,
\end{equation*}
where we estimated the summation over $\f a_b$ by $M^{2p}$ using the trivial bound $\abs{w_{\f a_b}(\Delta)} \leq 1$ (from \eqref{estimate for w} and $\norm{\f v}_2=1$). In the last step we used Lemma \ref{lemma: basic properties of prec} (i) and (iii). The assumption $\E Z^2 \leq N^C$ of Lemma \ref{lemma: basic properties of prec} (iii) for the random variable $Z = \absb{\sum_{\f a_w} \cal A_{\f a_b, \f a_w}(\Theta_\sigma)}$ follows from the following lemma combined with H\"older's inequality, and from the fact that the number of white vertices of $\Theta_\sigma$ is independent of $N$, so that the sum $\sum_{\f a_w}$ contains $O(N^C)$ terms.

\begin{lemma}
For any $p$ there exists a constant $C_p$ such that for any graph $\Gamma$ and any $e \in E(\Gamma)$ we have
\begin{equation*}
\E \absb{\cal A_{\f a}(e, \Gamma)}^p \;\leq\; M^{C_p}\,.
\end{equation*}
\end{lemma}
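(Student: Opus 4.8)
The plan is to check the claim separately for each of the finitely many possible forms of the single-edge quantity $\cal A_{\f a}(e,\Gamma)$. Recall that, by the definitions in Section \ref{sec: exp 1}, $\cal A_{\f a}(e,\Gamma)$ equals one of $\wt G_{ab}^{(T)}$, $\wt G_{ab}^{*(T)}$, $1/\wt G_{aa}^{(T)}$, $1/\wt G_{aa}^{*(T)}$, $R_{\mu\nu}^{(\f a_b)}$, $R_{\mu\nu}^{*(\f a_b)}$, $X_{ij}$, or $X_{ij}^*$, and that $z \in \f S$ throughout. Two elementary facts will be used repeatedly: on $\f S$ we have $\eta \geq K^{-1+\omega} \geq M^{-1}$ (by \eqref{def_K} and $K \le M$) and $\abs{z} \le M^C$ (using \eqref{def_S_theta}, \eqref{def:gamma_pm}, and \eqref{NM gen}), and since $\phi = M/N \le M$ also $\phi^{1/2} \le M^{1/2}$.

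For the $X$-edges the bound is immediate from \eqref{moments of X-1}: $\E\abs{X_{ij}}^p = (NM)^{-p/4}\,\E\absb{(NM)^{1/4}X_{ij}}^p \le C_p \le M^{C_p}$. For the numerator $G$-edges and for the $R$-edges I would use a purely deterministic operator-norm bound: the matrices $X^{(T)}(X^{(T)})^*$ and $(X^{(\f a_b)})^* X^{(\f a_b)}$ are positive semidefinite, so each of their eigenvalues $\lambda$ satisfies $\abs{\lambda - z} \geq \eta$; hence the corresponding resolvents have operator norm at most $\eta^{-1}$, and every such matrix entry is bounded in absolute value by $\phi^{1/2}\eta^{-1} \le M^{3/2}$. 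Raising to the $p$-th power gives $\E\abs{\cal A_{\f a}(e,\Gamma)}^p \le M^{3p/2} \le M^{C_p}$.

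The only step requiring a probabilistic input is the reciprocal diagonal $G$-edge, since there is no deterministic lower bound on $\abs{\wt G_{aa}^{(T)}}$. Here I would instead use the Schur-complement identity \eqref{Gii expanded sc}, which together with $\phi \ge 1$ yields $\absb{1/\wt G_{aa}^{(T)}} = \phi^{-1/2}\absb{1/G_{aa}^{(T)}} \le \abs{z}\pb{1 + \eta^{-1}\textstyle\sum_\mu \abs{X_{a\mu}}^2}$, where I bounded the quadratic form by $\absb{\sum_{\mu,\nu}X_{a\mu}R_{\mu\nu}^{(T a)}X_{\nu a}^*} \le \norm{R^{(T a)}}\sum_\mu\abs{X_{a\mu}}^2 \le \eta^{-1}\sum_\mu\abs{X_{a\mu}}^2$ via Cauchy--Schwarz and positive semidefiniteness. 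It then remains to control $\E\pb{\sum_\mu \abs{X_{a\mu}}^2}^p$: by Jensen's inequality this is at most $N^{p-1}\sum_\mu \E\abs{X_{a\mu}}^{2p}$, and \eqref{moments of X-1} gives $\E\abs{X_{a\mu}}^{2p} \le C_{2p}(NM)^{-p/2}$, so that $\E\pb{\sum_\mu\abs{X_{a\mu}}^2}^p \le C_{2p}N^p(NM)^{-p/2} = C_{2p}(N/M)^{p/2} \le M^{C_p}$ by \eqref{NM gen}. Combining this with $\abs{z} \le M^C$ and $\eta^{-1}\le M$ proves $\E\absb{1/\wt G_{aa}^{(T)}}^p \le M^{C_p}$; the case of $1/\wt G_{aa}^{*(T)}$ is identical by conjugation.

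Since these cases are exhaustive, the lemma follows by taking $C_p$ to be the maximum of the exponents produced above. I do not expect any serious obstacle here: the computation is routine, and the single point that needs care is precisely the reciprocal diagonal case, where one must route through the identity \eqref{Gii expanded sc} and the moment assumption on the entries of $X$ rather than attempt a naive pointwise bound on $1/\wt G_{aa}^{(T)}$.
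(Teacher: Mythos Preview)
Your proof is correct and follows essentially the same approach as the paper: deterministic resolvent bounds $\abs{G_{ij}^{(T)}}, \abs{R_{\mu\nu}^{(T)}} \leq \eta^{-1}$ for the numerator cases, the moment assumption \eqref{moments of X-1} for the $X$-edges, and the Schur identity \eqref{Gii expanded sc} for the reciprocal diagonal entries. You simply spell out in more detail the last case, which the paper leaves as ``follow easily from \eqref{Gii expanded sc}''.
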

\begin{proof}
The cases $\xi_1(e) \in \{G,G^*,R,R^*\}$ and $\xi_2(e) = +$ are dealt with the deterministic estimates
\begin{equation*}
\absb{\wt G_{ij}^{(T)}} \;\leq\; N \phi^{1/2} \;\leq\; M^2\,, \qquad \absb{R_{ij}^{(T)}} \;\leq\; N \leq M\,,
\end{equation*}
which follows from $\abs{G_{ij}^{(T)}}, \abs{R_{ij}^{(T)}} \leq \eta^{-1} \leq N$.
The cases $\xi_1(e) \in \{X,X^*\}$ follow immediately from \eqref{moments of X-1}. Finally, the cases $\xi_1(e) \in \{G,G^*\}$ and $\xi_2(e) = -$ follow easily from \eqref{Gii expanded sc}.
\end{proof}

Using Lemma \ref{lem: finite tree}, we therefore conclude that the contribution of all trivial leaves to the right-hand side of \eqref{Y in terms of tree} is bounded by
\begin{equation} \label{main estimate of trivial leaves}
\sum_{\sigma \in L_0(\cal T)} \sum_{\f a_b}^{*} w_{\f a_b}(\Delta) \sum_{\f a_w}  \E \, \cal A_{\f a_b, \f a_w}(\Theta_\sigma) \;\prec\; C_{p, \ell} \, M^{2p} (\phi^{-1/2} \Psi)^\ell \;\leq\; C_{p, \omega} (\phi^{-1} \Psi)^p\,,
\end{equation}
where $C_{p, \ell} = 2^{2p(p+6\ell)}$ estimates the number of vertices in $\cal T$ (see Lemma \ref{lem: finite tree}). The last step holds provided we choose
\begin{equation} \label{choice of ell}
\ell \;\deq\; \pbb{\frac{8}{\omega} + 2} p\,.
\end{equation}
Here we used the bound $\Psi \leq C N^{-\omega / 2}$, which follows from the definitions \eqref{def of Psi MP},  \eqref{def_S_theta}, and \eqref{bounds on mg}.

\subsection{The nontrivial leaves I: Operation (c)} \label{sec: exp 6}

From now on we focus on the nontrivial leaves, $\sigma \in L_1(\cal T)$. Our goal is to prove the following estimate, which is analogous to \eqref{main estimate of trivial leaves}. Its proof will be the content of this and the two following subsections, and will be completed at the end of Section~\ref{sec: exp 8}.

\begin{proposition} \label{prop: nontrivial leaves}
We have the bound
\begin{equation*}
\sum_{\sigma \in L_1(\cal T)} \sum_{\f a_b}^{*} w_{\f a_b}(\Delta) \sum_{\f a_w}  \E \, \cal A_{\f a_b, \f a_w}(\Theta_\sigma) \;\prec\; C_{p, \omega} (\phi^{-1} \Psi)^p\,.
\end{equation*}
\end{proposition}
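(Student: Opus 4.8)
The plan is to estimate the contribution of a nontrivial leaf $\Theta_\sigma$ by first getting rid of the remaining (diagonal, maximally expanded) $G$-entries via Operation (c), and then carrying out the moment/parity count sketched in Steps 6--8 of Section~\ref{sec:sketch}. First I would apply the identity \eqref{Gii expanded sc} to every diagonal $G$-edge of $\Theta_\sigma$. Since each such entry is maximally expanded and has source and target equal to some black vertex $a_i$, writing $1/\wt G_{a_i a_i}^{(\f a_b \setminus \{a_i\})}$ out with \eqref{Gii expanded sc} produces a factor of the form $-\wt z - \wt z \sum_{\mu,\nu} X_{i\mu} R^{(\f a_b)}_{\mu\nu} X^*_{\nu i}$, i.e.\ (up to the bounded prefactor absorbed into $u(\Gamma)$) a new diagonal $R$-group together with a purely deterministic piece. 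Because entries in the \emph{denominator} appear, one must expand $1/\wt G$ as a geometric-type series: on the high-probability event $\Xi$ the quantity $\wt z + \wt z\sum X R^{(\f a_b)} X^*$ is bounded below, so $1/\wt G_{a_i a_i}^{(\f a_b\setminus\{a_i\})}$ equals a bounded deterministic constant plus $O_\prec(\phi^{-1/2}\Psi)$ corrections which themselves are finite sums of monomials in $R$-groups. This replaces $\Theta_\sigma$ by a finite family of graphs $\Theta'$ in which \emph{no} $G$-edges remain: every edge is an $X$-edge or an $R$-edge, organized into $R$-groups (now possibly diagonal ones, as anticipated in the parenthetical remark after \eqref{definition of d}), and all $R$-entries carry the maximal upper index set $\f a_b$, hence are independent of the entries of $X$ in the rows indexed by $V_b$.

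Next I would carry out the partial expectation over the rows of $X$ indexed by $\f a_b$, i.e.\ $\E = \E_{\f a_b} \E'$ where $\E_{\f a_b}$ averages over $(X_{i\mu})_{i\in V_b,\mu}$ and $\E'$ over the rest. Since $R^{(\f a_b)}$ is measurable with respect to $\E'$ and the $X$-entries are centred with variance $(NM)^{-1/2}$, applying the large deviation/pairing structure of Lemma~\ref{lemma: LDE} (or, more precisely, expanding $\E_{\f a_b}$ of the product of $X$-entries as a sum over partitions of the $X$-variables that pair up the rows) forces index coincidences: each black index $a_i$ must appear an even number of times among the $X$-edges, and to leading order the combinatorially dominant contribution comes from the \emph{pairing} partition, which uses up the minimal number of constraints. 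Each Greek (white) index that gets paired contributes a factor $N^{-1}$ and, via the Ward identity \eqref{Ward} and the entrywise law $R_{\mu\nu} = \delta_{\mu\nu}m_\phi + O_\prec(\Psi)$, converts a sum $\sum_{\mu,\nu}|R^{(\f a_b)}_{\mu\nu}|^2$ into $\eta^{-1}\im\tr R^{(\f a_b)} \asymp N\im m_\phi/\eta$, i.e.\ a factor $\im m_\phi/(N\eta)\le\Psi^2$ per diagonal $R$-group pairing. Simultaneously, each black vertex paired in the $X$-edges forces two of the $p$ resolvent factors $\wt G_{b_{k1}b_{k2}}$ to share a lower index, and because of the weight $w_{\f a}$ from \eqref{def w and cal A}--\eqref{estimate for w} this means every $v_{a_i}$ appears with even degree, so the sum over $\f a_b$ is controlled by $\prod_i \sum_{a_i}|v_{a_i}|^2 = (\|\f v\|_2^2)^{\#V_b} = 1$ rather than by a power of $\|\f v\|_1 \le M^{1/2}$. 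Tracking $\phi$-powers: the prefactor $\phi^{-p/2}$ in \eqref{E Zp}, together with $p/2$ factors of $\wt z\asymp 1$ and the $p$ factors $\wt m_\phi\asymp 1$, combine with the $\Psi^p$ from the pairings to give the claimed $(\phi^{-1}\Psi)^p$; here one uses $\im m_{\phi^{-1}}\asymp \phi^{-1}\im m_\phi$ from \eqref{im_m_swap} to restore the correct $\phi$-dependence of $\Psi$.

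The main obstacle — Step 8 — is to show that a black index $i$ which was left \emph{single} in the partition $P\in\fra P_p$ of Step~1 (so that $w_{\f a}$ only controls it through $\sum_i|v_{a_i}|\le\|\f v\|_1$) does not actually spoil the bound. The key observation is a parity invariant: in $\Delta(P)$ such a vertex $i$ has odd degree among $G$-edges, and one checks that each of the operations $\tau_0,\tau_1,\rho$ and Operation~(c) preserves the parity of the number of edge-endpoints at $i$ (each $G$-edge expansion replaces one endpoint by an odd number of new endpoints at $i$, each $\rho$ or Operation~(c) replaces a $G$-edge-endpoint at $i$ by exactly one $X$-edge-endpoint at $i$). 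Hence in every $\Theta'$ the index $a_i$ appears an odd number of times among the $X$-edges, so in the partial expectation $\E_{\f a_b}$ the variables $X_{i\cdot}$ cannot be paired off entirely: at least one triple (in fact, by a more careful count, at least one quintuple) must be grouped, which forces an \emph{extra} coincidence among the Greek indices and thereby saves a factor $N^{-1/2}$ (resp.\ $N^{-1}$) in the summation; since there are at most $p$ such single indices and each loses at most $M^{1/2}\le N^{C}$ from $\|\f v\|_1$, the combinatorial gain from the forced higher-order moments of $X$ — which are finite by \eqref{moments of X-1} — exactly offsets this loss. Finally one sums over the finitely many graphs $\Theta'$ produced from each $\Theta_\sigma$, over the $|L_1(\cal T)|\le 2^{2p(p+6\ell)}$ nontrivial leaves (Lemma~\ref{lem: finite tree}), and over $P\in\fra P_p$, absorbing all these finite multiplicities into the constant $C_{p,\omega}$, to obtain Proposition~\ref{prop: nontrivial leaves}; combined with \eqref{main estimate of trivial leaves} and \eqref{Y in terms of tree} this gives $\E|\cal Z|^p\prec C_{p,\omega}(\phi^{-1}\Psi)^p$, and Markov's inequality together with the arbitrariness of $p$ yields \eqref{claim on X}.
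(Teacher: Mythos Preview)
Your outline follows the paper's architecture (Operation (c), then partial expectation with a partition of white vertices, then a parity argument for single black vertices), but there are two genuine gaps in the execution that the paper handles differently.

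\textbf{Where $\Psi^p$ comes from.} You describe the $\Psi$-count via Ward-type pairings of $R$-entries, as in the heuristic sketch of Section~\ref{sec:sketch}. The actual proof does \emph{not} use the Ward identity here. Instead, the key structural fact is that $d(\Delta)=p$ and that $d$ is nondecreasing under $\tau_0,\tau_1,\rho$ and under Operation~(c); hence every $\Gamma\in\fra G(\Theta_\sigma)$ has at least $p$ \emph{off-diagonal} $R$-groups. One then fixes a set $E_o(\Gamma)$ of $p$ off-diagonal $R$-edges, uses the entrywise law $\abs{R^{(\f a_b)}_{\mu\nu}}\prec\Psi$ for $\mu\neq\nu$ on each, and splits according to whether the endpoints of each $e\in E_o$ coincide in $\Gamma_\zeta$: if $k$ of them do, one gains $\Psi^{p-k}$ but loses $k$ white summation variables, and $N^{-k}\Psi^{p-k}\leq\Psi^p$ since $\Psi\geq N^{-1}$. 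Your pairing/Ward description does not make clear why the power is exactly $p$, and you never invoke the monotonicity of $d$, which is the crux.

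\textbf{Where the $\phi^{-p/2}$ comes from.} You attribute the $\phi$-power to the prefactor $\phi^{-p/2}$ in \eqref{E Zp}. But Proposition~\ref{prop: nontrivial leaves} is a bound on the sum \emph{before} multiplying by that prefactor, so the $\phi$-saving must be produced internally. In the paper this is the content of Lemma~\ref{lem: final power counting}: the variance of each $X$-entry is $(NM)^{-1/2}$, so $\abs{W_{\f a_b,\f a_w}(\Gamma_\zeta)}\leq C_\Gamma(NM)^{-\abs{V_w(\Gamma)}/4}$, and a careful count using $\sum_i\deg_\Delta(i)=2p$ together with the fact that $\tau_1$ changes $\deg(i)$ only by $0$ or $4$ (so $i\in V_b^*$ with $\deg_\Gamma(i)\geq 2$ forces $\deg_\Gamma(i)\geq 5$) extracts exactly $\phi^{-p/2}\prod_{i\in V_b^*}M^{-1/2}$. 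Your account of this step (``$\wt z\asymp 1$, $\wt m_\phi\asymp 1$, together with $\phi^{-p/2}$ from \eqref{E Zp}'') misidentifies the mechanism and would not yield the correct bound for the quantity stated in the proposition.

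A minor point: in Operation~(c) you have numerator and denominator reversed. Entries $1/\wh G_{aa}$ in the denominator are handled \emph{directly} by \eqref{Gii expanded sc} (this is \eqref{Gaa denom}); it is the \emph{numerator} entries $\wh G_{aa}$ that require the finite geometric expansion \eqref{Gaa numer} to order $\ell$, with a remainder $O_\prec((\phi^{-1/2}\Psi)^\ell)$ absorbed as in the trivial-leaf estimate.
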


By definition of $L_1(\cal T)$, all $G$-edges of $\Theta_\sigma$ are diagonal and
maximally expanded for any $\sigma \in L_1(\cal T)$. The first step behind the proof of Proposition \ref{prop: nontrivial leaves} uses Operation (c) from Section \ref{sec: exp 1}, i.e.\ expanding all diagonal $G$-entries of $\cal A_{\f a}(\Theta_\sigma)$ using \eqref{Gii expanded sc}. Roughly, this amounts to replacing diagonal $G$-edges by (a collection of) diagonal $R$-groups. More precisely, for entries in the denominator we use the identity
\begin{equation} \label{Gaa denom}
\frac{1}{\wh G_{aa}} \;=\; -\wt z - \wt z \sum_{\mu, \nu} X_{a \mu} R^{(\f a_b)}_{\mu \nu} X_{\nu a}^*\,.
\end{equation}
In order to handle entries in the numerator, we rewrite this identity in the form
\begin{equation} \label{Gaa numer 0}
\frac{1}{\wh G_{aa}} \;=\; \frac{1}{\wt m_\phi} - \pBB{\wt z \sum_{\mu, \nu} X_{i \mu} R^{(\f a_b)}_{\mu \nu} X_{\nu a}^* - \wt z \phi^{-1/2} m_\phi}\,,
\end{equation}
where used the definition \eqref{def tilde m} of $\wt m_\phi$ and
that $m_\phi$ satisfies the identity \eqref{identity for m MP}. From Lemma \ref{lem: wt G bounds} and \eqref{wt m gamma is bounded} we get $1/ \wh G_{aa} - 1 / \wt m_\phi \prec \phi^{-1/2} \Psi$. Thus we may expand the inverse of \eqref{Gaa numer 0} up to order $\ell$:
\begin{equation} \label{Gaa numer}
\wh G_{aa} \;=\; \sum_{k = 0}^{\ell - 1} \wt m_\phi^{k + 1} \pBB{\wt z \sum_{\mu, \nu} X_{i \mu} R^{(\f a_b)}_{\mu \nu} X_{\nu a}^* - \wt z \phi^{-1/2} m_\phi}^k + O_\prec \pb{(\phi^{-1/2} \Psi)^\ell}\,.
\end{equation}
This is our main expansion for the diagonal $G$-entries in the numerator. Both formulas \eqref{Gaa denom} and \eqref{Gaa numer} have trivial analogues for the Hermitian conjugate $\wh G_{aa}^*$.

Recall that all $G$-entries of $\cal A_{\f a}(\Theta_\sigma)$ are diagonal and maximally expanded. We apply \eqref{Gaa denom} or \eqref{Gaa numer} to each $G$-entry of $\cal A_{\f a}(\Theta_\sigma)$, and multiply everything out. The result may be written in terms graphs as
\begin{equation} \label{A Theta expanded}
\sum_{\f a_w} \cal A_{\f a_b, \f a_w}(\Theta_\sigma) \;=\; \sum_{\Gamma \in \fra G(\Theta_\sigma)} \sum_{\f a_w} \cal A_{\f a_b, \f a_w}(\Gamma) + O_\prec \pb{(\phi^{-1/2} \Psi)^\ell}\,,
\end{equation}
where the error term $O_\prec \pb{(\phi^{-1/2} \Psi)^\ell}$ contains all terms containing at least one error term from the expansion \eqref{Gaa numer}. The sum on the right-hand side of \eqref{A Theta expanded} consists of monomials in the entries of $R^{(\f a_b)}$, $R^{* (\f a_b)}$, $X$, and $X^*$ (note that entries of $\wh G$ and $\wh G^*$ no longer appear), and can hence be encoded using a family graphs which we call $\fra G(\Theta_\sigma)$. By construction, the family $\fra G(\Theta_\sigma)$ is finite. (In fact, it satisfies $\abs{\fra G(\Theta_\sigma)} \leq \ell^{6 \ell}$, where we used that the number of $G$-entries of $\cal A_{\f a}(\Theta_\sigma)$ to which \eqref{Gaa denom} or \eqref{Gaa numer} are applied is bounded by $p+5\ell \leq 6 \ell$; see the proof of Lemma \ref{lem: finite tree}.)

Exactly as in Section \ref{sec: exp 5}, we may brutally estimate the contribution of the rest term on the right-hand side of \eqref{A Theta expanded} by
\begin{equation*}
\sum_{\f a_b}^{*} w_{\f a_b}(\Delta) O_\prec \pb{(\phi^{-1/2} \Psi)^\ell} \;\prec\; C_{p, \omega} (\phi^{-1} \Psi)^p
\end{equation*}
with $\ell$ defined in \eqref{choice of ell}; we omit the details.

Hence, in order to complete the proof of Proposition \ref{prop: nontrivial leaves}, it suffices to prove that for all $\sigma \in L_1(\cal T)$ and all $\Gamma \in \fra G(\Theta_\sigma)$ we have
\begin{equation}\label{expanded terms}
\sum_{\f a_b}^{*} w_{\f a_b}(\Delta) \sum_{\f a_w} \E \cal A_{\f a_b, \f a_w}(\Gamma) \;\prec\; C_{p, \omega} (\phi^{-1} \Psi)^p\,.
\end{equation}
As before, the map $\Theta_\sigma \mapsto \fra G(\Theta_\sigma)$ may be explicitly given on the level of graphs, but we refrain from doing so. Instead, we illustrate this process for some simple cases in Figure \ref{fig: expanding Gaa}.
\begin{figure}[ht!]
\begin{center}
\includegraphics{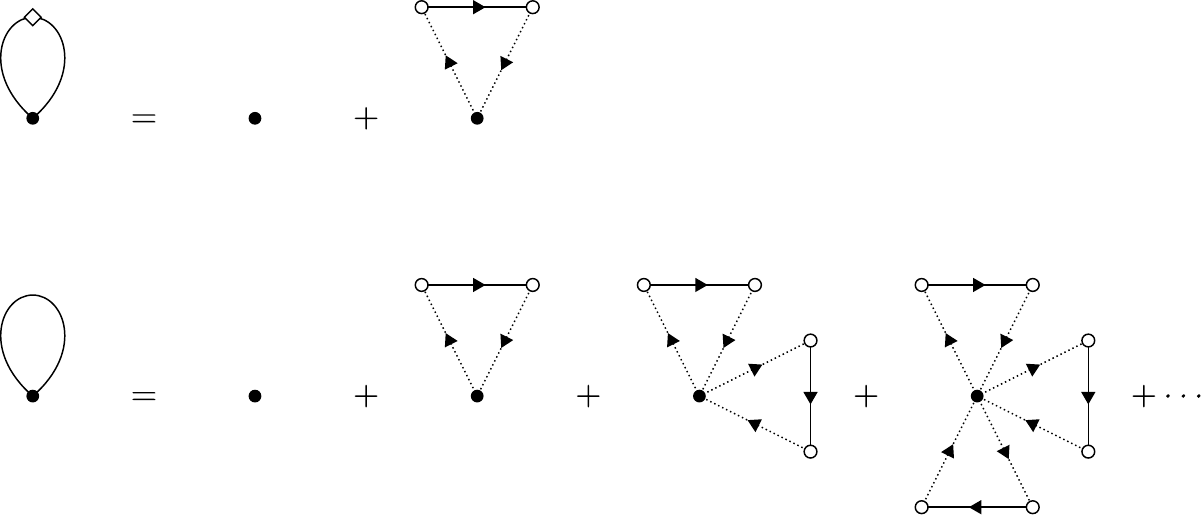}
\end{center}
\caption{A graphical depiction of the identities \eqref{Gaa denom} and \eqref{Gaa numer} that generate $\fra G(\Theta_\sigma)$ from $\Theta_\sigma$. A $G$-edge encoding an entry in the denominator is replaced by either nothing (leaving just the vertex) or a diagonal $R$-group. A $G$-edge encoding an entry in the numerator is replaced by either nothing or up to $\ell - 1$ diagonal $R$-groups. \label{fig: expanding Gaa}}
\end{figure}

\subsection{The nontrivial leaves II: taking the expectation} \label{sec: exp 7}

Let us now consider a nontrivial leaf $\sigma \in L_1(\cal T)$. By definition of $L_1(\cal T)$, all $G$-edges of $\Theta_\sigma$ are diagonal and maximally expanded. Therefore, any $\Gamma \in \fra G(\Theta_\sigma)$ does not contain any $G$-edges. This was the goal of the expansion
 generated by Operations  (a)--(c). Hence, each $\Gamma \in \fra G(\Theta_\sigma)$ consists solely of $R$-groups.

Let $\sigma \in L_1(\cal T)$ and $\Gamma \in \fra G(\Theta_\sigma)$. Fix the summation indices $\f a_b$, and recall that $a_i \neq a_j$ for $i,j \in V_b(\Gamma)$ and $i \neq j$. By definition of $R^{(\f a_b)}$, the $\abs{V_b(\Gamma)} + 1$ families $\pb{R^{(\f a_b)}_{\mu \nu}}_{\mu,\nu = 1}^N$ and
$\p{X_{a_i \mu}}_{\mu = 1}^N$, $i \in V_b(\Gamma)$, are independent. Therefore we may take the expectation of the $R$-entries and the $X$-entries separately. The expectation of the $X$-entries may be kept track of using partitions, very much like in Section \ref{sec: graphs}, except in this case the partition is on the white vertices. In fact, the combinatorics here are much simpler, since two white vertices may only be in the same block of the partition if they are adjacent to a common black vertex. Indeed, the (Latin) indices associated with two different black vertices are different, so that the two entries of $X$ encoded by two $X$-edges incident to two different black vertices are independent, since  $X_{a\mu}$ and $X_{b\nu}$ are independent if $a\neq b$ for all $\mu$ and $\nu$ (even if $\mu=\nu$). The precise definition is the following.

We recall from Property (vii) in Section \ref{sec: exp 1.1} that each white vertex $j \in  V_w(\Gamma)$ is adjacent in $\Gamma$  to a unique black vertex $\pi(j)  \equiv \pi_\Gamma(j)$. For each $i \in V_b(\Gamma)$ we introduce a partition $\zeta_i$ of the subset of white vertices $\pi^{-1}(\{i\})$, and constrain the values of the indices $(a_j \col \pi(j) = i)$ to be compatible with $\zeta_i$. On the level of graphs, such a partition amounts to merging vertices in $\pi^{-1}(\{i\})$. Abbreviate $\zeta = (\zeta_i)_{i \in V_b(\Gamma)}$, and denote by $\Gamma_\zeta$ the graph obtained from $\Gamma$ by merging, for each $i \in V_b(\Gamma)$, the vertices adjacent to $i$ according to $\zeta_i$. Note that, like $\Gamma$, each $\Gamma_\zeta$ satisfies the properties (i)--(vi) from Section \ref{sec: exp 1}, but, unlike $\Gamma$, in general $\Gamma_\zeta$ does not satisfy the property (vii) from Section \ref{sec: exp 1.1}. See Figure \ref{fig: merging} for an illustration of the mapping $\Gamma \mapsto \Gamma_\zeta$.
\begin{figure}[ht!]
\begin{center}
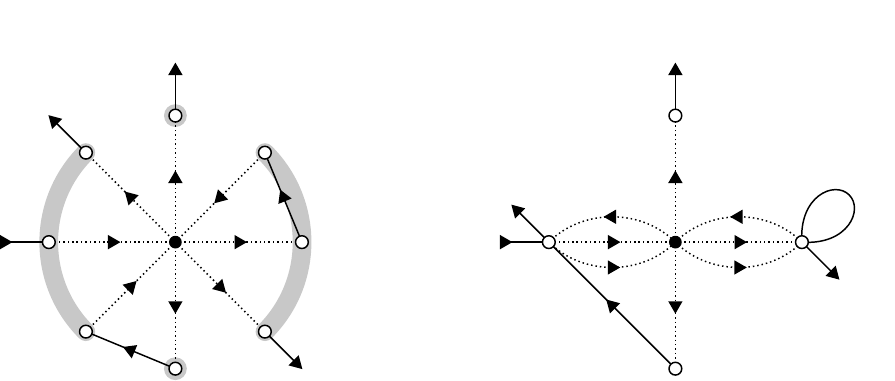
\end{center}
\caption{The process $\Gamma \mapsto \Gamma_\zeta$. Since this operation is local at each black vertex, we only draw the neighbourhood of (more precisely the unit ball around) a selected black vertex $i \in V_b(\Gamma)$. The depicted black vertex is part of two diagonal $R$-blocks and four off-diagonal $R$-blocks; the latter ones are not drawn completely.
The blocks of the partition $\zeta_i$ are drawn in grey. On the right we draw the corresponding neighbourhood of $\Gamma_\zeta$. \label{fig: merging}}
\end{figure}

Define the indicator function
\begin{equation*}
\chi_{\f a_w} (\Gamma) \;\deq\; \prod_{i \in V_b(\Gamma)} \indb{a_j \neq a_{j'} \text{ for } j,j' \in \pi^{-1}_{\Gamma }(\{i\}) \text{ and } j \neq j'}\,,
\end{equation*}
which constrains the summation indices associated with different white vertices adjacent to the same black vertex to have different values.
By definition of $\Gamma_\zeta$, we therefore have
\begin{equation*}
\sum_{\f a_w} \cal A_{\f a_b, \f a_w}(\Gamma) \;=\; \sum_\zeta \sum_{\f a_w}  \chi_{\f a_w}(\Gamma_\zeta)  \cal A_{\f a_b, \f a_w}(\Gamma_\zeta)\,,
\end{equation*}
where the sum ranges over all families of partitions $\zeta = (\zeta_i)_{i \in V_b(\Gamma)}$. 
As before, the summation of the white indices ${\f a}_w$ runs over different sets
on the left- and the right-hand sides, owing to the merging white vertices.
Taking the expectation yields
\begin{equation} \label{split Gamma Gamma zeta}
\sum_{\f a_w} \E \cal A_{\f a_b, \f a_w}(\Gamma) \;=\; \sum_\zeta \sum_{\f a_w} \chi_{\f a_w}(\Gamma_\zeta) \pBB{\E \prod_{e \in E_R(\Gamma_\zeta)} \cal A_{\f a_b, \f a_w}(e, \Gamma_\zeta)} W_{\f a_b, \f a_w}(\Gamma_\zeta)\,,
\end{equation}
where we set
\begin{equation*}
W_{\f a_b, \f a_w}(\Gamma_\zeta) \;\deq\;
\prod_{i \in V_b(\Gamma_\zeta)} \pBB{\E \prod_{e \in E_i(\Gamma_\zeta)} \cal A_{\f a_b, \f a_w}(e, \Gamma_\zeta)}
\end{equation*}
and abbreviated $E_R(\cdot)$ for the set of $R$-edges and $E_i(\cdot)$ for the set of $X$-edges incident to $i$.
Here we used the independence described above.

Since $\E X_{a \mu} = 0$, we immediately get that $W_{\f a_b, \f a_w}(\Gamma_\zeta) = 0$ unless, for each $i \in V_b(\Gamma)$, each block of $\zeta_i$ has size at least two. By \eqref{moments of X-1} we get in fact that
\begin{equation} \label{bound W}
\absb{W_{\f a_b, \f a_w}(\Gamma_\zeta)} \;\leq\; C_\Gamma (N M)^{- \abs{V_w(\Gamma)}/4} \prod_{i \in V_b(\Gamma)} \indb{\text{each block of $\zeta_i$ has size at least two}}\,.
\end{equation}
The following result is the main power counting estimate for $W_{\f a_b, \f a_w}$. It
shows that each black vertex of degree one in $\Delta$ (corresponding to Latin indices
that remained unpaired in the partition \eqref{E Xp 2}) results in an extra factor $M^{-1/2}$.
This will balance the passage from $\ell^1$- to $\ell^2$-norm of $\f v$, as explained in 
Section~\ref{sec:sketch}.

Note that by definition of $\Gamma_\zeta$ we have $V_b(\Gamma_\zeta) = V_b(\Gamma)$ and $\deg_\Gamma(i) = \deg_{\Gamma_\zeta}(i)$ for all $i \in V_b(\Gamma)$. For the following we therefore drop the argument of $V_b$. Define the subset 
\begin{equation*}
V_b^* \;\deq\; \hb{i \in V_b \col \deg_\Delta(i) = 1}\,.
\end{equation*}
For $i \in V_b(\Gamma)$ let $n_\zeta(i)$ denote the number of  vertices of $\Gamma_\zeta$ adjacent to $i$ (these are
all white since there are no $G$-edges in $\Gamma_\zeta$, which are the only edges that join two black vertices).
\begin{lemma} \label{lem: final power counting}
We have the bound
\begin{equation*}
\absb{W_{\f a_b, \f a_w}(\Gamma_\zeta)} \;\leq\; C_\Gamma \, \phi^{-p/2} \prod_{i \in V_b} N^{-n_\zeta(i)} \prod_{i \in V_b^*} M^{-1/2}\,.
\end{equation*}
\end{lemma}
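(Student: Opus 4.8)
The plan is to estimate $W_{\f a_b, \f a_w}(\Gamma_\zeta)$ by combining the crude bound \eqref{bound W} with a careful count of how the factors $(NM)^{-1/4}$ associated to individual $X$-edges are distributed among the black vertices. First I would recall that $W_{\f a_b, \f a_w}(\Gamma_\zeta)$ factorizes over the black vertices $i \in V_b$, with the factor at $i$ being $\E \prod_{e \in E_i(\Gamma_\zeta)} \cal A_{\f a_b, \f a_w}(e, \Gamma_\zeta)$, a product of $\deg_{\Gamma_\zeta}(i)$ entries of the $i$-th row of $X$ (counting $X$ and $X^*$ entries together). Since each white vertex of $\Gamma_\zeta$ adjacent to $i$ receives, by property (vii) of $\Gamma$ and the construction of $\Gamma_\zeta$, exactly two incident $X$-edges that both touch $i$, the number of $X$-edges incident to $i$ equals $2 n_\zeta(i)$. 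By \eqref{moments of X-1}, the absolute value of this factor is at most $C (NM)^{-(2 n_\zeta(i))/4} = C (NM)^{-n_\zeta(i)/2}$, so that
\begin{equation*}
\absb{W_{\f a_b, \f a_w}(\Gamma_\zeta)} \;\leq\; C_\Gamma \prod_{i \in V_b} (NM)^{-n_\zeta(i)/2} \;=\; C_\Gamma \prod_{i \in V_b} N^{-n_\zeta(i)} \phi^{-n_\zeta(i)/2}\,,
\end{equation*}
using $NM = N^2 \phi$. This already produces the desired $\prod_{i \in V_b} N^{-n_\zeta(i)}$ together with a total power of $\phi$ equal to $-\frac12 \sum_i n_\zeta(i)$.

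\textbf{The two remaining points.} It then remains to verify (a) that $\frac12 \sum_{i \in V_b} n_\zeta(i) \geq p/2$, so that the factor $\phi^{-\frac12 \sum_i n_\zeta(i)}$ can be bounded by $\phi^{-p/2}$ (recall $\phi \geq 1$), and (b) that the extra gain $\prod_{i \in V_b^*} M^{-1/2}$ is available. For (a) I would argue that $\sum_{i \in V_b} n_\zeta(i)$ is at least the number of white vertices of $\Gamma_\zeta$, hence at least the number of $R$-edges of $\Gamma_\zeta$ (each $R$-edge of $\Gamma$ lies in an $R$-group with two distinct white endpoints, so contributes at least one, and merging only decreases the white count down to, but not below, the number of $R$-edges of the corresponding $R$-group structure), and that in turn one counts that the total number of $R$-groups produced from the original $p$ edges of $\Delta$ is at least $p/2$ — each of the $p$ $G$-edges of $\Delta$, when fully expanded via Operations (a)--(c), generates at least one $R$-group, and off-diagonal $R$-groups come with two $X$-edges each; the precise bookkeeping is exactly the $f$- and $d$-counting already carried out in the proof of Lemma \ref{lem: finite tree}, specialized to a nontrivial leaf where no $G$-edges survive. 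For (b), the key observation — and the one flagged in Step 8 of Section \ref{sec:sketch} — is a parity argument: if $i \in V_b^*$, i.e.\ $\deg_\Delta(i) = 1$, then $i$ is incident to an odd number of $X$-edges in every $\Gamma \in \fra G(\Theta_\sigma)$, hence $2 n_\zeta(i)$ would have to be odd, which is impossible unless $W$ vanishes; more precisely, oddness forces at least one block of $\zeta_i$ to have odd size $\geq 3$, so the corresponding factor of the $i$-th row of $X$ is a product of at least $2 n_\zeta(i) + 1$ entries (the naive count $2 n_\zeta(i)$ being unattainable), giving an improved bound $(NM)^{-(2 n_\zeta(i) + 1)/4} \leq (NM)^{-n_\zeta(i)/2} (NM)^{-1/4} \leq C N^{-n_\zeta(i)} \phi^{-n_\zeta(i)/2} M^{-1/2}$, where in the last step I used $(NM)^{1/4} = N^{1/2} \phi^{1/4} \geq M^{1/2}$ since $\phi \geq 1$ implies $N^{1/2}\phi^{1/4} = M^{1/2}\phi^{-1/4} \cdot \phi^{1/2} \geq M^{1/2} \phi^{1/4} \geq M^{1/2}$. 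Actually it is cleaner to simply note $(NM)^{-1/4} = M^{-1/2} \phi^{1/4} \cdot \phi^{-1/2}$, wait — I would double-check: $(NM)^{1/4} \geq M^{1/2}$ is equivalent to $N M \geq M^2$, i.e.\ $N \geq M$, i.e.\ $\phi \leq 1$; so for $\phi \geq 1$ one instead has $(NM)^{1/4} \geq N^{1/2}$ and $N^{1/2} \geq M^{1/2}$ fails. The correct route is $(NM)^{-1/4} = (N^2 \phi)^{-1/4} = N^{-1/2}\phi^{-1/4}$ and separately $M^{-1/2} = (N\phi)^{-1/2} = N^{-1/2}\phi^{-1/2} \leq N^{-1/2}\phi^{-1/4} = (NM)^{-1/4}$, using $\phi \geq 1$; so $(NM)^{-1/4} \geq M^{-1/2}$, which is what is needed.

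\textbf{Carrying it out.} Concretely I would: (1) write $\absb{W_{\f a_b, \f a_w}(\Gamma_\zeta)} \leq \prod_{i \in V_b} \absb{\E \prod_{e \in E_i(\Gamma_\zeta)} X^{\#}_{a_i a_{\beta(e)}}}$ and, for each $i$, apply \eqref{moments of X-1} after grouping the $X$-entries by the white vertex they touch (using that $a_j \neq a_{j'}$ for distinct white neighbours $j,j'$ of $i$ in $\Gamma_\zeta$, as enforced by $\chi_{\f a_w}$, is \emph{not} needed here — what matters is that $\E X_{a\mu} = 0$ forces every block of $\zeta_i$ to have size $\geq 2$, already recorded in \eqref{bound W}); (2) count the number of $X$-entries in the $i$-th factor as $2 n_\zeta(i) + (\text{excess from odd blocks})$, where the excess is $\geq 1$ whenever $\deg_\Delta(i)$ is odd, i.e.\ whenever $i \in V_b^*$, by the parity argument that the total number of $X$-edges at $i$ has the same parity as $\deg_\Delta(i)$ throughout the expansion (each application of \eqref{Gij expanded sc} or \eqref{Gii expanded sc} at $i$ changes the count by $\pm 1$ paired with another $\pm 1$ at the new white vertex, or by $2$ at $i$ itself for \eqref{Gii expanded sc} — this needs to be checked against the precise form of Operations (b),(c), and is the one genuinely delicate step); (3) bound $(NM)^{-1/4} \leq N^{-1/2}$ and $(NM)^{-1/4} \geq M^{-1/2}$ under $\phi \geq 1$ to convert; (4) use $\sum_i n_\zeta(i) \geq p/2$ to replace the residual power of $\phi$ by $\phi^{-p/2}$. \textbf{The main obstacle} I anticipate is step (2): making the parity argument airtight requires tracking exactly how the parity of the number of $X$-edges incident to a fixed black vertex evolves under $\tau_0, \tau_1, \rho$ and the diagonal expansion \eqref{Gaa denom}--\eqref{Gaa numer}, and confirming that it is an invariant mod $2$ equal to $\deg_\Delta(i) \bmod 2$; the off-diagonal expansion \eqref{Gab expansion} attaches one $X$-edge to each of the two black endpoints, so it preserves parity at every black vertex, but \eqref{Gaa numer} attaches $2k$ $X$-edges at a single vertex (parity-preserving) while also feeding into the maximally-expanded structure, and one must make sure that the vertex ceasing to carry a $G$-loop does not hide a parity change. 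Once this invariant is established, the rest is the power counting already sketched above.
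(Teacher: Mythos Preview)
Your parity argument for extracting $M^{-1/2}$ at each $i\in V_b^*$ is essentially the paper's, and your observation that $(NM)^{-1/4}\ge M^{-1/2}$ when $\phi\ge 1$ is correct. The genuine gap is in the $\phi^{-p/2}$ power counting.

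You begin by claiming that each white vertex of $\Gamma_\zeta$ adjacent to $i$ carries \emph{exactly two} $X$-edges, so that the number of $X$-edges at $i$ equals $2n_\zeta(i)$. This is false: in $\Gamma$ each white vertex is incident to one $X$-edge and one $R$-edge (property (vii)), so a block of $\zeta_i$ of size $k$ carries $k$ $X$-edges, not $2$. The correct count is $\deg_\Gamma(i)=\sum_{\text{blocks }B\text{ of }\zeta_i}|B|\ge 2n_\zeta(i)$. Your bound $|W|\le C\prod_i(NM)^{-n_\zeta(i)/2}$ is therefore valid as an upper bound, but it is strictly weaker than the bound \eqref{bound W}, and that loss is fatal for the $\phi$-count.

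Indeed, to get $\phi^{-p/2}$ from $\phi^{-\sum_i n_\zeta(i)/2}$ you need $\sum_i n_\zeta(i)\ge p$ (and in fact $\ge p+\tfrac12|V_b^*|$ once the $\phi^{1/4}$ cost of each $M^{-1/2}$ extraction is included). But $\sum_i n_\zeta(i)=|V_w(\Gamma_\zeta)|$, and this can be as small as $|V_b|$: if every $\deg_\Gamma(i)\ge 2$, one may merge all white neighbours of each $i$ into a single block, giving $n_\zeta(i)=1$. Since $|V_b|=|V(\Delta)|$ can be much smaller than $p$ (e.g.\ a partition $P$ with few blocks), your inequality fails. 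Your sketched argument for (a) --- bounding $|V_w(\Gamma_\zeta)|$ below by the number of $R$-edges --- is also incorrect, since merging can make $|V_w(\Gamma_\zeta)|$ smaller than $|E_R|$.

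The paper avoids this by \emph{not} discarding the excess $\deg_\Gamma(i)-2n_\zeta(i)$. It keeps the full exponent and writes $(NM)^{-\deg_\Gamma(i)/4}=N^{-\deg_\Gamma(i)/2}\phi^{-\deg_\Gamma(i)/4}$, then uses $n_\zeta(i)\le\lfloor\deg_\Gamma(i)/2\rfloor$ only on the $N$-factor while retaining $\phi^{-\deg_\Gamma(i)/4}$. The required power of $\phi$ then follows from $\sum_i\deg_\Delta(i)=2p$ together with the fact that the operations $\tau_0,\tau_1,\rho$ (and Operation (c)) never decrease $\deg(i)$ below $\deg_\Delta(i)$, plus a refinement: for $i\in V_b^*$ with $\deg_\Gamma(i)\ge 2$ one actually has $\deg_\Gamma(i)\ge 5$ (since the degree jumps in steps of $0$ or $4$ from its initial value $1$), which supplies enough extra $\phi^{-1}$ to absorb the $\phi^{1/2}$ cost of writing $(NM)^{-1/4}$ as $M^{-1/2}\phi^{1/4}$. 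So the fix is to run your argument with $\deg_\Gamma(i)$ in place of $2n_\zeta(i)$ throughout the $\phi$-bookkeeping.
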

\begin{proof}
Recalling \eqref{bound W}, we assume without loss of generality that, for each $i \in V_b$, eack block of $\zeta_i$ has size at least two; in particular, we assume that for each $i \in V_b$ we have $\deg_\Gamma(i) \geq 2$.
From \eqref{bound W} we get
\begin{align*}
\absb{W_{\f a_b, \f a_w}(\Gamma_\zeta)} &\;\leq\; C_\Gamma (N M)^{- \abs{V_w(\Gamma)}/4}
\\
&\;=\; C_\Gamma \prod_{i \in V_b \setminus V_b^*} (N M)^{-\deg_\Gamma(i) / 4} \prod_{i \in V_b^*} (N M)^{-\deg_\Gamma(i) / 4}
\\
&\;=\; C_\Gamma \prod_{i \in V_b \setminus V_b^*} \pB{N^{-\deg_\Gamma(i) / 2} \phi^{-\deg_\Gamma(i)/4}} \prod_{i \in V_b^*} \pB{M^{-1/2} N^{-(\deg_\Gamma(i) - 1)/2} \phi^{1/2 - \deg_\Gamma(i) / 4}}\,.
\end{align*}

By definition, $\tau_0$ and $\rho$ leave $\deg(i)$ invariant, and $\tau_1$ increases $\deg(i)$ by $0$ or $4$. In particular, they all leave the parity of $\deg(i)$ invariant for $i \in V_b$. We conclude that $\deg_\Gamma(i)$ is odd for each $i \in V_b^*$. Since each block of $\zeta_i$ has size at least two, we find that
\begin{equation*}
n_\zeta(i) \;\leq\;
\begin{cases}
\frac{\deg_\Gamma(i)}{2} & \text{if } \deg_\Gamma(i) \text{ is even}
\\
\frac{\deg_\Gamma(i) - 1}{2} & \text{if } \deg_\Gamma(i) \text{ is odd}\,.
\end{cases}
\end{equation*}
We therefore conclude that
\begin{equation*}
\absb{W_{\f a_b, \f a_w}(\Gamma_\zeta)} \;\leq\; C_\Gamma \prod_{i \in V_b \setminus V_b^*} \pB{N^{-n_\zeta(i)} \phi^{-\deg_\Gamma(i)/4}} \prod_{i \in V_b^*} \pB{M^{-1/2} N^{- n_\zeta(i)} \phi^{1/2 - \deg_\Gamma(i) / 4}}\,.
\end{equation*}
The proof is then completed by the following claim.

If $\deg_\Gamma(i) \geq 2$ for all $i \in V_b^*$ then
\begin{equation} \label{main power counting}
\prod_{i \in V_b} \phi^{-\deg_\Gamma(i) / 4} \prod_{i \in V_b^*} \phi^{1/2} \;\leq\; \phi^{-p/2}\,.
\end{equation}
What remains is to prove \eqref{main power counting}. 
Since $\Delta$ has $p$ edges, we find
\begin{equation*}
\prod_{i \in V_b} \phi^{-\deg_\Delta(i) / 4} \;=\; \phi^{-p/2}\,.
\end{equation*}
As observed above, $\tau_0$ and $\rho$ leave $\deg(i)$ invariant, and $\tau_1$ increases $\deg(i)$ by $0$ or $4$. Let $i \in V_b^*$. Since by assumption $\deg_\Gamma(i) \geq 2$, we find that in fact $\deg_\Gamma(i) \geq 5$. This yields
\begin{equation*}
\prod_{i \in V_b} \phi^{-\deg_\Gamma(i) / 4} \;\leq\; \prod_{i \in V_b \setminus V_b^*} \phi^{-\deg_\Gamma(i) / 4} \prod_{i \in V_b^*} \phi^{- 5 / 4} \;\leq\; \prod_{i \in V_b \setminus V_b^*} \phi^{-\deg_\Delta(i) / 4} \prod_{i \in V_b^*} \phi^{-\deg_\Delta(i) / 4} \phi^{- 1}\,,
\end{equation*}
from which \eqref{main power counting} follows.
\end{proof}

\subsection{The nontrivial leaves III: summing over $\f a$ and conclusion of the proof of Proposition \ref{prop: nontrivial leaves}} \label{sec: exp 8}
As above, fix a tree vertex $\sigma \in L_1(\cal T)$, a graph $\Gamma \in \fra G(\Theta_\sigma)$, and a partition $\zeta$.
In order to conclude the proof, we use Lemma \ref{lem: final power counting} on each $\Gamma_\zeta$ to sum over $\f a_w$ in \eqref{split Gamma Gamma zeta}.

Recall the quantity $d$ from \eqref{definition of d}, defined as the number of off-diagonal $G$-edges plus the number of off-diagonal $R$-groups. By definition of $\Delta$, $d(\Delta) = p$. Moreover, $\tau_0$, $\tau_1$, and $\rho$ do not decrease $d$. Since by construction $\Gamma$ has no $G$-entries, we conclude that $\Gamma$ has at least $p$ off-diagonal $R$-groups. We may therefore choose a set $E_o(\Gamma) \subset E_R(\Gamma)$ of size at least $p$, such that each $e \in E_o(\Gamma)$ is the centre of an off-diagonal $R$-group (see Section \ref{sec: exp 1.1}). The set $E_o$ is naturally mapped into $E_R(\Gamma_\zeta)$, and is denoted by $E_o(\Gamma_\zeta)$. 
We denote by $\alpha(e)$ and $\beta(e)$ the end points of $e$ in $\Gamma_\zeta$. By \eqref{bound: Rij 2}, we have
\begin{equation*}
\prod_{e \in E_R(\Gamma_\zeta)} \cal A_{\f a_b, \f a_w}(e, \Gamma_\zeta) \;\prec\; \prod_{e \in E_o(\Gamma_\zeta)} \Psi^{\ind{\alpha(e) \neq \beta(e)}}\,.
\end{equation*}
As in Section \ref{sec: exp 5}, it is easy to take the expectation using Lemma \ref{lemma: basic properties of prec} to get
\begin{equation*}
\E \prod_{e \in E_R(\Gamma_\zeta)} \cal A_{\f a_b, \f a_w}(e, \Gamma_\zeta) \;\prec\; \prod_{e \in E_o(\Gamma_\zeta)} \Psi^{\ind{\alpha(e) \neq \beta(e)}}\,.
\end{equation*}
We may now sum over $\f a_w$ on the right-hand side of \eqref{split Gamma Gamma zeta}: from Lemma \ref{lem: final power counting} we get
\begin{align*}
&\mspace{-40mu}\sum_{\f a_w} \chi_{\f a_w}(\Gamma_\zeta) \pBB{\E \prod_{e \in E_R(\Gamma_\zeta)} \cal A_{\f a_b, \f a_w}(e, \Gamma_\zeta)} W_{\f a_b, \f a_w}(\Gamma_\zeta)
\\
&\;=\; \sum_{\f a_w} \chi_{\f a_w}(\Gamma_\zeta) \pBB{\E \prod_{e \in E_R(\Gamma_\zeta)} \cal A_{\f a_b, \f a_w}(e, \Gamma_\zeta)} W_{\f a_b, \f a_w}(\Gamma_\zeta) \prod_{e \in E_o(\Gamma_\zeta)} \pB{\ind{\alpha(e) = \beta(e)} + \ind{\alpha(e) \neq \beta(e)}}
\\
&\;\prec\; C_\Gamma \, \phi^{-p/2} \prod_{i \in V_b} N^{-n_\zeta(i)} \prod_{i \in V_b^*} M^{-1/2} \sum_{k = 0}^p \Psi^{p - k} N^{\abs{V_w(\Gamma_\zeta)} - k}
\\
&\;\leq\; C_\Gamma \phi^{-p/2} \Psi^p \prod_{i \in V_b^*} M^{-1/2}\,.
\end{align*}
In the second step we multiplied out the last $p$-fold product on the second line and classified all terms according to number, $k$, of factors $\ind{\alpha(e) = \beta(e)}$; we used that the total number of free summation variables is $\abs{V_w(\Gamma_\zeta)} - k$. In the third step we used that $\sum_{i \in V_b} n_\zeta(i) = \abs{V_w(\Gamma_\zeta)}$ and the bound $\Psi \geq N^{-1}$.

Returning to \eqref{split Gamma Gamma zeta}, we find
\begin{equation*}
\sum_{\f a_w} \E \cal A_{\f a_b, \f a_w}(\Gamma) \;\prec\; C_\Gamma \phi^{-p/2} \Psi^p \prod_{i \in V_b^*} M^{-1/2}\,.
\end{equation*}
We may now sum over $\f a_b$ to prove \eqref{expanded terms}.
 Using the bound \eqref{estimate for w}, we therefore get
\begin{equation*}
\sum_{\f a_b}^{*} w_{\f a_b}(\Delta) \sum_{\f a_w}  \E \, \cal A_{\f a_b, \f a_w}(\Gamma) \;\prec\; C_\Gamma \phi^{-p/2} \Psi^p \sum_{\f a_b} \prod_{i \in V_b^*} M^{-1/2} \prod_{i \in V(\Delta)} \abs{v_{a_i}}^{\deg_\Delta(i)} \;\prec\; C_\Gamma \phi^{-p/2} \Psi^p\,,
\end{equation*}
where the last step follows from the fact that, by definition of $\Delta$, $\deg_\Delta(i) \geq 1$ for all $i \in V_b$, as well as the estimate
\begin{equation*}
\sum_a \abs{v_a}^k \;\leq\;
\begin{cases}
M^{1/2} & \text{if } k = 1
\\
1 & \text{if } k \geq 2\,.
\end{cases}
\end{equation*}
Summing over $\sigma \in L_1(\cal T)$ concludes the proof of Proposition \ref{prop: nontrivial leaves}.

\subsection{Conclusion of the proof of Theorem \ref{thm: IMP}} \label{sec: exp 9}

Combining \eqref{main estimate of trivial leaves} and Proposition \ref{prop: nontrivial leaves}, and recalling \eqref{Y in terms of tree} and \eqref{E Zp}, yieds
\begin{equation*}
\E \abs{\cal Z}^p \;\prec\; C_{p, \omega} (\phi^{-1} \Psi)^p\,.
\end{equation*}
Now \eqref{claim on X}, and hence \eqref{bound: Gij isotropic}, follows by a simple application of Chebyshev's inequality. Let $\epsilon > 0$ and $D$ be given. Then
\begin{equation*}
\P(\abs{\cal Z} > M^\epsilon \phi^{-1} \Psi) \;\leq\; C_{p, \omega} M^\epsilon M^{-\epsilon p} \;\leq\; M^{-D}
\end{equation*}
for $p \geq \epsilon^{-1} D + 2$. This, together with Remark \ref{rem:stochdomMN} which allows us to interchange $M$ and $N$ in Definition \ref{def:stocdom},  concludes the proof of \eqref{bound: Gij isotropic}.

Finally, we outline the proof of \eqref{bound: Rij isotropic}, which is very similar to that of \eqref{bound: Gij isotropic}. The expansion of Sections \ref{sec: graphs}--\ref{sec: exp 6} may be taken over by swapping the roles of $R$ and $\wt G$. In other words, we use Lemma \ref{lem: res identity 2} instead of Lemma \ref{lem: res identity 1}.  The arguments from Sections \ref{sec: exp 7} and \ref{sec: exp 8} carry over with straightforward adjustments in the power counting of Section \ref{sec: exp 8}. We leave the details to the interested reader. This concludes the proof of
 Theorem \ref{thm: IMP}.

\section{Proof of Theorems \ref{thm: IMP outside} and \ref{thm: I deloc}} \label{sec:5}

\begin{proof}[Proof of Theorem \ref{thm: I deloc}]
Note that, by the definition \eqref{def:gamma_alpha} of $\gamma_\alpha$, we have $\gamma_\alpha \in [\gamma_-, \gamma_+]$ for all $\alpha = 1, \dots, N$. Hence, given $\epsilon > 0$ and $c > 0$ as in Theorem \ref{thm: I deloc}, for small enough $\omega \in (0,1)$ we have $\gamma_\alpha \geq 2 \omega$ provided that either $\alpha \leq (1 - \epsilon) N$ or $\phi \geq 1 + c$. Set $\eta \deq N^{-1 + \omega}$. We therefore conclude from Theorem \ref{thm: cov-rig} and the definition \eqref{def_S_theta} of $\f S$ that $\lambda_\alpha + \ii \eta \in \f S$ with high probability (see Definition \ref{def: high probability}), provided that either $\alpha \leq (1 - \epsilon) N$ or $\phi \geq 1 + c$. Let $\alpha$ be such an index, and abbreviate $\Xi \deq \{\lambda_\alpha + \ii \eta \in \f S\}$. Then we get from \eqref{bound: Rij isotropic}, 
Remark \ref{rem:all_z}, and \eqref{bounds on mg} that $\ind{\Xi} \im \scalar{\f v}{R(\lambda_\alpha + \ii \eta) \f v} \prec 1$. From
\begin{equation*}
\im \scalar{\f v}{R(\lambda_\alpha + \ii \eta) \f v} \;=\; \sum_{\beta = 1}^N \frac{\eta \abs{\scalar{\f u^{(\beta)}}{\f v}}^2}{(\lambda_\alpha - \lambda_\beta)^2 + \eta^2} \;\geq\; \frac{\abs{\scalar{\f u^{(\alpha)}}{\f v}}^2}{\eta}
\end{equation*}
we therefore get $\ind{\Xi} \abs{\scalar{\f u^{(\alpha)}}{\f v}}^2 \prec N^{-1 + \omega}$. Since $\omega \in (0,1)$ can be made arbitrarily small and $1 - \ind{\Xi} \prec 0$, the first estimate of Theorem \ref{thm: I deloc} follows. 

In order to prove the second estimate of Theorem \ref{thm: I deloc}, we use the same $\eta = N^{-1 + \omega}$ as above and write $z = \lambda_\alpha + \ii \eta$. Taking the imaginary part inside the absolute value on the left-hand side of \eqref{bound: Gij isotropic}, we get
\begin{equation*}
\ind{\Xi} \im \scalar{\f w}{G(z) \f w} \;\prec\; \im m_{\phi^{-1}}(z) + \frac{1}{\phi} \;\leq\; \frac{2}{\phi}\,,
\end{equation*}
where in the second step we used \eqref{im_m_swap}, \eqref{bounds on mg}, and $z \in \f S$ with high probability; this latter estimates follows from \eqref{def:gamma_pm}, the fact that $\gamma_\alpha \geq 2 \omega$ by assumption, and Theorem \ref{thm: cov-rig}.  Repeating the above argument, we therefore find $\ind{\Xi} \abs{\scalar{\wt {\f u}\!\,^{(\alpha)}}{\f w}}^2 \prec \phi^{-1} N^{-1 + \omega}$, and the second claim of Theorem \ref{thm: I deloc}  follows.
\end{proof}

\begin{proof}[Proof of Theorem \ref{thm: IMP outside}]
We only prove \eqref{bound: Gij isotropic outside sc}; the proof of \eqref{bound: Rij isotropic outside sc} is the same, using \eqref{bound: Rij isotropic} instead of \eqref{bound: Gij isotropic}. Moreover, to simplify notation, we assume that $E \geq \gamma_+ + N^{-2/3 + \omega}$; the case $E \leq \gamma_- - N^{-2/3 + \omega}$ is handled in exactly the same way.

Note first that if $\eta \geq \kappa$ then it is easy to see that \eqref{bound: Gij isotropic outside sc} follows from \eqref{bound: Gij isotropic}, \eqref{im m gamma}, and the lower bound $\eta \geq \kappa \geq N^{-2/3}$. For the following we therefore assume that $\eta \leq \kappa$.
By Lemma \ref{lemma: mg}, for $\eta \leq \kappa$ we have
\begin{equation*}
\sqrt{\frac{\im m_\phi(z)}{N\eta}} \;\asymp\; N^{-1/2} \kappa^{-1/4}\,.
\end{equation*}
By polarization and linearity, it therefore suffices to prove that
\begin{equation} \label{claim outside of spectrum}
\absb{\scalar{\f v}{G(z) \f v} - m_{\phi^{-1}}(z)} \;\prec\; \phi^{-1} N^{-1/2} \kappa^{-1/4}\,.
\end{equation}

Define $\eta_0 \deq N^{-1/2} \kappa^{1/4}$. By definition of the domain $\wt{\f S}$,
 we have $\eta_0 \leq \kappa$. 
Using \eqref{bound: Gij isotropic}, we find that  \eqref{claim outside of spectrum}  holds if $\eta \geq \eta_0$.
For the following we therefore take $0 < \eta \leq \eta_0$. We proceed by comparison using the two spectral parameters
\begin{equation*}
z \;\deq\; E + \ii \eta\,, \qquad z_0 \;\deq\; E + \ii \eta_0\,.
\end{equation*}
Since \eqref{claim outside of spectrum} holds at $z_0$ by \eqref{bound: Gij isotropic}, it is enough to prove the estimates
\begin{equation} \label{z z0 comparison 1}
\absb{m_\phi(z) - m_\phi(z_0)} \;\leq\; C N^{-1/2} \kappa^{-1/4}
\end{equation}
and
\begin{equation} \label{z z0 comparison 2}
\absb{\scalar{\f v}{G(z) \f v} - \scalar{\f v}{G(z_0) \f v}} \;\prec\; \phi^{-1} N^{-1/2} \kappa^{-1/4}\,.
\end{equation}
(The third required estimate, that of $\absb{\frac{1 - \phi}{\phi z} - \frac{1 - \phi}{\phi z_0}}$, is trivial by $\abs{z}^2 \asymp \phi$ for any $z\in \wt {\f S}$.)
We start with \eqref{z z0 comparison 1}. From the definition $m_\phi(z) = \int \frac{\varrho_\phi(\dd x)}{x - z}$ and the square root decay of the density of $\varrho_\phi$ near $\gamma_+$ from \eqref{def: rhog}, it is not hard to derive the bound $m'_\phi(z) \leq C \kappa^{-1/2}$ for $z \in \wt {\f S}$.
Therefore we get
\begin{equation*}
\absb{m_\phi(z) - m_\phi(z_0)} \;\leq\; C \kappa^{-1/2} \eta_0 \;=\; C N^{-1/2} \kappa^{-1/4}\,,
\end{equation*}
which is \eqref{z z0 comparison 1}.

What remains is to prove \eqref{z z0 comparison 2}. By Theorem \ref{thm: cov-rig} we have $E \geq \lambda_1 + \eta_0$ with high probability
since $\eta_0\ge N^{-2/3+\omega/4}$. Therefore, since $\eta \leq \eta_0 \leq E - \lambda_1 \leq E - \lambda_\alpha$ for all $\alpha \geq 1$, we get
\begin{equation} \label{strong 4}
\im \scalar{\f v}{G(z) \f v} \;=\; \sum_\alpha \frac{\abs{\scalar{\f v}{\f u^{(\alpha)}}}^2 \eta}{(E - \lambda_\alpha)^2 + \eta^2} \;\leq\;
2 \sum_\alpha \frac{\abs{\scalar{\f v}{\f u^{(\alpha)}}}^2 \eta_0}{(E - \lambda_\alpha)^2 + \eta_0^2} \;=\; 2 \im \scalar{\f v}{G(z_0) \f v} \;\prec\; \phi^{-1} N^{-1/2} \kappa^{-1/4}
\end{equation}
by \eqref{bound: Gij isotropic} at $z_0$ and the estimate 
\begin{equation*}
\im \pbb{\frac{1 - \phi}{\phi z_0} + \frac{m_\phi(z_0)}{\phi}} \;\leq\; C \phi^{-1} N^{-1/2} \kappa^{-1/4}\,,
\end{equation*}
as follows from \eqref{im m gamma} and the estimate $\abs{z_0}^2 \asymp \phi$.

Finally, we estimate the real part of the error in \eqref{z z0 comparison 2} using
\begin{multline} \label{strong 5}
\absb{\re \scalar{\f v}{G(z) \f v} - \re \scalar{\f v}{G(z_0) \f v}} \;=\; \sum_\alpha \frac{(E - \lambda_\alpha)(\eta_0^2 - \eta^2) \abs{\scalar{\f u^{(\alpha)}}{\f v}}^2}{\pb{(E - \lambda_\alpha)^2 + \eta^2} \pb{(E - \lambda_\alpha)^2 + \eta^2_0}}
\\
\leq\; \frac{\eta_0}{E - \lambda_1} \sum_\alpha \frac{\eta_0 \abs{\scalar{\f u^{(\alpha)}}{\f v}}^2}{(E - \lambda_\alpha)^2 + \eta^2_0} \;\leq\; \im \scalar{\f v}{G(z_0) \f v}
\end{multline}
with high probability,
where in the last step we used that $\eta_0 \leq E - \lambda_1$ with high probability. Combining \eqref{strong 4} and \eqref{strong 5} completes the proof of \eqref{z z0 comparison 2}, and hence of Theorem \ref{thm: IMP outside}.
\end{proof}

\section{Proofs for generalized Wigner matrices} \label{sec:appendix}

In this section we explain how to modify the arguments of Sections \ref{sec:tools}--\ref{sec:5} to the case of generalized Wigner matrices, and hence how to complete the proof of the results from Section \ref{sec: gen_wigner}. Since we are dealing with generalized Wigner matrices, in this section we consistently use the notations from Section \ref{sec: gen_wigner} instead of Section \ref{sec: sample covariance}.

\subsection{Tools for generalized Wigner matrices}
We begin by recalling some basic facts about the Stieltjes transform $m$ from \eqref{def_msc}.
In analogy to \eqref{def kappa sc}, for $E \in \R$ we define
\begin{equation} \label{def kappa}
\kappa \;\equiv\; \kappa_E \;\deq\; \absb{\abs{E} - 2}\,,
\end{equation}
the distance from $E$ to the spectral edges $\pm 2$.
\begin{lemma} \label{lemma: msc}
For $\abs{z} \leq 2 \omega^{-1}$ we have
\begin{equation} \label{bounds on msc}
\abs{m(z)} \;\asymp\; 1 \,, \qquad \abs{1 - m(z)^2} \;\asymp\; \sqrt{\kappa + \eta}
\end{equation}
and
\begin{equation*}
\im m(z) \;\asymp\;
\begin{cases}
\sqrt{\kappa + \eta} & \text{if $\abs{E} \leq 2$}
\\
\frac{\eta}{\sqrt{\kappa + \eta}} & \text{if $\abs{E} \geq 2$}\,.
\end{cases}
\end{equation*}
(All implicit constants depend on $\omega$.)
\end{lemma}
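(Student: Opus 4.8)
The plan is to deduce all three estimates from the explicit formula \eqref{def_msc}, or equivalently from the relation $z = -(m(z) + m(z)^{-1})$ furnished by \eqref{m identity}. I would first record $\abs{m}\asymp 1$. The upper bound $\abs m\le 1$ is classical: the two roots of $w^2 + zw + 1 = 0$ multiply to $1$, and since $\im m>0$ the companion root $m^{-1}$ has negative imaginary part, so exactly one root lies in the closed unit disc; as $m(z)\to 0$ at infinity and the two roots coincide only at $z=\pm2$ (where both have modulus $1$), $m$ is that root throughout the upper half-plane. For the lower bound, \eqref{m identity} gives $m(z+m)=-1$, so $\abs m^{-1}=\abs{z+m}\le\abs z+\abs m\le 2\omega^{-1}+1$, hence $\abs m\ge c$ with $c$ depending only on $\omega$.

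Next, squaring $z=-(m+m^{-1})$ gives $z^2-4=(m-m^{-1})^2=m^{-2}(m^2-1)^2$, so $\abs{1-m^2}=\abs m\,\abs{z^2-4}^{1/2}\asymp\abs{z^2-4}^{1/2}$ by the previous step; it therefore remains to show $\abs{z^2-4}\asymp\kappa+\eta$ on $\{\abs z\le2\omega^{-1}\}$. Since $\varrho$ is even we have $m(-\bar z)=-\overline{m(z)}$, which leaves $\abs m$, $\im m$, $\abs{1-m^2}$, $\kappa$, $\eta$ unchanged and preserves the distinction between $\abs E\le2$ and $\abs E\ge2$, so I may assume $E\ge0$. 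Then $\abs{z-2}=\bigl((E-2)^2+\eta^2\bigr)^{1/2}\asymp\abs{E-2}+\eta=\kappa+\eta$, while $2\le\abs{z+2}\le2\omega^{-1}+2$ gives $\abs{z+2}\asymp1$; multiplying yields $\abs{z^2-4}\asymp\kappa+\eta$, and hence the first line of \eqref{bounds on msc}.

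For $\im m$ I would write $\im m=\tfrac12\bigl(\im\sqrt{z^2-4}-\eta\bigr)$ and set $\sqrt{z^2-4}=u+\ii v$; the condition $\im m>0$ forces $v>\eta>0$, and from $(u+\ii v)^2=z^2-4$ one reads off $u^2+v^2=\abs{z^2-4}$ and $u^2-v^2=E^2-\eta^2-4$, whence (for $E\ge0$)
\begin{equation*}
2v^2=\abs{z^2-4}-(E^2-\eta^2-4)\,,\qquad \im m=\frac{v-\eta}{2}=\frac{v^2-\eta^2}{2(v+\eta)}\,,\qquad 2(v^2-\eta^2)=\abs{z^2-4}-(E^2-4)-\eta^2\,.
\end{equation*}
For $\abs E\ge2$ I would rationalize $\abs{z^2-4}-(E^2-4)$ via the identity $\abs{z^2-4}^2-(E^2-4)^2=\eta^2(2E^2+8+\eta^2)$, which together with $\abs{z^2-4}\asymp\kappa+\eta$, $E^2-4\asymp\kappa$, $E^2\asymp1$, and $\abs{z^2-4}\le\abs z^2+4$ shows $v^2\asymp v^2-\eta^2\asymp\eta^2/(\kappa+\eta)$ and $v+\eta\asymp\eta/\sqrt{\kappa+\eta}$, hence $\im m\asymp\eta/\sqrt{\kappa+\eta}$; the needed positivity $v>\eta$ follows from $\abs{z^2-4}\le\abs z^2+4$ with a gap of $8$ to spare. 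For $\abs E\le2$ one has $E^2-4=-(4-E^2)$ with $4-E^2\asymp\kappa\ge0$, so $2v^2=\abs{z^2-4}+(4-E^2)+\eta^2\asymp\kappa+\eta$ and $v+\eta\asymp\sqrt{\kappa+\eta}$; for $\eta\le\eta_0$ with $\eta_0=\eta_0(\omega)$ small one also gets $2(v^2-\eta^2)=\abs{z^2-4}+(4-E^2)-\eta^2\asymp\kappa+\eta$, and therefore $\im m\asymp\sqrt{\kappa+\eta}$.

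The main — and essentially the only — obstacle is bookkeeping the constants when $\eta$ is not small, where the cancellation $v-\eta=2\im m$ ceases to be ``small over large''. In that range (the relevant case being $\abs E\le2$ with $\eta\ge\eta_0$) I would bypass the square-root computation and use instead $\im m=\eta\abs m^2/(1-\abs m^2)$, obtained by taking imaginary parts in \eqref{m identity} together with $z+m=-m^{-1}$ (valid since $\abs m<1$ whenever $\eta>0$); rearranging also gives $1-\abs m^2=\eta/(\im m+\eta)\in(0,1]$, so with $\abs m\asymp1$ one reads off $c^2\eta\le\im m\le\abs m\le1$, and since $\kappa+\eta\asymp1$ in this range this is exactly $\im m\asymp\sqrt{\kappa+\eta}$. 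Collecting the cases completes the proof.
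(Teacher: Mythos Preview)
Your proof is correct; the paper itself does not prove this lemma but simply cites it as ``an elementary calculation'' from \cite{EYY2}. Your explicit computation from the formula $m = \tfrac{1}{2}(-z+\sqrt{z^2-4})$, together with the case split on $\eta$ small versus $\eta \geq \eta_0$ (handled via $\im m = \eta\abs{m}^2/(1-\abs{m}^2)$), is exactly the kind of direct verification that citation points to, and all of your asymptotic bookkeeping checks out.
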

\begin{proof}
The proof is an elementary calculation; see Lemma 4.2 in \cite{EYY2}.
\end{proof}

The following definition is the analogue of Definitions \ref{def: removing rows} and \ref{def: removing columns}. (Note that for generalized Wigner matrices we always simultaneously remove a row and the corresponding column.)
\begin{definition}[Minors] \label{def: Wigner minors}
For $T \subset \{1, \dots, N\}$ we define $H^{(T)}$ by
\begin{equation*}
(H^{(T)})_{ij} \;\deq\; \ind{i \notin T} \ind{j \notin T} H_{ij}\,.
\end{equation*}
Moreover, for $i,j \notin T$ we define the resolvent of the minor through
\begin{equation*}
G^{(T)}_{ij}(z) \;\deq\; (H^{(T)} - z)^{-1}_{ij}\,.
\end{equation*}
We also set
\begin{equation*}
\sum_i^{(T)} \;\deq\; \sum_{i \col i \notin T}\,.
\end{equation*}
When $T = \{a\}$, we abbreviate $(\{a\})$ by $(a)$ in the above definitions; similarly, we write $(ab)$ instead of $(\{a,b\})$.
\end{definition}

We shall also need the following resolvent identities, proved in Lemma 4.2 of \cite{EYY1} and Lemma 6.10 of \cite{EKYY2}.
\begin{lemma}[Resolvent identities] \label{lemma: res id}
For $i,j \neq k$ and $i,j,k \notin T$ the identity \eqref{Gij Gijk sc} holds.
Moreover, for $i \neq j$ and $i,j \notin T$ we have
\begin{equation} \label{sq root formula}
G_{ij}^{(T)} \;=\; - G_{ii}^{(T)} \sum_{k}^{(Ti)} H_{ik} G_{kj}^{(Ti)} \;=\; - G_{jj}^{(T)} \sum_k^{(Tj)} G_{ik}^{(Tj)} H_{k j}\,.
\end{equation}
Finally, for $i \notin T$ we have Schur's formula
\begin{equation} \label{Schur}
\frac{1}{G_{ii}^{(T)}} \;=\; H_{ii} - z - \sum_{k,l}^{(T i)} H_{ik} G_{kl}^{(T i)} H_{li}\,.
\end{equation}
\end{lemma}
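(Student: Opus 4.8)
The plan is to derive all three identities from the Schur complement (block inversion) formula, following the proofs of Lemma 4.2 of \cite{EYY1} and Lemma 6.10 of \cite{EKYY2}. Since deleting the rows and columns indexed by $T$ just means working throughout with the principal submatrix $(H-z)$ restricted to indices outside $T$ — whose inverse entries are by Definition \ref{def: Wigner minors} exactly the $G^{(T)}_{ij}$ — I would first reduce to the case $T = \emptyset$ and restore the superscript $(T)$ at the end, using the compatibility $(H^{(T)})^{(i)} = H^{(Ti)}$. I also note that division by diagonal entries such as $G_{kk}$ is legitimate because $\im M^{-1} = \eta (M^*M)^{-1}$ is positive definite for $M = H - z$ with $\eta > 0$, so all diagonal resolvent entries have strictly positive imaginary part. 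The identity \eqref{Gij Gijk sc} itself is purely an identity about matrix inverses (valid for any invertible, not necessarily Hermitian, matrix with the relevant diagonal entry nonzero), which is why it can be reused verbatim from Lemma \ref{lem: res identity 1}.

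For \eqref{Schur} and \eqref{sq root formula}, I would fix $i$ and write $M = H - z$ in $2 \times 2$ block form with respect to the splitting $\{1,\dots,N\} = \{i\} \cup \{i\}^c$. With $S_i \deq (H-z)_{ii} - (H-z)_{i,\{i\}^c}\big((H-z)_{\{i\}^c,\{i\}^c}\big)^{-1}(H-z)_{\{i\}^c,i}$ the Schur complement, the standard block-inverse formula gives $G_{ii} = (M^{-1})_{ii} = S_i^{-1}$, and since $(H-z)_{ii} = H_{ii} - z$, the off-diagonal blocks are $(H_{ik})_{k \neq i}$, and $\big((H-z)_{\{i\}^c,\{i\}^c}\big)^{-1}_{kl} = G^{(i)}_{kl}$, this is precisely Schur's formula \eqref{Schur}. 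The same block formula yields, for $j \neq i$,
\[
G_{ij} \;=\; (M^{-1})_{ij} \;=\; -S_i^{-1}\sum_{k \neq i} H_{ik} G^{(i)}_{kj} \;=\; -G_{ii}\sum_k^{(i)} H_{ik} G^{(i)}_{kj}\,,
\]
which is the first equality of \eqref{sq root formula}; the second follows by repeating the argument with the block $\{j\}$ in place of $\{i\}$, or equivalently by taking adjoints and using $H = H^*$.

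For the two identities collected in \eqref{Gij Gijk sc}, I would instead split with respect to $\{k\} \cup \{k\}^c$ and use the full block inverse: writing $S_k$ for the Schur complement of the $\{k\}$ block, one has $(M^{-1})_{kk} = S_k^{-1}$, $(M^{-1})_{ik} = -\big(\sum_a G^{(k)}_{ia} H_{ak}\big) S_k^{-1}$, $(M^{-1})_{kj} = -S_k^{-1}\big(\sum_b H_{kb} G^{(k)}_{bj}\big)$, and
\[
(M^{-1})_{ij} \;=\; G^{(k)}_{ij} + \Big(\sum_a G^{(k)}_{ia} H_{ak}\Big) S_k^{-1} \Big(\sum_b H_{kb} G^{(k)}_{bj}\Big)\,.
\]
Comparing these four expressions immediately gives $G_{ij} = G^{(k)}_{ij} + G_{ik} G_{kj}/G_{kk}$ for $i,j \neq k$, which is the first part of \eqref{Gij Gijk sc}; specializing to $j = i$ gives $G_{ii} = G^{(k)}_{ii} + G_{ik} G_{ki}/G_{kk}$, and dividing through by $G_{ii} G^{(k)}_{ii}$ and rearranging produces the second part.

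I do not expect any genuine obstacle here: the lemma is entirely standard linear algebra, and the only thing requiring care is the bookkeeping with minors — verifying that removing row and column $i$ from $H^{(T)}$ produces $H^{(Ti)}$, that the restricted sums $\sum^{(Ti)}$ in the statement exclude exactly the indices in $T \cup \{i\}$, and that the block-inverse identities are applied to the correct submatrices. All of these are immediate from Definition \ref{def: Wigner minors}, so the proof is short and can reasonably be left as a reference to \cite{EYY1, EKYY2} with the sketch above supplied for completeness.
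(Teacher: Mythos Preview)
Your proposal is correct and follows exactly the approach of the cited references \cite{EYY1, EKYY2}, which is all the paper does here (it gives no independent proof and simply points to those lemmas). The derivation via the $2\times 2$ block Schur complement, the reduction to $T=\emptyset$, and the remark on invertibility of diagonal entries are precisely the standard argument, so nothing needs to be added.
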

From \eqref{sq root formula} we find for $i \neq j$ and $i,j \notin T$ that
\begin{equation} \label{iterated sq root}
G_{ij}^{(T)} \;=\;  G_{ii}^{(T)} G_{jj}^{(Ti )} \pbb{- H_{ij} + \sum_{k,l}^{(Tij)} H_{ik} G_{kl}^{(Tij)} H_{lj}}\,,
\end{equation}

\subsection{The isotropic law: proof of Theorem \ref{thm: ILSC}}
As in \eqref{claim on X}, and using \eqref{eq:lscold} instead of
\eqref{bound: Gij}, it suffices to prove that
\begin{equation*}
\abs{\cal Z} \;\prec\; \sqrt{\frac{\im m(z)}{N\eta}}+\frac{1}{N\eta}\,,
\end{equation*}
where, as in Section \ref{sec:4}, $\cal Z = \sum_{a \neq b} \ol v_a G_{ab} v_b$. (Note that here there is no factor $\phi$ and hence no rescaled quantities bearing a tilde.) The estimate of $\E \abs{\cal Z}^p$ follows the argument of Section \ref{sec:4}; in particular, it consists of the eight steps sketched at the end of Section \ref{sec:sketch}, which we follow closely.
Throughout the argument we use the identities \eqref{Schur} and \eqref{iterated sq root} instead of \eqref{Gii expanded sc} and \eqref{Gij expanded sc}. In them, the two differences as compared to the argument of Section \ref{sec:4} are apparent.
\begin{enumerate}
\item
The quadratic term in the expansion of $1/G_{ii}$ and $G_{ij}$ contains an entry of $G$ and not of $R$. (The matrix $R$ is not even defined for generalized Wigner matrices.)
\item
Both \eqref{Schur} and \eqref{iterated sq root} contain an additional term, an entry of $H$.
\end{enumerate}
Of these differences, the first one is minor. In order to mimic the bookkepping of Section \ref{sec:4}, we still speak of black and white vertices.
The simplest definition of our colouring is that the vertices of $\Delta$ are black and
any other vertices that were added to $\Delta$ are white;
see the explanation below \eqref{VB}.
The second difference leads to a slightly larger class of graphs, but the new graphs will always be of subleading order. An alternative viewpoint is that the additional entry of $H$ on the right-hand side of \eqref{Schur} and \eqref{iterated sq root} should be regarded as a negligible error term. Almost all of the differences highlighted below stem from this additional term.

We now sketch the argument for each of the eight steps, by highlighting the changes as compared to Section \ref{sec:4}.
\begin{description}
\item[Step 1.]
The reduced family of matrix indices is, as in Section \ref{sec:4}, the set of indices $\f a_b$ associated with the black vertices of $\Delta$.
\item[Step 2.] We use \eqref{Gij Gijk sc} to make all entries of $G$ maximally expanded of the black indices $\f a_b = (a_i)_{i \in V_b}$. The graphical representation is the same as in Section \ref{sec:4}.
\item[Step 3.] We use \eqref{iterated sq root} to expand each maximally expanded off-diagonal entry of $G_{a_i a_j}$. As compared to the expansion based
on  \eqref{Gii expanded sc} and  \eqref{Gij expanded sc} and
used in Section \ref{sec:4}, we get an additional term, $-H_{a_i a_j}$. See Figure \ref{fig:Gij_W} for a graphical representation of this expansion. Note that \eqref{iterated sq root} is always invoked with $T = \f a_b \setminus \{a_i,a_j\}$. Hence, for any graph $\Gamma$ at any point of the argument, each white index is summed over the set $\{1, \dots, N\} \setminus \f a_b$.
\begin{figure}[ht!]
\begin{center}
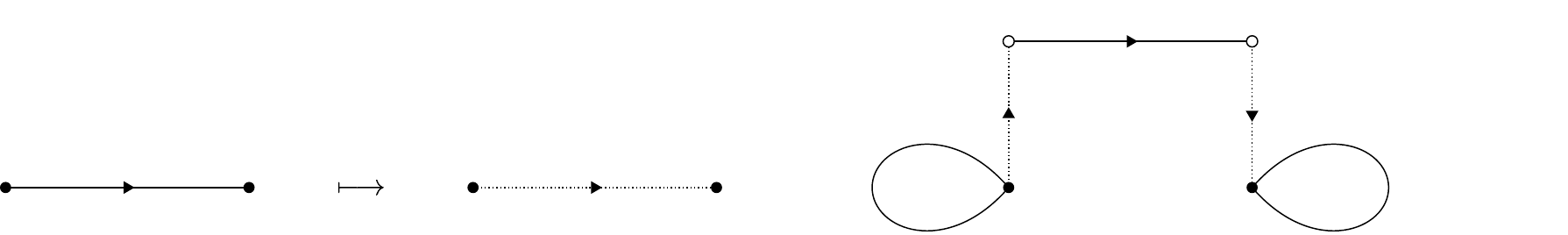
\end{center}
\caption{How Figure \ref{fig: Gab expansion} is modified for generalized Wigner matrices. We use an oriented dotted line from $i$ to $j$ to draw the entry $-H_{a_i a_j}$. \label{fig:Gij_W}}
\end{figure}
\item[Step 4.] Repeating Steps 2 and 3 in tandem yields a sum of monomials which consist only of maximally expanded diagonal entries of $G$ with black indices, entries of $G^{(\f a_b)}$ with white indices, and entries of $H$.
\item[Step 5.]
We apply
\eqref{Schur} to each maximally expanded diagonal entry of $G$. The graphical representation of this operation is similar to Figure \ref{fig: expanding Gaa}, except that we also get a diagonal entry of $H$, depicted as a dotted loop. 
 Note that there are no $R$-entries, but we still use the terminology of Section \ref{sec:4} and speak of $R$-groups; these refer to the
to the same structure as in Figure~\ref{fig: R-group}, except that the edge $e$ connecting the two white vertices encodes
a $G$-edge and not an $R$-edge. 
 We end up with entries of $G^{(\f a_b)}$ and entries of $H$. Note that, by construction, each entry of $H$ carries at least one black index, and that the $G$-edges are only incident to white vertices.  In particular, all entries of $H$ are independent of all entries of $G^{(\f a_b)}$.
\item[Step 6.]
Using the independence of the entries of $H$ and $G^{(\f a_b)}$, we may take the partial expectation in the rows (or, equivalently, columns) indexed by $\f a_b$. Note that now we have two classes of $H$-edges: white-black (incident to a black and a white vertex) and black-black (incident to two black vertices). Since the white indices are distinct from the black ones, the expectation factorizes over these two classes of $H$-edges.
 Exactly as in Section \ref{sec:4}, taking the expectation in the white-black $H$-edges yields, for each $i \in V_b$, a partition of the white vertices adjacent to $i$, whereby each block of the partition must contain at least two vertices. The expectation over the black-black $H$-edges imposes an additional constraint among the loops incident to the white vertices, which are unimportant for the argument. Finally, for $i \neq j \in V_b$, we have the constraint that the number of edges joining $i$ and $j$ cannot be one.
\item[Steps 7 and 8.]
The parity argument from the proof of Theorem \ref{sec: exp 8} may be taken over with minor modifications, which arise from the additional black-black $H$-edges described in Step 6. Recall that the goal is to gain a factor $N^{-1/2}$ from each black vertex $i \in V_b$ that has an odd degree. If $i$ is incident to a black-white $H$-edge, the counting from Section \ref{sec:4} applied unchanged and yields a power of $N^{-1/2}$. If $i$ is not incident to a black-white $H$-edge, it must be incident to a black-black $H$-edge (recall that all graphs must be connected). By the constraints arising from the expectation in Step 6, $i$ must then in fact be incident to at least two black-black $H$-edges which connect $i$ to the same black vertex $j$. This yields a factor $\E \abs{H_{a_i a_j}}^2 \leq C / N$, which is the desired small factor. (We may in general only allocate $N^{-1/2}$ from the factor $N^{-1}$ to the vertex $i$, since $j$ may also be a vertex that has degree one in $\Delta$, in which case we have to allocate the other factor in $N = N^{-1/2} N^{-1/2}$ to $j$.)
\end{description}

This concludes sketch of how the argument of Section \ref{sec:4} is to be modified for the proof of Theorem \ref{thm: ILSC}. We omit further details. Finally, Theorems \ref{thm: ILSC outside} and \ref{thm: I deloc Wig} follow from Theorem \ref{thm: ILSC} by repeating the arguments of Section \ref{sec:5} almost to the letter.

{\small
\providecommand{\bysame}{\leavevmode\hbox to3em{\hrulefill}\thinspace}
\providecommand{\MR}{\relax\ifhmode\unskip\space\fi MR }
\providecommand{\MRhref}[2]{%
  \href{http://www.ams.org/mathscinet-getitem?mr=#1}{#2}
}
\providecommand{\href}[2]{#2}

}


\begin{thebibliography}{10}

\bibitem{BBP}
J.~Baik, G.~Ben~Arous, and S.~P{\'e}ch{\'e}, \emph{Phase transition of the
  largest eigenvalue for nonnull complex sample covariance matrices}, Ann.\
  Prob. \textbf{33} (2005), 1643--1697.

\bibitem{BV2}
A.~Bloemendal and B.~Vir{\'a}g, \emph{Limits of spiked random matrices {II}},
  Preprint arXiv:1109.3704.

\bibitem{BV1}
\bysame, \emph{Limits of spiked random matrices {I}}, Prob.\ Theor.\ Rel.\
  Fields \textbf{156} (2013), 795--825.

\bibitem{ECDM}
L.~Erd\H{o}s, \emph{Universality for random matrices and log-gases}, Lecture
  Notes for Current Developments in Mathematics, Preprint arXiv:1212.0839
  (2012).

\bibitem{EKY2}
L.~Erd{\H{o}}s, A.~Knowles, and H.-T. Yau, \emph{Averaging fluctuations in
  resolvents of random band matrices}, to appear in Ann.\ H.\ Poincar{\'e}.
  Preprint arXiv:1205.5664.

\bibitem{EKYY3}
L.~Erd{\H{o}}s, A.~Knowles, H.-T. Yau, and J.~Yin, \emph{Delocalization and
  diffusion profile for random band matrices}, Preprint arXiv:1205.5669.

\bibitem{EKYY2}
\bysame, \emph{Spectral statistics of {E}rd{\H{o}}s-{R}\'enyi graphs {II}:
  Eigenvalue spacing and the extreme eigenvalues}, Comm.\ Math.\ Phys.\
  \textbf{314} (2012), 587--640.

\bibitem{EKYY4}
\bysame, \emph{The local semicircle law for a general class of random
  matrices}, Electron. J. Probab \textbf{18} (2013), 1--58.

\bibitem{EKYY1}
\bysame, \emph{Spectral statistics of {E}rd{\H{o}}s-{R}\'enyi graphs {I}: Local
  semicircle law}, Ann. Prob. \textbf{41} (2013), 2279--2375.

\bibitem{ESY2}
L.~Erd{\H{o}}s, B.~Schlein, and H.-T. Yau, \emph{Local semicircle law and
  complete delocalization for {W}igner random matrices}, Comm.\ Math.\ Phys.\
  \textbf{287} (2009), 641--655.

\bibitem{ESY4}
\bysame, \emph{Universality of random matrices and local relaxation flow},
  Invent. Math. \textbf{185} (2011), no.~1, 75--119.

\bibitem{ESYY}
L.~Erd{\H{o}}s, B.~Schlein, H.-T. Yau, and J.~Yin, \emph{The local relaxation
  flow approach to universality of the local statistics of random matrices},
  Ann.\ Inst.\ Henri Poincar{\'e} (B) \textbf{48} (2012), 1--46.

\bibitem{EYY2}
L.~Erd{\H{o}}s, H.-T. Yau, and J.~Yin, \emph{Universality for generalized
  {W}igner matrices with {B}ernoulli distribution}, J.\ Combinatorics
  \textbf{1} (2011), no.~2, 15--85.

\bibitem{EYY1}
\bysame, \emph{Bulk universality for generalized {W}igner matrices}, Prob.\
  Theor.\ Rel.\ Fields \textbf{154} (2012), 341--407.

\bibitem{EYY3}
\bysame, \emph{Rigidity of eigenvalues of generalized {W}igner matrices}, Adv.
  Math \textbf{229} (2012), 1435--1515.

\bibitem{Johnstone}
I.~M. Johnstone, \emph{On the distribution of the largest eigenvalue in
  principal components analysis}, Ann. Stat. \textbf{29} (2001), 295--327.

\bibitem{KYY}
A.~Knowles, H.-T. Yau, and J.~Yin, \emph{On the principal components of sample
  covariance matrices}, Preprint arXiv:1404.0788.

\bibitem{KY2}
A.~Knowles and J.~Yin, \emph{The isotropic semicircle law and deformation of
  {W}igner matrices}, to appear in Comm. Pure Appl. Math. Preprint
  arXiv:1110.6449.

\bibitem{KY3}
\bysame, \emph{The outliers of a deformed wigner matrix}, to appear in Ann.
  Prob. Preprint arXiv:1207.5619.

\bibitem{MP}
V.~A. Marchenko and L.~A. Pastur, \emph{Distribution of eigenvalues for some
  sets of random matrices}, Mat. Sbornik \textbf{72} (1967), 457--483.

\bibitem{PY}
N.~S. Pillai and J.~Yin, \emph{Universality of covariance matrices}, Preprint
  arXiv:1110.2501.

\bibitem{Wig}
E.~P. Wigner, \emph{Characteristic vectors of bordered matrices with infinite
  dimensions}, Ann. Math. \textbf{62} (1955), 548--564.

\end{thebibliography}
\end{document}